\documentclass[reqno]{amsart}

\usepackage{float}
\usepackage[utf8]{inputenc}

\usepackage{capt-of}
\usepackage[usenames,dvipsnames,svgnames,table]{xcolor}
\usepackage[colorlinks=true, pdfstartview=FitV, linkcolor=blue, citecolor=blue, urlcolor=darkblue]{hyperref}


\usepackage{geometry}                
\geometry{letterpaper}                   
\usepackage{graphicx}
\usepackage{mathrsfs}
\usepackage{amssymb}
\usepackage{amsmath}
\setcounter{MaxMatrixCols}{12}
\usepackage{amsfonts}
\usepackage{mathrsfs}
\usepackage{epstopdf}
\usepackage{lscape}
\usepackage{array}
\usepackage[utf8]{inputenc}
\usepackage{tikz,caption}
\usepackage{biblatex}
\addbibresource{refs.bib}

\DeclareGraphicsRule{.tif}{png}{.png}{`convert #1 `dirname #1`/`basename #1 .tif`.png}
\usepackage{enumitem}
\setlist[itemize]{leftmargin=2em}
\setlist[enumerate]{leftmargin=2em}

\usepackage{booktabs}
\usepackage{multirow}
\usepackage{mathtools}
\usepackage[linesnumbered,ruled]{algorithm2e}
\usepackage{tikz}
\usepackage{mathtools}
\usepackage{soul}
\usepackage{upgreek}

\newtheorem{theorem}{Theorem}[section]
\newtheorem{lemma}[theorem]{Lemma}
\newtheorem{corollary}[theorem]{Corollary}

\newtheorem{proposition}[theorem]{Proposition}

\theoremstyle{definition}
\newtheorem{definition}[theorem]{Definition}
\newtheorem{example}[theorem]{Example}

\newtheorem{remark}[theorem]{Remark}

\newcolumntype{C}[1]{>{\centering\arraybackslash}m{#1}}

\newcommand{\C}{\mathbb{C}}

\newcommand{\Q}{\mathbb{Q}}

\DeclareMathOperator{\spn}{span}
\newcommand{\rank}{\mathrm{rank}}
\newcommand{\csff}{\mathbf{X}_F}
\newcommand{\csft}{\mathbf{X}_T}
\newcommand{\stfrak}{\mathfrak{st}}
\newcommand{\lmulti }{\{\!\!\{}
\newcommand{\rmulti}{\}\!\!\}}
\newcommand{\diam}{\mathrm{diam}}
\newcommand{\sort}{\mathrm{sort}}
\newcommand{\lc}{\lambda_\mathrm{LC}}
\newcommand{\lead}{\lambda_\mathrm{lead}}

\definecolor{darkblue}{rgb}{0.0,0,0.7} 
\definecolor{darkred}{rgb}{0.7,0,0} 
\definecolor{darkgreen}{rgb}{0, .6, 0} 

\newcommand{\defncolor}{\color{darkred}}
\newcommand{\defn}[1]{{\defncolor\emph{#1}}} 

\usepackage[colorinlistoftodos]{todonotes}

\newcommand{\tikzmath}[2][]{\vcenter{\hbox{\begin{tikzpicture}[#1]#2
\end{tikzpicture}}}
}

\title{The chromatic symmetric function in the star-basis}

\author[Gonzalez]{Michael Gonzalez}
\address[M. Gonzalez]{Mathematics Department, Dartmouth College, 
Hanover, NH 03755, U.S.A.}
\email{mikegonz1130@gmail.com}

\author[Orellana]{Rosa Orellana}
\address[R. Orellana]{Mathematics Department, Dartmouth College, 
Hanover, NH 03755, U.S.A.}
\email{Rosa.C.Orellana@dartmouth.edu}
\urladdr{\href{https://math.dartmouth.edu/~orellana/}{https://math.dartmouth.edu/~orellana/}}

\author[Tomba]{Mario Tomba}
\address[M. Tomba]{Mathematics Department, Dartmouth College, 
Hanover, NH 03755, U.S.A.}
\email{mario.tomba.morillo.25@dartmouth.edu}

\begin{document}

\begin{abstract}
    We study Stanley's chromatic symmetric function (CSF) for trees when expressed in the star-basis.  We use the deletion-near-contraction algorithm recently introduced in \cite{ADOZ} to compute coefficients that occur in the CSF in the star-basis. In particular, one of our main results determines the smallest partition in lexicographic order that occurs as an indexing partition in the CSF, and we also give a formula for its coefficient. In addition to describing properties of trees encoded in the coefficients of the star-basis, we give two main applications of the leading coefficient result. The first is a strengthening of the result in \cite{ADOZ} that says that proper trees of diameter less than or equal to 5 can be reconstructed from their CSFs. In this paper we show that this is true for all trees of diameter less than or equal 5. In our second application, we show that the dimension of the subspace of symmetric functions spanned by the CSF of $n$-vertex trees is $p(n)-n+1$, where $p(n)$ is the number of partitions of $n$.  
\end{abstract}
\maketitle

\section{Introduction}

In 1995, Stanley \cite{stanley95symmetric} introduced a symmetric function for a simple graph $G$ with vertex set $V=\{v_1, \ldots, v_n\}$. Let $x_1, x_2, \ldots$  be commuting variables, then the symmetric function associated to the graph $G$ is defined by
\[\mathbf{X}_G := \sum_{\kappa}x_{\kappa(v_1)}x_{\kappa(v_2)}\cdots x_{\kappa(v_n)}, \]
where the sum runs over all proper colorings $\kappa: V \rightarrow \mathbb{N}$. The function $\mathbf{X}_G$ is known as the \emph{chromatic symmetric function} of $G$. By setting $x_i=1$, for $1\leq i\leq k$, and $x_j=0$, for $j>k$, we recover $\chi_G(k)$, the one variable chromatic polynomial, which counts the number of proper colorings of $G$ with $k$ colors.  In his seminal paper, Stanley expressed $\mathbf{X}_G$ using the classical bases of symmetric 
functions, proved many results and made several conjectures related to $\mathbf{X}_G$. This function has attracted a lot of interest, see for example \cite{aliste2014proper, crew2020deletion, Crew-distinguishing, gasharov1999stanley, DSvW-epos, guay2013modular,  heil2019algorithm, loebl2018isomorphism, martin2008distinguishing, OrellanaScott,shareshian2016chromatic}.  

Most of the research related to $\mathbf{X}_G$ revolves around two main conjectures: The \emph{$e$-Positivity Conjecture}, \cite{stanley95symmetric} which states that if a poset is (3+1)-free then its incomparability graph is a nonnegative linear combination of elementary symmetric functions \cite{CH-epos,DSvW-epos,DFvW-epos,guay2013modular,HP-epos}; and the \emph{Tree Isomorphism Conjecture}, which states that the chromatic symmetric function distinguishes non-isomorphic trees. This conjecture is known to hold for trees with less than 30 vertices  \cite{heil2019algorithm} and it has been verified for several subclasses of trees \cite{ADOZ,aliste2014proper,loebl2018isomorphism,
martin2008distinguishing}. Another problem that has drawn attention is in finding families of graphs for which $\mathbf{X}_G$ is Schur positive, \cite{DSvW-epos,G-spos, TW-spos}, mainly because of connections to the representation theory of the general linear and symmetric groups. 

Another line of research has involved generalizations of the chromatic symmetric function which contain $\mathbf{X}_G$ as a specialization, for example the $q$-quasisymmetric function of Shareshian and Wachs \cite{shareshian2016chromatic}, non-commutative versions, \cite{GS-noncom}, a rooted version, \cite{LW-rootedversion}, and weighted versions \cite{ADOZ,crew2020deletion}. Most of these generalizations have been introduced as an approach to the two main open problems. 

In proving results about $\mathbf{X}_G$, we often use the \emph{modular relation} or \emph{triple deletion property} in \cite{guay2013modular,OrellanaScott} which is a recursive formula satisfied by $\mathbf{X}_G$. Another method involves writing $\mathbf{X}_G$ as a linear combination in the classical bases of symmetric functions. In a recent paper, Aliste-Prieto, De-Mier, Zamora and the second author \cite{ADOZ} introduced the \emph{Deletion-Near-Contraction} algorithm to efficiently compute the CSF of a graph in the star basis. The deletion-near-contraction relation was first established for chord diagrams by Chmutov, Duzhin, and Lando in \cite{chmutov}, and was later proved to be intimately related to the $W$-polynomial of Noble and Welsh and Stanley's chromatic symmetric function \cite{noble99weighted}.

Several results related to the Tree Isomorphism conjecture have been derived by relating $\mathbf{X}_G$ to other polynomials, such as the \emph{subtree polynomial} of Chaudhary and Gordon \cite{CG-graphpoly}, the W-polynomial of Noble and Welsh \cite{noble99weighted}, the Tutte polynomial, and others. For example, Martin, Morin, 
and Wagner, \cite{martin2008distinguishing}, proved a number of results related to $\csft$, for a tree $T$, using the subtree polynomial, in particular, they showed that the diameter of a tree can be recovered from $\csft$. In another example, Loebl and Sereni, \cite{loebl2018isomorphism}, showed that caterpillars are distinguished by the chromatic symmetric function using the W-polynomial.  

Cho and van Willigenburg \cite{chromBases} defined multiplicative bases of symmetric functions from any sequence of connected graphs $(G_n)_{n\geq 1}$ such that for each $n$ the graph $G_n$ has $n$ vertices. In this article we are interested in working with the basis constructed from star graphs, that is, when $G_n$ is the tree with one vertex of degree $n-1$ and all other vertices having degree $1$. We denote the star-basis of symmetric functions of homogeneous symmetric functions of degree $n$ by $\{\mathfrak{st}_\lambda\, :\, \text{$\lambda$ is a partition of $n$}\}$. Hence, 
 \[\mathbf{X}_G = \sum_{\lambda\vdash n} c_\lambda \mathfrak{st}_\lambda,\]
where $\lambda\vdash n$ denotes that $\lambda$ is a partition of $n$.  In this paper we give formulas for some of the coefficients $c_\lambda$ when $G = T$ is a tree. Our main result, Theorem \ref{thm:leading-partition}, is the identification of the smallest partition that occurs with nonzero coefficient, $\lead(\csft)$, when we order the partitions using lexicographic order. We call this partition the \emph{leading partition} of $\csft$. In addition, we give a formula for the coefficient $c_{\lead(\csft)}$ in Theorem \ref{thm:leading-coefficient}.  Further, we explain the information about $T$ encoded in $\lead(\csft)$ and its coefficient.  Our work with the star-basis indicates that the indexing partitions encode information about edge adjacencies, see Proposition \ref{prop:k-edge-adjacencies}, while the coefficients seem to encode information about vertex degrees and the number of non-leaf-edges, see Proposition \ref{prop:hook-coeff} and Theorem \ref{thm:leading-coefficient}. 

We give two applications of our main result about the leading partition. The first application is a strengthening of the result in \cite{ADOZ} which shows that proper trees of diameter less than or equal to 5 are distinguished by their chromatic symmetric function.  We show that \emph{all} trees of diameter less than or equal to 5 are distinguished by the chromatic symmetric function. In Theorem \ref{thm:diam-4-reconstruction} we show that trees of diameter four can be reconstructed from $\csft$; and in Proposition \ref{reconstruction diam 5 different orders} and Theorem \ref{thm:p-balanced diameter 5 reconstruction} we show that those of diameter five are also reconstructible from $\csft$.  Furthermore, in Corollary \ref{cor:proper-tree-distinct-parts}, we prove that proper trees whose leading partition have distinct parts can be reconstructed from $\csft$, as well as extended bi-stars, which are special cases of caterpillars that can be reconstructed directly from the leading partition in Corollaries \ref{cor:bi-stars-leading} and \ref{cor:bi-stars-distinguish}. Our proofs are algorithmic, in the sense that they describe an algorithm on how to reconstruct the tree from its chromatic symmetric function. 
 
For the other application, we define a subspace of the space of homogeneous symmetric functions of degree $n$,
\[\mathcal{V}_n = \text{Span}\{ \mathbf{X}_T\, : \, T \text{ is an $n$-vertex tree}\}.\]
We use Theorem \ref{thm:leading-partition} to show that the dimension of this subspace is $p(n) - n +1$, where $p(n)$ is the number of partitions of $n$, see Theorem \ref{thm:dimension}.  In addition, we give a construction for a basis of caterpillars.  Hence, for any tree $\csft$ is a linear combination of the chromatic symmetric functions of caterpillars. We hope that this basis will give a new approach to the Tree Isomorphism Conjecture. 

This paper is organized as follows: In Section 2, we review some definitions and results about graph theory and symmetric functions. 
In Section 3, we review the deletion-near-contraction rule and give the  algorithm for computing the chromatic symmetric function in the star-basis. In addition, we give some basic results about the coefficients that occur.  In Section 4, we present our main result about the leading partition and its coefficient. In Section 5, we prove that the Tree Isomorphism conjecture is true for all trees of diameter less than 6. Finally, in Section 6, we determine the dimension of the subspace spanned by the chromatic symmetric functions of trees and give a basis indexed by caterpillars. 

\subsection*{Acknowledgements} MG was supported by the EE Just fellowship at Dartmouth College, RO was partially supported by NSF grant DMS--2153998, and MT was partially supported by the URAD office at Dartmouth College. We thank Andrew Koulogeorge for discussions and initial interest in this project. We also thank the anonymous referees for valuable suggestions that improved the paper, in particular for a shorter proof of Proposition \ref{prop:numberinternaledges-in-IntSub}.
\section{Preliminaries and definitions}
\subsection{Graph theory}
     For background details on graph theory, we refer the reader to an introductory graph theory textbook such as \cite{westBook}.  For convenience and to set up the notation needed for the paper, we include some of the basic concepts here. 

    A \defn{graph} $G$ is a pair $(V,E)$, where $V$ is the set of \defn{vertices} and $E$ is a multiset of \defn{edges}. The \defn{order} of a graph is the number of vertices, i.e., $|V|$. An edge $e$ is an unordered pair of vertices $uv$, in which case $u$ and $v$ are its \defn{endpoints}. The \defn{degree} of a vertex $v$, denoted by $\deg(v)$, is the number of edges incident to it. A \defn{loop} is an edge with equal endpoints. The \defn{neighborhood} of a vertex $v$ is the set of vertices that are adjacent to $v$ and is denoted by $N(v)$. If $G$ does not contain loops or multiple edges between two vertices, we say that $G$ is \defn{simple}. In the remainder of the paper, we restrict our attention exclusively to simple graphs and, for that reason, we refer to them simply as graphs. 
    
    Two graphs $G, H$ are \defn{isomorphic} if there exists a bijection $f:V(G) \to V(H)$ between the vertex sets of the graphs such that $uv \in E(G)$ if and only if $f(u)f(v) \in E(H)$. A \defn{path} in a graph $G$ is a sequence of distinct vertices $v_1,v_2, \ldots,v_n$, such that $v_iv_{i+1}\in E(G)$ for all $1\leq i\leq n-1$. We call $v_1$ and  $v_n$ the endpoints of the path. The \defn{length} of a path is the number of edges in the path.  We say that $G$ is \defn{connected} if there is a path between any two vertices. A \defn{cycle} in $G$ is a sequence of distinct edges in which only the first and last vertex are equal.
    
    A \defn{tree} $T$ is an acyclic connected graph. The \defn{diameter} of a tree $T$, denoted by $\diam(T)$, is the maximum length of a path between any pair of vertices in $T$. We say that a tree is \defn{proper} if every non-leaf-vertex has at least one leaf in its neighborhood. A \defn{forest} is a graph where each connected component is a tree.

    Let $\mathbb{P}$ denote the positive integers. A \defn{proper coloring} of $G$ is a function $\kappa : V \to \mathbb{P}$ such that, for any $uv \in E$, it satisfies $\kappa(u) \neq \kappa(v)$. If $\kappa(V)$ is the image of $\kappa$, i.e., the set of colors used and $|\kappa(V)| = k$, then we say that $\kappa$ is a \defn{$k$-coloring}. The number of $k$-colorings is denoted by $\chi_G(k)$, and it is a basic fact in graph theory that $\chi_G(k)$ is a polynomial of degree $|V|$, called the \defn{chromatic polynomial}. It is well known that the chromatic polynomial satisfies the deletion-contraction formula. That is, for a non-loop edge $e$, we have:
    \[
    \chi_G(k) = \chi_{G \setminus e}(k) - \chi_{G/e}(k)~,
    \]
    where $G \setminus e$ and $G/e$ are the graphs obtained by deleting and contracting $e$, respectively. We remark that when we contract an edge in a simple graph, the resulting graph might contain multiple edges. In this case, the convention is that multiple edges resulting from the contraction in $G/e$ are replaced with a single edge. In Sections 4 through 6, we will restrict our attention to forests and trees, in which case contraction never produces multiple edges.

    We now introduce some definitions and notation around special types of edges and vertices that we will use throughout the paper.  An \defn{internal vertex} is a vertex of degree at least two. A \defn{leaf vertex}, or simply leaf, is a vertex of degree one. An \defn{internal edge} is a non-loop edge such that both its endpoints are internal vertices, and a \defn{leaf-edge} is an edge such that one of its endpoints is a leaf. We denote by $I(G)$ the set of internal edges of $G$ and by $IV(G)$ the set of internal vertices of $G$.
    \begin{figure}[ht]
        \centering
        \begin{tikzpicture}[auto=center,every node/.style={circle, fill=black, scale=0.5}]
            \node (a) at (1,0) {};
            \node (b) at (0,.5) {};
            \node (c) at (0,-.5) {};
            \node (d) at (-1,.5) {};
            \node (e) at (-1,-.5) {};
            \node (f) at (2, .5) {};
            \node (g) at (2, -.5) {};
            \node (h) at (3, .5) {};

            \draw[thick] (a)--(b);
            \draw[thick] (a)--(c);
            \draw[thick] (b)--(c);
            \draw[thick] (b)--(d);
            \draw[thick] (c)--(e);
            \draw[thick] (f)--(h);
            \draw[thick] (g)--(a)--(f)--(g);

            \node[fill=none, scale=1.75] at (-.5, .7) {$\ell$};
            \node[fill=none, scale=1.75] at (-1.2, .6) {$v$};
            \node[fill=none, scale=1.75] at (1, 0.2) {$u$};
            \node[fill=none, scale=1.75] at (2, 0.7) {$w$};
            \node[fill=none, scale=1.75] at (2.2, 0) {$e$};
        \end{tikzpicture}
        \caption{The edge $\ell$ is a leaf-edge, while $e$ is an internal edge. The vertex $v$ is a leaf vertex, and $u$ and $w$ are examples of internal vertices.}
        \label{fig:enter-label}
    \end{figure}
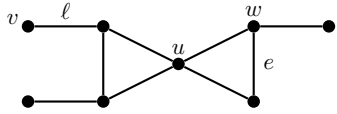

\subsection{Symmetric functions}\label{subsec:sym-fun}
    In this subsection, we review the basics that we need about symmetric functions. For a deeper coverage of the vast field of symmetric functions, see \cite{macdonald1998symmetric, symSagan}.

    A \defn{partition} $\lambda$ is a sequence of positive integers $(\lambda_1, \ldots, \lambda_k)$ such that $\lambda_1 \geq \cdots \geq \lambda_k$. 
    We say that $|\lambda| \coloneqq \sum_i \lambda_i$ is the \defn{weight} of $\lambda$. If $|\lambda|=n$, we say that $\lambda$ is a partition of $n$, and write $\lambda \vdash n$. Each $\lambda_i$ is a \defn{part}, and the number of parts is the \defn{length} of $\lambda$, denoted by $\ell(\lambda)$. There is a natural total ordering for partitions called \defn{lexicographic order}. If $\lambda, \mu \vdash n$, we say that $\lambda \leq \mu$ if $\lambda = \mu$ or if $\lambda_i = \mu_i$ for $1\leq i<j$ and $\lambda_j<\mu_j$ for some $1\leq j \leq \ell(\lambda)$.

    Let $\{x_1, x_2, \ldots\}$ be a countably infinite set of commuting variables. The algebra of symmetric functions is a subalgebra of  $\Q[[x_1, x_2, \ldots]]$ and can be defined as follows. For $r\geq 1$, let $p_r$ denote the \defn{$r$-th power symmetric function} which is defined by 
    \[ p_r \coloneqq x_1^r + x_2^r + x_3^r+ \cdots \]
    and for a partition $\lambda = (\lambda_1, \lambda_2, \ldots, \lambda_\ell)$, the \defn{power symmetric function} is defined by 
    \[ p_\lambda \coloneqq p_{\lambda_1} p_{\lambda_2}\cdots p_{\lambda_\ell}~.\]
    The set $\{p_\lambda \, |\, \lambda\vdash k\}$ is linearly independent. The function $p_\lambda$ is called symmetric because it is invariant under the action of the symmetric group, $S_\infty = \bigsqcup_{n\geq 1} S_n$ (the disjoint union of all symmetric groups), which acts by permuting the indices of the variables.  

   The algebra of symmetric functions is defined as the graded algebra
    \[ \Lambda \coloneqq \Lambda^0 \oplus \Lambda^1 \oplus \Lambda^2 \oplus \cdots ~,\]
    where $\Lambda^0 \coloneqq\mathbb{Q}$, and for $k\geq 1$, we have $\Lambda^k \coloneqq \text{Span}_{\mathbb{Q}} \{ p_\lambda \, |\, \lambda \vdash k\}$.
    
    In this paper we are concerned with the \defn{chromatic symmetric function} introduced by Stanley in \cite{stanley95symmetric}. For any finite graph $G$ with vertex set $\{v_1, \ldots, v_n\}$ define
    \[
    \mathbf{X}_G = \sum_{\kappa} x_{\kappa(v_1)}x_{\kappa(v_2)}\cdots x_{\kappa(v_n)},
    \]
    where the sum runs over all proper colorings of the graph $G$.  $\mathbf{X}_G$ is a homogeneous symmetric function of degree the order of $G$, i.e., $n$. In \cite{stanley95symmetric}, Stanley described the expansion of $\mathbf{X}_G$ in terms of the power sum basis. To describe this expansion, we need some notation.  Given a subset of edges $A \subseteq E$ of the graph $G$, the \defn{spanning subgraph} induced by $A$ is the subgraph $G|_A$ with vertex set $V$ and edge set $A$. If $G$ has $n$ vertices, let $\lambda(A)$ be the partition of $n$ whose parts are the orders of the connected components of $G|_A$.

    \begin{theorem}[\cite{stanley95symmetric}, Theorem 2.5]\label{thm:stanley-power-sum-expansion}
        We have
        \[
        \mathbf{X}_G = \sum_{A \subseteq E} (-1)^{|A|}p_{\lambda(A)}
        \]
    \end{theorem}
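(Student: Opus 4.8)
The plan is to prove this by inclusion--exclusion over the edge set, treating each edge as forbidding a monochromatic event. First I would fix $V = \{v_1, \dots, v_n\}$ and, for each edge $e = v_iv_j \in E$, record the formal sum
\[
Q_e := \sum_{\substack{\kappa : V \to \mathbb{P} \\ \kappa(v_i) = \kappa(v_j)}} x_{\kappa(v_1)} \cdots x_{\kappa(v_n)},
\]
ranging over all colorings (not necessarily proper) that are monochromatic on $e$. A coloring is proper exactly when it avoids every such event, so inclusion--exclusion inside the ring of formal power series $\mathbb{Q}[[x_1, x_2, \dots]]$ gives
\[
\mathbf{X}_G = \sum_{A \subseteq E} (-1)^{|A|} \sum_{\substack{\kappa : V \to \mathbb{P} \\ \kappa \text{ monochromatic on every } e \in A}} x_{\kappa(v_1)} \cdots x_{\kappa(v_n)}.
\]
A small technical point to dispatch here is that this rearrangement is legitimate: every monomial of a fixed multidegree is produced by only finitely many colorings, so all the series in sight are honest elements of $\mathbb{Q}[[x_1, x_2, \dots]]$ and the manipulation is valid coefficient-by-coefficient.

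Next I would evaluate the inner sum for a fixed $A \subseteq E$. A coloring $\kappa$ is monochromatic on every edge of $A$ if and only if it is constant on each connected component of the spanning subgraph $G|_A$: constancy propagates along paths within a component, and distinct components impose no mutual constraint. If the components of $G|_A$ have orders $\lambda_1, \dots, \lambda_\ell$, i.e. $\lambda(A) = (\lambda_1, \dots, \lambda_\ell)$ after sorting into weakly decreasing order, then such a $\kappa$ is exactly a choice of color $c_i \in \mathbb{P}$ for the $i$-th component, contributing the monomial $\prod_i x_{c_i}^{\lambda_i}$. Summing over all choices factors as
\[
\sum_{\substack{\kappa : V \to \mathbb{P} \\ \kappa \text{ monochromatic on every } e \in A}} x_{\kappa(v_1)} \cdots x_{\kappa(v_n)} = \prod_{i=1}^{\ell} \Bigl( \sum_{c \geq 1} x_c^{\lambda_i} \Bigr) = \prod_{i=1}^{\ell} p_{\lambda_i} = p_{\lambda(A)}.
\]
Substituting this back into the inclusion--exclusion expansion yields $\mathbf{X}_G = \sum_{A \subseteq E} (-1)^{|A|} p_{\lambda(A)}$, as claimed.

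The only genuine obstacle is the bookkeeping around these infinite formal sums; once one is comfortable that everything lives in $\mathbb{Q}[[x_1, x_2, \dots]]$ and that each graded piece is a finite sum, the argument reduces to a two-step inclusion--exclusion followed by the factorization of the ``monochromatic-coloring'' generating function into power sums. An alternative, essentially equivalent, route would be to establish the identity first for the one-variable chromatic polynomial (set $x_1 = \cdots = x_k = 1$ and the rest to $0$) via deletion--contraction and then argue that it determines the symmetric function; but the direct inclusion--exclusion above is shorter and makes the appearance of $\lambda(A)$ transparent.
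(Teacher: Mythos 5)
Your proof is correct and is essentially Stanley's original argument: inclusion--exclusion over ``monochromatic'' events on edges, followed by the observation that a coloring monochromatic on every edge of $A$ is precisely a choice of one color per connected component of $G|_A$, contributing $p_{\lambda(A)}$. The paper cites this result from Stanley (1995, Theorem 2.5) without reproducing a proof, and yours matches the standard one in that source.
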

    
    In \cite{chromBases}, Cho and van Willigenburg introduced new bases of symmetric functions using the chromatic symmetric function. They proved the following theorem.

    \begin{theorem}[\cite{chromBases}, Theorem 5] \label{thm:cho-vanW}
        For any positive integer $k$, let $G_k$ denote a connected graph with $k$ vertices and let $\{G_k\}_{k\geq 1}$ be a family of such graphs. Given a partition $\lambda \vdash n$ of length $\ell$, define $G_\lambda = G_{\lambda_1} \sqcup \cdots \sqcup G_{\lambda_\ell}$. Then, $\{\mathbf{X}_{G_{\lambda}} : \lambda \vdash n\}$ is a basis for $\Lambda^n$.
    \end{theorem}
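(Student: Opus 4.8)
The plan is to count dimensions and reduce to the connected case. Since there are $p(n)$ partitions of $n$ and, by the definition of $\Lambda^n$ recalled above, $\dim\Lambda^n=p(n)$, it suffices to show that $\{\mathbf{X}_{G_\lambda}:\lambda\vdash n\}$ is linearly independent; I would get this by proving that the transition matrix from this set to the power-sum basis $\{p_\mu:\mu\vdash n\}$ is triangular with nonzero diagonal. Everything reduces to the connected case through the elementary multiplicativity $\mathbf{X}_{G\sqcup H}=\mathbf{X}_G\mathbf{X}_H$: a proper coloring of a disjoint union is an independent choice of proper coloring on each piece, so the defining generating function factors. Hence $\mathbf{X}_{G_\lambda}=\prod_{i=1}^{\ell(\lambda)}\mathbf{X}_{G_{\lambda_i}}$, and I only need to understand $\mathbf{X}_{G_k}$ for a connected graph $G_k$ on $k$ vertices.

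For that, I would apply Theorem~\ref{thm:stanley-power-sum-expansion}: $\mathbf{X}_{G_k}=\sum_{A\subseteq E(G_k)}(-1)^{|A|}p_{\lambda(A)}$, where $\ell(\lambda(A))$ is the number of connected components of the spanning subgraph $(V,A)$, equal to $1$ precisely when $(V,A)$ is connected. Separating the length-one terms gives
\[
\mathbf{X}_{G_k}=N(G_k)\,p_{(k)}+\sum_{\substack{\mu\vdash k\\ \ell(\mu)\ge 2}}b^{(k)}_\mu\,p_\mu,\qquad N(G_k):=\sum_{\substack{A\subseteq E(G_k)\\ (V,A)\text{ connected}}}(-1)^{|A|}.
\]
The crucial point — and the step I expect to be the main obstacle — is that $N(G_k)\ne 0$. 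I would prove this by deletion–contraction: for a non-loop edge $e$ one has $N(G)=N(G\setminus e)-N(G/e)$ when $e$ is not a bridge and $N(G)=-N(G/e)$ when it is, and an induction on the number of edges (using the usual convention that parallel edges are merged) shows that $N(G_k)$ has sign $(-1)^{k-1}$ and absolute value at least $1$. Equivalently one may note $N(G_k)=(-1)^{k-1}T(G_k;1,0)$, the Tutte-polynomial evaluation counting acyclic orientations of $G_k$ with a single prescribed source, which is positive for connected $G_k$.

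Finally I would assemble the pieces. Expanding $\mathbf{X}_{G_\lambda}=\prod_i\mathbf{X}_{G_{\lambda_i}}$ using the formula above, a typical term of the product is a scalar multiple of $p_{\mu^{(1)}}\cdots p_{\mu^{(\ell)}}=p_\nu$ with $\mu^{(i)}\vdash\lambda_i$, where $\nu$ is the partition whose multiset of parts is $\bigcup_i\mu^{(i)}$; grouping the parts of $\nu$ by their origin shows $\nu$ is a refinement of $\lambda$, hence $\nu\le\lambda$ in dominance order and therefore in lexicographic order. Moreover $\nu=\lambda$ forces $\mu^{(i)}=(\lambda_i)$ for all $i$, so $p_\lambda$ occurs in $\mathbf{X}_{G_\lambda}$ exactly once, with coefficient $\prod_i N(G_{\lambda_i})\ne 0$. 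Listing the partitions of $n$ in increasing lexicographic order, the transition matrix from $(\mathbf{X}_{G_\lambda})_{\lambda\vdash n}$ to $(p_\mu)_{\mu\vdash n}$ is then triangular with nonzero diagonal entries, hence invertible; so $\{\mathbf{X}_{G_\lambda}:\lambda\vdash n\}$ is linearly independent, and since it has the right cardinality $p(n)=\dim\Lambda^n$ it is a basis of $\Lambda^n$.
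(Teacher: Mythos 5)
The paper states this result as a citation to Cho and van Willigenburg and gives no proof of its own, so there is nothing in the paper to compare against line by line. Your argument is correct and is essentially the one given in the cited source: expand $\mathbf{X}_{G_\lambda}$ in the power-sum basis via Theorem~\ref{thm:stanley-power-sum-expansion} and multiplicativity, observe that every $p_\nu$ appearing has $\nu$ a refinement of $\lambda$ (hence $\nu\le\lambda$ in dominance and therefore in lexicographic order), and check that the diagonal coefficient $\prod_i N(G_{\lambda_i})$ is nonzero because $N(G_k)=[q]\chi_{G_k}(q)=(-1)^{k-1}T(G_k;1,0)$ is a nonzero integer of sign $(-1)^{k-1}$ for connected $G_k$. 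One small caution in your induction: if you insist on merging parallel edges after contraction, the literal definition $N(G)=\sum_{A:\,(V,A)\ \text{connected}}(-1)^{|A|}$ no longer obviously satisfies deletion-contraction; it is cleaner either to allow multigraphs throughout, or to identify $N(G)$ with the coefficient of $q$ in $\chi_G(q)$ (which does satisfy deletion-contraction with merging) before inducting. With that adjustment the proof is complete.
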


The bases constructed in Theorem \ref{thm:cho-vanW} are called \defn{chromatic bases}. In this paper, we are concerned with the chromatic basis constructed from star graphs, where each $G_k\coloneqq St_k$ is a $k$-star. This basis was one of the examples in \cite{chromBases} and it was recently further explored in \cite{ADOZ}.  For any positive integer $k$, $St_k$ denotes the tree with $k-1$ vertices of degree 1 and one vertex of degree $k-1$, and it is called the star graph on $k$ vertices because of its shape.  For example, for $k=5$, we have
\[
St_5 = 
    \tikzmath{
    \filldraw (0,0) node {} circle (.06cm);
    \filldraw (0,-.6) node {} circle (.06cm);
    \filldraw (0,.6) node {} circle (.06cm);
    \filldraw (-.6,0) node {} circle (.06cm);
    \filldraw (.6,0) node {} circle (.06cm);

    \draw[thick] (0,0)--(0,-.6);
    \draw[thick] (0,0)--(0,.6);
    \draw[thick] (0,0)--(0.6,0);
    \draw[thick] (0,0)--(-0.6,0);
    }
\]

And for a partition $\lambda = (\lambda_1, \ldots, \lambda_\ell)$, define $St_\lambda = St_{\lambda_1} \sqcup \cdots \sqcup St_{\lambda_\ell}$ as the disjoint union of $\lambda_i$-stars. For any positive integer $k$, \[\stfrak_k \coloneqq \mathbf{X}_{St_k} \text{ and }  \stfrak_\lambda \coloneqq \stfrak_{\lambda_1}\cdots\stfrak_{\lambda_\ell} \text{ for }\lambda\vdash n~.\] 
By Theorem \ref{thm:cho-vanW}, the set $\{\stfrak_\lambda \, |\,\lambda \vdash n\}$ is a basis for $\Lambda^n$, which we call the \defn{star-basis}. There is a nice change of basis from the power sum basis to the star-basis, which we state below. Observe that \eqref{eq:st-to-power-sum} follows from Theorem \ref{thm:stanley-power-sum-expansion} and \eqref{eq:power-sum-to-star} follows from \eqref{eq:st-to-power-sum} using simple properties of binomial coefficients.

\begin{proposition} 
    We have
\begin{equation}\label{eq:st-to-power-sum}
        \stfrak_{n+1} = \sum_{r=0}^n(-1)^r \binom{n}{r} p_{(r+1,1^{n-r})}
    \end{equation}
    and 
    \begin{equation}\label{eq:power-sum-to-star}
        p_{n+1}=\sum_{r=0}^n(-1)^r \binom{n}{r} \stfrak_{(r+1,1^{n-r})}
    \end{equation}
\end{proposition}
By combining the proposition above (in particular \eqref{eq:power-sum-to-star}) with Theorem \ref{thm:stanley-power-sum-expansion}, we obtain the expansion of the chromatic symmetric function in terms of the star-basis. In \cite{ADOZ}, the authors gave an efficient algorithm for computing the star-basis expansion of $\mathbf{X}_G$ for any graph. 

\section{The Deletion-Near-Contraction relation}
In this section we review the \defn{deletion-near-contraction} (DNC) relation, introduced in \cite{ADOZ}. This relation is a modification of the classical deletion-contraction relation used to compute the chromatic polynomial of any graph.  The DNC relation leads to an algorithm for computing the chromatic symmetric function in the star-basis that avoids cancellation of terms. The DNC relation uses three operations on an edge: deletion, leaf-contraction, and dot-contraction.
\begin{itemize}
    \item {\bf Deletion:} this is the classical deletion of an edge in a graph. Given a graph $G$, we denote the resulting graph obtained by deleting an edge $e$ by $G \setminus e$.
    \begin{center}
        \begin{tikzpicture}[auto=center,every node/.style={circle, fill=black, scale=0.5}]
            \node (a) at (0,0) {};
            \node (a1) at (-.5,.5) {};
            \node (a2) at (-.75, 0) {};
            \node (a3) at (-.5, -.5) {};

            \draw[thick] (a) -- (a1);
            \draw[thick] (a) -- (a2);
            \draw[thick] (a) -- (a3);

            \node (b) at (1,0) {};
            \node (b1) at (1.65, .5) {};
            \node (b2) at (1.65, -.5){};

            \node (b11) at (2.65, .5) {};
            \node (b21) at (2.65, -.5) {};

            \draw[thick] (b) -- (b1);
            \draw[thick] (b) -- (b2);

            \draw[thick] (b1) -- (b11);
            \draw[thick] (b2) -- (b21);

            \draw[thick] (a) -- node[above, fill=none, scale=1.75] {$e$} ++ (b); 

            \node[fill=none, scale=1.75] at (0.5, -1) {$G$};

            \draw[very thick, ->] (3,0) -- (4,0);

            \node (a) at (5.25,0) {};
            \node (a1) at (4.75,.5) {};
            \node (a2) at (4.5, 0) {};
            \node (a3) at (4.755, -.5) {};

            \draw[thick] (a) -- (a1);
            \draw[thick] (a) -- (a2);
            \draw[thick] (a) -- (a3);

            \node (b) at (6.25,0) {};
            \node (b1) at (6.9, .5) {};
            \node (b2) at (6.9, -.5){};

            \node (b11) at (7.65, .5) {};
            \node (b21) at (7.65, -.5) {};

            \draw[thick] (b) -- (b1);
            \draw[thick] (b) -- (b2);

            \draw[thick] (b1) -- (b11);
            \draw[thick] (b2) -- (b21);

            \node[fill=none, scale=1.75] at (5.75, -1) {$G\setminus e$};
        \end{tikzpicture}
    \end{center}
 \item \textbf{Leaf-contraction:} Given a graph $G$ and an edge $e$ in $G$, the \defn{leaf-contracted graph}, $G\odot e$, is obtained by contracting $e$ and attaching a leaf, $\ell_e$, to the vertex that results from the contraction of $e$. 
    \begin{center}
        \begin{tikzpicture}[auto=center,every node/.style={circle, fill=black, scale=0.5}]
            \node (a) at (0,0) {};
            \node (a1) at (-.5,.5) {};
            \node (a2) at (-.75, 0) {};
            \node (a3) at (-.5, -.5) {};

            \draw[thick] (a) -- (a1);
            \draw[thick] (a) -- (a2);
            \draw[thick] (a) -- (a3);

            \node (b) at (1,0) {};
            \node (b1) at (1.65, .4) {};
            \node (b2) at (1.65, -.4){};

            \node (b11) at (2.65, .4) {};
            \node (b21) at (2.65, -.4) {};

            \draw[thick] (b) -- (b1);
            \draw[thick] (b) -- (b2);

            \draw[thick] (b1) -- (b11);
            \draw[thick] (b2) -- (b21);

            \draw[thick] (a) -- node[above, fill=none, scale=1.75] {$e$} ++ (b); 

            \node[fill=none, scale=1.75] at (0.5, -1) {$G$};

            \draw[very thick, ->] (3,0) -- (4,0);

            \node (a) at (5.25,0) {};
            \node (a1) at (4.75,.5) {};
            \node (a2) at (4.5, 0) {};
            \node (a3) at (4.755, -.5) {};

            \draw[thick] (a) -- (a1);
            \draw[thick] (a) -- (a2);
            \draw[thick] (a) -- (a3);

            \node (b1) at (5.75, .3) {};
            \node (b2) at (5.75, -.3){};

            \node (l) at (5.25, 0.75) {};

            \draw[thick] (a)--(l);
            
            \node (b11) at (6.5, .3) {};
            \node (b21) at (6.5, -.3) {};

            \draw[thick] (a) -- (b1);
            \draw[thick] (a) -- (b2);

            \draw[thick] (b1) -- (b11);
            \draw[thick] (b2) -- (b21);
            \node[fill=none, scale=1.75] at (5.45, .5) {$\ell_e$};
            \node[fill=none, scale=1.75] at (5.5, -1) {$G\odot e$};
        \end{tikzpicture}
    \end{center}
 
    \item \textbf{Dot-contraction:} Given an edge $e$, the \defn{dot-contracted graph}, $(G\odot e)\setminus \ell_e$, is obtained by contracting the edge $e$ and adding an isolated vertex, $v$, to the graph. This can be formulated in terms of the leaf-contraction operation as simply removing the edge $\ell_e$, hence the notation for the dot-contraction. 
    \begin{center}
        \begin{tikzpicture}[auto=center,every node/.style={circle, fill=black, scale=0.5}]
            \node (a) at (0,0) {};
            \node (a1) at (-.5,.5) {};
            \node (a2) at (-.75, 0) {};
            \node (a3) at (-.5, -.5) {};

            \draw[thick] (a) -- (a1);
            \draw[thick] (a) -- (a2);
            \draw[thick] (a) -- (a3);

            \node (b) at (1,0) {};
            \node (b1) at (1.65, .5) {};
            \node (b2) at (1.65, -.5){};

            \node (b11) at (2.65, .5) {};
            \node (b21) at (2.65, -.5) {};

            \draw[thick] (b) -- (b1);
            \draw[thick] (b) -- (b2);

            \draw[thick] (b1) -- (b11);
            \draw[thick] (b2) -- (b21);

            \draw[thick] (a) -- node[above, fill=none, scale=1.75] {$e$} ++ (b); 

            \node[fill=none, scale=1.75] at (0.5, -1) {$G$};

            \draw[very thick, ->] (3,0) -- (4,0);

            \node (a) at (5.25,0) {};
            \node (a1) at (4.75,.5) {};
            \node (a2) at (4.5, 0) {};
            \node (a3) at (4.755, -.5) {};

            \draw[thick] (a) -- (a1);
            \draw[thick] (a) -- (a2);
            \draw[thick] (a) -- (a3);

            \node (b1) at (5.75, .5) {};
            \node (b2) at (5.75, -.5){};

            \node at (5.25, 0.75) {};
            
            \node (b11) at (6.5, .5) {};
            \node (b21) at (6.5, -.5) {};

            \draw[thick] (a) -- (b1);
            \draw[thick] (a) -- (b2);

            \draw[thick] (b1) -- (b11);
            \draw[thick] (b2) -- (b21);
            \node[fill=none, scale=1.75] at (5.5, .8) {$v$};
            \node[fill=none, scale=1.75] at (5.5, -1) {$(G\odot e)\setminus \ell_e$};
        \end{tikzpicture}
        \vspace{-.5cm}
    \end{center}
\end{itemize}

In \cite{ADOZ}, the authors proved that the chromatic symmetric function satisfies the following relation, which was first introduced in \cite{chmutov} for chord diagrams and reduces to the well-known deletion-contraction relation of $\chi_G(k)$ when we make the appropriate substitution to $\mathbf{X}_G$ so that it becomes the chromatic polynomial.
\begin{proposition}[\cite{ADOZ}](The deletion-near-contraction relation or DNC relation) \label{prop:dnc-formula}
    For a simple graph $G$ and any edge $e$ in $G$, we have
    \[
    \mathbf{X}_G = \mathbf{X}_{G \setminus e} - \mathbf{X}_{(G \odot e)\setminus \ell_e} + \mathbf{X}_{G \odot e}
    \]
\end{proposition}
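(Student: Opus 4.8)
The plan is to argue directly from the definition of $\mathbf{X}$ as a sum over proper colorings, rather than through the power-sum expansion. Write $e = uv$, let $w$ be the vertex of $G/e$ obtained by identifying $u$ and $v$, and let ``coloring'' mean a map $V\to\mathbb{P}$ with $x^{\kappa}\coloneqq\prod_b x_{\kappa(b)}$. I will use two elementary facts: $\mathbf{X}_{G_1\sqcup G_2}=\mathbf{X}_{G_1}\mathbf{X}_{G_2}$, and $\mathbf{X}_K=p_1=\sum_{i\ge 1}x_i$ when $K$ is a single vertex. The whole argument is organized around the ``error term''
\[
Y \ \coloneqq\ \sum_{\kappa'} x_{\kappa'(w)}\, x^{\kappa'},
\]
where the sum is over proper colorings $\kappa'$ of $G/e$; this is a pointed version of $\mathbf{X}_{G/e}$ in which the color at $w$ is counted twice.

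First I would do the classical deletion--contraction bookkeeping. The proper colorings of $G\setminus e$ are exactly the colorings $\tau$ of $V$ that are proper on $E(G)\setminus\{e\}$, and these split according to whether $\tau(u)\ne\tau(v)$ (these are precisely the proper colorings of $G$) or $\tau(u)=\tau(v)$. A coloring $\tau$ of the second type corresponds to the coloring $\kappa'$ of $G/e$ with $\kappa'(w)=\tau(u)=\tau(v)$, and then $x^{\tau}=x_{\kappa'(w)}\,x^{\kappa'}$; hence $\mathbf{X}_{G\setminus e}=\mathbf{X}_G+Y$. Here one must verify that $\tau\mapsto\kappa'$ is a bijection between $\{\tau:\tau\text{ proper on }E(G)\setminus\{e\},\ \tau(u)=\tau(v)\}$ and the proper colorings of $G/e$; the only delicate point is that contracting $e$ may create parallel edges when $u$ and $v$ share a neighbor, but since $\tau(u)=\tau(v)$ the two corresponding inequality constraints coincide, so collapsing parallel edges (which in any case does not change $\mathbf{X}$) is harmless.

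Next I would evaluate $Y$ through the two near-contractions. Both $G\odot e$ and $(G\odot e)\setminus\ell_e$ have vertex set $V(G/e)\cup\{z\}$ for a new vertex $z$: in $(G\odot e)\setminus\ell_e$ the vertex $z$ is isolated, so $\mathbf{X}_{(G\odot e)\setminus\ell_e}=p_1\,\mathbf{X}_{G/e}$, while in $G\odot e$ the vertex $z$ is a leaf attached to $w$. A proper coloring of $G\odot e$ is a proper coloring $\kappa'$ of $G/e$ together with a color for $z$ different from $\kappa'(w)$, so
\[
\mathbf{X}_{G\odot e} \ =\ \sum_{\kappa'} x^{\kappa'}\!\!\sum_{c\neq\kappa'(w)}\!\! x_c \ =\ p_1\,\mathbf{X}_{G/e}\;-\;Y .
\]
Therefore $Y=p_1\,\mathbf{X}_{G/e}-\mathbf{X}_{G\odot e}=\mathbf{X}_{(G\odot e)\setminus\ell_e}-\mathbf{X}_{G\odot e}$, and substituting into $\mathbf{X}_G=\mathbf{X}_{G\setminus e}-Y$ gives the stated identity.

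The main obstacle is not a hard computation but the careful setup of the coloring bijections together with the bookkeeping forced by the fact that contracting an edge drops the vertex count by one: this degree mismatch is exactly what makes the auxiliary term $Y$ necessary, and it is absorbed by padding with the extra vertex $z$, which is isolated in one term and a pendant leaf in the other. A secondary technical point is the possible creation of parallel edges under contraction, handled by the remark that $\mathbf{X}$ is invariant under merging parallel edges. (Alternatively, one can run the entire argument inside the power-sum expansion of Theorem~\ref{thm:stanley-power-sum-expansion}, matching the edge subsets of $G$ that contain $e$ with the edge subsets of $G\odot e$ that contain $\ell_e$ and tracking how the relevant connected component changes size under contracting $e$ versus attaching $\ell_e$; the underlying combinatorics is the same.)
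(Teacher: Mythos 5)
Your argument is correct. Note that the paper itself gives no proof of this proposition; it is quoted from \cite{ADOZ}, where the relation is established in the setting of weighted/marked graphs. Your proof is a clean, self-contained derivation directly from the coloring definition, and every step checks out: the bijection between proper colorings of $G\setminus e$ with $\tau(u)=\tau(v)$ and proper colorings of $G/e$ is handled correctly (including the parallel-edge subtlety, which is immaterial since coincident constraints impose the same inequality); the identity $\mathbf{X}_{G\setminus e}=\mathbf{X}_G+Y$ with the pointed sum $Y=\sum_{\kappa'}x_{\kappa'(w)}x^{\kappa'}$ is exactly the right way to absorb the degree mismatch that breaks naive deletion--contraction for $\mathbf{X}$; and the two evaluations $\mathbf{X}_{(G\odot e)\setminus \ell_e}=p_1\mathbf{X}_{G/e}$ and $\mathbf{X}_{G\odot e}=p_1\mathbf{X}_{G/e}-Y$ follow immediately from multiplicativity over disjoint unions and from summing the leaf color over all values distinct from $\kappa'(w)$. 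Eliminating $Y$ yields the stated relation. This is arguably more elementary than the route through the power-sum expansion sketched in your final parenthetical, and it makes transparent why the two near-contraction terms appear: they are precisely the two ways of padding $G/e$ back up to $|V(G)|$ vertices, with the isolated vertex overcounting the color at $w$ by exactly the pointed term $Y$ that the pendant leaf undercounts.
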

In \cite{ADOZ}, the authors used the DNC relation to give a recursive algorithm for computing $\mathbf{X}_G$  in the star-basis. 
\begin{remark}
\begin{enumerate}\label{DNC-properties}
\item  If $e$ is a leaf-edge in $G$, i.e., one of its endpoints has degree 1,  then $G\setminus e \cong (G \odot e)\setminus \ell_e$ and $G \cong G \odot e$. Therefore, applying the DNC relation to a leaf-edge $e$ does not simplify the computation of $\mathbf{X}_G$.

\item If $e$ is an internal edge in $G$, i.e., both endpoints have degree greater than 1, then $G\setminus e$, $G\odot e$ and $(G\odot e)\setminus \ell_e$ are graphs with fewer internal edges than $G$.  

\item The only connected, simple graph without internal edges is the star graph. 
\end{enumerate}
\end{remark}
As a consequence of Remark \ref{DNC-properties}, we can recursively apply the DNC relation on internal edges until $\mathbf{X}_G$ can be written as a linear combination of $\mathbf{X}_H$, where $H$ is a forest of star graphs. 
This process is formalized in the Star-Expansion Algorithm presented in \cite{ADOZ} which we include below. 

\begin{algorithm}
\noindent \textbf{Input:} A simple graph $G$ and an ordering of the internal edges.

\noindent \textbf{Initialization:} Let $\mathcal{T}$ be a rooted tree with root labeled by $G$ and no edges.

\noindent \textbf{Iteration:} If $H$ is a leaf of $\mathcal{T}$ labeled by a graph $H$ and $H$ has an internal edge $e$, then add three children to $H$ labeled by the graphs $H \setminus e$, $(H \odot e) \setminus \ell_e$, and $H \odot e$, and label these edges with $+$ or $-$ according to the coefficient of these graphs in the DNC relation. The algorithm terminates when all leaves in $\mathcal{T}$ have no internal edges.

\noindent \textbf{Output:} A rooted tree $\mathcal{T}(G)$ where the leaves are labeled by star forests.
    \caption{The star-expansion algorithm.}
    \label{DNC-algorithm}
\end{algorithm}
We call the output of the star-expansion algorithm a \defn{DNC-tree}. As shown by Aliste-Prieto, de Mier, Orellana and Zamora in \cite{ADOZ}, the chromatic symmetric function of a graph $G$ can be computed directly from a DNC-tree whose root is labeled by $G$.
\begin{theorem}[\cite{ADOZ}] \label{thm:dnc-tree-csf}
   For any simple graph $G$, let $\mathcal{T}(G)$ be a DNC-tree obtained from the star-expansion algorithm and let $L(\mathcal{T}(G))$ be the multiset of leaf labels of $\mathcal{T}(G)$. Then
   \[
   \mathbf{X}_G = \sum_{H \in L(\mathcal{T}(G))} (-1)^{\iota(H)-\iota(G)}\stfrak_{\lambda(H)}
   \]
   where $\iota(H)$ and $\iota(G)$ denote the number of isolated vertices in $H$ and $G$, respectively, and $\lambda(H)$ is the partition whose parts are the orders of the connected components of $H$. In addition, no cancellations occur in the computation; that is, for any partition $\lambda$, all terms $\mathfrak{st}_\lambda$ appear with the same sign.
\end{theorem}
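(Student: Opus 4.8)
The plan is to argue by induction on the number of internal edges of $G$. For the base case, if $G$ has no internal edges then, by Remark~\ref{DNC-properties}(3), every connected component of $G$ is a star, so $G$ is the star forest $St_{\lambda(G)}$ and the DNC-tree $\mathcal{T}(G)$ consists of the single root vertex $G$. Since $\mathbf{X}$ is multiplicative over disjoint unions and $\stfrak_k = \mathbf{X}_{St_k}$ by definition, $\mathbf{X}_G = \stfrak_{\lambda(G)}$, which is exactly the asserted formula (the exponent $\iota(G)-\iota(G)$ being $0$), and there is nothing to check about cancellations.

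For the inductive step, let $e$ be the internal edge of $G$ that the algorithm selects. By Remark~\ref{DNC-properties}(2) the graphs $G\setminus e$, $(G\odot e)\setminus\ell_e$, and $G\odot e$ all have strictly fewer internal edges than $G$ (this also yields termination of Algorithm~\ref{DNC-algorithm}, hence finiteness of $L(\mathcal{T}(G))$), and by construction $\mathcal{T}(G)$ is the root $G$ joined to DNC-trees of these three graphs, so $L(\mathcal{T}(G))$ is the disjoint union of their leaf multisets. I would apply the inductive hypothesis to each of the three graphs and substitute into the DNC relation (Proposition~\ref{prop:dnc-formula})
\[
\mathbf{X}_G = \mathbf{X}_{G\setminus e} - \mathbf{X}_{(G\odot e)\setminus\ell_e} + \mathbf{X}_{G\odot e}.
\]
The one thing requiring care is matching signs, and here internality of $e$ is used essentially: since both endpoints of $e$ have degree at least $2$, neither is isolated by deletion or contraction, and a short case check gives $\iota(G\setminus e) = \iota(G\odot e) = \iota(G)$ while $\iota((G\odot e)\setminus\ell_e) = \iota(G)+1$ (the only new isolated vertex being the severed leaf $\ell_e$). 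Then for a leaf $H$ below $G\setminus e$ its contribution is $(+1)(-1)^{\iota(H)-\iota(G\setminus e)} = (-1)^{\iota(H)-\iota(G)}$; below $G\odot e$ it is $(+1)(-1)^{\iota(H)-\iota(G\odot e)} = (-1)^{\iota(H)-\iota(G)}$; and below $(G\odot e)\setminus\ell_e$ it is $(-1)(-1)^{\iota(H)-\iota(G)-1} = (-1)^{\iota(H)-\iota(G)}$. Summing over all leaves gives the claimed expansion of $\mathbf{X}_G$, completing the induction.

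For the no-cancellation statement, I would observe that each leaf $H$ of $\mathcal{T}(G)$ is a star forest, so its isolated vertices are exactly the components isomorphic to $St_1$; hence $\iota(H) = m_1(\lambda(H))$, where $m_1(\mu)$ denotes the number of parts of $\mu$ equal to $1$. Therefore the coefficient $(-1)^{\iota(H)-\iota(G)}$ contributed by $H$ depends only on the partition $\lambda(H)$ (with $G$ fixed), so after collecting terms the coefficient of $\stfrak_\lambda$ equals $(-1)^{m_1(\lambda)-\iota(G)}$ times the number of leaves $H$ with $\lambda(H)=\lambda$; in particular all occurrences of $\stfrak_\lambda$ carry the same sign.

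The main obstacle is the sign bookkeeping in the inductive step: everything reduces to controlling how $\iota$ changes under the three operations, which is precisely where internality of $e$ matters. If $e$ were a leaf-edge an endpoint could become isolated, the telescoping of signs would break, and indeed the DNC relation offers no simplification there, cf. Remark~\ref{DNC-properties}(1).
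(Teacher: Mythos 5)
The paper states this theorem as a citation to \cite{ADOZ} and does not supply a proof of its own, so there is no in-text argument to compare against. That said, your proof is correct and is the natural argument one would expect. The induction on the number of internal edges is well-founded (by Remark~\ref{DNC-properties}(2) each child has strictly fewer internal edges, so the DNC-tree is finite and the inductive hypothesis applies to the three children of the root). The sign bookkeeping is the heart of the matter, and you get it right: because $e=uv$ is internal, neither endpoint can become isolated under deletion nor can the contracted vertex $w$ become isolated (for a simple graph, $\deg(w)\geq 1$ even after collapsing any parallel edges created by contraction), so $\iota(G\setminus e)=\iota(G\odot e)=\iota(G)$ while $\iota((G\odot e)\setminus\ell_e)=\iota(G)+1$, and the $-1$ in the DNC relation exactly compensates the extra isolated vertex, producing the uniform sign $(-1)^{\iota(H)-\iota(G)}$ on every leaf. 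Your observation that every leaf $H$ is a star forest, so $\iota(H)$ equals the number of parts equal to $1$ in $\lambda(H)$, is exactly the clean reason the sign of a term depends only on $\lambda(H)$ and hence that no cancellation can occur.
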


\begin{example}\label{exa:DNC-tree}
    Figure \ref{fig:dnc-tree} shows an example of how to apply the star-expansion algorithm. In particular, it says that for the graph $T$ at the root, we have \[\csft = 
    -\mathfrak{st}_{(4,2,1)}+ \mathfrak{st}_{(4,3)}+\mathfrak{st}_{(5,1,1)}+\mathfrak{st}_{(5,2)}-2\mathfrak{st}_{(6,1)}+\mathfrak{st}_{(7)}~.\]
In Figure \ref{fig:dnc-tree} we use red to indicate the internal edge on which we are applying the DNC relation. 
\begin{figure}[hbt!]\label{Ex:DNC-tree}
    \centering
    \begin{tikzpicture}[auto=center,every node/.style={circle, fill=black, scale=0.45}, style=thick, scale=0.4] \label{tikz:dnc-tree}
    \filldraw[black] (0, 0) coordinate (A1) circle (4pt) node{};
    \filldraw[black] (1, 0) coordinate (A2) circle (4pt) node{};
    \filldraw[black] (1, 1) coordinate (A3) circle (4pt) node{};
    \filldraw[black] (1, -1) coordinate (A4) circle (4pt) node{};
    \node (A5) at (2,0) {};
    \node (A6) at (3,0) {};
    \filldraw[black] (4, 0) coordinate (A7) circle (4pt) node{};
    
    \draw(A1) -- (A2);
    \draw(A2) -- (A3);
    \draw(A2) -- (A4);
    \draw(A2) -- (A5);
    \draw[color=red](A5) -- (A6);
    \draw(A6) -- (A7);
    \node[fill=none, scale=1.75] at (14.5, -4.7) {$e$};

    \filldraw[black] (-13, -5) coordinate (A1) circle (4pt) node{};
    \filldraw[black] (-12, -5) coordinate (A2) circle (4pt) node{};
    \filldraw[black] (-12, -4) coordinate (A3) circle (4pt) node{};
    \filldraw[black] (-12, -6) coordinate (A4) circle (4pt) node{};
    \filldraw[black] (-11, -5) coordinate (A5) circle (4pt) node{};
    \filldraw[black] (-10, -5) coordinate (A6) circle (4pt) node{};
    \filldraw[black] (-9, -5) coordinate (A7) circle (4pt) node{};
    
    \draw(A1) -- (A2);
    \draw(A2) -- (A3);
    \draw(A2) -- (A4);
    \draw(A2) -- (A5);
    \draw(A6) -- (A7);
    
    \filldraw[black] (0.5, -5) coordinate (A1) circle (4pt) node{};
    \node (A2) at (1.5,-5) {};
    \filldraw[black] (1.5, -4) coordinate (A3) circle (4pt) node{};
    \filldraw[black] (1.5, -6) coordinate (A4) circle (4pt) node{};
    \node (A5) at (2.5,-5) {};
    \filldraw[black] (3.5, -5) coordinate (A6) circle (4pt) node{};
    \filldraw[black] (2.5, -4) coordinate (A7) circle (4pt) node{};
    
    \draw(A1) -- (A2);
    \draw(A2) -- (A3);
    \draw(A2) -- (A4);
    \draw[red](A2) -- (A5);
    \draw(A5) -- (A6);
    \node[fill=none, scale=1.75] at (2.1, -4.7) {$e$};

    \filldraw[black] (13, -5) coordinate (A1) circle (4pt) node{};
    \node (A2) at (14,-5) {};
    \filldraw[black] (14, -4) coordinate (A3) circle (4pt) node{};
    \filldraw[black] (14, -6) coordinate (A4) circle (4pt) node{};
    \node (A5) at (15,-5) {};
    \filldraw[black] (16, -5) coordinate (A6) circle (4pt) node{};
    \filldraw[black] (15, -4) coordinate (A7) circle (4pt) node{};

    \draw(A1) -- (A2);
    \draw(A2) -- (A3);
    \draw(A2) -- (A4);
    \draw[red](A2) -- (A5);
    \draw(A5) -- (A7);
    \draw(A5) -- (A6);
    \node[fill=none, scale=1.75] at (2.5, 0.3) {$e$};

    \filldraw[black] (-4.5, -10) coordinate (A1) circle (4pt) node{};
    \filldraw[black] (-3.5, -10) coordinate (A2) circle (4pt) node{};
    \filldraw[black] (-3.5, -9) coordinate (A3) circle (4pt) node{};
    \filldraw[black] (-3.5, -11) coordinate (A4) circle (4pt) node{};
    \filldraw[black] (-2.5, -10) coordinate (A5) circle (4pt) node{};
    \filldraw[black] (-1.5, -10) coordinate (A6) circle (4pt) node{};
    \filldraw[black] (-2.5, -9) coordinate (A7) circle (4pt) node{};
    
    \draw(A1) -- (A2);
    \draw(A2) -- (A3);
    \draw(A2) -- (A4);
    \draw(A5) -- (A6);
    
    \filldraw[black] (0.5, -10) coordinate (A1) circle (4pt) node{};
    \filldraw[black] (1.5, -10) coordinate (A2) circle (4pt) node{};
    \filldraw[black] (1.5, -9) coordinate (A3) circle (4pt) node{};
    \filldraw[black] (1.5, -11) coordinate (A4) circle (4pt) node{};
    \filldraw[black] (2.5, -10) coordinate (A5) circle (4pt) node{};
    \filldraw[black] (2.5, -11) coordinate (A6) circle (4pt) node{};
    \filldraw[black] (2.5, -9) coordinate (A7) circle (4pt) node{};
    
    \draw(A1) -- (A2);
    \draw(A2) -- (A3);
    \draw(A2) -- (A4);
    \draw(A2) -- (A5);
    
    \filldraw[black] (4.5, -10) coordinate (A1) circle (4pt) node{};
    \filldraw[black] (5.5, -10) coordinate (A2) circle (4pt) node{};
    \filldraw[black] (5.5, -9) coordinate (A3) circle (4pt) node{};
    \filldraw[black] (5.5, -11) coordinate (A4) circle (4pt) node{};
    \filldraw[black] (6.5, -10) coordinate (A5) circle (4pt) node{};
    \filldraw[black] (6.5, -11) coordinate (A6) circle (4pt) node{};
    \filldraw[black] (6.5, -9) coordinate (A7) circle (4pt) node{};
    
    \draw(A1) -- (A2);
    \draw(A2) -- (A3);
    \draw(A2) -- (A4);
    \draw(A2) -- (A5);
    \draw(A2) -- (A6);
    
    \filldraw[black] (8.5, -10) coordinate (A1) circle (4pt) node{};
    \filldraw[black] (9.5, -10) coordinate (A2) circle (4pt) node{};
    \filldraw[black] (9.5, -9) coordinate (A3) circle (4pt) node{};
    \filldraw[black] (9.5, -11) coordinate (A4) circle (4pt) node{};
    \filldraw[black] (10.5, -10) coordinate (A5) circle (4pt) node{};
    \filldraw[black] (11.5, -10) coordinate (A6) circle (4pt) node{};
    \filldraw[black] (10.5, -9) coordinate (A7) circle (4pt) node{};
    
    \draw(A1) -- (A2);
    \draw(A2) -- (A3);
    \draw(A2) -- (A4);
    \draw(A5) -- (A7);
    \draw(A5) -- (A6);
    
    \filldraw[black] (13.5, -10) coordinate (A1) circle (4pt) node{};
    \filldraw[black] (14.5, -10) coordinate (A2) circle (4pt) node{};
    \filldraw[black] (14.5, -9) coordinate (A3) circle (4pt) node{};
    \filldraw[black] (14.5, -11) coordinate (A4) circle (4pt) node{};
    \filldraw[black] (15.5, -10) coordinate (A5) circle (4pt) node{};
    \filldraw[black] (15.5, -11) coordinate (A6) circle (4pt) node{};
    \filldraw[black] (15.5, -9) coordinate (A7) circle (4pt) node{};
    
    \draw(A1) -- (A2);
    \draw(A2) -- (A3);
    \draw(A2) -- (A4);
    \draw(A2) -- (A6);
    \draw(A2) -- (A7);
    
    \filldraw[black] (17.5, -10) coordinate (A1) circle (4pt) node{};
    \filldraw[black] (18.5, -10) coordinate (A2) circle (4pt) node{};
    \filldraw[black] (18.5, -9) coordinate (A3) circle (4pt) node{};
    \filldraw[black] (18.5, -11) coordinate (A4) circle (4pt) node{};
    \filldraw[black] (19.5, -10) coordinate (A5) circle (4pt) node{};
    \filldraw[black] (19.5, -11) coordinate (A6) circle (4pt) node{};
    \filldraw[black] (19.5, -9) coordinate (A7) circle (4pt) node{};
    
    \draw(A1) -- (A2);
    \draw(A2) -- (A3);
    \draw(A2) -- (A4);
    \draw(A2) -- (A5);
    \draw(A2) -- (A6);
    \draw(A2) -- (A7);

    \draw(2,-1.5) -- (-9,-3.5);
    
    \node[fill=none] at (-3.5, -2.15) {\huge $+$};
    \draw (2,-1.5) -- (2,-3.5);
    \node[fill=none] at (1.55, -2.35) {\huge $-$};
    \draw (2,-1.5) -- (13,-3.5);
    \node[fill=none] at (7.25, -2.15) {\huge $+$};
    
    \draw [fill=none](2,-6.5) -- (-2,-8.5);

    \node[fill=none] at (-0.5, -7.25) {\huge $+$};
    
    \draw(2,-6.5) -- (2,-8.5);
    
    \node[fill=none] at (1.55, -7.45) {\huge $-$};
    
    \draw (2,-6.5) -- (6,-8.5);

    \node[fill=none] at (4.5, -7.25) {\huge $+$};

    \draw (14.5,-6.5) -- (10.5,-8.5);
    
    \node[fill=none] at (12, -7.25) {\huge $+$};
    
    \draw (14.5,-6.5) -- (14.5,-8.5);

    \node[fill=none] at (13.95, -7.5) {\huge $-$};
    
    \draw (14.5,-6.5) -- (18.5,-8.5);

    \node[fill=none] at (17, -7.25) {\huge $+$};
    \end{tikzpicture}
    \caption{A DNC-tree $\mathcal{T}(G)$ where we apply the relation on the edge labeled $e$.}
    \label{fig:dnc-tree}
\end{figure}

\end{example}

In this paper we are interested in the coefficients that occur when $\mathbf{X}_G$ is written in the star-basis for any $n$-vertex graph $G$.  That is,
\[
\mathbf{X}_G = \sum_{\lambda \vdash n} c_\lambda \mathfrak{st}_\lambda ~. 
\]
By Theorem \ref{thm:dnc-tree-csf} each path in $\mathcal{T}(G)$ from the root $G$ to a star forest $F$ produces a summand in $\mathbf{X}_G$. Since there are no cancellations, i.e., a term obtained from one such path from $G$ to $F_1$ does not cancel another from $G$ to $F_2$, the coefficient of $\mathfrak{st}_\lambda$ can be computed by counting paths from $G$ to leaf-vertices $F$ in $\mathcal{T}(G)$ such that $\lambda(F) = \lambda$ and counting the number of times we dot-contract in the path since $\iota(F)-\iota(G)$ is precisely the number of new isolated vertices created along the path.

In addition, the paths in $\mathcal{T}(G)$ can be encoded as sequences of operations. If we let $L$ represent a deletion, $M$ a dot-contraction, and $R$ a leaf-contraction, then a path from the root $G$ to a star forest $F$ is a sequence of length at most $\#I(T)$ of $L$s, $M$s, and $R$s, see Example \ref{exa:DNC-tree-seq}.  Note that $M$ is the only operation that results in isolated vertices as components and the only operation that changes the sign of the coefficient.  We summarize these observations in the following corollary. 

\begin{corollary}\label{cor:coeff-paths-seq} Let $G$ be an $n$-vertex graph and $\mathcal{T}(G)$ a DNC-tree corresponding to $G$. If $\mathbf{X}_G=\sum_{\lambda\vdash n} c_\lambda \mathfrak{st}_\lambda$, then 
\[ c_\lambda = (-1)^{m} |\mathcal{S}_\lambda|,\]
where $\mathcal{S}_\lambda$ is the set of sequences from the root $G$ to a forest $F$ such that $\lambda(F)= \lambda$ and $m$ is the number of $M$s in each sequence.
\end{corollary}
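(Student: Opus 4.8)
The plan is to read the identity off Theorem~\ref{thm:dnc-tree-csf} after making two bookkeeping observations precise.

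\emph{First}, since the Iteration step of the star-expansion algorithm only ever branches on an internal edge of the current graph, every non-leaf node of $\mathcal{T}(G)$ has exactly three children, and the three edges leading to them carry the three distinct labels $L$ (deletion), $M$ (dot-contraction), $R$ (leaf-contraction). Because $\mathcal{T}(G)$ is a rooted tree, each leaf is reached by a unique path from the root, and this path is recovered from its word of edge labels; conversely each such word determines a unique leaf (follow the labels). Hence the words in $\mathcal{S}_\lambda$ are in bijection with the leaves $F$ of $\mathcal{T}(G)$ (counted as nodes) satisfying $\lambda(F)=\lambda$, so $|\mathcal{S}_\lambda|$ is exactly the number of such leaves, with multiplicity in the multiset $L(\mathcal{T}(G))$.

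\emph{Second}, I would track isolated vertices along a path. Each DNC operation is applied to an internal edge $e=uv$, so $\deg u,\deg v\ge 2$. Deletion removes no vertex and leaves both endpoints with positive degree, so $\iota$ is unchanged; leaf-contraction merges $u,v$ into a vertex of degree $\ge 2$ and attaches exactly one new leaf, so $\iota$ is unchanged; dot-contraction merges $u,v$ into a vertex of degree $\ge 2$ and adds exactly one isolated vertex, so $\iota$ increases by $1$. No operation ever destroys an isolated vertex, since an isolated vertex is never an endpoint of an edge, a fortiori not of an internal edge. Therefore, on the path from $G$ to a leaf $F$, the number of $M$'s equals $\iota(F)-\iota(G)$. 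Moreover, since $F$ is a star forest, its isolated vertices are precisely the components isomorphic to $St_1$, so $\iota(F)$ equals the multiplicity of the part $1$ in $\lambda(F)$ and thus depends only on $\lambda(F)$. Consequently all words in $\mathcal{S}_\lambda$ have the \emph{same} number of $M$'s, which is what legitimizes writing $m$ for this common value in the statement.

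Combining the two observations with Theorem~\ref{thm:dnc-tree-csf} and grouping the leaves by the partition they induce gives
\[
\mathbf{X}_G=\sum_{F\in L(\mathcal{T}(G))}(-1)^{\iota(F)-\iota(G)}\mathfrak{st}_{\lambda(F)}
=\sum_{\lambda\vdash n}\Big(\sum_{\substack{F\in L(\mathcal{T}(G))\\ \lambda(F)=\lambda}}(-1)^{\iota(F)-\iota(G)}\Big)\mathfrak{st}_\lambda
=\sum_{\lambda\vdash n}(-1)^{m}\,|\mathcal{S}_\lambda|\,\mathfrak{st}_\lambda ,
\]
and reading off the coefficient of $\mathfrak{st}_\lambda$ yields $c_\lambda=(-1)^m|\mathcal{S}_\lambda|$. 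I do not expect a genuine obstacle; the only point requiring care is that $m$ is \emph{constant} on $\mathcal{S}_\lambda$ rather than merely constant in parity (which is all that the ``no cancellation'' clause of Theorem~\ref{thm:dnc-tree-csf} supplies for free), and this is exactly what the identification of $\iota(F)$ with the multiplicity of the part $1$ in $\lambda(F)$ provides.
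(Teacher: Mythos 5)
Your proof is correct and follows essentially the same route as the paper, which derives the corollary directly from Theorem~\ref{thm:dnc-tree-csf} by identifying root-to-leaf paths in $\mathcal{T}(G)$ with $L/M/R$-words and observing that $\iota(F)-\iota(G)$ counts the $M$'s along the path. Your explicit justification that $m$ is \emph{constant} on $\mathcal{S}_\lambda$ (via $\iota(F)$ being the multiplicity of the part $1$ in $\lambda(F)$, which depends only on $\lambda$) is a welcome sharpening of a point the paper leaves implicit, since the no-cancellation clause alone only fixes the parity.
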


\begin{example}\label{exa:DNC-tree-seq}
For the DNC-tree in Figure \ref{fig:dnc-tree} we have the following correspondences to sequences. 
\begin{center}
\begin{tabular}{||c | c | r||} 
 \hline
 $\lambda \vdash n$ & $\mathcal{S}_\lambda$ & $c_{\lambda}$ \\ [0.5ex] 
 \hline\hline
 (7) & \{$(R,R)$\} & $1$ \\ 
 \hline
 (6,1) & \{$(R,M), (M,R)$\} & $-2$ \\
 \hline
 (5,2) & \{$(L)$\} & $1$ \\
 \hline
 $(5,1^2)$ & \{$(M,M)$\} & $1$ \\
 \hline
 (4,3) & \{$(R,L)$\} & $1$ \\ 
 \hline
 (4,2,1) & \{$(M,L)$\} & $-1$ \\
 \hline
\end{tabular}
\end{center}
\end{example}

Our main interest is to study $\mathbf{X}_G$ when $G = T$ is a tree; therefore, in what follows we will restrict ourselves to the case when $G = F$ is a forest.  

In the proof of Lemma \ref{lem:deletions-lead}, we will see that deleting an internal edge $e$ from a forest $G$ can result in a forest $G\setminus e$ with one to three fewer internal edges than $T$. On the other hand, both leaf-contraction and dot-contraction result in a forest with only one fewer internal edge as we show in the next lemma. 

\begin{lemma}\label{lemma:internal-edge-contraction}
    If $G$ is a forest and $e$ an internal edge, then $G\odot e$ and $(G\odot e)\setminus \ell_e$ have exactly one fewer internal edge than $G$.
\end{lemma}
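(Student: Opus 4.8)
The plan is to track how each of the operations changes the degree of every vertex and hence the internal/leaf status of every edge. Write $e = uv$, so $\deg_G(u), \deg_G(v) \geq 2$ since $e$ is internal. The first step is to record the structural fact coming from the forest hypothesis: $u$ and $v$ have no common neighbor (a common neighbor would, together with $u$ and $v$, form a triangle). Consequently, contracting $e$ produces no multiple edges, $G/e$ is again a simple forest, and the vertex $w$ obtained by merging $u$ and $v$ has $\deg(w) = \deg_G(u) + \deg_G(v) - 2 \geq 2$; in particular $w$ is an internal vertex of $G/e$.

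Next I would compare $I(G)$ with $I(G/e)$ directly. Every vertex other than $u$ and $v$ keeps its degree when we pass from $G$ to $G/e$, since an edge $ux$ is simply replaced by $wx$; hence such a vertex is internal in $G$ if and only if it is internal in $G/e$, and $w$ is internal just as $u$ and $v$ were. Therefore an edge of $G$ other than $e$ that is not incident to $u$ or $v$ is internal in $G$ if and only if it is internal in $G/e$; an edge $ux$ (resp.\ $vx$) is internal in $G$ if and only if $x$ is internal, if and only if its image $wx$ is internal in $G/e$; and the edge $e$ is internal in $G$ with no counterpart in $G/e$. This bijection between $I(G)\setminus\{e\}$ and $I(G/e)$ gives $\#I(G/e) = \#I(G) - 1$.

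Finally I would pass from $G/e$ to the two graphs in the statement. The graph $(G\odot e)\setminus\ell_e$ is $G/e$ with one extra isolated vertex added; an isolated vertex lies on no edge and changes no other vertex's degree, so $\#I((G\odot e)\setminus\ell_e) = \#I(G/e) = \#I(G)-1$. The graph $G\odot e$ is $G/e$ with a new leaf $\ell_e$ joined to $w$: the new edge $w\ell_e$ is a leaf-edge and hence not internal, attaching it increases $\deg(w)$ by $1$ but $w$ was already internal, so it remains internal, and no other vertex's degree changes; thus no edge changes status and $\#I(G\odot e) = \#I(G/e) = \#I(G)-1$ as well.

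I do not expect a serious obstacle here — it is a bookkeeping argument — and the only point requiring care is the claim that passing from $G$ to $G/e$ preserves the internal/leaf status of every surviving edge. This rests on the forest hypothesis in two places: the no-common-neighbor observation guarantees that no edges other than $e$ are lost or merged in the contraction, and the degree computation guarantees that the merged vertex $w$ inherits the property of being internal from $u$ and $v$.
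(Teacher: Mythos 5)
Your proof is correct and follows essentially the same route as the paper's: both arguments track how the contraction affects vertex degrees and hence the internal/leaf status of every surviving edge, concluding that only $e$ itself is lost from the set of internal edges. Your version is organized slightly differently (factoring through $G/e$ and noting explicitly that the forest hypothesis prevents common neighbors and that the merged vertex has degree $\deg(u)+\deg(v)-2\geq 2$), but this is the same bookkeeping argument.
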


\begin{proof}
    Notice that in both dot-contraction and leaf-contraction operations, we contract an internal edge to a vertex, thus reducing the number of edges by one in the case of a dot-contraction and leaving the number of edges the same in the case of a leaf-contraction in which an internal edge is contracted, but we add a leaf-edge.
    
    Let $e'\neq e$ be another edge in $G$. Then if $e'$ is an internal edge and $e$ and $e'$ are not incident, then the degrees of the endpoints of $e'$ are unchanged by the contraction of $e$ (contraction is a local operation). So, $e'$ remains an internal edge. If $e'$ is an internal edge and $e$ and $e'$ are incident, then the degree of the endpoint at which $e$ and $e'$ meet does not decrease under either a dot-contraction or a leaf-contraction. Hence, $e'$ is still an internal edge in $G\odot e$ and also in  $(G\odot e)\setminus \ell_e$.  If $e'$ is a leaf, then it remains a leaf in $G\odot e$ and also in  $(G\odot e)\setminus \ell_e$ as the endpoint of degree 1 remains an endpoint of degree 1 after a leaf- or dot-contraction.  Hence the result follows. 
\end{proof}

\begin{proposition}\label{prop:hook-coeff}
   Let $T$ be an $n$-vertex tree and $I(T)$ the set of internal edges of $T$. If 
     $\csft = \sum_{\lambda\vdash n} c_\lambda \mathfrak{st}_\lambda$, then 
    \[
    c_{(n-m,1^m)} = (-1)^m \binom{\#I(T)}{m}~.
    \]
    In particular, $|c_{(n-1,1)}| = \#I(T)$.
\end{proposition}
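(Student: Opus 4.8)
The plan is to apply Corollary~\ref{cor:coeff-paths-seq} to a fixed DNC-tree $\mathcal{T}(T)$, so that $c_{(n-m,1^m)} = (-1)^m|\mathcal{S}_{(n-m,1^m)}|$, where $\mathcal{S}_{(n-m,1^m)}$ is the set of root-to-leaf sequences $s\in\{L,M,R\}^*$ whose terminal star forest $F$ satisfies $\lambda(F)=(n-m,1^m)$. (We may assume $n\ge 2$; for $n=1$ the statement is trivial.) The heart of the argument is the identification of $\mathcal{S}_{(n-m,1^m)}$ with the set of words of length $\#I(T)$ over the alphabet $\{M,R\}$ having exactly $m$ letters equal to $M$; this set has cardinality $\binom{\#I(T)}{m}$, from which the formula follows at once, and the case $m=1$ gives $|c_{(n-1,1)}|=\#I(T)$.

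I would first show that no sequence in $\mathcal{S}_{(n-m,1^m)}$ uses a deletion, by means of a monotone invariant. Let $f(H)$ denote the number of connected components of $H$ with at least two vertices; then $f(T)=1$, and any forest $F$ with $\lambda(F)=(n-m,1^m)$ has $f(F)\le 1$. Now track how $f$ changes under one step of the algorithm applied to an internal edge $e=uv$ lying in a component $C$. Since $C$ is a tree and $u,v$ each have degree at least two in $C$, deleting $uv$ leaves $u$ and $v$ each with a neighbor on its side, so a deletion splits $C$ into two components each of size at least two and $f$ increases by one. On the other hand $|C|\ge 3$ (in fact $|C|\ge 4$, since by acyclicity the two further neighbors of $u$ and $v$ are distinct), so contracting $e$ produces a component of size $|C|-1\ge 2$; a dot-contraction then only adds an isolated vertex, and a leaf-contraction only adds a pendant leaf to that component, so $M$ and $R$ both leave $f$ unchanged. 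Hence $f$ is nondecreasing along any root-to-leaf path and strictly increases at each deletion; as it starts and ends at the value $1$, no deletion occurs. Moreover, applying only $M$'s and $R$'s to the connected graph $T$ keeps exactly one connected component and creates one isolated vertex per $M$, so a leaf reached by a sequence with $j$ dot-contractions is a star on $n-j$ vertices together with $j$ isolated vertices; matching $\lambda(F)=(n-j,1^j)$ with $(n-m,1^m)$ forces $j=m$.

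For the reverse inclusion and the count I would invoke Lemma~\ref{lemma:internal-edge-contraction}: applying $M$ or $R$ to an internal edge of a forest decreases the number of internal edges by exactly one, regardless of which internal edge is used. Since (Remark~\ref{DNC-properties}) a forest is a leaf of $\mathcal{T}(T)$ exactly when it has no internal edge, a root-to-leaf path using only $M$'s and $R$'s has length exactly $\#I(T)$; conversely any word over $\{M,R\}$ of that length labels such a path, because after fewer than $\#I(T)$ steps there is still an internal edge and hence three children to descend into. Distinct root-to-leaf paths carry distinct label sequences, so $\mathcal{S}_{(n-m,1^m)}$ is precisely the set of length-$\#I(T)$ words over $\{M,R\}$ with $m$ copies of $M$, and there are $\binom{\#I(T)}{m}$ of them. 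Substituting into Corollary~\ref{cor:coeff-paths-seq} gives $c_{(n-m,1^m)}=(-1)^m\binom{\#I(T)}{m}$, and $m=1$ yields $|c_{(n-1,1)}|=\#I(T)$. I expect the only point needing real care to be the monotonicity bookkeeping of the previous paragraph --- verifying that a deletion of an internal edge of a forest strictly increases $f$ while both contractions preserve it --- which in the end rests on the single observation that each endpoint of an internal edge retains a neighbor in its component after the edge is deleted or contracted; everything else is formal.
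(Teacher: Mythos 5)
Your proof is correct and follows essentially the same route as the paper's: both reduce to Corollary~\ref{cor:coeff-paths-seq}, use Lemma~\ref{lemma:internal-edge-contraction} to fix the sequence length at $\#I(T)$, and count the $\binom{\#I(T)}{m}$ words over $\{M,R\}$ with $m$ copies of $M$. The only difference is that the paper simply asserts that no deletion can occur along such a path, whereas you justify it carefully with the monotone invariant counting components of order at least two --- a worthwhile piece of added rigor, but not a different argument.
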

\begin{proof}
    Notice that $|c_{(n-m,1^m)}|$ is the number of paths in $\mathcal{T}(T)$ from the root $T$ to leaves labeled $ H = St_{n-m}\cup \underbrace{St_1\cup \cdots \cup St_1}_\text{$m$}$. $H$ can only be obtained  through sequences with $m$ $M$s (dot-contractions) and $\#I(T) - m$ $R$s (leaf-contractions). 

    By Corollary \ref{cor:coeff-paths-seq}, the coefficient is $c_{(n-m,1^m)} = (-1)^m|\mathcal{S}_{(n-m,1^m)}|$. Hence, we simply have to count the number of sequences in $\mathcal{S}_{(n-m,1^m)}$.  By Lemma \ref{lemma:internal-edge-contraction}, these sequence have length $\#I(T)$, so it suffices to choose which terms are $M$s.
    There are  $\binom{\#I(T)}{m}$ such sequences. 
\end{proof}

We remark that in the chromatic symmetric function of any $n$-vertex tree $T$ where $n \geq 2$ or $n \geq 3$, respectively, we have
\[c_{(1^n)} =0 \text{ and } c_{(2,1^{n-2})}=0 ~,\]
this is because we only apply the operations to internal edges and every tree has at least two leaves. In addition, for any $n$-vertex tree $T$, we have $c_{(n)} = 1$, this corresponds to the leaf in $\mathcal{T}(T)$ obtained by applying only leaf-contractions, i.e., a sequence of $\#I(T)$ $R$s.

\section{The leading partition in the star-basis} \label{sec:leading}
In the remainder of the paper, we assume that $\{\mathfrak{st}_\lambda \mid  \lambda \vdash k\}$ is an ordered basis, ordered using the lexicographic order on partitions.  In this section, we restrict ourselves to the study of the chromatic symmetric function of a tree, $T$, or a forest, $F$. For any $n$-vertex tree, $T$, we write 
\[\mathbf{X}_T = \sum_{\lambda\vdash n} c_\lambda \mathfrak{st}_\lambda, \]
where the summands are listed in increasing lexicographic order. 
For example,  if $T= P_5$, the path with 5 vertices, then 
\[ \mathbf{X}_{P_5} = -\mathfrak{st}_{(2^2,1)}+ \mathfrak{st}_{(3,1,1)}+2\mathfrak{st}_{(3,2)}-2\mathfrak{st}_{(4,1)}+\mathfrak{st}_{(5)}.\]
At the end of the last section we observed that in the case of an $n$-vertex tree $T$, the largest indexing partition, in lexicographic order, with nonzero coefficient is $(n)$, in fact $c_{(n)} =1$. In this section, we are interested in the smallest indexing partition $\lambda$ with nonzero coefficient in the expansion of $\csft$ in the star-basis, as well as the value of this coefficient.  

\begin{definition}
    Let $F$ be an $n$-vertex forest with chromatic symmetric function $\mathbf{X}_F=\sum_{\lambda \vdash n}c_\lambda \mathfrak{st}_\lambda$. The \defn{leading partition} of $\mathbf{X}_F$ is the smallest partition $\lambda \vdash n$, in lexicographic order, such that $c_{\lambda} \neq 0$. We then say that $c_{\lambda}$ is the \defn{leading coefficient} and that $c_{\lambda} \mathfrak{st}_{\lambda}$ is the \defn{leading term}.  We denote the leading partition of $F$ by $\lead(\csff)$.
\end{definition}

For example, in $\mathbf{X}_{P_5}$, the leading partition is $(2^2,1)$, the leading coefficient is $c_{(2^2,1)} = -1$ and the leading term is $-\mathfrak{st}_{(2^2,1)}$. 

Recall that $I(F)$ is the set of internal edges of $F$. We refer to the connected components of $F \setminus I(F)$ as the \defn{leaf components} of $F$, and we denote by $\lc(F)$ the partition whose parts are the orders of the leaf components of $F \setminus I(F)$:
\[
\lc(F)\coloneqq\lambda(F\setminus I(F)).
\]
We will call this partition the \defn{leaf component partition} of $F$.
Notice that $F \setminus I(F)$ is a spanning subgraph, hence every vertex in $F$ is also a vertex in $F \setminus I(F)$. Furthermore, $F\setminus I(F)$ is a forest whose connected components are all stars. Therefore, a leaf component is always a star tree. 

\begin{example}
    Consider the following tree $T$ with two internal edges $e_1, e_2$. It has leaf components $St_4$, $St_2$, and  $St_1$ and $\lc(T) = (4,2,1)$
    \begin{center}
        \begin{tikzpicture}[auto=center,every node/.style={circle, fill=black, scale=0.5}, scale=0.75]
            \node (a) at (0, 0) {};
            \node (b) at (1, 0) {};
            \node (c) at (.3, .5) {};
            \node (d) at (.3, -.5) {};
            \node (e) at (2, 0) {};
            \node (f) at (3, 0) {};
            \node (g) at (4, 0) {};
                
            \draw[thick] (a) -- (b);
            \draw[thick] (b) -- (c);
            \draw[thick] (b) -- (d);
            \draw[thick][color=red, thick] (b) -- (e);
            \draw[thick][color=red, thick] (e) -- (f);
            \draw[thick] (f) -- (g);
                
            \node (h) at (9, 0) {};
            \node (i) at (10, 0) {};
            \node (j) at (9.3, .5) {};
            \node (k) at (9.3, -.5) {};
            \node (l) at (11, 0) {};
            \node (m) at (12, 0) {};
            \node (n) at (13, 0) {};
                
            \draw[thick] (h) -- (i);
            \draw[thick] (i) -- (j);
            \draw[thick] (i) -- (k);
            \draw[thick] (m) -- (n);
                
            \draw[thick, ->] (5,0)--(6,0);
                
            \node[fill=none, scale=1.75] (p) at (-1,0) {$T = $};
            \node[fill=none, scale=1.75] (q) at (7.5,0) {$T\setminus I(T) =$};
            \node[fill=none, scale=1.75] at (1.5, 0.2) {$e_1$};
            \node[fill=none, scale=1.75] at (2.5, 0.2) {$e_2$};
        \end{tikzpicture}
    \end{center}
\end{example}

Notice that the path $P_5$ has two internal edges, hence $P_5\setminus I(P_5)$ is equal to $St_2 \sqcup St_2 \sqcup St_1$ and $\lc(P_5) =(2^2,1)$ which is the same as the leading partition in $\mathbf{X}_{P_5}$. Note, however, that this is not so obvious. It is not even clear that the partition $\lc(F)$ can be reached in a DNC-tree since the path of all $\#I(F)$ deletions is not always possible. In Example \ref{exa:DNC-tree}, the sequence that achieves the leading partition is $(M,L)$, see the table in Example \ref{exa:DNC-tree-seq}. Notice that $(L, L)$ does not occur as a sequence in this DNC-tree. In fact, no DNC-tree for the tree in this example will contain the sequence $(L,L)$.

The main objective of this section is to prove that for any forest $F$, $\lc(F)$ is equal to the leading partition of $\mathbf{X}_F$. 

\begin{proposition} \label{properties-LCpartition}
Let $T$ be a tree on $n$ vertices with $n\geq3$.
\begin{enumerate}
    \item[(a)] There is a bijection between leaf components of $T$ and internal vertices in $T$. 
    \item[(b)] $\ell(\lc(T)) = \#I(T)+1$.
    \item[(c)] If $T$ has at least one internal edge, then $\lc(T)$ has at least two parts greater than 1.
\end{enumerate}
\end{proposition}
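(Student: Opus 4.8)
The plan is to prove part (a) first, since it pins down the structure of the leaf components, and then to deduce (b) and (c) from standard facts about trees.

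For \textbf{(a)}, the first step I would record is that, since $n \geq 3$ and $T$ is connected, every leaf-vertex of $T$ has a unique neighbor and that neighbor must be internal (were it a leaf, $T$ would be a single edge on two vertices). Writing $L(v)$ for the set of leaves of $T$ adjacent to an internal vertex $v$, I would then show that the leaf components of $T$ are exactly the stars on the vertex sets $\{v\}\cup L(v)$, $v \in IV(T)$. The point is that the edges surviving in $T\setminus I(T)$ are precisely the leaf-edges of $T$; the leaf-edges at $v$ are the edges $v\ell$ with $\ell\in L(v)$; and each leaf $\ell$ lies on a single edge of $T$, namely the leaf-edge joining it to its (internal) neighbor. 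Hence a breadth-first search in $T\setminus I(T)$ from an internal vertex $v$ reaches exactly $\{v\}\cup L(v)$, while the component of a leaf is that of its unique neighbor. These sets are pairwise disjoint, since a common vertex would be a leaf of $T$ with two internal neighbors, and they cover $V(T)$; so sending each leaf component to the unique internal vertex it contains is the desired bijection.

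For \textbf{(b)}, by (a) it suffices to prove that $\#IV(T) = \#I(T) + 1$. The key step is that $IV(T)$ induces a subtree of $T$: for any two internal vertices, the path between them in $T$ has all of its interior vertices of degree at least $2$, hence internal, so the whole path lies in $T[IV(T)]$; being a connected subgraph of a tree, $T[IV(T)]$ is itself a tree. Since $n \geq 3$ forces $IV(T) \neq \emptyset$, and the edge set of $T[IV(T)]$ is exactly $I(T)$, the fact that a tree has one fewer edge than vertices gives $\#I(T) = \#IV(T) - 1$, that is, $\ell(\lc(T)) = \#IV(T) = \#I(T) + 1$. For \textbf{(c)}, the parts of $\lc(T)$ are the integers $1 + |L(v)|$, $v \in IV(T)$, so a part is greater than $1$ exactly when $v$ has a leaf-neighbor in $T$. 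If $T$ has an internal edge, then $\#IV(T) \geq 2$ by (b), so $T[IV(T)]$ is a tree on at least two vertices and hence has at least two leaves; each leaf $u$ of $T[IV(T)]$ has exactly one internal neighbor in $T$ but satisfies $\deg_T(u) \geq 2$, so $u$ has a leaf-neighbor in $T$ and contributes a part greater than $1$ to $\lc(T)$. Two such vertices $u$ produce two parts greater than $1$.

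I expect the only genuine obstacle to be the bookkeeping in (a) for the small leaf components $St_1$ and $St_2$: there the ``center'' of the star is not distinguished by degrees inside the component, and one must invoke $n \geq 3$ to see that the unique internal vertex of $T$ lying in the component is forced to play that role, and in particular that a leaf of $T$ is never an isolated vertex of $T\setminus I(T)$. Once the explicit description of the leaf components is in hand, (b) and (c) are routine.
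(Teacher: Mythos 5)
Your argument is correct, and for parts (b) and (c) it is a genuinely different (and more self-contained) route than the paper's. The paper dispatches (b) by simply \emph{citing} the fact that a tree has one more internal vertex than internal edges, and proves (c) by choosing two leaf-vertices of $T$ and asserting ``without loss of generality'' that they are adjacent to distinct internal vertices --- an assertion that is true but is left unjustified. You instead introduce the induced subgraph $T[IV(T)]$: you observe that it is connected (because any path in $T$ between two internal vertices passes only through internal vertices), hence a tree with vertex set $IV(T)$ and edge set exactly $I(T)$. This gives (b) at once via the vertex--edge count for trees, and it also gives (c) cleanly: a tree on $\ge 2$ vertices has $\ge 2$ leaves, and a leaf of $T[IV(T)]$ has only one internal neighbor in $T$ but degree $\ge 2$ in $T$, so it must have a leaf-neighbor in $T$, giving a leaf component of order $\ge 2$. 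What your approach buys is that it replaces the paper's hand-waved ``WLOG'' by a structural fact about the subtree of internal vertices, and derives (b) from first principles rather than appealing to a known identity; the cost is a paragraph more of setup. Your proof of (a) is the same idea as the paper's --- a leaf component is a star whose center is an internal vertex and whose other vertices are leaves of $T$ --- just with the edge-by-edge bookkeeping made explicit, including the $St_1$, $St_2$ edge cases the paper glosses over.
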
 
\begin{proof}
    A leaf component $\mathcal{L}$ in $T$ is a star $St_k$ for some $k\geq 1$ where the center is an internal vertex in $T$ and all other vertices in $\mathcal{L}$  are leaves in $T$, as leaf-edges are the only edges not in $I(T)$. This proves (a). For (b), observe that the number of internal vertices is $\#I(T)+1$, hence the claim follows from (a). For (c), recall that any tree has at least two leaf-vertices $v$ and $v^\prime$. Since $T$ has an internal edge, $v$ and $v^\prime$ are neighbors of internal vertices and, further, we can choose $v$ and $v^\prime$ so that they are neighbors of distinct internal vertices. Let $u$ be the internal vertex in $N(v)$ and $u^\prime$ be the internal vertex in $N(v^\prime)$. Then, the leaf component containing $u$ has at least order 2, and so does the leaf component containing $u^\prime$.
\end{proof}

\subsection{DNC operations and the leaf component partition.}
In this subsection, we consider how $\lc(F)$ relates to $\lc(F\setminus e), \lc((F \odot e)\setminus \ell_e)$ and $\lc(F\odot e)$ for an arbitrary forest $F$ with internal edge $e$.
This will help us prove our main result by allowing us to identify 
the paths (or sequences) in the DNC-tree of $F$ from $F$ to a star forest $H$ which satisfy $\lambda(H)=\lc(F)$.

\begin{definition} Let $F$ be a forest. An internal vertex $u$ is 
    a \defn{deep vertex} if the leaf component containing $u$ is $\mathcal{L}_u= St_1$, the single vertex $u$. In other words, $u$ is an internal vertex without leaves in its neighborhood.
\end{definition}

\begin{example} In the following tree, there are only two deep vertices labeled $u$ and $v_1$. Notice that $u$ is a deep vertex of degree 2 and $v_1$ is a deep vertex of degree 3 and $v_2$ is internal but not a deep vertex. 
\begin{center}
        \begin{tikzpicture}[auto=center,every node/.style={circle, fill=black, scale=0.5}, scale=0.75]
            \node (b) at (1, 0) {};
            \node (c) at (.1, .5) {};
            \node (d) at (.1, -.5) {};
            \node (e) at (2, 0) {};
            \node (f) at (3, 0) {};
            \node (g) at (4, 0) {};
            \node (h) at (-1,.5) {};
            \node (i) at (-1,-.5) {};
            \node (j) at (5, 0) {};
            \node (k) at (3.6,.5) {};
                
            \draw[thick] (b) -- (c);
            \draw[thick] (b) -- (d);
            \draw[thick] (b) -- (e);
            \draw[thick] (e) -- (f);
            \draw[thick] (f) -- (g);
            \draw[thick] (c) -- (h);
            \draw[thick] (d) -- (i);
            \draw[thick] (g) -- (j);
            \draw[thick] (k) -- (f);
            \node[fill=none, scale=1.75] at (2, .35) {$u$};
            \node[fill=none, scale=1.75] at (1, .35) {$v_1$};
            \node[fill=none, scale=1.75] at (3, .35) {$v_2$};
    \end{tikzpicture}
    \end{center} 
\end{example}

We define a function \defn{$\sort()$} which takes as input a sequence of positive integers and outputs the sequence containing those positive integers in nonincreasing order. Given two sequences $a=(a_1,\ldots,a_s)$ and $b=(b_1,\ldots,b_t)$, let $a \cdot b = (a_1,\ldots, a_s, b_1,\ldots, b_t)$. In addition, we will use the notation $(a_1, \ldots, \widehat{a_i}, \ldots, a_s)$ to denote that the $i$-th term has been omitted. 

The following lemma shows that in general deleting an edge $e$ from a forest $F$ produces a forest $F'=F\setminus e$ such that $\lc(F)\neq \lc(F')$. In addition, the lemma describes how these partitions would differ.  It also describes for which edges $e$ we have $\lc(F)=\lc(F')$. 

\begin{lemma}(Deletion Lemma) \label{lem:deletions-lead}
    Let $F$ be a forest with $\lc(F) = (\kappa_1, \ldots, \kappa_m)$ and assume that $e=uv$ is an internal edge of $F$.
    \begin{enumerate}
        \item[(a)] If both endpoints of $e$ are deep vertices of degree 2 with 
         $N(u)=\{t,v\}$ and $N(v) = \{u,w\}$, then $\lc(F\setminus e)=\sort(\kappa_i+1, \kappa_j+1, \kappa_1,\ldots,\widehat{\kappa_i}, \ldots,\widehat{\kappa_j},\ldots,\kappa_{m-2})$, where $\kappa_i$ and $\kappa_j$ are the orders of the leaf components containing $t$ and $w$, respectively. \label{deletions-lead-2}
        \item[(b)] If only one endpoint of $e$ is a deep vertex of degree 2, without loss of generally assume $u$ is this vertex and $N(u) = \{t,v\}$, then $\lc(F\setminus e)=\sort(\kappa_i+1,\kappa_1,\ldots,\widehat{\kappa_i}, \ldots,\kappa_{m-1})$, where $\kappa_i$ is the order of the leaf component containing the vertex $t$. \label{deletions-lead-1}
        \item[(c)] If neither $u$ nor $v$ are deep vertices of degree $2$, then $\lc(F) = \lc(F \setminus e)$. \label{deletions-lead-3}
    \end{enumerate}
\end{lemma}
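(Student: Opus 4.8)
The plan is to argue directly from the definition $\lc(F)=\lambda(F\setminus I(F))$, tracking how the spanning star-forest $F\setminus I(F)$ changes when the internal edge $e=uv$ is deleted. I will use the elementary fact, noted just before Proposition~\ref{properties-LCpartition}, that each connected component of $F\setminus I(F)$ is a star, and that for an internal vertex $x$ of $F$ the leaf component containing $x$ is exactly the star with center $x$ and leaves the leaf-neighbors of $x$; it therefore has order $1+d(x)$, where $d(x)$ denotes the number of leaf-neighbors of $x$. (This uses no tree hypothesis, only $\deg_F(x)\ge 2$.)

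First I would record the two facts that drive the case split: deleting $e$ lowers $\deg(u)$ and $\deg(v)$ by one and leaves every other degree fixed, so (i) the internal vertices of $F\setminus e$ are those of $F$ with whichever of $u,v$ had degree exactly $2$ removed, and (ii) the only vertices whose leaf-versus-internal status can change are $u$ and $v$, and only when they have degree $2$. Then I would analyze an endpoint, say $u$, by type. If $\deg_F(u)\ge 3$, then $u$ stays internal with the same leaf-neighbors, so its leaf component is unchanged. If $\deg_F(u)=2$, write $N(u)=\{v,z\}$. If $z$ is a leaf (equivalently $u$ is not a deep vertex), the leaf component of $u$ in $F$ is the order-$2$ edge $\{u,z\}$; after deleting $e$ both $u$ and $z$ are leaves, so $\{u,z\}$ persists unchanged as an order-$2$ leaf component of $F\setminus e$, while $v$ merely loses an internal neighbor, which does not change $d(v)$. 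If $z$ is internal (equivalently $u$ is a deep vertex of degree $2$), the leaf component of $u$ in $F$ is $St_1=\{u\}$, contributing a part $1$; after deleting $e$ the previously internal edge $uz$ becomes a leaf-edge, so $u$ is absorbed into the leaf component centered at $z$, whose order increases by exactly one --- here one checks that $u$ is the only neighbor of $z$ that newly becomes a leaf, and in particular $v\notin N(z)$, since $u,v,z$ would otherwise form a triangle in the forest $F$. The identical analysis applies to $v$, and the two local pictures decouple: in case (a) the ``other neighbors'' $t$ of $u$ and $w$ of $v$ are distinct (else $u,v,t$ is a triangle), hence distinct internal vertices with distinct leaf components.

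Finally I would assemble the three cases. In (c), every endpoint either has degree $\ge 3$ or is a non-deep degree-$2$ vertex, so by the local analysis every leaf component of $F$ reappears in $F\setminus e$ with the same order and no new component is created, giving $\lc(F)=\lc(F\setminus e)$. In (b), $u$ is deep of degree $2$ with other neighbor $t$ whose leaf component has order $\kappa_i$, so the part $1$ for $\{u\}$ disappears and $\kappa_i$ becomes $\kappa_i+1$, while $v$ contributes nothing new; this is exactly $\sort(\kappa_i+1,\kappa_1,\ldots,\widehat{\kappa_i},\ldots,\kappa_{m-1})$. Case (a) is (b) applied simultaneously and independently at $u$ (other neighbor $t$, part $\kappa_i$) and at $v$ (other neighbor $w\ne t$, part $\kappa_j$): two parts equal to $1$ vanish and $\kappa_i,\kappa_j$ each increase by one. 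The main obstacle is not any single hard step but the bookkeeping: the mildly counterintuitive point in case (c) that an endpoint can leave $IV(F\setminus e)$ while the part of $\lc(F)$ it carried is unchanged, together with verifying that the trichotomy ``$\deg\ge 3$ / deep degree $2$ / non-deep degree $2$'' is exhaustive and that the two endpoints truly do not interact (the acyclicity checks $t\ne w$ and $v\notin N(z)$).
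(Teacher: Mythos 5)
Your proof is correct and follows essentially the same local degree-tracking argument as the paper's. The only organizational differences are cosmetic: you work directly with forests (avoiding the paper's reduction to a single tree and its split $T\setminus e=T_u\sqcup T_v$), and you anchor the case analysis on the observation that the leaf component of an internal vertex $x$ has order $1+d(x)$, where $d(x)$ counts leaf-neighbors; both proofs ultimately hinge on the facts that only $u$ and $v$ can change internal/leaf status upon deleting $e$, and that this happens precisely when they have degree $2$, together with the acyclicity checks ($t\neq w$ and $v\notin N(t)$) that you make explicit and the paper uses implicitly.
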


\begin{proof}
     A forest $F$ is a finite collection of trees, $T_1\sqcup T_2 \sqcup \ldots \sqcup T_s$ and $\lc(F) = \sort(\lc(T_1)\cdot \lc(T_2) \cdots \lc(T_s))$.  Deleting an internal edge affects only the connected component containing that edge. Hence, we can restrict ourselves to proving the three claims hold for a tree, $T$.  

    If $e= uv\in I(T)$, then $T\setminus e$ has two connected components $T_u$ and $T_v$ containing the vertices $u$ and $v$, respectively. In addition, both $u$ and $v$ have degree greater than 1 in $T$. Then,
    $u$ and $v$ are leaves in $T\setminus e = T_u \sqcup T_v$ if and only if they have degree 2 in $T$. 

    Recall that, by definition, the leaf components of $T$ are the connected components of $T\setminus I(T)$ having orders $\kappa_1\geq \cdots\geq \kappa_m$, where $m$ is the number of leaf components in $T$ which is equal to $\#I(T)+1$ by Proposition \ref{properties-LCpartition} (b).
    
    We begin by proving (b). We assume $u$ is a deep vertex of degree 2 and $v$ is not. Hence $u$ is a leaf component of $T$ of size 1, implying that $\kappa_m = 1$ in $\lc(T)$.  Since $v$ is not a deep vertex of degree 2, there are two cases we need to consider, either $v$ has degree at least 3 or $v$ has degree 2, in which case it is adjacent to a leaf.  If $v$ has degree 2 and adjacent to a leaf, then $T_v$ is a leaf component of size 2, $St_2$, and this is also a leaf component of $T$. If $v$ has degree greater or equal to 3 in $T$, then $v$ is an internal vertex in $T_v$ since it has degree greater or equal to 2.  No other vertex changes degree in $T_v$ when we delete $e$; hence, all leaf components in $T_v$ are leaf components in $T$ of the same orders.     

    Since $u$ has degree 2 and $N(u) =\{v,t\}$ in $T$, $u$ is a leaf in $T_u$ adjacent to $t$. Therefore, the leaf component of $T$ containing $t$, which had order $\kappa_i$ in $T$, for some $i\leq m$, will have order $\kappa_i +1$ in $T_u$.  In addition, $u$ is not a leaf component of $T\setminus e$ as the edge 
    $tu\notin I(T\setminus e)$, hence $\kappa_m =0$ in $\lc(T\setminus e)$. 
    Deleting $e$ from $T$ only decreases the degree of $u$ in $T_u$, hence all other vertices in $T_u$ have the same degrees as in $T$. In particular the only leaf in $T_u$ that is not a leaf in $T$ is $u$.  Therefore, all leaf components not containing $t$ in $T_u$ have the same order as in $T$. Hence, the conclusion of the claim follows since $\lc(T\setminus e) = \sort(\lc(T_u)\cdot \lc(T_v))$. 

    Proving (a) is similar. Here we assume that both $u$ and $v$ are deep vertices of degree 2. This means $\kappa_m=\kappa_{m-1}=1$ since $u$ and $v$ are each leaf components of order 1 in $T$. Since $N(u) = \{t, v\}$ and $N(v) = \{ u, w\}$, then both $t$ and $w$ are internal vertices.  If the leaf components of $t$ and $w$ of $T$ had orders $\kappa_i$ and $\kappa_j$, respectively. Then, in $T\setminus e$ the leaf component of $t$ has one additional leaf, $u$, hence it has order $\kappa_i + 1$ and $v$ is an additional leaf of the leaf component of $w$, hence it has order $\kappa_j+1$ and deleting $e$ only changes the degrees of $v$ and $u$. Since the edges $tu, vw \notin I(T\setminus e)$, the vertices $u$ and $v$ are not leaf components of $T\setminus e$, hence $\kappa_m=0$ and $\kappa_{m-1}=0$ in $\lc(T\setminus e)$. It also follows from the same argument as for (b) that all other leaf components have the same order in $T$ as in $T\setminus e$, so (a) follows.

    For (c), assume that $u$ and $v$ are both not deep of degree 2. Then there are three cases to consider: (1) both $u$ and $v$ have degree greater or equal to 3; (2) one of $u$ or $v$ has degree 2; and (3) both have degree 2.  In the first case, if $u$ and $v$ have degree greater or equal to 3, then $T\setminus e$ has the same internal vertices and same leaves as $T$. Together with Proposition \ref{properties-LCpartition}(a), this implies $\lc(T) = \lc(T\setminus e)$. In case (2), we assume that $v$ has degree 2 and $u$ has degree greater or equal to 3. Since $v$ is not deep, then 
    $T\setminus e = T_u \sqcup T_v$ and $T_v=St_2$, which is also a leaf component of $T$. Since $u$ is an internal vertex in $T\setminus e$, then $T$ and $T\setminus e$ have the same leaf components.  For case (3), if both $u$ and $v$ have degree 2, then they are not deep and each is adjacent to a leaf.  Hence, $T = P_4$, the path with four vertices with the edge $e=uv$ in the middle. Then $T$ and $T\setminus e$  both have two leaf components equal to $St_2$. 
\end{proof}

\begin{example} Figure \ref{fig:deletion-lemma} illustrates Lemma \ref{lem:deletions-lead} parts (a),(b), and (c), respectively.
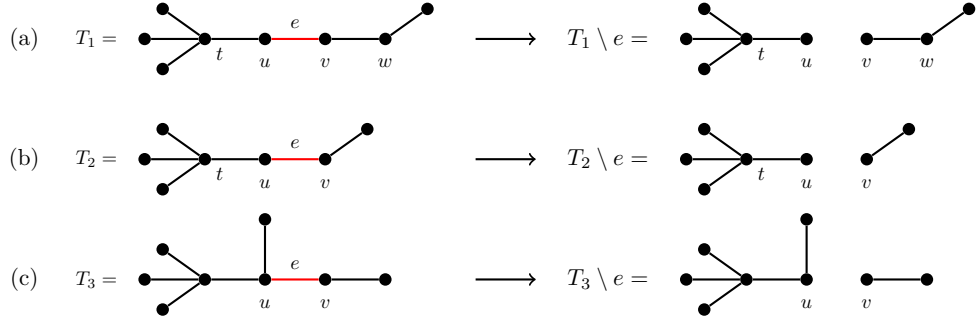
\begin{figure}[ht]
    \centering
    \begin{tikzpicture}[auto=center,every node/.style={circle, fill=black, scale=0.5}, scale=0.8]
        \node (a) at (0, 0) {};
        \node (b) at (1, 0) {};
        \node (c) at (.3, .5) {};
        \node (d) at (.3, -.5) {};
        \node (e) at (2, 0) {};
        \node (f) at (3, 0) {};
        \node (g) at (4, 0) {};
        \node (h) at (4.7, .5) {};
        \node[fill = none, scale=1.5] (t) at (1.25, -0.25) {$t$};
        \node[fill = none, scale=1.5] (u) at (2, -0.4) {$u$};
        \node[fill = none, scale=1.5] (v) at (3, -0.4) {$v$};
        \node[fill = none, scale=1.5] (w) at (4, -0.4) {$w$};
        \node[fill = none, scale=1.5] (t) at (10.25, -0.25) {$t$};
        \node[fill = none, scale=1.5] (u) at (11, -0.4) {$u$};
        \node[fill = none, scale=1.5] (v) at (12, -0.4) {$v$};
        \node[fill = none, scale=1.5] (w) at (13, -0.4) {$w$};
        
        \draw[thick] (a) -- (b);
        \draw[thick] (b) -- (c);
        \draw[thick] (b) -- (d);
        \draw[thick] (b) -- (e);
        \draw[thick][color=red, thick] (e) -- (f);
        \draw[thick] (f) -- (g);
        \draw[thick] (g) -- (h);
        
        \node[fill = none, scale=1.5] (p) at (2.5,0.25) {$e$};
        \node[fill = none, scale=1.5] (q) at (-.8, 0) {$T_1=$};
        
        \draw[->, thick] (5.5,0) -- (6.5,0);
        
        \node (a1) at (9, 0) {};
        \node (b1) at (10, 0) {};
        \node (c1) at (9.3, .5) {};
        \node (d1) at (9.3, -.5) {};
        \node (e1) at (11, 0) {};
        \node (f1) at (12, 0) {};
        \node (g1) at (13, 0) {};
        \node (h1) at (13.7, .5) {};
        
        \draw[thick] (a1) -- (b1);
        \draw[thick] (b1) -- (c1);
        \draw[thick] (b1) -- (d1);
        \draw[thick] (b1) -- (e1);
        \draw[thick] (f1) -- (g1);
        \draw[thick] (g1) -- (h1);

        \node[fill = none, scale=1.75] at (-2,0) {(a)};
         \node[fill = none, scale=1.75] (q1) at (7.7,0) {$T_1\setminus e =$};
        \node (a2) at (0, -2) {};
        \node (b2) at (1, -2) {};
        \node (c2) at (.3, -1.5) {};
        \node (d2) at (.3, -2.5) {};
        \node (e2) at (2, -2) {};
        \node (f2) at (3, -2) {};
        \node (g2) at (3.7, -1.5) {};
        \node[fill = none, scale=1.5] (t) at (1.25, -2.25) {$t$};
        \node[fill = none, scale=1.5] (u) at (2, -2.4) {$u$};
        \node[fill = none, scale=1.5] (v) at (3, -2.4) {$v$};
        \node[fill = none, scale=1.5] (t) at (10.25, -2.25) {$t$};
        \node[fill = none, scale=1.5] (u) at (11, -2.4) {$u$};
        \node[fill = none, scale=1.5] (v) at (12, -2.4) {$v$};
        
        \draw[thick] (a2) -- (b2);
        \draw[thick] (b2) -- (c2);
        \draw[thick] (b2) -- (d2);
        \draw[thick] (b2) -- (e2);
        \draw[thick][color=red, thick] (e2) -- (f2);
        \draw[thick] (f2) -- (g2);
        
        \node[fill = none, scale=1.5] (p2) at (2.5,-1.75) {$e$};
        \node[fill = none, scale=1.5] (q2) at (-.8, -2) {$T_2=$};
        
        \draw[->, thick] (5.5,-2) -- (6.5,-2);
        
        \node (a3) at (9, -2) {};
        \node (b3) at (10, -2) {};
        \node (c3) at (9.3, -1.5) {};
        \node (d3) at (9.3, -2.5) {};
        \node (e3) at (11, -2) {};
        \node (f3) at (12, -2) {};
        \node (g3) at (12.7, -1.5) {};
        
        \draw[thick] (a3) -- (b3);
        \draw[thick] (b3) -- (c3);
        \draw[thick] (b3) -- (d3);
        \draw[thick] (b3) -- (e3);
        \draw[thick] (f3) -- (g3);

        \node[fill = none, scale=1.75] at (-2,-2) {(b)};
         \node[fill = none, scale=1.75] (q3) at (7.7,-2) {$T_2\setminus e =$};
         
        \node (a4) at (0, -4) {};
        \node (b4) at (1, -4) {};
        \node (c4) at (.3, -3.5) {};
        \node (d4) at (.3, -4.5) {};
        \node (e4) at (2, -4) {};
        \node (f4) at (3, -4) {};
        \node (g4) at (4, -4) {};
        \node (h4) at (2, -3) {};
        \node[fill = none, scale=1.5] (u) at (2, -4.4) {$u$};
        \node[fill = none, scale=1.5] (v) at (3, -4.4) {$v$};
        \node[fill = none, scale=1.5] (u) at (11, -4.4) {$u$};
        \node[fill = none, scale=1.5] (v) at (12, -4.4) {$v$};
        
        \draw[thick] (a4) -- (b4);
        \draw[thick] (b4) -- (c4);
        \draw[thick] (b4) -- (d4);
        \draw[thick] (b4) -- (e4);
        \draw[thick][color=red, thick] (e4) -- (f4);
        \draw[thick] (f4) -- (g4);
        \draw[thick] (e4) -- (h4);
        
        \node[fill = none, scale=1.5] (p) at (2.5,-3.75) {$e$};
        \node[fill = none, scale=1.5] (q) at (-.8, -4) {$T_3=$};
        
        \draw[->, thick] (5.5,-4) -- (6.5,-4);
        
        \node (a5) at (9, -4) {};
        \node (b5) at (10, -4) {};
        \node (c5) at (9.3, -3.5) {};
        \node (d5) at (9.3, -4.5) {};
        \node (e5) at (11, -4) {};
        \node (f5) at (12, -4) {};
        \node (g5) at (13, -4) {};
        \node (h5) at (11, -3) {};
        
        \draw[thick] (a5) -- (b5);
        \draw[thick] (b5) -- (c5);
        \draw[thick] (b5) -- (d5);
        \draw[thick] (b5) -- (e5);
        \draw[thick] (f5) -- (g5);
        \draw[thick] (e5) -- (h5);

        \node[fill = none, scale=1.75] at (-2,-4) {(c)};
         \node[fill = none, scale=1.75] (q5) at (7.7,-4) {$T_3\setminus e =$};
         
    \end{tikzpicture}

    \caption{The leaf component partitions are (a) $\lc(T_1) = (4,2,1,1)$ and $\lc(T_1\setminus e) = (5,3)$; (b) $\lc(T_2) = (4,2,1)$ and $\lc(T_2\setminus e) = (5,2)$; (c) $\lc(T_3) = \lc(T_3\setminus e) = (4,2,2)$. }
    \label{fig:deletion-lemma}
\end{figure}
\end{example}

\begin{lemma}(Dot-contraction lemma)\label{lemma:dot-contraction-lead}    
    Let $F$ be a forest and suppose that $e=uv$ is an internal edge in $F$. If $\lc(F)=(\kappa_1,\ldots,\kappa_i,\ldots,\kappa_j,\ldots,\kappa_m)$, where $\kappa_i$ and $\kappa_j$ are the orders of the leaf components of $F$ that contain $u$ and $v$, respectively, then $\lc((F\odot e)\setminus \ell_e)=\sort(\kappa_i+\kappa_j-1, \kappa_1,\ldots,\widehat{\kappa_i},\ldots,\widehat{\kappa_j},\ldots,\kappa_{m}, 1)$.
\end{lemma}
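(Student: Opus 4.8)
The plan is to reduce to the connected case and then track how contracting $e$ moves vertices between leaf components. A forest is a disjoint union of trees $T_1 \sqcup \cdots \sqcup T_s$ with $\lc(F) = \sort(\lc(T_1)\cdot \lc(T_2)\cdots\lc(T_s))$, and forming $(F\odot e)\setminus \ell_e$ only alters the component $T$ of $F$ containing $e$, replacing $T$ by the contraction $T/e$ and adjoining one new isolated vertex (which is itself a leaf component of order $1$, contributing the trailing part $1$). So it suffices to show that for a tree $T$ with internal edge $e=uv$, $\lc(T/e) = \sort(\kappa_i + \kappa_j - 1,\, \kappa_1,\ldots,\widehat{\kappa_i},\ldots,\widehat{\kappa_j},\ldots,\kappa_m)$; appending the part $1$ then yields the claimed formula for $(F\odot e)\setminus \ell_e$.

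Next I would set up the vertex bookkeeping for $T/e$. Since $e=uv$ is internal, $u$ and $v$ are distinct internal vertices of $T$, so by Proposition \ref{properties-LCpartition}(a) they lie in distinct leaf components $\mathcal{L}_u,\mathcal{L}_v$, of orders $\kappa_i,\kappa_j$, with $u$ (resp.\ $v$) serving as the internal ``center'' of that star. Let $\tilde u$ be the vertex of $T/e$ obtained by contracting $e$. Because $T$ is acyclic, $u$ and $v$ have no common neighbor, so $\deg_{T/e}(\tilde u) = \deg_T(u) + \deg_T(v) - 2 \ge 2$, hence $\tilde u$ is internal in $T/e$. Contraction is a local operation: every vertex of $T$ other than $u,v$ keeps its degree in $T/e$, hence keeps its status as a leaf or internal vertex, and every edge not incident to $u$ or $v$ keeps its status as a leaf-edge or internal edge. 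Thus $IV(T/e) = \{\tilde u\}\cup (IV(T)\setminus\{u,v\})$.

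Then I would identify the leaf components of $T/e$. The leaf-neighbors of $\tilde u$ are exactly the vertices that were leaves of $T$ adjacent to $u$ or to $v$ — that is, the $\kappa_i - 1$ non-center vertices of $\mathcal{L}_u$ together with the $\kappa_j - 1$ non-center vertices of $\mathcal{L}_v$ — since any internal neighbor of $u$ or $v$ in $T$ remains internal in $T/e$. So the leaf component of $\tilde u$ is the star on these vertices together with $\tilde u$, of order $1 + (\kappa_i - 1) + (\kappa_j - 1) = \kappa_i + \kappa_j - 1$. For any other internal vertex $w$ of $T$, we have $w\neq u,v$ and $\mathcal{L}_w$ contains neither $u$ nor $v$, so every edge incident to a vertex of $\mathcal{L}_w$ retains its leaf/internal type; hence $\mathcal{L}_w$ is unchanged as a leaf component of $T/e$, of the same order. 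Since the internal vertices $w\neq u,v$ account for precisely the parts $\kappa_1,\ldots,\widehat{\kappa_i},\ldots,\widehat{\kappa_j},\ldots,\kappa_m$ of $\lc(T)$, this gives $\lc(T/e) = \sort(\kappa_i + \kappa_j - 1, \kappa_1,\ldots,\widehat{\kappa_i},\ldots,\widehat{\kappa_j},\ldots,\kappa_m)$, and adding back the isolated vertex finishes the proof.

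The computations are routine; the part needing the most care is verifying that contraction neither creates nor destroys a leaf except at the merged vertex $\tilde u$, and does not disturb any leaf component $\mathcal{L}_w$ with $w\neq u,v$ — both of which follow from the locality of contraction — together with checking that the formula correctly absorbs the degenerate cases in which $u$ or $v$ (or both) is a deep vertex, i.e.\ $\kappa_i = 1$ or $\kappa_j = 1$: then $\mathcal{L}_u$ (or $\mathcal{L}_v$) contributes no leaves to $\mathcal{L}_{\tilde u}$, but the count $\kappa_i + \kappa_j - 1$ still comes out right.
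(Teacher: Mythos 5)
Your proof is correct and takes essentially the same approach as the paper: reduce to the connected case, observe that contracting $e$ merges the two stars $\mathcal{L}_u$ and $\mathcal{L}_v$ into a single star of order $\kappa_i+\kappa_j-1$ centered at $\tilde u$ while leaving all other leaf components untouched, and account for the extra isolated vertex. Your version spells out the degree and locality bookkeeping more carefully than the paper does, but the underlying argument is the same.
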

\begin{proof} 
    If $F$ is a forest with connected components $T_1,\ldots, T_s$, we have 
    \[\lc(F) = \sort(\lc(T_1)\cdot \lc(T_2)\cdots \lc(T_s)).\] 
    Dot-contracting an edge in $F$ will change the leaf component partition of only one of the connected components in $F$. Therefore, we restrict ourselves to the case of a tree, $T$.  

    If $\lc(T) = (\kappa_1, \ldots, \kappa_m)$ and $e=uv$ is an internal edge with $u$ having $\kappa_i-1$ incident leaves and $v$ having $\kappa_j -1$ incident leaves in $T$, then $(T\odot e)\setminus \ell_e$ consists of two connected components, one is a single vertex and the other component has one fewer vertex than $T$ since the endpoints of $e$ contracted to a single vertex $w$ with $\kappa_i+\kappa_j -2$ incident leaves.  Therefore, the leaf component containing $w$ has order $\kappa_i + \kappa_j -1$. All other leaf components in $T$ are unchanged by the dot-contraction operation.  Hence, $\lc((F\odot e)\setminus \ell_e)=\sort(\kappa_i+\kappa_j-1, \kappa_1,\ldots,\widehat{\kappa_i},\ldots,\widehat{\kappa_j},\ldots,\kappa_{m}, 1)$.
\end{proof}
\begin{example}  Figure \ref{fig:dot-contraction-lead} illustrates Lemma 4.8. 
\begin{figure}[ht]
    \centering
    \begin{tikzpicture}[auto=center,every node/.style={circle, fill=black, scale=0.5}, scale=0.8]
        \node (a) at (0, 0) {};
        \node (b) at (1, 0) {};
        \node (c) at (.5, .5) {};
        \node (d) at (.5, -.5) {};
        \node (e) at (2, 0) {};
        \node (f) at (3, 0) {};
        \node (g) at (4, 0) {};
        \node (h) at (2, 1) {};
        \node (h11) at (1.5,.8) {};
        \node (h22) at (2.5,.8) {};
        \node (g11) at (3.7,.7) {};
        
        \node[fill = none, scale=1.5] (u) at (2, -0.4) {$u$};
        \node[fill = none, scale=1.5] (v) at (3, -0.4) {$v$};
        
        \draw[thick] (a) -- (b);
        \draw[thick] (b) -- (c);
        \draw[thick] (b) -- (d);
        \draw[thick] (b) -- (e);
        \draw[thick][color=red, thick] (e) -- (f);
        \draw[thick] (f) -- (g);
        \draw[thick] (e) -- (h);
        \draw[thick] (e) -- (h11);
        \draw[thick] (e) -- (h22);
        \draw[thick] (f) -- (g11);
        
        \node[fill = none, scale=1.5] (p) at (2.5,0.25) {$e$};
        \node[fill = none, scale=1.5] (q) at (-.7,0) {$T =$};
        
        \draw[->, thick] (4.5,0)--(5.5,0);

        \node (a1) at (8, 0) {};
        \node (b1) at (9, 0) {};
        \node (c1) at (8.5, .5) {};
        \node (d1) at (8.5, -.5) {};
        \node (e1) at (10, 0) {};
        \node (f1) at (11, 0) {};
        \node (h1) at (10, 1) {};
        \node (i1) at (9.5,.8) {};
        \node (j1) at (10.5,.8) {};
       \node (k1) at (10.9,.5) {};
        \node (g1) at (9.7, -.7) {};
         
        \draw[thick] (a1) -- (b1);
        \draw[thick] (b1) -- (c1);
        \draw[thick] (b1) -- (d1);
        \draw[thick] (b1) -- (e1);
        \draw[thick] (e1) -- (f1);

        \draw[thick] (e1)--(i1);
        \draw[thick] (e1)--(j1);
        \draw[thick] (e1)--(k1);
        \draw[thick] (e1)--(h1);
        
        \node[fill = none, scale=1.5] (q1) at (6.7,0) {$(T\odot e)\setminus \ell_e=$};
    \end{tikzpicture}
    \caption{ We have $\lc(T)=(4,4,3)$, and $\lc((T\odot)\setminus\ell_e)=(6,4,1)$}
    \label{fig:dot-contraction-lead}
\end{figure}
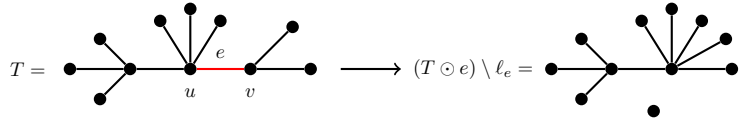
\end{example}
  As we will see later in Theorem \ref{thm:lead-inequalities}, it will be possible to have $\lc(F) = \lc((F\odot e)\setminus \ell_e)$. We now consider leaf-contractions. 

\begin{lemma}(Leaf-contraction lemma)\label{lem:leaf-contraction-lead}
    Suppose $F$ has an internal edge $e=uv$. If $\lc(F)=(\kappa_1,\ldots,\kappa_i,\ldots,\kappa_j,\ldots,\kappa_m)$, where $\kappa_i$ and $\kappa_j$ are the orders of leaf components that contain $u$ and $v$, then $\lc(F\odot e)=\sort( \kappa_i+\kappa_j, \kappa_1,\ldots,\widehat{\kappa_i},\ldots \widehat{\kappa_j},\ldots, \kappa_m)$.
\end{lemma}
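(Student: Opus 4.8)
The plan is to mimic the proof of the Dot-contraction Lemma (Lemma~\ref{lemma:dot-contraction-lead}): leaf-contraction differs from dot-contraction only in that an extra pendant leaf $\ell_e$ is attached to the contracted vertex and no isolated vertex is produced, so the bookkeeping is almost identical. First I would reduce to the case that $F$ is a single tree $T$. Writing $F = T_1 \sqcup \cdots \sqcup T_s$, we have $\lc(F)=\sort(\lc(T_1)\cdot\lc(T_2)\cdots\lc(T_s))$, and leaf-contracting the internal edge $e$ changes only the component that contains $e$; hence it suffices to establish the identity for the tree $T$ containing $e$.

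Next I would analyze the local picture at $e=uv$. Since $u$ and $v$ are endpoints of an internal edge they are internal vertices of $T$, so by Proposition~\ref{properties-LCpartition}(a) each of them is the \emph{center} of its own leaf component; consequently $u$ has exactly $\kappa_i-1$ leaf-neighbours and $v$ has exactly $\kappa_j-1$ leaf-neighbours in $T$. In $T\odot e$ the edge $e$ is contracted to a single vertex $w$ and a new leaf $\ell_e$ is attached to $w$, so the neighbours of $w$ are precisely the neighbours of $u$ other than $v$, the neighbours of $v$ other than $u$, and $\ell_e$. Because contraction is a local operation, every leaf-neighbour of $u$ or $v$ remains a leaf, so $w$ acquires $(\kappa_i-1)+(\kappa_j-1)+1 = \kappa_i+\kappa_j-1$ leaf-neighbours. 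Moreover $w$ is internal in $T\odot e$: as $u$ is internal it has a neighbour other than $v$, so $w$ has degree at least $2$ (in fact even if that neighbour were absent, $\ell_e$ together with it would still force $\deg(w)\geq 2$). Therefore the leaf component of $T\odot e$ containing $w$ is the star on $w$ together with its $\kappa_i+\kappa_j-1$ leaves, which has order $\kappa_i+\kappa_j$.

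Finally I would verify that no other leaf component changes. Any vertex $x\notin\{u,v\}$ is not an endpoint of $e$, so contracting $e$ leaves its degree, and hence its status as a leaf or internal vertex, unchanged, and the only vertices of $T\odot e$ not already present in $T$ are $w$ and $\ell_e$. Thus the remaining leaf components of $T\odot e$ are exactly the leaf components of $T$ other than the two containing $u$ and $v$, with unchanged orders. A consistency check with Lemma~\ref{lemma:internal-edge-contraction} confirms the count: $\#I(T\odot e)=\#I(T)-1$, so the number of leaf components drops by one, matching the replacement of the two parts $\kappa_i,\kappa_j$ by the single part $\kappa_i+\kappa_j$. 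Assembling these observations yields $\lc(T\odot e)=\sort(\kappa_i+\kappa_j,\ \kappa_1,\ldots,\widehat{\kappa_i},\ldots,\widehat{\kappa_j},\ldots,\kappa_m)$, and the forest case follows from the first-paragraph reduction.

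I expect no serious obstacle: the argument is a routine variant of Lemma~\ref{lemma:dot-contraction-lead}. The only points deserving care are confirming that $w$ is a genuine internal vertex (so that the star-component accounting is legitimate), which is where the new leaf $\ell_e$ plays its role, and checking that the count is uniform in the degenerate cases $\kappa_i=1$ or $\kappa_j=1$ (deep endpoints); the formula $\kappa_i+\kappa_j$ already absorbs these.
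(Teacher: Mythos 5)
Your proof is correct and takes essentially the same approach as the paper: reduce to a single tree, observe that leaf-contraction merges the two leaf components containing $u$ and $v$ into one star centered at the contracted vertex $w$ with the new leaf $\ell_e$ contributing the extra vertex, and note that all other leaf components are untouched. Your write-up is somewhat more detailed (explicitly counting leaf-neighbours of $w$, verifying $w$ is internal, and cross-checking against Lemma~\ref{lemma:internal-edge-contraction}), but the underlying argument is identical to the paper's.
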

\begin{proof}
    As in the proofs of Lemma \ref{lem:deletions-lead} and Lemma \ref{lemma:dot-contraction-lead}, we observe that leaf-contraction only affects the connected component containing the edge $e=uv$ and all other connected components of $F$ are unchanged. Assume $\kappa_i$ and $\kappa_j$ are the orders of the leaf components containing $u$ and $v$ respectively. 
    
    The leaf-contraction operation on the edge $e$ merges the leaf component containing $u$ with the leaf component containing $v$ into a single leaf component and adds one new leaf to this new leaf component.  Hence, the order of the newly created leaf component is $(\kappa_i + \kappa_j-1)+1=\kappa_i + \kappa_j$, where the minus 1 results from $u$ and $v$ merging into one vertex and the plus 1 is the contribution of the new leaf.  Hence, the claim follows. 
\end{proof}
\begin{example} Illustration of Lemma \ref{lem:leaf-contraction-lead}. 
\begin{figure}[ht]
    \centering
    \begin{tikzpicture}[auto=center,every node/.style={circle, fill=black, scale=0.5}, scale=0.8]
        \node (a) at (0, 0) {};
        \node (b) at (1, 0) {};
        \node (c) at (.5, .5) {};
        \node (d) at (.5, -.5) {};
        \node (e) at (2, 0) {};
        \node (f) at (3, 0) {};
        \node (g) at (4, 0) {};
        \node (h) at (2, 1) {};
        \node (h11) at (1.5,.8) {};
        \node (h22) at (2.5,.8) {};
        \node (g11) at (3.7,.7) {};
        
        \node[fill = none, scale=1.5] (u) at (2, -0.4) {$u$};
        \node[fill = none, scale=1.5] (v) at (3, -0.4) {$v$};
        
        \draw[thick] (a) -- (b);
        \draw[thick] (b) -- (c);
        \draw[thick] (b) -- (d);
        \draw[thick] (b) -- (e);
        \draw[thick][color=red, thick] (e) -- (f);
        \draw[thick] (f) -- (g);
        \draw[thick] (e) -- (h);
        \draw[thick] (e) -- (h11);
        \draw[thick] (e) -- (h22);
        \draw[thick] (f) -- (g11);
        
        \node[fill = none, scale=1.5] (p) at (2.5,0.25) {$e$};
        \node[fill = none, scale=1.5] (q) at (-.7,0) {$T =$};
        
        \draw[->, thick] (4.5,0)--(5.5,0);

        \node (a1) at (8, 0) {};
        \node (b1) at (9, 0) {};
        \node (c1) at (8.5, .5) {};
        \node (d1) at (8.5, -.5) {};
        \node (e1) at (10, 0) {};
        \node (f1) at (11, 0) {};
        \node (h1) at (10, 1) {};
        \node (i1) at (9.5,.8) {};
        \node (j1) at (10.5,.8) {};
       \node (k1) at (10.9,.5) {};
        \node (g1) at (9.7, -.7) {};

        \draw[thick] (a1) -- (b1);
        \draw[thick] (b1) -- (c1);
        \draw[thick] (b1) -- (d1);
        \draw[thick] (b1) -- (e1);
        \draw[thick] (e1) -- (f1);

        \draw[thick] (e1)--(i1);
        \draw[thick] (e1)--(j1);
        \draw[thick] (e1)--(k1);
        \draw[thick] (e1)--(h1);
        \draw[thick] (e1) -- (g1);
        
        \node[fill = none, scale=1.5] (q1) at (6.7,0) {$(T\odot e)=$};
    \end{tikzpicture}
    \caption{ We have $\lc(T)=(4,4,3)$, and $\lc(T\odot e)=(7,4)$}
    \label{fig:leaf-contraction-lead}
\end{figure}
\end{example}

In Lemmas \ref{lem:deletions-lead}, \ref{lem:leaf-contraction-lead}, and \ref{lemma:dot-contraction-lead}, we have computed the leaf component partitions after applying the three DNC operations to a forest $F$.  We now use these lemmas to characterize when $\lc(F)$ is equal to $\lc(F')$, where $F'$ results from applying a DNC operation to $F$.  In the next theorem, $<$ denotes lexicographic order of partitions.

\begin{theorem}\label{thm:lead-inequalities}
Let $F$ be a forest with $\lc(F)=(\kappa_1,\ldots,\kappa_i,\ldots,\kappa_j,\ldots,\kappa_m)$ and at least one internal edge $e=uv$. Then:
\begin{enumerate}
    \item[(a)] $\lambda_{LC}(F) = \lambda_{LC}(F\setminus e)$ if and only if neither $u$ nor $v$ is a degree-$2$ deep vertex. 
    \item[(b)] $\lc(F) = \lc((F\odot e)\setminus \ell_e)$ if and only if at least one of $u$ or $v$ is a deep vertex.  
    \item[(c)] $\lc(F) < \lc(F\odot e)$.
\end{enumerate}
Furthermore, $\lc(F)\leq \lc(F\setminus e)$, $\lc(F)\leq \lc((F\odot e)\setminus \ell_e)$, and $\lc(F)\leq \lc(F\odot e)$.
\end{theorem}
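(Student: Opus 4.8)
The plan is to derive all three parts from the three structural lemmas already established (Lemma~\ref{lem:deletions-lead}, Lemma~\ref{lemma:dot-contraction-lead}, Lemma~\ref{lem:leaf-contraction-lead}), each of which expresses $\lc$ of the corresponding near-contraction as an explicit re-sorting of $\lc(F)=(\kappa_1,\dots,\kappa_m)$. The one auxiliary fact I would isolate first is an elementary monotonicity statement: if a partition $\nu$ is obtained from a partition $\lambda$ by deleting two of its parts $a$ and $b$ and inserting a single new part $a+b$, then $\nu>\lambda$ in lexicographic order. The proof is short: since $a+b>\max(a,b)$, when the new part is placed in sorted order it lands at some position $p$ such that positions $1,\dots,p-1$ of $\nu$ and of $\lambda$ agree (these are exactly the parts of $\lambda$ that are at least $a+b$, and none of them is the removed $a$ or $b$), while $\nu_p=a+b$ strictly exceeds $\lambda_p$; the bookkeeping with ties is handled by taking $p$ to be one plus the number of parts of $\lambda$ that are $\ge a+b$.

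Given this, part (c) is immediate: by Lemma~\ref{lem:leaf-contraction-lead}, $\lc(F\odot e)$ is obtained from $\lc(F)$ by deleting the two parts $\kappa_i$ and $\kappa_j$ — which are genuinely two distinct parts, since $u$ and $v$ lie in distinct leaf components by Proposition~\ref{properties-LCpartition}(a) — and inserting $\kappa_i+\kappa_j$, so the monotonicity fact gives $\lc(F)<\lc(F\odot e)$. For part (a), the forward implication is exactly Lemma~\ref{lem:deletions-lead}(c). For the converse I would observe that if at least one of $u,v$ is a degree-$2$ deep vertex then we are in case (a) or case (b) of Lemma~\ref{lem:deletions-lead}, and in both of those cases the explicit formula for $\lc(F\setminus e)$ produces a partition with strictly fewer parts than $\lc(F)$ (two fewer, resp. one fewer), so the two partitions cannot be equal; together with the trichotomy of Lemma~\ref{lem:deletions-lead} this yields the ``if and only if''.

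For part (b), Lemma~\ref{lemma:dot-contraction-lead} says that $\lc((F\odot e)\setminus\ell_e)$ is obtained from $\lc(F)$ by replacing the two parts $\kappa_i,\kappa_j$ with the two parts $\kappa_i+\kappa_j-1$ and $1$, leaving every other part alone; hence equality with $\lc(F)$ holds if and only if $\{\kappa_i,\kappa_j\}$ and $\{\kappa_i+\kappa_j-1,\,1\}$ coincide as multisets. Taking $\kappa_i\ge\kappa_j$ without loss of generality, one has $\kappa_i+\kappa_j-1\ge\kappa_i$, so the multisets agree precisely when $\kappa_j=1$, that is, precisely when one of $u,v$ lies in a leaf component of order $1$, equivalently precisely when one of $u,v$ is a deep vertex.

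With the three lemmas in hand the argument is essentially bookkeeping; the only points that require care are the tie-handling in the auxiliary monotonicity lemma used for (c), and — more conceptually — confirming in (b) that the multiset comparison really captures every way the partition could fail to change, which it does precisely because $\kappa_i$ and $\kappa_j$ are two honest parts of $\lc(F)$ that are removed while the remaining parts are untouched.
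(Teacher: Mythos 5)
Your proof is correct and follows essentially the same route as the paper: it derives all three parts from Lemmas \ref{lem:deletions-lead}, \ref{lemma:dot-contraction-lead}, and \ref{lem:leaf-contraction-lead} and then compares the resulting partitions. The only real difference is presentational — you prove the lexicographic monotonicity fact carefully (where the paper asserts it), use a length-count to finish the converse of (a), and recast (b) as a multiset equality check — but the logical skeleton matches the paper's proof.
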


\begin{proof} We  prove (a)-(c) and then the weak inequality follows immediately. The lexicographic order is a complete order on partitions of the same weight. Since $F'$ is the forest resulting from either deletion, dot-contraction, or leaf-contraction of an edge in $F$, the weights of $\lambda_{LC}(F)$ and $\lambda_{LC}(F')$ are equal to the number of vertices in $F$.

      Part (a) follows immediately from the arguments in Lemma \ref{lem:deletions-lead}. If neither endpoint of $e$ is a deep vertex of degree 2, then by \ref{lem:deletions-lead}(c)  we have $\lambda(F)=\lambda(F\setminus e)$.  If one or both endpoints are deep vertices of degree 2, then by parts (a) and (b) of the lemma, we either have $\lc(F \setminus e) = \sort(\kappa_i+1, \kappa_j+1, \kappa_1,\ldots,\widehat{\kappa_i}, \ldots,\widehat{\kappa_j},\ldots,\kappa_{m-2})$ or $\lc(F \setminus e) = \sort(\kappa_i+1, \kappa_1,\ldots,\widehat{\kappa_i}, \ldots,\kappa_{m-1})$. In both cases, $\lc(F \setminus e) > \lc(F)$.

     Part (b) follows from Lemma \ref{lemma:dot-contraction-lead}. If we dot-contract the edge $e$, then $\lambda_{LC}((F\odot e))\setminus 
     \ell_e) = \sort(\kappa_i+\kappa_j -1, \kappa_1, \ldots, 
     \widehat{\kappa_i}, \ldots, \widehat{\kappa_j}, \kappa_m, 1)$, where $\kappa_i$ and $\kappa_j$ are the orders of the leaf components in $F$ containing $u$ and $v$, respectively, and $1\leq i, j\leq m$.  
     Without loss of generality, assume $u$ is a deep vertex in $F$. Then 
     $\kappa_i =1$ in $F$ and $\kappa_i+\kappa_j -1 = \kappa_j$ in $(F\odot e)\setminus \ell_e$. In addition, we gain an extra 1 from the single vertex added in the dot-operation. It then follows that $\lambda_{LC}(F) = \lambda((F\odot e)\setminus \ell_e)$.  Now if neither $u$ nor $v$ are deep vertices, then $\kappa_i,\kappa_j\geq 2$, which implies $\kappa_i+\kappa_j-1>\kappa_i$ and $\kappa_i+\kappa_j-1> \kappa_j$. Hence, $\lc((F\odot e)\setminus \ell_e)>\lc(F)$.

    Part (c) follows from Lemma \ref{lem:leaf-contraction-lead}. We have
    $\lc(F \odot e) = \sort(\kappa_i+\kappa_j, \kappa_1, \ldots, \widehat{\kappa_i}, \ldots, \widehat{\kappa_j}, \ldots, \kappa_m)$. Since $\kappa_i+\kappa_j > \kappa_i$ and $\kappa_i+\kappa_j >\kappa_j$, it follows that $\lc(F\odot e) > \lc(F)$.
\end{proof}

We remark that $\lc(F)\neq \lc(F\odot e)$ for any $F$ and any internal edge $e$.  Thus, all sequences containing $R$s in a DNC-tree of $F$ cannot end in star forests $H$ with $\lambda(H) = \lead(\mathbf{X}_F)$.

\subsection{The Leading Term and $\lc$} In Theorem \ref{thm:lead-inequalities}, we observed that applying DNC operations on internal edges of a forest, $F$, does not produce a partition smaller in lexicographic order than $\lambda_{LC}(F)$.  Since we compute $\mathbf{X}_F$ in the star-basis by applying DNC operations, the following lemma holds. 

\begin{lemma}\label{lem:lower-bound-leading}
    Let $F$ be a forest on $n$ vertices. Then, $ \lead(\csff) \geq \lc(F)$. 
\end{lemma}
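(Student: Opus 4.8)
The plan is to trace every leaf label of a DNC-tree of $F$ back to $F$ itself and then apply Corollary~\ref{cor:LC-lex} step by step along the way. Fix a DNC-tree $\mathcal{T}(F)$ produced by the star-expansion algorithm. By Theorem~\ref{thm:dnc-tree-csf} we have $\csff=\sum_{H\in L(\mathcal{T}(F))}(-1)^{\iota(H)-\iota(F)}\stfrak_{\lambda(H)}$, so every partition $\lambda$ with $c_\lambda\neq 0$ occurs as $\lambda(H)$ for some leaf label $H$ (indeed the set of indexing partitions with nonzero coefficient is contained in $\{\lambda(H):H\in L(\mathcal{T}(F))\}$). It therefore suffices to show $\lambda(H)\geq \lc(F)$ for every leaf label $H$, since $\lead(\csff)$ is by definition the smallest such $\lambda(H)$ that survives.

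First I would note that each leaf label $H$ of $\mathcal{T}(F)$ is a star forest, so $I(H)=\emptyset$ and $H\setminus I(H)=H$; hence $\lc(H)=\lambda(H\setminus I(H))=\lambda(H)$. Thus it is enough to prove $\lc(H)\geq \lc(F)$. Next, consider the unique path in $\mathcal{T}(F)$ from the root to $H$, written $F=F_0,F_1,\dots,F_k=H$, where each $F_{t+1}$ is obtained from $F_t$ by deleting, dot-contracting, or leaf-contracting an \emph{internal} edge of $F_t$ (the star-expansion algorithm only ever acts on internal edges). Each $F_t$ is again a forest, since deletion, dot-contraction, and leaf-contraction all preserve the property of being a forest, so Corollary~\ref{cor:LC-lex} applies at every step and gives $\lc(F_t)\leq \lc(F_{t+1})$ in lexicographic order. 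Because lexicographic order is a total order on partitions of $n$, transitivity yields $\lc(F)=\lc(F_0)\leq \lc(F_k)=\lc(H)=\lambda(H)$.

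Finally, combining these observations: every $\lambda$ with $c_\lambda\neq 0$ satisfies $\lambda\geq \lc(F)$, so in particular the smallest such $\lambda$, namely $\lead(\csff)$, satisfies $\lead(\csff)\geq \lc(F)$. (A leading partition exists because $\csff$ is a nonzero element of $\Lambda^n$, hence has a nonzero coordinate in the star-basis.)

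I do not expect a genuine obstacle here; the lemma is essentially a bookkeeping consequence of Corollary~\ref{cor:LC-lex} together with the DNC-tree description of $\csff$. The only points requiring a line of care are that the intermediate graphs along a root-to-leaf path of $\mathcal{T}(F)$ are genuine forests (so Corollary~\ref{cor:LC-lex} is applicable at each step) and that for a star forest $H$ one has $\lc(H)=\lambda(H)$; both are immediate. Note also that the no-cancellation part of Theorem~\ref{thm:dnc-tree-csf} is not even needed for this direction of the bound—cancellation could only delete indexing partitions, which cannot decrease the minimum.
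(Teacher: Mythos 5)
Your proof is correct and is essentially the paper's own argument spelled out: the paper states the lemma as an immediate consequence of Corollary~\ref{cor:LC-lex} applied along root-to-leaf paths of a DNC-tree, which is precisely the transitivity argument you give.
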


We have just established a lower bound on the leading partition of $\mathbf{X}_F$. To conclude that $\lc(F)$ is, in fact, the leading partition of $\mathbf{X}_F$, we must show that there exists a path in any DNC-tree of $F$ from the root $F$ to the star-forest $F\setminus I(F)$. This follows almost immediately from the following lemma.

\begin{lemma} \label{lem:deg2-deep-vert-either-or}
For any internal edge $e=uv$ in a forest $F$, the endpoints of $e$ satisfy at least one of the following:
    \begin{enumerate}
        \item[(a)] Neither $u$ nor $v$ is a deep vertex of degree 2.
        \item[(b)] At least one of $u$ and $v$ is a deep vertex. 
    \end{enumerate}
\end{lemma}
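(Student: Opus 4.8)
The statement is a pure logical dichotomy, so the plan is simply to case-split on whether either endpoint of $e$ is a deep vertex, reading off condition (b) in one case and condition (a) in the other.

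First I would fix the two endpoints $u,v$ of the internal edge $e$. If at least one of $u,v$ is a deep vertex, then condition~(b) holds directly and there is nothing more to do. If neither $u$ nor $v$ is a deep vertex, then \emph{a fortiori} neither $u$ nor $v$ is a deep vertex \emph{of degree $2$} — being a deep vertex of degree $2$ means, in particular, being a deep vertex, i.e.\ an internal vertex whose leaf component is $St_1$ — so condition~(a) holds. These two cases are exhaustive, hence (a) or (b) always holds.

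The proof is immediate once the definition of a deep vertex is unwound, so there is no real obstacle; the content of the lemma lies in its application. Combined with Theorem~\ref{thm:lead-inequalities}(a),(b), it shows that for every internal edge $e$ of a forest $F$ there is a DNC operation on $e$ — deletion when (a) holds, dot-contraction when (b) holds — that leaves $\lc$ unchanged. Since each such operation strictly decreases $\#I(F)$ (by Lemma~\ref{lemma:internal-edge-contraction} for dot-contraction, and because deleting an internal edge removes at least one internal edge), iterating produces a path in some DNC-tree of $F$ from the root $F$ down to the star forest $F\setminus I(F)$, whose leaf-label partition is $\lc(F)$. Together with the lower bound $\lead(\mathbf{X}_F)\ge \lc(F)$ of Lemma~\ref{lem:lower-bound-leading}, this will identify $\lc(F)$ as the leading partition of $\mathbf{X}_F$.
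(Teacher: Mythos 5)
Your proof is correct and matches the paper's argument exactly: case-split on whether one of $u,v$ is deep, noting that if not, then in particular neither is a deep vertex of degree $2$. The extra discussion of how the lemma feeds into Theorem~\ref{thm:leading-partition} is accurate context but not part of the proof itself.
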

\begin{proof}
If (b) holds, there is nothing to show. If (b) does not hold, then neither $u$ nor $v$ are deep vertices. In particular, neither $u$ nor $v$ are deep vertices of degree 2, so (a) holds
\end{proof}

We now present one of the main results of this section, which follows from all the previous results. 

\begin{theorem} \label{thm:leading-partition}
    Let $F$ be a forest with $n$ vertices. Then $ \lead(\csff) = \lc(F)$.
\end{theorem}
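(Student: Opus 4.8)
The plan is to pair the lower bound already established in Lemma~\ref{lem:lower-bound-leading}, namely $\lead(\csff)\ge \lc(F)$, with a matching upper bound. For the upper bound it suffices to exhibit, for some DNC-tree of $F$, a root-to-leaf path from $F$ to a star forest $H$ with $\lambda(H)=\lc(F)$: since no cancellations occur (Theorem~\ref{thm:dnc-tree-csf}), such a path makes $\mathcal{S}_{\lc(F)}$ nonempty, so Corollary~\ref{cor:coeff-paths-seq} forces $c_{\lc(F)}=(-1)^m|\mathcal{S}_{\lc(F)}|\neq 0$, whence $\lead(\csff)\le \lc(F)$. Combining the two inequalities gives $\lead(\csff)=\lc(F)$.

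To construct the path I would induct on $\#I(F)$. If $\#I(F)=0$ then $F$ is itself a star forest, $F=F\setminus I(F)$, $\lc(F)=\lambda(F)$, and the empty path works. If $\#I(F)\ge 1$, choose any internal edge $e=uv$. By Lemma~\ref{lem:deg2-deep-vert-either-or}, at least one of the following holds: (a) neither $u$ nor $v$ is a degree-$2$ deep vertex, or (b) at least one of $u,v$ is a deep vertex. In case (a), put $F'=F\setminus e$; Theorem~\ref{thm:lead-inequalities}(a) gives $\lc(F')=\lc(F)$, and deleting the internal edge $e$ cannot create new internal edges (degrees only drop), so $\#I(F')<\#I(F)$. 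In case (b), put $F'=(F\odot e)\setminus \ell_e$; Theorem~\ref{thm:lead-inequalities}(b) gives $\lc(F')=\lc(F)$, and Lemma~\ref{lemma:internal-edge-contraction} gives $\#I(F')=\#I(F)-1$. Either way $F'$ is a forest with $\lc(F')=\lc(F)$ and strictly fewer internal edges, and the step $F\to F'$ is one of the three children produced by the star-expansion algorithm at $e$, below which hangs a DNC-tree of $F'$. By the inductive hypothesis that subtree contains a path from $F'$ to a star forest $H$ with $\lambda(H)=\lc(F')=\lc(F)$; prepending the step $F\to F'$ produces the desired path in a DNC-tree of $F$.

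Since the path just built uses only deletions and dot-contractions, it is a sequence of $L$'s and $M$'s terminating at $H$ with $\lambda(H)=\lc(F)$; in particular $\mathcal{S}_{\lc(F)}\neq\varnothing$, so $c_{\lc(F)}\neq 0$ and $\lead(\csff)\le \lc(F)$. With Lemma~\ref{lem:lower-bound-leading} this finishes the proof.

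The step I expect to require the most care is the inductive path construction: one must check that at every internal edge the greedy choice dictated by Lemma~\ref{lem:deg2-deep-vert-either-or} is exactly one of the two $\lc$-preserving moves of Theorem~\ref{thm:lead-inequalities}(a),(b) (note that leaf-contraction, Theorem~\ref{thm:lead-inequalities}(c), is never invoked, since it always strictly increases $\lc$), and that the induction terminates because $\#I$ strictly decreases in both cases. The rest is a direct assembly of the lemmas of this section.
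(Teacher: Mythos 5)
Your proposal is correct and uses the same approach as the paper: combine the lower bound from Lemma~\ref{lem:lower-bound-leading} with the existence of an $\lc$-preserving path in a DNC-tree, guaranteed by Lemma~\ref{lem:deg2-deep-vert-either-or} together with Theorem~\ref{thm:lead-inequalities}(a),(b). The only cosmetic difference is that the paper fixes an arbitrary DNC-tree $\mathcal{D}$ in advance and shows each level admits an $\lc$-preserving child, while you build the path (hence the DNC-tree) greedily and induct on $\#I(F)$; since $c_{\lc(F)}$ is DNC-tree-independent by Theorem~\ref{thm:dnc-tree-csf}, the two formulations are interchangeable.
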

\begin{proof}
    For all $\lambda \vdash n$ with $\lambda<\lc(F)$, it follows by Lemma \ref{lem:lower-bound-leading} that $c_\lambda = 0$.  Let  $\mathcal{D}$ denote a fixed DNC-tree of $F$. At any level $k\geq 1$ of $\mathcal{D}$, if $F_k$ is a forest at this level containing at least one internal edge, then in $\mathcal{D}$ there exists at least one edge from $F_k$ to a forest $F_{k+1}$ at level $k+1$, for which $\lc(F_k)=\lc(F_{k+1})$. To see this, notice that by Lemma \ref{lem:deg2-deep-vert-either-or}, if $e$ is an internal edge of $F_k$, its endpoints satisfy at least one of properties (a) and/or (b).  If $e$ satisfies (a), i.e., both endpoints of $e$ are not degree-2 deep vertices, then deleting $e$ results in a forest in level $k+1$ of $\mathcal{D}$ with the same leaf component partition as $F_k$ by Theorem \ref{thm:lead-inequalities}(a). And if $e$ satisfies (b), i.e., at least one endpoint of $e$ is a deep vertex, then dot-contracting $e$ results in a forest with the same leaf component partitions by Theorem \ref{thm:lead-inequalities}(b). It follows inductively that there exists a path in $\mathcal{D}$ from $F$ to a star-forest $\widetilde{F}$ such that $\lc(\widetilde{F})=\lc(F)$. That is $c_{\lc(F)}\neq 0$. It then follows by Lemma \ref{lem:lower-bound-leading} that $\lc(F)$ is the leading partition of $\csff$.
\end{proof}

\begin{example} Let $T$ be the tree below.
    \begin{center}
        \begin{tikzpicture}[auto=center,every node/.style={circle, fill=black, scale=0.5}, scale=0.8]
            \node (1a) at (0, 2) {};
            \node (1b) at (0, 1) {};
            \node (1c) at (0, 0) {};
            \node (1d) at (-0.87, -0.5) {};
            \node (1e) at (-1.74, -1) {};
            \node (1f) at (0.87, -0.5) {};
            \node (1g) at (1.74, -1) {};
            
            \draw[thick] (1a) -- (1b) -- (1c);
            \draw[thick] (1c) -- (1d) -- (1e);
            \draw[thick] (1c) -- (1f) -- (1g);
            
        \end{tikzpicture}
    \end{center}
    
    \noindent By definition, we have that $\lc(T)=(2,2,2,1)$. We also have
    \[
    \csft = -2\mathfrak{st}_{(2^3,1)} +3\mathfrak{st}_{(3,2,1^2)}   +3\mathfrak{st}_{(3,2^2)}
    -\mathfrak{st}_{(4,1^3)}
    -6\mathfrak{st}_{(4,2,1)}+ 3\mathfrak{st}_{(5,1,1)} + 3\mathfrak{st}_{(5,2)} - 3\mathfrak{st}_{(6,1)} +
    \mathfrak{st}_{(7)} 
    \]
    We can see that $\lead(\csft)=(2,2,2,1) = \lc(T)$, as predicted by Theorem \ref{thm:leading-partition}.
\end{example}

We have shown that given any tree $T$, we can now determine the leading partition of $\csft$ based on properties of the tree $T$ itself. This allows us to make some progress towards a positive answer of Stanley's isomorphism conjecture. In particular, we immediately obtain the following corollary:

\begin{corollary} \label{cor:different-leading}
    If $T_1$ and $T_2$ are trees with $T_1 \setminus I(T_1)\not\cong T_2\setminus I(T_2)$, then $\mathbf{X}_{T_1}\neq \mathbf{X}_{T_2}$. Equivalently, if $\lc(T_1)\neq \lc(T_2)$, then $\mathbf{X}_{T_1}\neq \mathbf{X}_{T_2}$. \qed
\end{corollary}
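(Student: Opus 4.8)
The plan is to obtain this as an immediate consequence of Theorem \ref{thm:leading-partition}, which identifies $\lead(\csff)$ with $\lc(F)$ for every forest $F$; essentially all the work has already been done, and the corollary just repackages that theorem.

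First I would check that the two hypotheses in the statement are equivalent. By construction $T_i \setminus I(T_i)$ is a disjoint union of star graphs, and the partition $\lc(T_i) = \lambda(T_i \setminus I(T_i))$ records the multiset of orders of these stars. Since a star graph is determined up to isomorphism by its number of vertices, two star forests are isomorphic precisely when they have the same multiset of component orders. Hence $T_1 \setminus I(T_1) \cong T_2 \setminus I(T_2)$ if and only if $\lc(T_1) = \lc(T_2)$, so it suffices to prove that $\lc(T_1) \neq \lc(T_2)$ implies $\mathbf{X}_{T_1} \neq \mathbf{X}_{T_2}$.

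Next, assuming $\lc(T_1) \neq \lc(T_2)$, I would argue as follows. Since $\{\mathfrak{st}_\lambda : \lambda \vdash n\}$ is a basis of $\Lambda^n$ by Theorem \ref{thm:cho-vanW}, the star-basis coefficients $c_\lambda$ of $\mathbf{X}_{T_i}$ are well-defined, and therefore so is the leading partition $\lead(\mathbf{X}_{T_i})$, as an invariant extracted from the chromatic symmetric function itself. Applying Theorem \ref{thm:leading-partition} to the trees $T_1$ and $T_2$ then gives $\lead(\mathbf{X}_{T_1}) = \lc(T_1) \neq \lc(T_2) = \lead(\mathbf{X}_{T_2})$. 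If we had $\mathbf{X}_{T_1} = \mathbf{X}_{T_2}$ their leading partitions would coincide, a contradiction; hence $\mathbf{X}_{T_1} \neq \mathbf{X}_{T_2}$.

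I do not expect any real obstacle here: the statement is a direct unpacking of Theorem \ref{thm:leading-partition} together with the elementary observation that a star forest is recovered from its multiset of component sizes. The only point deserving a moment's care is ensuring $\lead$ is a genuine invariant of $\mathbf{X}_T$ — i.e. that the star functions are linearly independent — which is exactly what Theorem \ref{thm:cho-vanW} provides.
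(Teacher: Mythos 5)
Your argument is correct and is precisely the intended deduction: the paper marks this corollary as immediate from Theorem \ref{thm:leading-partition}, and your proof simply spells out why — that $\lc$ is a complete isomorphism invariant of the star forest $T\setminus I(T)$, and that $\lead(\mathbf{X}_T)$ is a well-defined invariant of $\mathbf{X}_T$ because the star functions form a basis. Nothing in your proposal departs from the paper's route.
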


Another consequence of Theorem \ref{thm:leading-partition} is that the number of deep vertices in a tree can be recovered from its leading partition. Notice that a tree with three or fewer vertices are stars. Therefore, they do not have deep vertices.

\begin{corollary} \label{cor:num-deep-vertices}
    For any forest $F$ that has no isolated vertices, the multiplicity of $1$ in $\lead(\csff)$ is the number of deep vertices of $F$.
\end{corollary}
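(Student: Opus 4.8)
The plan is to obtain this directly from Theorem \ref{thm:leading-partition}. That theorem gives $\lead(\csft) = \lc(T)$, so the multiplicity of $1$ in $\lead(\csft)$ equals the number of leaf components of $T$ of order $1$, and it remains only to identify these with the deep vertices of $T$.

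First I would handle the case where $T$ has no internal edge. By Remark \ref{DNC-properties}(3), such a $T$ is a star $St_n$; since $n \geq 2$ we have $\lc(T) = (n)$, which has no part equal to $1$, and a star has no deep vertex (for $n \geq 3$ the unique internal vertex has only leaf-neighbors, and for $n = 2$ there is no internal vertex at all), so both quantities are $0$.

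For the remaining case, $T$ has at least one internal edge, so $n \geq 3$ and Proposition \ref{properties-LCpartition}(a) applies: there is a bijection between the leaf components of $T$ and the internal vertices of $T$, sending an internal vertex $u$ to the star $\mathcal{L}_u$ centered at $u$ whose remaining vertices are precisely the leaves of $T$ adjacent to $u$. Hence $\mathcal{L}_u$ has order $1$ if and only if $u$ has no leaf in its neighborhood, i.e., if and only if $u$ is a deep vertex. Therefore the number of parts of $\lc(T)$ equal to $1$ is exactly the number of deep vertices of $T$, and by Theorem \ref{thm:leading-partition} the same holds for $\lead(\csft)$.

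Since every step is a translation of the relevant definitions once Theorem \ref{thm:leading-partition} is in hand, there is no substantive obstacle here. The only points meriting a line of justification are the degenerate star cases treated above and the observation (implicit in the proof of Proposition \ref{properties-LCpartition}) that a leaf component of order $1$ must be centered at an internal vertex rather than at a leaf of $T$; the latter holds because the unique edge at a leaf of $T$ is a leaf-edge and is therefore retained in $T \setminus I(T)$, so no leaf of $T$ is isolated in $T \setminus I(T)$.
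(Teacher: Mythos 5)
Your proof is correct and follows essentially the same route as the paper: apply Theorem \ref{thm:leading-partition} to reduce to $\lc(T)$, then identify the parts of size~$1$ with deep vertices via the bijection between leaf components and internal vertices. Your explicit treatment of the star case and the remark that a size-one component cannot be centered at a leaf are slightly more detailed than the paper's prose, but they are the same observations.
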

\begin{proof}
     By definition, $\lc(F)=\lambda(F\setminus I(F))$, and by Theorem \ref{thm:leading-partition},  $\lc(F) =\lead(\csff)$. Since there are no isolated vertices in $F$, every component consisting of a single vertex in $F\setminus I(F)$ arises from deleting all internal edges incident to a vertex in $F$ that has no leaves in its neighborhood. That is, there is a natural correspondence between the parts of size 1 in $\lc(F)$ and deep vertices in $F$. 
\end{proof}
\begin{example} The tree below has three deep vertices: $u_1$, $u_2$, and $u_3$.
    \begin{center}
        \begin{tikzpicture}[auto=center,every node/.style={circle, fill=black, scale=0.5}, scale=0.8]
            \node (1a) at (0, 0) {};
            \node (1b) at (1, 0) {};
            \node (1c)[fill=red] at (2, 0) {};
            \node (1d)[fill=red] at (3, 0) {};
            \node (1e) at (4, 0) {};
            \node (1f)[fill=red] at (5, 0) {};
            \node (1g) at (6, 0) {};
            \node (1h) at (7, 0) {};
            \node (1i) at (4, 1) {};
            \node (1j) at (4, -1) {};

            \node[fill = none, scale=1.5] (u1) at (2, -0.4) {$u_1$};
            \node[fill = none, scale=1.5] (u2) at (3, -0.4) {$u_2$};
            \node[fill = none, scale=1.5] (u3) at (5, -0.4) {$u_3$};
            \draw[thick] (1a) -- (1b) -- (1c) -- (1d) -- (1e) -- (1f) -- (1g) -- (1h);
            \draw[thick] (1i) -- (1e) -- (1j);
        \end{tikzpicture}
    \end{center}
Observe, by computing $\csft$ or by applying Theorem \ref{thm:leading-partition}, that the leading partition of this tree is $(3,2^2,1^3)$. As expected, the number of $1$s equals the number of deep vertices in $T$.
\end{example}

\begin{corollary} \label{cor:prop-trees-deep-vertices}
    For any tree $T$ with at least two vertices, the multiplicity of $1$ in $\lead(\csft)$ is $0$ if and only if $T$ is a proper tree. 
\end{corollary}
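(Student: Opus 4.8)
The plan is to deduce this immediately from Corollary \ref{cor:num-deep-vertices} together with the definitions of \emph{deep vertex} and \emph{proper tree}. First I would invoke Corollary \ref{cor:num-deep-vertices}, which says that the multiplicity of $1$ in $\lead(\csft)$ equals the number of deep vertices of $T$. Consequently this multiplicity is $0$ if and only if $T$ has no deep vertices, and the whole statement reduces to showing that $T$ has no deep vertices if and only if $T$ is proper.

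Next I would unwind the two relevant definitions and observe that they are negations of one another at the level of a single internal vertex. Recall that a deep vertex is an internal vertex (degree at least $2$) having no leaf in its neighborhood, whereas $T$ is proper when every non-leaf-vertex has at least one leaf in its neighborhood. Thus a vertex $u$ is a deep vertex precisely when $u$ is an internal vertex that violates the properness condition, so $T$ possesses a deep vertex if and only if $T$ fails to be proper. Combining this equivalence with the reduction from the first paragraph gives the corollary.

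Finally, I would briefly check the small cases permitted by the hypothesis $n \geq 2$: a tree on two or three vertices is a star, hence has no internal vertices, is vacuously proper, has no deep vertices, and has leading partition $(n)$ with no part equal to $1$, consistent with the statement; for $n \geq 4$ the argument above applies directly. The only "obstacle" here is purely a matter of bookkeeping — lining up the definitions of deep vertex and proper tree and confirming the vacuous cases — since all of the substantive content is already contained in Corollary \ref{cor:num-deep-vertices}.
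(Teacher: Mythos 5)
Your argument matches the paper's proof: both invoke Corollary \ref{cor:num-deep-vertices} to equate the multiplicity of $1$ in $\lead(\csft)$ with the number of deep vertices, and then unwind the definitions to see that having no deep vertices is precisely the condition of being a proper tree. The small-case check for $n=2,3$ is harmless extra care but not needed, since the definitions already handle those cases.
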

\begin{proof}
    By definition, a tree with at least two vertices is proper if and only if each internal vertex has at least one leaf in its neighborhood or, equivalently, if and only if it has no deep vertices. The result then follows immediately by Corollary \ref{cor:num-deep-vertices}.
\end{proof}

We now positively answer Stanley's conjecture for another infinite family of trees.
\begin{definition}
    A \defn{bi-star} is a tree consisting of two star graphs whose centers are joined by an internal edge. An \defn{extended bi-star} is a tree consisting of two star graphs whose centers are connected by a path of one or more deep vertices of degree 2.
\end{definition}

\begin{corollary} \label{cor:bi-stars-leading}
    Let $T$ be a tree with $n$ vertices. Then, $\lead(\csft)=(i,j,1^{n-i-j})$ for some $i,j>1$ if and only if $T$ is a bi-star or extended bi-star with leaf-stars $St_{i}$ and $St_{j}$ separated by $n-i-j$ deep vertices of degree 2.
\end{corollary}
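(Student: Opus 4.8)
The plan is to prove this as a biconditional, tackling each direction separately, with the bulk of the work in showing that the stated form of $\lead(\csft)$ forces $T$ to have the stated structure.

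For the easy ($\Leftarrow$) direction: if $T$ is a bi-star or extended bi-star with leaf-stars $St_i$ and $St_j$ joined by a path of $d = n-i-j$ deep vertices of degree $2$, then by definition $T \setminus I(T)$ consists of these two stars (of orders $i$ and $j$, since the centers of the leaf-stars are internal vertices whose non-path edges are leaf-edges) together with $d$ isolated vertices (the deep degree-$2$ vertices, each of which becomes a singleton component once its two incident internal edges are removed). Hence $\lc(T) = (i,j,1^{d})$ with $i,j > 1$ by hypothesis, and Theorem~\ref{thm:leading-partition} gives $\lead(\csft) = \lc(T) = (i,j,1^{n-i-j})$.

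For the ($\Rightarrow$) direction: assume $\lead(\csft) = (i,j,1^{n-i-j})$ with $i,j>1$. By Theorem~\ref{thm:leading-partition} this says $\lc(T) = (i,j,1^{n-i-j})$, i.e. $T \setminus I(T)$ has exactly two star components of orders $i,j \ge 2$ and all remaining components are singletons. By Proposition~\ref{properties-LCpartition}(a), the leaf components of $T$ are in bijection with the internal vertices of $T$; so $T$ has exactly two internal vertices carrying at least one leaf — call them $a$ (center of $St_i$) and $b$ (center of $St_j$) — and all other internal vertices are deep (Corollary~\ref{cor:num-deep-vertices} identifies the $1$'s with deep vertices). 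Now I would argue about the structure: since $T$ is a tree, there is a unique path from $a$ to $b$, and every internal vertex of $T$ lies on the ``spine'' — indeed, any internal vertex not on the $a$–$b$ path would either create a third leaf-bearing internal vertex (if it had a leaf) or, being deep, would force a branch of the tree consisting only of deep vertices, which must terminate in a leaf, contradicting that the branching point is internal and the leaf's neighbor would then be another leaf-bearing internal vertex distinct from $a,b$. Thus all internal vertices lie on the $a$–$b$ path; the internal vertices strictly between $a$ and $b$ are all deep, and I must check each has degree exactly $2$: a deep vertex on this path of degree $\ge 3$ would have a third neighbor starting a branch, and following that branch to a leaf again produces a leaf-bearing internal vertex other than $a,b$ — contradiction. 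Finally, $a$ has degree $i$ (one edge toward $b$, and $i-1$ leaves) and similarly $b$ has degree $j$, so $T$ is precisely a bi-star (if $a,b$ adjacent) or extended bi-star with $n-i-j$ interior deep degree-$2$ vertices.

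The main obstacle is the structural argument in the ($\Rightarrow$) direction: converting the purely numerical data ``exactly two parts exceed $1$'' into the rigid geometric conclusion that $T$ is (an extension of) a bi-star. The key lever is Proposition~\ref{properties-LCpartition}(a)'s bijection between leaf components and internal vertices, which pins down the number of leaf-bearing internal vertices to exactly two; the rest is a careful case analysis showing no ``extra'' internal structure can hide — every internal vertex is accounted for, lies on the spine, and the interior ones are forced to have degree $2$ because any extra incident edge propagates (through a chain of necessarily-deep vertices) to a leaf whose neighbor would be a third leaf-bearing internal vertex. I would present this via a short lemma: \emph{in a tree, if a deep vertex $w$ has a neighbor $w'$ with $\deg(w') \ge 2$ lying off the $a$–$b$ path, then the subtree hanging off $w$ through $w'$ contains a leaf-vertex whose unique neighbor is an internal vertex $\notin\{a,b\}$}, which makes the propagation argument clean.
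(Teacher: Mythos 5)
Your proof is correct and follows essentially the same route as the paper: both directions reduce via Theorem~\ref{thm:leading-partition} to statements about $\lc(T)$, and the $\Rightarrow$ direction then argues from the structure of $T\setminus I(T)$. The only difference is one of detail — the paper's proof simply asserts that the singleton components are deep vertices and ``it follows that'' $T$ has the claimed form, whereas you spell out why all internal vertices must lie on the unique $a$–$b$ path and why the intermediate deep vertices must have degree exactly $2$ (both via the observation that any branching would propagate to a leaf whose neighbor is a third leaf-bearing internal vertex, contradicting $\ell(\lc(T))=2$ parts of size $>1$). That extra care closes a small expository gap in the paper's one-sentence conclusion but does not change the underlying argument.
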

\begin{proof}
    Let $T$ be a tree with $\lead(\csft)=\lc(T)=(i,j,1^{n-i-j})$ for some $i,j>1$. It follows that the connected components of $T \setminus I(T)$ are $St_i$, $St_j$, and $n-i-j$ components of order $1$. Since these order-$1$ components are deep vertices in $T$ and these cannot be leaves by definition, it follows that $T$ consists of the stars $St_i$ and $St_j$ whose centers are connected by a path of $n-i-j$ deep vertices of degree 2. The converse follows simply by an application of Theorem \ref{thm:leading-partition}.
\end{proof}

\begin{example}
    The extended bi-star shown below has leading partition $(6,4,1^4)$
    \begin{figure}[ht]
        \centering
        \begin{tikzpicture}[every node/.style={circle, fill=black, scale=0.5}, scale=0.8]
            \node (a) at (0,0) {};
            \node (b) at (1,0) {};
            \node (c) at (2,0) {}; 
            \node (d) at (3,0) {};
            \node (e) at (4,0) {};
            \node (f) at (5,0) {};

            \draw[thick] (a)--(b);
            \draw[thick] (b)--(c);
            \draw[thick] (c)--(d);
            \draw[thick] (d)--(e);
            \draw[thick] (e)--(f);

            \node (a1) at (0,0.75) {};
            \node (a2) at (0,-.75){};
            \node (a3) at (-.75, 0) {};
            \node (a4) at (-.53, .53) {};
            \node (a5) at (-.53, -.53) {};

            \draw[thick] (a)--(a1);
            \draw[thick] (a)--(a2);
            \draw[thick] (a)--(a3);
            \draw[thick] (a)--(a4);
            \draw[thick] (a)--(a5);

            \node (f1) at (5.75,0) {};
            \node (f2) at (5, .75) {};
            \node (f3) at (5, -.75){};

            \draw[thick] (f)--(f1);
            \draw[thick] (f)--(f2);
            \draw[thick] (f)--(f3);
            
        \end{tikzpicture}
    \end{figure}
\end{example}

From the preceding corollary, we immediately obtain the following result:

\begin{corollary} \label{cor:bi-stars-distinguish}
    Bi-stars and extended bi-stars are distinguished by their chromatic symmetric functions.\qed
\end{corollary}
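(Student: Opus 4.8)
The plan is to derive this immediately from Corollary \ref{cor:bi-stars-leading} together with the fact that the leading partition is an invariant of the chromatic symmetric function. First I would record the purely combinatorial observation that bi-stars and extended bi-stars are classified, up to isomorphism, by the data of an unordered pair $\{i,j\}$ with $i,j>1$ (the orders of the two leaf-stars) and a nonnegative integer $k$ (the number of degree-$2$ deep vertices on the path joining the two star centers), where $k=0$ is precisely the case of a bi-star and $k\geq 1$ that of an extended bi-star; conversely, every such triple $(\{i,j\},k)$ determines exactly one tree in this family, on $n=i+j+k$ vertices.

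Next I would apply Corollary \ref{cor:bi-stars-leading}: if $T$ is the bi-star or extended bi-star corresponding to $(\{i,j\},k)$ with $i\geq j>1$, then $\lead(\mathbf{X}_T)=(i,j,1^{k})$. Since $i,j>1$, the multiplicity of the part $1$ in this partition equals $k$, and the two parts exceeding $1$ are $i$ and $j$; hence $(i,j,1^{k})$ determines the triple $(\{i,j\},k)$, and therefore the isomorphism type of $T$. In particular, non-isomorphic members of this family have distinct leading partitions; equivalently, the map $T\mapsto\lc(T)$ from bi-stars and extended bi-stars to partitions of the shape $(i,j,1^{n-i-j})$ is injective.

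Finally I would invoke that $\lead(\mathbf{X}_T)=\lc(T)$ by Theorem \ref{thm:leading-partition}, and that this partition is recoverable from $\mathbf{X}_T$ since $\{\stfrak_\lambda : \lambda\vdash n\}$ is a basis of $\Lambda^n$, so the expansion coefficients $c_\lambda$ are uniquely determined by $\mathbf{X}_T$. Combining this with the previous paragraph — or simply quoting Corollary \ref{cor:different-leading} directly — if $T_1$ and $T_2$ are non-isomorphic bi-stars or extended bi-stars, then $\lc(T_1)\neq\lc(T_2)$ and hence $\mathbf{X}_{T_1}\neq\mathbf{X}_{T_2}$, which is the claim.

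I expect no genuine obstacle here: the substantive work is already contained in Theorem \ref{thm:leading-partition} and Corollary \ref{cor:bi-stars-leading}, and the only point to verify is the injectivity in the second paragraph, which is immediate once one observes that the condition $i,j>1$ separates the two ``structural'' parts of the leading partition from the string of $1$'s recording the length of the connecting path.
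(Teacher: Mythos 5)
Your proposal is correct and matches the paper's intended argument: the paper simply appends a \qed after remarking that the result follows from Corollary \ref{cor:bi-stars-leading}, and you have filled in precisely those details — the classification of (extended) bi-stars by the pair $\{i,j\}$ and the integer $k$, the identification $\lead(\csft)=(i,j,1^{k})$, and the invariance of the leading partition via Theorem \ref{thm:leading-partition} and Corollary \ref{cor:different-leading}.
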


Note that bi-stars and extended bi-stars are particular cases of caterpillars, which are already known to be distinguishable \cite{loebl2018isomorphism}. However, we included the result here since the leading partition provides an almost immediate way to reconstruct them from $\mathbf{X}_T$ directly while in \cite{loebl2018isomorphism,martin2008distinguishing}, the proofs are not reconstructive and use indirect methods.

So far, we have collected a few immediate consequences of Theorem \ref{thm:leading-partition}. In Section \ref{sec:diam5} and \ref{sec:subspace} we will give two bigger applications of Theorem \ref{thm:leading-partition}. 

\subsection{The coefficient of $\lead(\csff)$} The coefficient of $\lead(\csff)$ admits a nice closed formula that only depends on the degrees of the deep vertices of a given forest $F$.

Recall that $\sqcup$ means the disjoint union of disconnected components in a graph.  If $G = H_1 \sqcup H_2$, then $\mathbf{X}_G = \mathbf{X}_{H_1}\mathbf{X}_{H_2}$. 

\begin{lemma} \label{lem:c-lead-forest}
    Let $T_1,\ldots,T_k$ be trees and let $F = \bigsqcup_{i=1}^k T_i.$ Then, $\lead(\csff) = \sort(\lead(\mathbf{X}_{T_1})\cdot \ldots \cdot \lead(\mathbf{X}_{T_k}))$ and $c_{\lead(\csff)}=\prod\limits_{i=1}^k c_{\lead(\mathbf{X}_{T_i})}$.
\end{lemma}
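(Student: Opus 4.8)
The plan is to deduce both assertions from a single monotonicity property of the lexicographic order under the operation that merges two partitions. For partitions $\alpha,\beta$ write $\alpha\cdot\beta$ for the concatenation of the part-sequences and set $\alpha\cup\beta \coloneqq \sort(\alpha\cdot\beta)$; since the star-basis is multiplicative, $\alpha\cup\beta$ is exactly the partition with $\mathfrak{st}_\alpha\mathfrak{st}_\beta=\mathfrak{st}_{\alpha\cup\beta}$. The statement about $\lead(\csff)$ is then immediate: by Theorem \ref{thm:leading-partition} we have $\lead(\mathbf{X}_{T_i})=\lc(T_i)$ for each $i$ and $\lead(\csff)=\lc(F)$, and since deleting internal edges acts on each connected component separately, $\lc(F)=\sort\big(\lc(T_1)\cdot\ldots\cdot\lc(T_k)\big)$, as already observed in the proof of Lemma \ref{lem:deletions-lead}.

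The core of the argument is the following claim: \emph{if $\alpha,\alpha'$ are partitions of the same weight with $\alpha\le\alpha'$ and $\gamma$ is any partition, then $\alpha\cup\gamma\le\alpha'\cup\gamma$, with equality only if $\alpha=\alpha'$.} To see this, recall that for $v\ge1$ the multiplicity $m_v(\lambda)$ of $v$ as a part of $\lambda$ governs the lexicographic order: if $\lambda\ne\mu$ have the same weight and $v_0$ is the largest value with $m_{v_0}(\lambda)\ne m_{v_0}(\mu)$, then $\lambda<\mu$ if and only if $m_{v_0}(\lambda)<m_{v_0}(\mu)$, since the two part-sequences agree on all parts exceeding $v_0$ and then display blocks of $v_0$'s of lengths $m_{v_0}(\lambda)$ and $m_{v_0}(\mu)$, so the one with the longer block is lexicographically larger. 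Now $m_v(\alpha\cup\gamma)=m_v(\alpha)+m_v(\gamma)$ for every $v$, so $\alpha\cup\gamma$ and $\alpha'\cup\gamma$ differ exactly at the values where $\alpha$ and $\alpha'$ differ, the critical value $v_0$ is unchanged, and the strict inequality at $v_0$ is preserved; this proves the claim. Applying it repeatedly (varying one factor at a time) gives: if $\alpha^{(i)}\le\beta^{(i)}$ are partitions of equal weight for $i=1,\ldots,k$, then $\alpha^{(1)}\cup\cdots\cup\alpha^{(k)}\le\beta^{(1)}\cup\cdots\cup\beta^{(k)}$, with equality if and only if $\alpha^{(i)}=\beta^{(i)}$ for all $i$.

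For the coefficient, expand the product using multiplicativity of the star-basis: writing $\mathbf{X}_{T_i}=\sum_\mu c^{(i)}_\mu\mathfrak{st}_\mu$,
\[
\mathbf{X}_F=\prod_{i=1}^k\mathbf{X}_{T_i}=\sum_{(\mu^{(1)},\ldots,\mu^{(k)})}\Big(\prod_{i=1}^k c^{(i)}_{\mu^{(i)}}\Big)\,\mathfrak{st}_{\mu^{(1)}\cup\cdots\cup\mu^{(k)}},
\]
the sum ranging over tuples with every $c^{(i)}_{\mu^{(i)}}\ne0$, hence with $\mu^{(i)}\ge\lead(\mathbf{X}_{T_i})$ for all $i$ by definition of the leading partition. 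By the previous paragraph, every such tuple satisfies $\mu^{(1)}\cup\cdots\cup\mu^{(k)}\ge\lead(\mathbf{X}_{T_1})\cup\cdots\cup\lead(\mathbf{X}_{T_k})=\lead(\csff)$, with equality only for the single tuple $(\lead(\mathbf{X}_{T_1}),\ldots,\lead(\mathbf{X}_{T_k}))$; hence that tuple is the only one contributing to $\mathfrak{st}_{\lead(\csff)}$, and $c_{\lead(\csff)}=\prod_{i=1}^k c_{\lead(\mathbf{X}_{T_i})}$. The only step requiring real work is the strict monotonicity claim, and its strictness is essential: the $\mathbf{X}_{T_i}$ may have coefficients of either sign, so without strictness one could not rule out several distinct tuples landing on $\lead(\csff)$ and partially cancelling.
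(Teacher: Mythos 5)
Your proof is correct, and it is in the same spirit as the paper's: both hinge on the multiplicativity $\mathfrak{st}_\lambda\mathfrak{st}_\mu=\mathfrak{st}_{\sort(\lambda\cdot\mu)}$. The difference is one of rigor rather than route. The paper's proof is a one-liner that simply declares that the claim ``follows by the multiplicative property,'' leaving the reader to supply the argument. You fill that gap: the key step you make explicit is the strict lexicographic monotonicity of the operation $\alpha\cup\gamma=\sort(\alpha\cdot\gamma)$ in each variable, proved correctly via the observation that the lexicographic comparison of equal-weight partitions is governed by the largest part value at which their multiplicities differ, and that multiplicities add under $\cup$. This strictness is precisely what rules out cancellation when several tuples could a priori map to the same product partition, a point you rightly flag as essential since the coefficients $c^{(i)}_\mu$ have mixed signs. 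One small redundancy: you could derive the $\lead(\csff)$ identity from the same monotonicity argument (the unique minimizing tuple has nonzero product coefficient, hence its partition is the leading one) rather than invoking Theorem \ref{thm:leading-partition}; either way is fine, and using the theorem is shorter, but it is worth noting that the coefficient half of the lemma does not actually depend on the combinatorial description of $\lead$ in terms of leaf components.
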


\begin{proof}
 Since we are writing the chromatic symmetric function in the star-basis and $\mathfrak{st}_\lambda\mathfrak{st}_\mu = \mathfrak{st}_{\sort(\lambda\cdot \mu)}$, where $\cdot$ denotes concatenation of two sequences, then the claim follows by the multiplicative property of the chromatic symmetric function. 
\end{proof}

\begin{proposition}\label{prop:no-deep-coeff}
    If $F$ is a proper forest, i.e., $F$ has no deep vertices, then $c_{\lead(\csff)}=1$. 
\end{proposition}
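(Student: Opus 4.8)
The plan is to induct on $\#I(F)$, the number of internal edges of $F$, using the DNC relation (Proposition~\ref{prop:dnc-formula}) together with the identification of the leading partition in Theorem~\ref{thm:leading-partition} and the inequalities of Theorem~\ref{thm:lead-inequalities}. (One could first invoke Lemma~\ref{lem:c-lead-forest} to reduce to the case that $F$ is a single tree, but this is not strictly necessary, as the argument works for forests directly.)

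For the base case $\#I(F)=0$, the forest $F$ is a disjoint union of stars, so $\mathbf{X}_F = \mathfrak{st}_{\lambda(F)}$ is a single star-basis element, and hence its leading coefficient is $1$. For the inductive step, I would pick any internal edge $e=uv$ and apply Proposition~\ref{prop:dnc-formula}: $\mathbf{X}_F = \mathbf{X}_{F\setminus e} - \mathbf{X}_{(F\odot e)\setminus\ell_e} + \mathbf{X}_{F\odot e}$. Since $F$ is proper, neither $u$ nor $v$ is a deep vertex, and in particular neither is a degree-$2$ deep vertex. Theorem~\ref{thm:lead-inequalities}(a) then gives $\lc(F\setminus e)=\lc(F)$, while parts (b) and (c) give $\lc((F\odot e)\setminus\ell_e)>\lc(F)$ and $\lc(F\odot e)>\lc(F)$. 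By Theorem~\ref{thm:leading-partition} these leaf component partitions are precisely the leading partitions of the corresponding chromatic symmetric functions, so $\mathfrak{st}_{\lc(F)}$ occurs with coefficient $0$ in both $\mathbf{X}_{(F\odot e)\setminus\ell_e}$ and $\mathbf{X}_{F\odot e}$ (being strictly below their leading partitions). Extracting the coefficient of $\mathfrak{st}_{\lc(F)}$ from the DNC relation, and using $\lead(\mathbf{X}_F)=\lc(F)$, yields $c_{\lead(\mathbf{X}_F)} = c_{\lead(\mathbf{X}_{F\setminus e})}$.

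It remains to verify that $F\setminus e$ is again a proper forest with strictly fewer internal edges, so that the induction hypothesis applies; I expect this to be the only point requiring genuine care, though it is mild. The count of internal edges drops by Remark~\ref{DNC-properties}(2). For properness, note that deleting $e$ only decreases vertex degrees, so every internal vertex $w$ of $F\setminus e$ is already internal in $F$; since the endpoints $u,v$ of $e$ are internal in $F$, the edge from $w$ to a leaf-neighbor of $w$ in $F$ cannot be $e$, so that leaf-neighbor remains a degree-one vertex adjacent to $w$ in $F\setminus e$. Hence $F\setminus e$ has no deep vertices. By the induction hypothesis $c_{\lead(\mathbf{X}_{F\setminus e})}=1$, and therefore $c_{\lead(\mathbf{X}_F)}=1$, completing the proof.
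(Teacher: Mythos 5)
Your proof is correct. The key fact driving both your argument and the paper's is the same: by Theorem \ref{thm:lead-inequalities}, when neither endpoint of an internal edge is deep, deletion is the unique DNC operation that preserves the leaf component partition. Where you diverge is in the bookkeeping. The paper reduces to a single tree via Lemma \ref{lem:c-lead-forest}, fixes a DNC-tree, and uses the path-counting interpretation of Corollary \ref{cor:coeff-paths-seq} to show that exactly one root-to-leaf path reaches $T\setminus I(T)$ (the all-deletions path); for this it must check that each successive edge remains internal after the earlier deletions. You instead run a clean induction on $\#I(F)$ using the DNC identity as a linear recurrence: the two contraction terms contribute nothing to the coefficient of $\mathfrak{st}_{\lc(F)}$ because their leading partitions are strictly larger, so $c_{\lead(\mathbf{X}_F)}=c_{\lead(\mathbf{X}_{F\setminus e})}$, and the inductive burden shifts to verifying that $F\setminus e$ is still proper --- which you do correctly (an internal vertex of $F\setminus e$ is internal in $F$, and its leaf-neighbor in $F$ cannot be an endpoint of $e$, hence survives). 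Your version buys a uniform treatment of forests without the multiplicativity reduction and avoids invoking the path-counting machinery; the paper's version stays inside the DNC-tree framework that it reuses for the harder count in Theorem \ref{thm:leading-coefficient}. Both are complete.
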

\begin{proof}  
    Since $F$ has no deep vertices, every internal vertex has at least one leaf incident to it.  Let $e=uv$ be an internal edge. Deleting $e$ decreases the degree of $u$ and $v$ by exactly 1. We will show that deleting $e$ reduces the number of internal edges by exactly one. First note that deleting $e$ only changes the degrees of its endpoints. Thus, every other internal edge that is not incident to $u$ or $v$ remains internal. 
    
    If $u$ and $v$ both have degrees $\geq 3$, then any internal edge incident to either $u$ or $v$ will remain an internal edge because $u$ and $v$ have degree $\geq 2$ after deleting $e$.  Suppose that either one or both of $u$ and $v$ have degree 2.  If both have degree $2$, then the connected component of $F$ containing $e=uv$ is a path with 4 vertices since $u$ and $v$ are each incident to $e$ and a leaf-edge. In this case, $e$ is the only internal edge in this connected component and so every other internal edge in $F$ remains internal in $F\setminus e$. If only one endpoint of $e$ has degree 2, we can assume without loss of generality that $u$ has degree 2. Since $u$ is not deep, the two edges incident to $u$ are $e$ and a leaf-edge.  Since $v$ has degree $\geq 3$, any internal edge incident to $v$ remains an internal edge since $v$ has degree $\geq 2$ after deleting $e$. In all cases, $F\setminus e$ has exactly one fewer internal edge than $F$.

    By Lemma \ref{lem:c-lead-forest}, it suffices to show the proposition for a tree $T$. Fix a DNC-tree $\mathcal{D}$ for $T$. We will show that there is exactly one path from $T$ to $T\setminus I(T)$ in $\mathcal{D}$. Let $e_1, \ldots, e_s$ be the internal edges in $T$ listed in the order in which they are operated on in $\mathcal{D}$. We have shown that $e_i$ remains internal in $T \setminus \{e_1, \ldots, e_{i-1}\}$ for each $2 \leq i \leq s$. Theorem \ref{thm:lead-inequalities} implies that any path in $\mathcal{D}$  from $T$ to $T \setminus I(T)$ must begin with a deletion of $e_1$ since any other operation results in a forest with a leading partition greater in lexicographic order than $\lead(\csft)$. Let $k\geq 1$ and assume we have deleted $e_1,\ldots,e_{k}$. Since $e_{k+1}$ remains internal, Theorem \ref{thm:lead-inequalities} implies that $e_{k+1}$ must be deleted as well. It follows by induction that the only path from $F$ to $T\setminus I(T)$ is the path of repeated deletions. Thus, by Corollary \ref{cor:coeff-paths-seq} $c_{\lead(\csft)} = 1$.
\end{proof}

\begin{lemma}\label{lem:deep-vertices-neighborhood}
    If $F$ is a forest containing deep vertices, then there exists a deep vertex in $F$ with at most one deep vertex in its neighborhood.
\end{lemma}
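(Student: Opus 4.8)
The plan is to pass to the subgraph of $F$ induced on its deep vertices and then exploit the elementary fact that a forest always contains a vertex of small degree. First I would let $D$ denote the subgraph of $F$ whose vertex set is the set of deep vertices of $F$ and whose edges are exactly the edges of $F$ having both endpoints deep. Since $D$ is a subgraph of the forest $F$, it is itself a forest, and by hypothesis $D$ has at least one vertex.

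Next I would invoke the standard fact that every forest with at least one vertex contains a vertex of degree at most $1$: take any connected component of $D$; if it consists of a single vertex, that vertex has degree $0$ in $D$, and otherwise it is a tree on at least two vertices, which has a leaf. Call such a vertex $u$. By construction $u$ is a deep vertex of $F$, and the degree of $u$ in $D$ equals $|N(u)\cap V(D)|$, which is precisely the number of deep vertices adjacent to $u$ in $F$. Since this degree is at most $1$, the vertex $u$ is a deep vertex of $F$ with at most one deep vertex in its neighborhood, which is exactly the assertion of the lemma.

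There is essentially no obstacle here; the only points requiring a moment of care are, first, to use the "degree at most $1$" formulation rather than "has a leaf," so that components of $D$ that are single vertices (deep vertices all of whose $F$-neighbors have incident leaves) are not overlooked, and second, to note that the identification of the $D$-degree of $u$ with the number of deep neighbors of $u$ in $F$ is immediate from the definition of the induced subgraph.
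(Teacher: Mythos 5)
Your proof is correct and rests on the same essential fact as the paper's: acyclicity of $F$ forces some deep vertex to have at most one deep neighbor. The paper phrases this as a proof by contradiction (if every deep vertex had two deep neighbors, one could trace a cycle through deep vertices), whereas you argue directly by passing to the induced forest on deep vertices and citing the minimum-degree property of forests; these are the same argument in different packaging.
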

\begin{proof}
    Assume, to the contrary, that each deep vertex in $F$ has at least two deep vertices in its neighborhood. Since $F$ is finite, it follows that $F$ has a cycle consisting of deep vertices, a contradiction.
\end{proof}

We can finally prove the following combinatorial description for the leading coefficient of a forest.

\begin{theorem}\label{thm:leading-coefficient}
    Let $F$ be a forest with $n$ vertices and with deep vertices $u_1, \ldots, u_m$ and leading partition $\lead = \lead(\csff)$. Then:
    \[
    c_{\lead} = (-1)^m \prod_{i=1}^m (\deg(u_i)-1)
    \]
\end{theorem}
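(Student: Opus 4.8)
The plan is to prove the statement for forests by induction on $\#I(F)$, the number of internal edges. If $F$ has no deep vertices then it is proper, so $c_{\lead}=1$ by Proposition~\ref{prop:no-deep-coeff}; this agrees with the claimed formula (empty product, $(-1)^0=1$) and in particular settles the base case $\#I(F)=0$. So assume $\#I(F)\ge 1$ and that $F$ has deep vertices, which I will call $u_1,\dots,u_m$. By Lemma~\ref{lem:deep-vertices-neighborhood} I may (after relabeling) take $u_1$ to have at most one deep vertex in its neighborhood. Since $u_1$ is deep, all of its neighbors are internal vertices, and since $\deg(u_1)\ge 2$ at least one of them, say $w$, is internal and \emph{not} deep. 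I set $e=u_1w$, an internal edge.

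Next I would apply the DNC relation (Proposition~\ref{prop:dnc-formula}), $\mathbf{X}_F=\mathbf{X}_{F\setminus e}-\mathbf{X}_{(F\odot e)\setminus\ell_e}+\mathbf{X}_{F\odot e}$, and read off the coefficient of $\mathfrak{st}_{\lc(F)}$ on each side; by Theorem~\ref{thm:leading-partition}, $\lc(F)=\lead(\csff)$, so the left side contributes $c_{\lead}$. By Theorem~\ref{thm:lead-inequalities}(c) and Theorem~\ref{thm:leading-partition} applied to $F\odot e$, the smallest partition occurring in $\mathbf{X}_{F\odot e}$ is $\lc(F\odot e)>\lc(F)$, so $\mathfrak{st}_{\lc(F)}$ does not occur there. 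Since $u_1$ is deep, Theorem~\ref{thm:lead-inequalities}(b) gives $\lc((F\odot e)\setminus\ell_e)=\lc(F)$, so the coefficient of $\mathfrak{st}_{\lc(F)}$ in $\mathbf{X}_{(F\odot e)\setminus\ell_e}$ is exactly its leading coefficient. Finally, since $w$ is not deep, Theorem~\ref{thm:lead-inequalities}(a) shows $\lc(F\setminus e)=\lc(F)$ when $\deg(u_1)\ge 3$ and $\lc(F\setminus e)>\lc(F)$ when $\deg(u_1)=2$ (so $\mathbf{X}_{F\setminus e}$ contributes nothing to the $\mathfrak{st}_{\lc(F)}$-coefficient in the latter case). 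Hence
\[
c_{\lead}=\begin{cases}-\,c_{\lead(\mathbf{X}_{(F\odot e)\setminus\ell_e})}, & \deg(u_1)=2,\\ c_{\lead(\mathbf{X}_{F\setminus e})}-c_{\lead(\mathbf{X}_{(F\odot e)\setminus\ell_e})}, & \deg(u_1)\ge 3.\end{cases}
\]

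It then remains to identify the deep vertices, and their degrees, in the two smaller forests and to apply the inductive hypothesis. I would record first that a deletion or dot-contraction of a non-leaf edge never creates a deep vertex (deleting an edge cannot strip a vertex of a leaf neighbor, and a merged vertex keeps a leaf neighbor whenever one of the two merged vertices had one). For $(F\odot e)\setminus\ell_e=St_1\sqcup F''$ with $F''=F/e$: the new vertex inherits a leaf from $w$, hence is not deep, and every other vertex keeps its degree and the internality of its neighbors, so the deep vertices of $F''$ are exactly $u_2,\dots,u_m$ with unchanged degrees; by Lemma~\ref{lemma:internal-edge-contraction}, $\#I(F'')=\#I(F)-1$, so Lemma~\ref{lem:c-lead-forest} and induction give $c_{\lead(\mathbf{X}_{(F\odot e)\setminus\ell_e})}=(-1)^{m-1}\prod_{i=2}^{m}(\deg(u_i)-1)$. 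In the case $\deg(u_1)=2$ this already yields $c_{\lead}=(-1)^m\prod_{i=2}^m(\deg(u_i)-1)=(-1)^m\prod_{i=1}^m(\deg(u_i)-1)$ since $\deg(u_1)-1=1$. For $\deg(u_1)\ge 3$: in $F\setminus e$ the vertex $u_1$ keeps all of its (internal) neighbors except $w$, hence remains deep, now of degree $\deg(u_1)-1$; $w$ either stays internal and non-deep or becomes a leaf (in which case its other neighbor was already a leaf, so no deep vertex is affected); no vertex becomes deep, and one checks $\#I(F\setminus e)=\#I(F)-1$ because $e$ is the only internal edge destroyed. So the deep vertices of $F\setminus e$ are $u_1$ of degree $\deg(u_1)-1$ together with $u_2,\dots,u_m$, and induction gives $c_{\lead(\mathbf{X}_{F\setminus e})}=(-1)^m(\deg(u_1)-2)\prod_{i=2}^m(\deg(u_i)-1)$. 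Substituting into the displayed identity and using $(\deg(u_1)-2)+1=\deg(u_1)-1$ finishes this case as well. The hard part is precisely the bookkeeping just described: checking that, with $w$ chosen non-deep, deletion and dot-contraction of $e$ change the set of deep vertices only in the controlled way stated and drop $\#I$ by exactly one — this is exactly where the existence of a non-deep neighbor of $u_1$, guaranteed by Lemma~\ref{lem:deep-vertices-neighborhood}, is used.
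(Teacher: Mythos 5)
Your proof is correct, but it takes a genuinely different route from the paper's. The paper inducts on the number of deep vertices $m$: it fixes a DNC-tree that processes all $\deg(u_1)-1$ edges from $u_1$ to its non-deep internal neighbors as a batch, shows via Corollary \ref{cor:coeff-paths-seq} that every leading-partition-preserving path must begin with a prefix containing exactly one $M$ (in any of $\deg(u_1)-1$ positions) and $L$s elsewhere, and multiplies the $\deg(u_1)-1$ prefix choices by the inductive path count for the resulting forest with one fewer deep vertex. You instead induct on $\#I(F)$, apply the DNC relation to a \emph{single} edge $e=u_1w$ with $w$ internal and non-deep, and extract the coefficient of $\mathfrak{st}_{\lc(F)}$ from the three-term identity, using Theorem \ref{thm:lead-inequalities} to decide which of the three children contribute. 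Your choice of induction variable is essential and correct: in the case $\deg(u_1)\ge 3$ the forest $F\setminus e$ still has $m$ deep vertices (with $u_1$ demoted to degree $\deg(u_1)-1$), so an induction on $m$ would not apply to it — this is precisely why the paper batches all of $u_1$'s edges at once, and why your version must descend on $\#I$ instead. The bookkeeping you flag as the hard part (deletion and dot-contraction of $e$ never create deep vertices, the merged vertex inherits a leaf from $w$, and $\#I$ drops by exactly one) checks out, as does the identity $(\deg(u_1)-2)+1=\deg(u_1)-1$ that replaces the paper's count of prefix positions. What each approach buys: yours is a cleaner algebraic recurrence that avoids fixing a DNC-tree and analyzing path prefixes; the paper's yields a direct combinatorial interpretation of $|c_{\lead}|$ as a count of sequences, consistent with how the rest of the paper reads off coefficients.
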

\begin{proof}
    We prove the statement by induction on $m$.   If $F$ is a forest with no deep vertices, Proposition \ref{prop:no-deep-coeff} implies that $c_{\lead(\csff)}=1$, so the base case holds.
        
    Assume that the claim is true for all forests with at most $m$ deep vertices, where $m \geq 0$. Let $F$ be a forest with exactly $m+1$ deep vertices and let $u_1,\ldots,u_{m+1}$ be the deep vertices in $F$. Since $m+1\geq 1$, it follows by Lemma \ref{lem:deep-vertices-neighborhood} that there exist a deep vertex in $F$, say $u_1$, such that $u_1$ has at most one deep vertex in its neighborhood. Suppose we fix a DNC-tree $\mathcal{D}$ for $F$ that starts by performing the DNC relation on $\deg(u_1)-1$ internal edges of the form $u_1v_j$, where for all $1 \leq j \leq \deg(u_1)-1$, the vertex $v_j$ is internal but not deep.
    
    In Corollary \ref{cor:coeff-paths-seq}, we showed that for any partition $\lambda$, $c_\lambda = (-1)^r|\mathcal{S}_\lambda|$, 
    where $r$ is the number of 1s in $\lambda$ and $\mathcal{S}_\lambda$ is the set of sequences with terms in $\{L, M, R\}$ that correspond to paths from $F$ to a star forest $H$ such that $\lambda(H) = \lambda$.  Recall that $L$ is deletion, $M$ is dot-contraction and $R$ is leaf-contraction.
    
    Fix an arbitrary path from $F$ to $F\setminus I(F)$ in $\mathcal{D}$, and consider the first $\deg(u_1)-1$ steps along this path. Let $x_1,x_2,\ldots,x_{\deg(u_1)-1}$ where $x_i\in \{L,M,R\}$ be the terms in the sequence that encode the first $\deg(u_1)-1$ operations. Since the only two operations that preserve the leading partition are deletions or dot-contractions by Theorem \ref{thm:lead-inequalities}, then $x_i\in \{L, M\}$.  Furthermore, if all of these steps were deletions, then the final deletion would occur on an edge $u_1v$, where $u_1$ is a degree-2 deep vertex. By Theorem \ref{thm:lead-inequalities}(a), the resulting forest has a greater leading partition than $F$. Therefore, at least one of these labels $x_i$ must be an $M$. Applying a dot-contraction on an edge $wv_j$ where the endpoints are both not deep will not preserve the leading partition by Theorem \ref{thm:lead-inequalities}(b). Hence, only one of the labels can be an $M$, because dot-contracting an edge $u_1v_i$ where only $u_1$ is deep, results in a vertex $w=u_1$ that is no longer a deep vertex (we continue to refer to $w $ as $u_1$).  Note that in particular if $\deg(u_1) = 2$, then only dot-contraction preserves the leading partition.  
        
    Thus, we have shown that any path from $F$ to $F \setminus I(F)$ in $\mathcal{D}$ necessarily has a prefix in the following set
    \[
    S = \{(\underbrace{L, \ldots, L}_{i-1}, M, \underbrace{L, \ldots, L}_{\deg(u_1)-1-i} ) : i \in [\deg(u_1)-1]\}
    \]
    Applying any sequence in $S$, reduces the number of deep vertices by exactly 1, since $w=u_1$ is no longer a deep vertex.  Let $F^\prime$ denote a forest obtained after applying any sequence from $S$. $F^\prime$ has exactly $m$ deep vertices: $u_2, \ldots, u_{m+1}$ and  they have the same degree in $F$ and in $F^\prime$ since no operation was performed on any edge incident to them. Since, $F^\prime$ has exactly $m$ deep vertices by induction hypothesis we have $c_{\lead(\mathbf{X}_{F^\prime})} = (-1)^m \prod\limits_{i=2}^{m+1}(\deg(u_i)-1)$. Therefore, there are $\prod\limits_{i=2}^{m+1}(\deg(u_i)-1)$ paths from $F^\prime$ to $F^\prime \setminus I(F^\prime) = F \setminus I(F)$ in $\mathcal{D}$. 
    
    Now note that there are $\deg(u_1)-1$ distinct paths in $\mathcal{D}$ to obtain such a forest $F^\prime$. Hence, we obtain that the number of paths from $F$ to $F\backslash I(F)$ in $\mathcal{D}$ is 
    \[
    (\deg(u_1)-1)\cdot \prod\limits_{i=2}^{m+1}(\deg(u_i)-1) = \prod_{i=1}^{m+1}(\deg(u_i)-1)
    \]
    Lastly, we have $c_{\lead} = (-1) \cdot (\deg(u_1)-1)\cdot (-1)^m \prod\limits_{i=2}^{m+1}(\deg(u_i)-1) = (-1)^{m+1}\prod\limits_{i=1}^{m+1}(\deg(u_i)-1)$ since there is exactly one dot-contraction ($M$) which contributes a factor $(-1)$ in the first $\deg(u_1)-1$ edges along the path in $\mathcal{D}$.
\end{proof}

\section{Trees with diameter at most 5}\label{sec:diam5}
In this section, we show how to reconstruct trees of diameter at most five from their chromatic symmetric function. We show a reconstruction that relies on two main ideas: (1) the orders of the leaf components in the \emph{internal subgraph} of the tree, which we define below, and (2) the \emph{adjacencies} between the leaf components of the tree. 

In \cite{martin2008distinguishing}, Martin, Morin, and Wagner proved that one can compute the diameter of a tree from its chromatic symmetric function.  Aliste-Prieto, de Mier, Orellana, and Zamora proved in \cite{ADOZ} that \emph{proper} trees of diameter at most five are distinguishable from their chromatic symmetric function using two new graph polynomials. This section improves their result to all trees of diameter at most five, providing a reconstruction algorithm directly from $\mathbf{X}_T$.

Trees of diameter $\leq 2$ are stars and there is only one such tree for a given number of vertices $k$, namely $St_k$. If $T$ has diameter 3, then $T$ is a bi-star with $\lead(\mathbf{X}_T) = (i,j)$, where $i$ and $j$ are the orders of its two leaf components, see Corollary \ref{cor:bi-stars-distinguish}. Therefore, in the remainder of this section we focus on trees with diameter 4 and 5.

\subsection{The internal subgraph}

We begin this section by introducing a special subgraph of a tree using the concept of an internal edge. Recall that a leaf component of a tree $T$ is a connected component of $T \setminus I(T)$. Define the \defn{internal degree} of a vertex $v$ as the number of internal vertices in $N(v)$.

\begin{definition}
    Let $T$ be a tree and let $\{v_1, \ldots, v_l\}$ be the set of vertices of $T$ whose internal degree is strictly greater than 1. Let $L_i$ be the set of leaf-vertices that are neighbours of $v_i$ for $1 \leq i \leq l$. Then, the \defn{internal subgraph} of $T$ is the subgraph of $T$ induced by the set of vertices $\{v_1, \ldots, v_l\} \cup L_1 \cup \cdots \cup L_l$. We denote the internal subgraph of $T$ by $\mathcal{I}_T$.
\end{definition}

\begin{example}
    For the tree on the left-hand side in Figure \ref{fig:internal-subgraph}, the internal subgraph is the tree on the right-hand side. In particular, the vertices whose internal degree is greater than one are labeled $v_1,v_2$ and $v_3$. Their internal degrees are 3, 3, and 5, respectively.
\end{example}

\begin{figure}[ht]
    \centering
    \begin{tikzpicture}[every node/.style={circle, fill=black, scale=0.6}, scale=0.85]
    
        \node (a2) at (2, 3) {};
        \node (a4) at (4, 3) {};
        \node (a5) at (5, 2.75) {};
        \node (a52) at (5.5, 2.5) {};
        \node (a53) at (5.75, 2) {};
    
        \node (b0) at (0.4, 1.5) {};
        \node (b2) at (2,2) {};
        \node[fill=red] (b3) at (2.6, 1.75) {};
        \node[fill=red] (b32) at (3.4, 1.75) {};
        \node (b4) at (4, 2) {};
        \node (b5) at (4.75, 1.75) {};
        
        \node (c0) at (0.25,1) {};
        \node (c1) at (1,1) {};
        \node[fill=red] (c2) at (2,1) {};
        \node[fill=none, scale=1.5] at (1.75, 1.25) {$v_1$};
        \node[fill=red] (c3) at (3,1) {};
        \node[fill=none, scale=1.5] at (3.25, .75) {$v_2$};
        \node[fill=red] (c4) at (4,1) {};
        \node[fill=none, scale=1.5] at (3.75, 1.25) {$v_3$};
        \node (c5) at (5,1) {};
        \node (c6) at (6,1) {};
        
        \node (d0) at (0.4, 0.5) {};
        \node[fill=red] (d2) at (1.6, 0.4) {};
        \node[fill=red] (d22) at (2, 0.25) {};
        \node[fill=red] (d23) at (2.4, 0.4) {};
        \node (d3) at (3, 0) {};
        \node[fill=red] (d4) at (3.6, 0.25) {};
        \node (d5) at (4.75, 0.25) {};
        
        \node (e3) at (2.6, -0.75) {};
        \node (e32) at (3.4, -0.75) {};
        \node (e5) at (5.5, -0.5) {};
        
        \draw[thick] (c1) -- (c2);
        \draw[color=red, thick] (c3) -- (b3);
        \draw[color=red, thick] (c3) -- (b32);
        \draw[color=red, thick] (c2) -- (c3);
        \draw[color=red, thick] (c3) -- (c4);
        \draw[thick] (c4) -- (c5);
        \draw[thick] (c5) -- (c6);
        
        \draw[thick] (c2) -- (b2);
        \draw[thick] (b2) -- (a2);
        \draw[thick] (c4) -- (b4);
        \draw[thick] (b4) -- (a4);
        \draw[thick] (c0) -- (c1);
        \draw[thick] (c1) -- (b0);
        \draw[thick] (c1) -- (d0);
        \draw[color=red, thick] (c2) -- (d2);
        \draw[color=red, thick] (c2) -- (d22);
        \draw[color=red, thick] (c2) -- (d23);
        \draw[thick] (c3) -- (d3);
        \draw[thick] (d3) -- (e3);
        \draw[thick] (d3) -- (e32);
        \draw[color=red, thick] (c4) -- (d4);
        \draw[thick] (c4) -- (b5);
        \draw[thick] (c4) -- (d5);
        \draw[thick] (d5) -- (e5);
        \draw[thick] (b5) -- (a5);
        \draw[thick] (b5) -- (a52);
        \draw[thick] (b5) -- (a53);

    \node[fill=red] (bb3) at (10.6, 1.75) {};
    \node[fill=red] (bb32) at (11.4, 1.75) {};
    \node[fill=red] (cc2) at (10,1) {};
        \node[fill=none, scale=1.5] at (9.75, 1.25) {$v_1$};
        \node[fill=red] (cc3) at (11,1) {};
        \node[fill=none, scale=1.5] at (11.25, .75) {$v_2$};
        \node[fill=red] (cc4) at (12,1) {};
        \node[fill=none, scale=1.5] at (11.75, 1.25) {$v_3$};
        \node[fill=red] (dd2) at (9.6, 0.4) {};
        \node[fill=red] (dd22) at (10, 0.25) {};
        \node[fill=red] (dd23) at (10.4, 0.4) {};
        \node[fill=red] (dd4) at (11.6, 0.25) {};
        \draw[color=red, thick] (cc3) -- (bb3);
        \draw[color=red, thick] (cc3) -- (bb32);
        \draw[color=red, thick] (cc2) -- (cc3);
        \draw[color=red, thick] (cc3) -- (cc4);
        \draw[color=red, thick] (cc2) -- (dd2);
        \draw[color=red, thick] (cc2) -- (dd22);
        \draw[color=red, thick] (cc2) -- (dd23);
        \draw[color=red, thick] (cc4) -- (dd4);
        
    \end{tikzpicture}
    \caption{A tree $T$ on the left and its internal subgraph, $\mathcal{I}_T$, on the right.}
    \label{fig:internal-subgraph}
\end{figure}
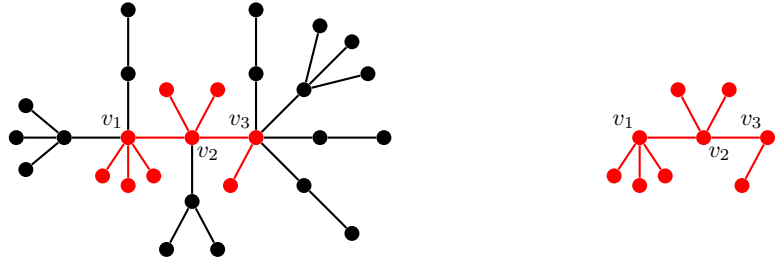

The internal subgraph as defined above plays a crucial role in our reconstruction of trees of small diameter. We now show some basic properties and results about the internal subgraph that we need for our reconstruction. 

\begin{proposition}\label{connectedness internal subgraph}
    The internal subgraph of a tree $T$ is a tree.
\end{proposition}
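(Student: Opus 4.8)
The plan is to separate the two defining properties of a tree. Acyclicity is immediate: $\mathcal{I}_T$ is a subgraph of the tree $T$, and every subgraph of a forest is a forest, so $\mathcal{I}_T$ has no cycles. Hence the entire content of the proposition is that $\mathcal{I}_T$ is connected, and that is what I would focus on.

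First I would show that the set $W = \{v_1, \dots, v_l\}$ of vertices of internal degree strictly greater than $1$ already induces a connected subgraph of $T$. Fix $v_i, v_j \in W$ and let $P\colon v_i = w_0, w_1, \dots, w_k = v_j$ be the unique path between them in $T$ (if $v_i = v_j$ or $k=1$ there is nothing to prove). The key observation is that every vertex of $P$ is an internal vertex of $T$: the endpoints $v_i, v_j$ have internal degree $>1$, hence degree $\ge 2$; and each interior vertex $w_t$ with $1 \le t \le k-1$ has the two path-neighbours $w_{t-1}, w_{t+1}$, so $\deg_T(w_t) \ge 2$ as well. Consequently, for each interior vertex $w_t$ its two path-neighbours $w_{t-1}$ and $w_{t+1}$ are themselves internal vertices of $T$, so $w_t$ has internal degree $\ge 2$, i.e. $w_t \in W$. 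Together with $w_0, w_k \in W$ this shows $P$ lies entirely in $W$; since $P$ is a path in $T$ all of whose vertices lie in $W$, it is a path in the induced subgraph $T[W]$, so $T[W]$ is connected.

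Finally I would add back the leaf-vertices: every $\ell \in L_i$ is a leaf of $T$ whose unique neighbour is $v_i \in W$, so the edge $\ell v_i$ belongs to the induced subgraph $\mathcal{I}_T$, joining $\ell$ to the connected set $W$; hence $\mathcal{I}_T$ is connected, and being connected and acyclic it is a tree. The only point worth flagging is the degenerate case $l=0$ (for instance stars and bi-stars), where $\mathcal{I}_T$ is the empty graph; this case is irrelevant to the later applications, which concern diameter $4$ and $5$, so I would either remark on it or simply assume $l \ge 1$. I do not expect a genuine obstacle here; the one step that requires care is the claim that the connecting path $P$ stays inside $W$, which is precisely where one uses that the defining condition is \emph{internal degree $>1$} rather than merely being an internal vertex.
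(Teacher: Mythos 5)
Your proof is correct and follows essentially the same route as the paper: acyclicity is inherited from $T$, and connectivity is established by taking the path in $T$ between two vertices of internal degree greater than $1$ and showing every interior vertex of that path also has internal degree at least $2$. Your write-up is if anything slightly more careful than the paper's, since you explicitly justify why the interior path vertices are internal and explicitly reattach the leaf-vertices $L_i$ at the end.
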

\begin{proof} The internal subgraph does not contain cycles, because it is a subgraph of a tree. Thus, it suffices to show that $\mathcal{I}_T$, is connected. It then suffices to show that, if $\{v_1, \ldots, v_l\}$ is the set of vertices of $T$ with internal degree strictly greater than 1, then there is a path from $v_i$ to $v_j$ in $\mathcal{I}_T$ for all $1 \leq i,j \leq l$ with $i \neq j$. Since $T$ itself is connected, then there is a path $v_i, u_1, \ldots, u_x, v_j$ from $v_i$ to $v_j$, for some vertices $u_1, \ldots, u_x \in V(T)$. Observe that $u_1, \ldots, u_x$ have two neighbors in the path, so they are not leaves. Therefore, $u_1, \ldots, u_x$ are internal vertices that have internal degree at least two, so $u_1, \ldots, u_x$ are vertices of the internal subgraph. Hence, the path $v_i, u_1, \ldots, u_x, v_j$ is contained in $\mathcal{I}_T$, which implies that $\mathcal{I}_T$ is connected.
\end{proof}

The following lemma is a trivial application of the fact that any longest path in a tree contains two leaves, but we write it here to reference it in future proofs.

\begin{lemma} \label{diam-int}
    If $T$ is a tree with diameter  $d \geq 2$, then any path $P$ in $T$ contains at most $d-2$ internal edges. Furthermore, $T$ must contain at least one path with $d-2$ internal edges. \qed
\end{lemma}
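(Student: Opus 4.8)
The plan is to prove both halves by the standard path-extension argument for trees. For the upper bound, I would start from an \emph{arbitrary} path $P = v_0 v_1 \cdots v_k$ in $T$; we may assume $P$ has at least one internal edge, since otherwise the bound $0 \le d-2$ is immediate from $d \ge 2$. Now repeatedly lengthen $P$: if an endpoint, say $v_0$, is not a leaf of $T$, then $\deg(v_0) \ge 2$, so $v_0$ has a neighbour $w \ne v_1$, and since $T$ is acyclic $w$ cannot already lie on $P$, so $w v_0 v_1 \cdots v_k$ is again a path. Iterating at both ends (this terminates because $T$ is finite and no vertex repeats on a path) produces a path $\widetilde{P} = u_0 u_1 \cdots u_l$ whose two endpoints $u_0, u_l$ are leaves of $T$, with $l \le d$ by definition of the diameter. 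Being an internal edge is a property of the edge in $T$ and does not depend on the path it is viewed in, and every edge of $P$ is an edge of $\widetilde{P}$; hence the number of internal edges on $P$ is at most the number of internal edges on $\widetilde{P}$.

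It then remains to bound the internal edges of $\widetilde{P}$. Since $P$ has an internal edge, $\widetilde{P}$ has at least one edge, and together with $d \ge 2$ this forces $l \ge 2$ (a one-edge path with both endpoints leaves of $T$ would force $T = K_2$, i.e.\ $d=1$). Because $u_0$ and $u_l$ are leaves of $T$, the two extreme edges $u_0 u_1$ and $u_{l-1} u_l$ are leaf-edges, so at most $l - 2 \le d - 2$ of the $l$ edges of $\widetilde{P}$ are internal. Combining this with the previous paragraph gives that $P$ has at most $d - 2$ internal edges.

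For the second half I would take a diametral path $P = u_0 u_1 \cdots u_d$, that is, a path realizing the diameter. Its endpoints $u_0, u_d$ must be leaves of $T$, since otherwise the extension step above would produce a strictly longer path, contradicting maximality. Each interior vertex $u_i$ with $1 \le i \le d-1$ already has degree at least $2$ within $P$, hence is an internal vertex of $T$. Therefore each of the $d-2$ edges $u_i u_{i+1}$ with $1 \le i \le d-2$ has both endpoints internal and so is an internal edge of $T$; thus $P$ carries at least $d-2$ internal edges, and by the first half it carries exactly $d - 2$.

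I do not expect a genuine obstacle here: the only point requiring a sentence of care is the termination of the extension process and the trivial degenerate cases ($d = 2$, or a path with no internal edge), both of which are dispatched above. Everything else is immediate from the acyclicity and finiteness of $T$.
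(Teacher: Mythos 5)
Your proof is correct and takes the same approach the paper has in mind. The paper provides no written proof, merely noting in the surrounding text that the lemma is a trivial application of the fact that a longest path in a tree has two leaves as endpoints; your path-extension argument for the upper bound and your diametral-path analysis for the lower bound are precisely this observation carried out in detail.
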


From Lemma \ref{diam-int}, it follows that, if $T$ is a tree with diameter less than or equal to three, then the internal subgraph of $T$ is empty. 

\begin{proposition}\label{prop:numberinternaledges-in-IntSub} Let $d$ be an integer greater than or equal to 4.  If $T$ is a tree with diameter $d$, then any path in $\mathcal{I}_T$ has at most $d-4$ internal edges.
\end{proposition}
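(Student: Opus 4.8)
The plan is to reduce the statement to Lemma~\ref{diam-int}. Concretely, I would show that any path $P$ in $\mathcal{I}_T$ carrying $k$ internal edges of $T$ can be enlarged, \emph{inside $T$}, to a path with at least $k+2$ internal edges of $T$; since that larger path lies in $T$, Lemma~\ref{diam-int} forces $k+2 \le d-2$, i.e.\ $k \le d-4$, which is exactly the claim. The base case $k=0$ needs nothing, since $0 \le d-4$ holds by hypothesis.

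The first step is a normalization of $P = p_0 p_1 \cdots p_\ell$. Every interior vertex $p_1,\dots,p_{\ell-1}$ has degree $\ge 2$ in $\mathcal{I}_T$, hence cannot be a leaf of $T$; and since a vertex of $\mathcal{I}_T$ is either one of the distinguished vertices $v_1,\dots,v_l$ (those of internal degree $>1$) or a leaf of $T$, each $p_i$ with $1\le i\le \ell-1$ must be some $v_j$, and in particular is internal in $T$. Consequently the edges of $T$ lying on $P$ that are internal form a contiguous subpath $Q = p_s\cdots p_t$ with $s\in\{0,1\}$, $t\in\{\ell-1,\ell\}$ and $t-s=k$, and the two endpoints of $Q$ are themselves internal vertices of $T$, hence again of the form $v_j$. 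So it suffices to prove the bound for $Q$; after relabelling I may assume $P = q_0 q_1 \cdots q_k$ where every $q_i$ is one of $v_1,\dots,v_l$ and every edge $q_i q_{i+1}$ is an internal edge of $T$.

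For the extension step, assume $k\ge 1$. Since $q_0$ equals some $v_j$, it has at least two internal neighbours in $T$; its only neighbour on $P$ is $q_1$, so there is an internal neighbour $w_0\ne q_1$ of $q_0$, and $w_0\notin P$ because otherwise a cycle through $q_0$ would appear in the tree $T$. Symmetrically choose an internal neighbour $w_k\ne q_{k-1}$ of $q_k$ with $w_k\notin P$. If $w_0=w_k$, then $w_0,q_0,q_1,\dots,q_k$ would form a cycle of length $k+2\ge 3$ in $T$, impossible; hence $w_0\ne w_k$ and $w_0,q_0,q_1,\dots,q_k,w_k$ is a genuine path in $T$, all $k+2$ of whose edges are internal edges of $T$. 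Applying Lemma~\ref{diam-int} gives $k+2\le d-2$, as desired.

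The only place requiring care — and the step I would treat most carefully — is the normalization: one must be sure that the endpoints of the maximal ``internal'' subpath of $P$ are actual vertices $v_j$ rather than leaves of $T$, which rests on the two facts above (an interior vertex of a path in $\mathcal{I}_T$ is not a leaf of $T$, and a vertex of $\mathcal{I}_T$ is a $v_j$ or a leaf of $T$). Everything else is a short acyclicity argument in a tree, and, pleasantly, no case analysis on $d$ is needed beyond the trivial $k=0$ base case.
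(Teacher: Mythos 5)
Your proof is correct, and it is a clean, self-contained version of the argument that the paper only gestures at. Both proofs rest on the same two ingredients: Lemma~\ref{diam-int}, and the observation that a vertex of internal degree at least two lets you lengthen a path by an internal edge. The paper, however, proceeds indirectly: it fixes a diametral path $\ell_1, v_2, \ldots, v_d, \ell_{d+1}$, proves (via the extension trick and Lemma~\ref{diam-int}) that $v_2$ and $v_d$ have internal degree $1$, and then asserts that ``this implies'' the maximum distance between two vertices of internal degree $\geq 2$ is $d-4$. That final ``implies'' quietly reuses the very extension argument, since two such vertices need not lie on any diametral path, so the structure of a diametral path does not by itself give the bound. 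Your route is more direct and more complete: you normalize an arbitrary path in $\mathcal{I}_T$ to the contiguous internal-edge subpath $Q = q_0 \cdots q_k$, verify that each $q_i$ is one of the $v_j$ (so in particular $q_0, q_k$ have internal degree $\geq 2$, which is exactly the point that needs the care you flagged), extend by one internal neighbour at each end using acyclicity to keep the result a path, and then feed the resulting $k+2$ internal edges into Lemma~\ref{diam-int}. This avoids diametral paths entirely and closes the small gap in the paper's exposition, at the cost of a slightly more elaborate normalization step.
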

\begin{proof}
     
    Choose a path $P= v_0,\ldots,v_k$ in $\mathcal I_T$ with maximal number of internal edges. If either $v_0$ or $v_k$ is a leaf of $T$ then we may remove them from $P$ without changing the number of internal edges, so we may assume that neither $v_0$ nor $v_k$ is a leaf of $T$. Thus, $P$ contains $k$ internal edges.
    
    Further, $v_0$ and $v_k$ are adjacent to internal vertices $u$ and $w$ of $T$, respectively, and $u$ and $w$ are adjacent to leaves $u'$ and $w'$ of $T$, because if $u$ and $w$ were deep vertices, then we could find a path in $\mathcal{I}_T$ with more internal edges than $P$. Clearly $u$, $w$, $u'$ and $w'$ are pairwise distinct and none of them belong to $P$. So $u'$, $u$, $v_0$, \ldots, $v_k$, $w$, $w'$ is a path in $T$ of length $k+4$. Hence $k+4 \leq d$ and so $k\leq d-4$, as required.
\end{proof}

\begin{proposition}\label{prop:int-edge-incident-I_T} Let $d$ be an integer greater than 4. If $T$ is a tree with diameter $d$, then every internal edge in $T$ is incident to a vertex in $\mathcal{I}_T$.
\end{proposition}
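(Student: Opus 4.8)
The plan is to argue by contradiction: assume $e=uv$ is an internal edge of $T$ such that neither $u$ nor $v$ is a vertex of $\mathcal{I}_T$, and deduce that $T$ must be a bi-star, whose diameter is at most $3$, contradicting $d>4$.

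The first step is to unwind what the failure hypothesis says. Since $e=uv$ is internal, both $u$ and $v$ are internal vertices, hence neither is a leaf; in particular neither can belong to any of the leaf-sets $L_i$ in the definition of $\mathcal{I}_T$, so for such a vertex ``$u\in V(\mathcal{I}_T)$'' is equivalent to ``$u$ has internal degree greater than $1$.'' Thus the assumption is that both $u$ and $v$ have internal degree at most $1$. On the other hand, because $e$ is internal, $v$ is an internal neighbour of $u$ and $u$ is an internal neighbour of $v$, so in fact both internal degrees equal exactly $1$, with $v$ the unique internal neighbour of $u$ and $u$ the unique internal neighbour of $v$.

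The second step is to read off the global structure this forces. Every neighbour of $u$ other than $v$ is then a leaf of $T$, and likewise every neighbour of $v$ other than $u$ is a leaf; moreover each such leaf has $u$ (respectively $v$) as its only neighbour. Since $u$ and $v$ are internal they have degree at least $2$, so each has a nonempty set of leaf neighbours, say $S_u$ and $S_v$. Because $T$ is connected and acyclic, tracing outward from the edge $uv$ shows $V(T)=\{u,v\}\cup S_u\cup S_v$, i.e.\ $T$ is a bi-star with centers $u$ and $v$. In a bi-star the only vertices of degree greater than $1$ are the two centers, so every path has length at most $3$ (a longest one being $\ell\,u\,v\,\ell'$ with $\ell\in S_u$, $\ell'\in S_v$); hence $\diam(T)\le 3$, contradicting $d>4$. (In fact $d\ge 4$ already suffices for the contradiction.)

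I do not expect a genuine obstacle here: the only point requiring care is the bookkeeping in the first step, namely that for an internal edge the phrase ``incident to a vertex of $\mathcal{I}_T$'' really does reduce to ``some endpoint has internal degree greater than $1$,'' and not to the a priori weaker ``some endpoint is a leaf neighbour of such a vertex'' — this reduction is automatic precisely because the endpoints of an internal edge are never leaves. After that, the remainder is the short structural argument above identifying $T$ as a bi-star.
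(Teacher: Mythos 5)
Your argument is correct and follows essentially the same route as the paper: assume an internal edge $e=uv$ has neither endpoint in $\mathcal{I}_T$, deduce that both $u$ and $v$ have internal degree exactly $1$, conclude that all other neighbours are leaves so $T$ is a bi-star with diameter $3$, contradicting $d>4$. The one small added value in your write-up is that you explicitly justify why, for an internal edge, ``incident to $\mathcal{I}_T$'' reduces to ``some endpoint has internal degree greater than $1$'' (since endpoints of an internal edge cannot lie in the sets $L_i$), a point the paper leaves implicit.
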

\begin{proof}
    Let $e$ be an internal edge in $T$ and arguing by contraction, assume that $e$ is not incident to a vertex in $\mathcal{I}_T$. This implies that if $e=uv$, then both $u$ and $v$ have internal degree exactly 1. Then $N(u)\setminus \{v\}$ contains only leaf-vertices of internal degree 0, and similarly for $N(v)\setminus \{u\}$. Since $T$ is connected, this implies that $T$ has only one internal edge and hence it is a bi-star. Hence, $T$ has diameter 3, which contradicts that $T$ has diameter $d \geq 4$.
\end{proof}

\begin{corollary}\label{cor:leaf-comp-number-diam4-diam5}
Let $T$ be a tree. Then:
    \begin{enumerate}
         \item[(a)] If $T$ has diameter 4, then $\mathcal{I}_T$ is a star, or equivalently one leaf component of $T$.
        \item[(b)] If $T$ has diameter 5, then $\mathcal{I}_T$ is a star or a bi-star, or equivalently two leaf components of $T$ connected by an edge. 
    \end{enumerate}
\end{corollary}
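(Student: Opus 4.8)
The plan is to reduce both statements to the structure of the subtree $J$ of $\mathcal{I}_T$ induced by the set $\{v_1,\dots,v_l\}$ of vertices of $T$ of internal degree strictly greater than $1$. The key observation is that $\mathcal{I}_T$ is obtained from $J$ by attaching to each $v_i$ the leaves in $L_i$ (the leaf-neighbours of $v_i$), and that $\{v_i\}\cup L_i$ is exactly the leaf component of $T$ containing $v_i$: the non-leaf-edges at $v_i$ all lie in $I(T)$, and a leaf in $L_i$ has no neighbour other than $v_i$, so removing $I(T)$ leaves the star on $\{v_i\}\cup L_i$ as one component. Thus the "star / bi-star" assertion and the "one / two leaf components" assertion are two descriptions of the same object.

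First I would record two facts about $J$. By the argument in Proposition \ref{connectedness internal subgraph}, a path in $T$ between two vertices of internal degree $\geq 2$ passes only through vertices of internal degree $\geq 2$, so $J$ is connected, hence a subtree of $T$. Every edge of $J$ joins two vertices of degree $>1$ in $T$, so it is an internal edge of $T$; therefore every path in $J$ is a path in $\mathcal{I}_T$ consisting entirely of internal edges, and Proposition \ref{prop:numberinternaledges-in-IntSub} gives $\diam(J)\leq d-4$. Second, $J$ is nonempty: by Lemma \ref{diam-int}, $T$ has a path with $d-2\geq 2$ internal edges, and the vertex shared by two consecutive internal edges of that path has two internal neighbours along the path, hence internal degree $\geq 2$, so $l\geq 1$.

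Now the corollary is a case split on $\diam(J)$. For (a), $d=4$ forces $\diam(J)=0$, so $J$ is a single vertex $v$, and $\mathcal{I}_T$ is the star on $\{v\}\cup L_v$, which is the leaf component of $T$ containing $v$. For (b), $d=5$ forces $\diam(J)\leq 1$, so $J$ is a single vertex or a single edge $v_1v_2$. In the first case $\mathcal{I}_T$ is a star, i.e. one leaf component. In the second, $\mathcal{I}_T$ is the union of the stars on $\{v_1\}\cup L_1$ and $\{v_2\}\cup L_2$ together with the internal edge $v_1v_2$ — that is, the two corresponding leaf components of $T$ joined by an edge, which is a bi-star (degenerating to a star if one of $v_1,v_2$ has no leaf neighbour).

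I do not expect a genuine obstacle here: once Proposition \ref{connectedness internal subgraph}, Proposition \ref{prop:numberinternaledges-in-IntSub}, and Lemma \ref{diam-int} are in hand, the result is essentially immediate. The only point requiring a little care is the routine bookkeeping that identifies $\{v_i\}\cup L_i$ with a leaf component of $T$, and the check that a path of $J$ really consists entirely of internal edges of $T$ so that Proposition \ref{prop:numberinternaledges-in-IntSub} applies to it.
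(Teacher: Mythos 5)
Your proof is correct and takes essentially the same route as the paper: both rely on Proposition \ref{connectedness internal subgraph} for connectedness of $\mathcal{I}_T$ and on Proposition \ref{prop:numberinternaledges-in-IntSub} to bound the number of internal edges in $\mathcal{I}_T$ by $d-4$, then do a short case analysis. Your version is slightly more carefully packaged (making the auxiliary subtree $J$ on the high-internal-degree vertices explicit and checking nonemptiness via Lemma \ref{diam-int}), but the underlying argument is identical.
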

\begin{proof}
     (a) By Proposition \ref{prop:numberinternaledges-in-IntSub}, any path in $\mathcal{I}_T$ has at most $0$ internal edges since $T$ has diameter 4. Then, $\mathcal{I}_T$ consists of an internal vertex $u$ and the set of leaf-vertices in $T$ incident to $u$. Hence, it follows that $\mathcal{I}_T$ is a leaf component of $T$, or in other words, a star graph. 

     (b) Similarly, Proposition \ref{prop:numberinternaledges-in-IntSub} tells us that any path in $\mathcal{I}_T$ has at most $1$ internal edge, since $T$ has diameter 5.  Then, $\mathcal{I}_T$ consists of two adjacent internal vertices $u$ and $v$ and the set of leaf-vertices adjacent to $u$, $L_u$, and those adjacent to $v$, $L_v$.  In the case, that both $L_u$ and $L_v$ are empty, then $\mathcal{I}_T$ is one edge $uv$ (the star of order 2), if only one of $L_u$ or $L_v$ is empty, then $\mathcal{I}_T$ is a star and if none are empty, then $\mathcal{I}_T$ is a bi-star. 
\end{proof}

\begin{lemma}\label{lem:deep-vertex-in-internal-subgraph}
    Let $T$ be a tree with internal subgraph $\mathcal{I}_T$. If $T$ has a deep vertex $v$, then $v$ is in $\mathcal{I}_T$ and so the leaf components of order 1 in $T$ are contained in $\mathcal{I}_T$.
\end{lemma}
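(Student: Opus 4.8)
The plan is to reduce everything to a degree count. First I would recall that, by definition, a deep vertex $v$ is an internal vertex whose leaf component is $St_1$; equivalently, $v$ is internal and has no leaf in its neighbourhood. Hence every vertex of $N(v)$ is an internal vertex, and since $v$ is internal we have $\deg(v) \ge 2$. Therefore the internal degree of $v$ --- the number of internal vertices in $N(v)$ --- equals $\deg(v) \ge 2$, which is strictly greater than $1$. By the definition of the internal subgraph, $v$ is then one of the vertices $v_1,\dots,v_l$ of internal degree $>1$ used to build $\mathcal{I}_T$, so $v \in V(\mathcal{I}_T)$.

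For the second assertion, I would observe that a leaf component of $T$ of order $1$ is, by definition, a single vertex $u$ that forms its own connected component in $T \setminus I(T)$. Such a $u$ cannot be a leaf of $T$: if $\deg_T(u)=1$, its unique incident edge is a leaf-edge, hence not in $I(T)$, so $u$ would lie in a component of $T\setminus I(T)$ of order at least $2$. Thus $u$ is internal. Moreover $u$ has no leaf-neighbour in $T$: any leaf-edge incident to $u$ survives in $T\setminus I(T)$ and again forces the component of $u$ to have order $\ge 2$. Hence $u$ is a deep vertex, and by the first part $u \in V(\mathcal{I}_T)$; since the leaf component is just the single vertex $\{u\}$, it is contained in $\mathcal{I}_T$.

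The argument is essentially immediate once the definitions are unpacked, so I do not expect a genuine obstacle; the only point requiring a moment's care is distinguishing the \emph{internal} degree (counting internal neighbours) from the ordinary degree, and checking that being deep forces these two quantities to coincide. One could alternatively phrase the second part via Corollary~\ref{cor:num-deep-vertices}, which already identifies the parts of size $1$ in $\lead(\csft)=\lc(T)$ with the deep vertices of $T$, but the direct argument above avoids invoking the leading-partition machinery and keeps the lemma self-contained.
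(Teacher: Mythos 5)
Your proposal is correct and follows essentially the same route as the paper: unpack the definition of a deep vertex, note that every neighbour of a deep vertex must itself be internal, conclude the internal degree is at least $2$, and hence $v\in\mathcal{I}_T$. You additionally spell out why an order-$1$ leaf component must be a deep vertex (ruling out that it could be a leaf of $T$ or have a leaf-neighbour), a step the paper treats as immediate from the earlier correspondence between deep vertices and singleton leaf components; this adds clarity but is not a different argument.
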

\begin{proof}
    A deep vertex, $v$, in $T$ is by definition an internal vertex without any leaf-vertices in its neighborhood. Since an internal vertex has degree at least 2, this implies that all  neighbors of $v$ have degree at least 2 and there are at least two such neighbors.  Hence, $v$ has internal degree at least 2, so $v \in \mathcal{I}_T$.
\end{proof}

\begin{proposition}\label{prop:num-deep-vert-diam-at-most-5}
    Let $T$ be a tree. Then:
    \begin{enumerate}
        \item[(a)] If $T$ has diameter at most 3, then $T$ has no deep vertices.
        \item[(b)] If $T$ has diameter 4, then $T$ contains at most 1 deep vertex.
        \item[(c)] If $T$ has diameter 5, then $T$ contains at most 2 deep vertices.
    \end{enumerate}
\end{proposition}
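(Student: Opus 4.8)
The plan is to deduce all three bounds from the structure of the internal subgraph $\mathcal{I}_T$ established above, using two facts: first, by the definition of a deep vertex, $v$ is deep exactly when its leaf component is the singleton $\{v\}$, so deep vertices correspond bijectively to the order-$1$ leaf components of $T$; and second, by Lemma~\ref{lem:deep-vertex-in-internal-subgraph}, every such order-$1$ leaf component is contained in $\mathcal{I}_T$. Combined with Corollary~\ref{cor:leaf-comp-number-diam4-diam5}, which tells us that $\mathcal{I}_T$ is a single leaf component when $\diam(T)=4$ and is a union of at most two leaf components (joined by an edge) when $\diam(T)=5$, the bounds will follow from the simple observation that a singleton leaf component contained in a union of one or two full leaf components must coincide with one of them.

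For part (a): if $\diam(T)\le 3$, then (as noted after Lemma~\ref{diam-int}) $\mathcal{I}_T$ is empty, so by Lemma~\ref{lem:deep-vertex-in-internal-subgraph} $T$ has no deep vertices. For part (b): if $\diam(T)=4$, Corollary~\ref{cor:leaf-comp-number-diam4-diam5}(a) gives that $V(\mathcal{I}_T)$ is the vertex set of a single leaf component $C$ of $T$. If $v$ is a deep vertex then $v\in V(\mathcal{I}_T)=V(C)$; but the leaf component of $T$ containing $v$ is $C$ (leaf components partition $V(T)$) and is also $\{v\}$ (since $v$ is deep), whence $V(C)=\{v\}$. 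Thus any deep vertex equals $v$, so $T$ has at most one deep vertex. For part (c): if $\diam(T)=5$, Corollary~\ref{cor:leaf-comp-number-diam4-diam5}(b) gives that $V(\mathcal{I}_T)$ is the union of at most two leaf components $C_1,C_2$ of $T$. The same argument shows that for any deep vertex $v$ the singleton $\{v\}$, being the leaf component of $v$, must equal $C_1$ or $C_2$. Two distinct deep vertices would therefore account for $C_1$ and $C_2$, and a third would have to coincide with one of them; hence $T$ has at most two deep vertices.

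I do not expect a genuine obstacle here: the proposition is a bookkeeping consequence of the internal-subgraph results. The one point to double-check is that the ``equivalently'' clauses of Corollary~\ref{cor:leaf-comp-number-diam4-diam5} are being used in the intended sense, namely that $V(\mathcal{I}_T)$ is a union of full leaf components of $T$ rather than merely a subset of such vertices. Should a more self-contained argument be preferred, each part also admits a direct proof via Lemma~\ref{diam-int}: an extra deep vertex beyond the allowed count would, together with a longest path, produce a path with more than $\diam(T)-2$ internal edges, a contradiction; I would keep this as an alternative but expect the route through $\mathcal{I}_T$ to be the cleanest. Note also that, via Corollary~\ref{cor:num-deep-vertices}, these bounds translate directly into bounds on the multiplicity of $1$ in $\lead(\csft)$ for trees of diameter at most $5$.
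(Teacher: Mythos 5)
Your argument is correct and follows the same route as the paper: Lemma~\ref{lem:deep-vertex-in-internal-subgraph} places every deep vertex (equivalently, every order-$1$ leaf component) inside $\mathcal{I}_T$, and Corollary~\ref{cor:leaf-comp-number-diam4-diam5} (with Lemma~\ref{diam-int} for the diameter~$\le 3$ case) caps the number of leaf components that $\mathcal{I}_T$ can comprise. You merely spell out the short bookkeeping step that the paper declares ``immediate,'' and your reading of the ``equivalently'' clause in Corollary~\ref{cor:leaf-comp-number-diam4-diam5} is the intended one.
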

\begin{proof}
    For (a), recall that Lemma \ref{diam-int} implies that the internal subgraph of a tree of diameter less than or equal to 3 is empty. Hence, Lemma \ref{lem:deep-vertex-in-internal-subgraph} implies that there cannot be deep vertices in a tree of diameter at most 3. Similarly, (b) and (c) immediately follow from Corollary \ref{cor:leaf-comp-number-diam4-diam5} and Lemma \ref{diam-int}.
\end{proof}

\begin{corollary}\label{cor:1s-leading-diam45}
    Let $\csft$ be the chromatic symmetric function of a tree $T$ and let $\lead(\csft)=(n^{m_n},\ldots,1^{m_1})$ be its leading partition. Then:
    \begin{enumerate}
        \item[(a)] If $T$ has diameter $1 \leq \diam(T) \leq 3$, then $m_1=0$. 
        \item[(b)] If $T$ has diameter 4, then $m_1=0$ or $1$.
        \item[(c)] If $T$ has diameter 5, then $m_1=0,1$ or $2$.
    \end{enumerate}
\end{corollary}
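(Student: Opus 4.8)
The plan is to reduce this entirely to two results already established: Corollary~\ref{cor:num-deep-vertices}, which identifies $m_1$ with a structural invariant of $T$, and Proposition~\ref{prop:num-deep-vert-diam-at-most-5}, which bounds that invariant in terms of $\diam(T)$. Write $\lead(\csft) = \lc(T) = (n^{m_n}, \ldots, 1^{m_1})$, where the first equality is Theorem~\ref{thm:leading-partition}. By Corollary~\ref{cor:num-deep-vertices}, for any tree $T$ with at least two vertices the multiplicity $m_1$ of the part $1$ in $\lead(\csft)$ equals the number of deep vertices of $T$. (One should also note the degenerate cases: trees with at most three vertices are stars, hence have diameter at most $2$ and no deep vertices, so $m_1 = 0$ there as well, consistent with part (a).)

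With this identification in hand, the three parts follow immediately from Proposition~\ref{prop:num-deep-vert-diam-at-most-5}. For (a), a tree of diameter $1 \leq \diam(T) \leq 3$ has no deep vertices by part (a) of that proposition, so $m_1 = 0$. For (b), a tree of diameter $4$ has at most one deep vertex by part (b), so $m_1 \in \{0, 1\}$. For (c), a tree of diameter $5$ has at most two deep vertices by part (c), so $m_1 \in \{0, 1, 2\}$.

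There is essentially no obstacle here: the content of the statement has already been done in Corollary~\ref{cor:num-deep-vertices} (translating "multiplicity of $1$ in the leading partition" into "number of deep vertices") and in Proposition~\ref{prop:num-deep-vert-diam-at-most-5} (bounding the deep vertices via the diameter, itself a consequence of Lemma~\ref{diam-int} and Corollary~\ref{cor:leaf-comp-number-diam4-diam5} through Lemma~\ref{lem:deep-vertex-in-internal-subgraph}). The only thing to be careful about is phrasing part (a) so that it genuinely covers the full range $1 \leq \diam(T) \leq 3$, including the small stars where the leading partition is a single part; this is harmless since those trees have $m_1 = 0$ as well. Accordingly the write-up will be a two-sentence deduction citing the two earlier results.
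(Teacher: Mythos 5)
Your proof is correct and follows exactly the same route as the paper's, which deduces the corollary immediately from Corollary~\ref{cor:num-deep-vertices} and Proposition~\ref{prop:num-deep-vert-diam-at-most-5}. Your added remark about the degenerate small-star cases is a harmless and reasonable clarification.
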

\begin{proof}
    This is an immediate consequence of Corollary \ref{cor:num-deep-vertices} and Proposition \ref{prop:num-deep-vert-diam-at-most-5}.
\end{proof}

\subsection{Leaf components in the internal subgraph} From the leading partition of a tree, $T$, we can recover the orders of the leaf components of $T$. When $T$ has diameter 4, one of the leaf components of $T$ is in $\mathcal{I}_T$ and if $T$ has diameter 5, then two of the leaf components of $T$ will be in $\mathcal{I}_T$.  We now show that in the cases that $T$ is proper or has diameter at most 5, we will be able to recover the orders of the leaf components in $T$ that are also in $\mathcal{I}_T$ from $\mathbf{X}_T$.

\begin{definition}\label{def:lead-component-adjacency}
    Let $T$ be a tree and let $\mathcal{L}_1$ and $\mathcal{L}_2$ be two leaf components with central vertices $v_1$ and $v_2$, respectively. We say that $\mathcal{L}_1$ and $\mathcal{L}_2$ are \defn{adjacent} if $v_1v_2 \in E(T)$. In addition, we will refer to $\mathcal{L}_1$ and $\mathcal{L}_2$ as the \defn{leaf component endpoints} for the internal edge $e=v_1v_2$.
\end{definition}

\begin{example}
    In the tree below, the leaf components $\mathcal{L}_1$ (with center $v_1$) and  $\mathcal{L}_2$ (with center $v_2$), are adjacent and they are the leaf component endpoints of the edge $v_1v_2$.
    \begin{center}
        \begin{tikzpicture}[every node/.style={circle, fill=black, scale=0.6}, thick]
            \node[fill=red] (a) at (0,0) {};
            \node[fill=red] (a1) at (-.53, -.53) {};
            \node[fill=red] (a2) at (-.75, 0) {};
            \node[fill=red] (a3) at (-.53, .53) {};

            \node[scale=1.5, fill=none] at (0.05, -.25) {$v_1$};
            
            \node[fill=blue] (b) at (1,0) {};
            \node[fill=blue] (b1) at (0.63, 0.53) {};
            \node[fill=blue] (b2) at (1.37, 0.53) {};

            \node[scale=1.5, fill=none] at (1.05, -.25) {$v_2$};

            \node (c) at (2,0) {};
            
            \node (d) at (3,0) {};
            \node (d1) at (3.75, 0) {};

            \draw[red] (a) -- (a1);
            \draw[red] (a) -- (a2);
            \draw[red] (a) -- (a3);

            \draw[blue] (b) -- (b1);
            \draw[blue] (b) -- (b2);

            \draw(d)--(d1);

            \draw (a) -- (b);
            \draw(b) -- (c);
            \draw(c)--(d);
        \end{tikzpicture}
    \end{center}
    
\end{example}

Given a multiset $A$, we denote by $m_A(a)$ the multiplicity of $a$ in $A$. Further, if we have another multiset $B$, then their difference $A-B$ is the multiset where each $a \in A$ has multiplicity $\max(0, m_A(a) - m_B(a))$. For a multiset $A$, we denote by $|A|$ the number of elements (counted with multiplicity) in $A$, or equivalently the sum of the multiplicities of all the elements. We denote multisets using double-brackets, for instance $A = \lmulti 3,3,5 \rmulti$.

\begin{definition}
    Let $\csft=\sum_{\lambda \vdash n}c_\lambda \mathfrak{st}_\lambda$ be the chromatic symmetric function of an $n$-vertex tree. For any partition $\mu \vdash n$, let $A_\mu$ denote the multiset containing the parts of $\mu$ with multiplicity given by the number of occurrences of that part in $\mu$. Further, for any $\mu \vdash n$ such that $c_\mu \neq 0$ in $\csft$, define the \defn{adjacency multiset} $E_\mu \coloneqq A_{\lead(\csft)}-A_\mu$. If $\ell(\mu) = \ell(\lead(\csft))-k$ where $k \geq 1$, we call $E_\mu$ a \defn{k-edge adjacency multiset}.
\end{definition}

In the following proposition, we interpret the definition of $E_\mu$ in terms of edge adjacencies of leaf components of a tree with chromatic symmetric function $\csft$.  For example, in the case of proper trees, we can recover all edge adjacencies between leaf components from the 1-edge adjacency multisets, $E_\mu$. Proposition \ref{prop:k-edge-adjacencies} contains all the results needed about $E_\mu$ for the reconstructions of trees of diameter at most 5. 
Recall that in Theorem \ref{thm:dnc-tree-csf} we used the notation $\lambda(F)$ to denote the partition whose parts are the orders of the connected components in a forest $F$.

\begin{proposition}\label{prop:k-edge-adjacencies}
    Assume that $\csft$ is the chromatic symmetric function of an $n$-vertex tree $T$ with leading partition $\lead(\csft)=(n^{m_n},\ldots, 1^{m_1})$. Let $\mu \vdash n$ such that $c_\mu \neq 0$ in $\csft$ and such that $\mu$ contains no 1s. Then:
    \begin{enumerate}
        \item[(a)] If $m_1=0$ and $\ell(\mu) = \ell(\lead(\csft))-1$, then $E_\mu = \lmulti p,q \rmulti$, where $p$ and $q$ are orders of two adjacent leaf components in $T$. Further, $c_\mu$ is the number of internal edges with leaf component endpoints of order $p$ and $q$.
        \item[(b)] If $m_1=1$ and $\ell(\mu) = \ell(\lead(\csft))-1$, then $E_\mu = \lmulti 1, q \rmulti$, where $q$ is the order of a leaf component adjacent to the deep vertex. Then, $c_\mu$ is the number of leaf components of order $q$ adjacent to the deep vertex.
    \end{enumerate}
\end{proposition}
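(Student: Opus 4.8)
The plan is to evaluate $c_\mu$ through the path count of Corollary \ref{cor:coeff-paths-seq} in a fixed DNC-tree $\mathcal{D}$ of $T$. First I would observe that, since $\mu$ has no part equal to $1$ and $T$ has no isolated vertices, every sequence in $\mathcal{S}_\mu$ has $m=0$ dot-contractions; hence $c_\mu=|\mathcal{S}_\mu|$ and every path counted uses only deletions $L$ and leaf-contractions $R$. Next I would record how these operations affect $\ell(\lc)$: by Proposition \ref{properties-LCpartition}(b), $\ell(\lc(T))=\#I(T)+1$, so $\ell(\mu)=\#I(T)$; and by Lemmas \ref{lem:deletions-lead} and \ref{lem:leaf-contraction-lead} the length of the leaf component partition drops by $2$ under a case-(a) deletion, by $1$ under a leaf-contraction or a case-(b) deletion, and is unchanged under a case-(c) deletion, while $\lc$ is nondecreasing along any path (Corollary \ref{cor:LC-lex}). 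Since neither a deletion nor a leaf-contraction creates a new deep vertex (a deletion adds no neighbours, and a leaf-contraction attaches a leaf to the merged vertex), every forest along an $L/R$-path from $T$ has at most as many deep vertices as $T$; in parts (a) and (b) this rules out case-(a) deletions, so along any $L/R$-path from $T$ to a star forest $H\in\mathcal{S}_\mu$ the length of $\lc$ drops by exactly $1$. It follows that such a path contains \emph{exactly one} length-decreasing step (a leaf-contraction or a case-(b) deletion), every other step is a length-preserving case-(c) deletion, and $\mu=\lambda(H)$ equals the leaf component partition of the forest obtained immediately after that single step. I would also record the elementary fact that a case-(c) deletion removes exactly one edge from $I(F)$, hence leaves the spanning subgraph $F\setminus I(F)$ — and so the actual set of leaf components — unchanged.

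For part (a), $m_1=0$ forces $T$ proper by Corollary \ref{cor:prop-trees-deep-vertices}, and deletions and leaf-contractions preserve properness, so every intermediate forest is proper and all of its internal-edge deletions are case-(c). Thus the single length-decreasing step is a leaf-contraction of some internal edge $e$, applied to a forest with the same leaf components as $T$; by Lemma \ref{lem:leaf-contraction-lead} the resulting partition is obtained from $\lead(\csft)$ by merging the two leaf components $\mathcal{L},\mathcal{L}'$ that are the endpoints of $e$, say of orders $p$ and $q$, and a short multiset computation (distinguishing $p=q$ from $p\neq q$, using $p+q>p,q$) gives $E_\mu=A_{\lead}-A_\mu=\lmulti p,q\rmulti$. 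Conversely, for a fixed internal edge $e$ whose leaf-component endpoints have orders $p,q$, I would check there is exactly one path in $\mathcal{S}_\mu$: traverse the internal edges in the ordering of $\mathcal{D}$, deleting each one except $e$ when it is reached (legitimate because properness, hence the internal status of the unprocessed edges, persists) and leaf-contracting $e$ when it is reached. This yields a bijection between $\mathcal{S}_\mu$ and the set of internal edges with leaf-component endpoints of orders $p$ and $q$, which is the claim.

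For part (b), $m_1=1$ means $T$ has a unique deep vertex $d$, and $\mathcal{L}_d=St_1$ is its only order-$1$ leaf component; a case-(b) deletion along an $L/R$-path can occur only after $d$ has been reduced to degree $2$. The single length-decreasing step is then either a leaf-contraction of an edge incident to $d$, say $dw$ (a leaf-contraction of an edge not incident to $d$ would merge two components of order $\geq 2$ and leave $\mathcal{L}_d=St_1$ separate, forcing a part $1$ in $\mu$), or a case-(b) deletion of an edge $dw'$, which leaves $d$ pendant at its other neighbour $w$; in both cases the step merges $\mathcal{L}_d$ into the leaf component $\mathcal{L}_w$ of a neighbour $w\in N(d)$ (every neighbour of $d$ is internal, hence the centre of a leaf component joined to $d$ by an edge, so $\mathcal{L}_w$ is adjacent to the deep vertex). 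Hence $\mu=\sort(|\mathcal{L}_w|+1,\ldots)$ and $E_\mu=\lmulti 1,q\rmulti$ with $q=|\mathcal{L}_w|$. I would then build a bijection between $\mathcal{S}_\mu$ and $\{\,w\in N(d):|\mathcal{L}_w|=q\,\}$: given such a $w$, the corresponding path deletes every internal edge other than $dw$ as it is reached; if $dw$ comes after all other edges of $d$ in the ordering of $\mathcal{D}$, the deletion of the last remaining other edge of $d$ is the case-(b) step performing the merge (and $dw$, now a leaf-edge, is skipped), and otherwise $dw$ is leaf-contracted when reached, all remaining steps being forced case-(c) deletions. Conversely, the length-decreasing step of any path in $\mathcal{S}_\mu$ cannot delete $dw$ beforehand (that would disconnect $d$ from $\mathcal{L}_w$), so it singles out such a $w$ and the rest of the path is forced. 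Therefore $c_\mu=|\mathcal{S}_\mu|$ is the number of leaf components of order $q$ adjacent to $d$.

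I expect the main obstacle to be the case-(b)/leaf-contraction dichotomy in part (b): the merge of the deep vertex into a neighbouring leaf component can be realized either by a leaf-contraction of the relevant edge or by the last deletion incident to $d$, depending on where that edge sits in the fixed edge ordering of $\mathcal{D}$, and in each situation one must verify that the remainder of the path is uniquely determined, that the forest reached just after the merge is proper so that the tail deletions are all case-(c), and that the unprocessed internal edges remain internal until operated on. By contrast, the multiset identities for $E_\mu$ and the fact that case-(c) deletions preserve the \emph{set} of leaf components (not merely its partition) are routine, being already implicit in the proofs of Lemma \ref{lem:deletions-lead}(c) and Proposition \ref{prop:no-deep-coeff}.
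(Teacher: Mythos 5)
Your proposal is correct and follows essentially the same route as the paper: both count DNC-tree paths containing no dot-contractions, deduce that exactly one step merges two leaf components, and in part (b) distinguish the all-deletion path (where the last internal edge at the deep vertex becomes a leaf-edge and is skipped) from the paths with a single leaf-contraction incident to that vertex. Your bookkeeping via the drop in $\ell(\lc)$ replaces the paper's path-length and component count, and your handling of part (b) is, if anything, more explicit than the paper's.
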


\begin{proof}
    We begin by noting that since $\mu$ does not contain 1s, then in any DNC-tree for $T$, any sequence of $L,M$, and $R$s encoding a path from $T$ to a star forest $F$ such that $\lambda(F)=\mu$ contains no $M$s (dot-contractions). 

    To prove (a), note that since $\lead(\csft)$ contains no 1s, then $T$ must be a proper tree by Corollary \ref{cor:prop-trees-deep-vertices}. For proper forests, a deletion always results in a forest with one fewer internal edge. Hence, all paths in a DNC-tree from the root $T$ to a leaf $F$ have length $\#I(T)$, the number of internal edges.  If we want $\lambda(F)$ to have no 1s and length $\ell(\lead(\csft))-1$, then $F$ corresponds a path in the DNC-tree with exactly one $R$ (leaf-contraction) and the rest $L$s (deletions). If the internal edge that is leaf-contracted has leaf component endpoints of order $p$ and $q$, then $\lambda(F) = \mu = \sort(p+q, n^{m_n},\ldots, p^{m_p-1}, \ldots, q^{m_q-1}, \ldots, 2^{m_2})$ and so $E_\mu=\lmulti p,q\rmulti$. We get this $\mu$ for every internal edge connecting two leaf components of orders $p$ and $q$. Hence, the result follows.
    
    For (b), observe that Corollary \ref{cor:num-deep-vertices} implies that $T$ has exactly one deep vertex $v$. Then, in any DNC-tree for $T$, if deletions are applied to $\deg(v)-1$ of the internal edges incident to $v$, then after the last of these deletions $v$ is no longer an internal vertex in the resulting forest, it is a leaf in the leaf component of one of its neighbors $u \in N(v)$. This implies that a star forest $F$ with $\lambda(F)$ of length $\ell(\lead(\csft))-1$ and without 1s can correspond to a sequence of only deletions, $L$s.   Note that in this case we must have $\mu=\lambda(F) = (q+1, n^{m_n},\ldots, q^{m_q-1}, \ldots, 2^{m_2})$ where $q$ is the order of the leaf component containing $u$. Hence, $E_\mu = \lmulti 1,q \rmulti$. The other way to obtain a star forest $F$ with $\ell(\lambda(F))=\ell(\lead(\csft))-1$ and such that $\lambda(F)$ contains no ones is by using paths that correspond to a sequence of one $R$ and $\#I(T)-1=\ell(\lead(\csft))-2$ $L$s, where $R$ cannot be the last entry in the sequence and where $R$ is applied to an internal edge incident to the deep vertex $v$. It is then easy to see that we also obtain $E_\mu = \lmulti 1,q\rmulti$ where $q$ is the order of the leaf component containing the endpoint distinct to $v$ of the internal edge that was leaf-contracted. Now let $Q$ be the number of leaf components of order $q$ that are adjacent to the deep vertex $v$. From the argument above, the paths in a DNC-tree for $T$ that end in a star forest, $F$, with $\lambda(F)=\mu$ are $(L,\ldots,L)$ where there are $\#I(T)-1$ $Ls$ and $(L, \ldots, L, R, L, \ldots, L)$ where there are $\#I(T)-1$ $L$s and $R$ is in the $i$-th position for each $1 \leq i \leq Q-1$. Hence, $c_\mu$ is precisely the number of leaf components of order $q$ that are adjacent to the deep vertex $v$.
\end{proof}

For examples of edge adjacency multisets and Proposition \ref{prop:k-edge-adjacencies}, we refer the reader to Examples \ref{ex:proper-distinct-parts} and \ref{ex:diam-4-reconstruction}. The following corollary to the proposition above allows us to reconstruct proper trees whose leading partitions have distinct parts.

\begin{corollary}\label{cor:proper-tree-distinct-parts}
    Let $\csft$ be the chromatic symmetric function of a tree, $T$, such that $\lead=\lead(\csft) = (\lambda_1, \ldots, \lambda_\ell)$ contains no 1s and has all distinct parts. Then, $T$ can be reconstructed from $\csft$. In particular, $T$ can be reconstructed from the 1-edge adjacency multisets.
\end{corollary}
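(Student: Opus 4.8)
The plan is to turn $\csft$ into an explicit reconstruction of $T$, using only the leading partition and the $1$-edge adjacency multisets. First, read off $\lead \coloneqq \lead(\csft) = (\lambda_1,\dots,\lambda_\ell)$ as the lex-smallest partition of $n$ with $c_{\lead}\neq 0$; by hypothesis its parts are pairwise distinct and all $\geq 2$. Since $\lead$ has no $1$s, Corollary \ref{cor:prop-trees-deep-vertices} says $T$ is a proper tree, and Theorem \ref{thm:leading-partition} gives $\lead = \lc(T)$. Thus the leaf components of $T$ are the stars $St_{\lambda_1},\dots,St_{\lambda_\ell}$, and by Proposition \ref{properties-LCpartition}(a) their centres are exactly the $\ell$ internal vertices of $T$; distinctness of the $\lambda_i$ lets us tag each internal vertex unambiguously by the order of its leaf component.

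The key structural observation is that $T$ is determined, up to isomorphism, by the orders $\lambda_1,\dots,\lambda_\ell$ (now in hand) together with the set of unordered pairs $\{p,q\}$ of these orders for which the leaf components of orders $p$ and $q$ are adjacent in the sense of Definition \ref{def:lead-component-adjacency}. Indeed, $E(T)$ is the disjoint union of $I(T)$ with the leaf-edges, every internal edge joins the centres of two \emph{distinct} leaf components (removing an internal edge separates its endpoints in the tree $T$, hence in $T\setminus I(T)$), and conversely any edge between two internal vertices is internal; so the internal edges of $T$ are precisely the leaf-component adjacencies, and $T$ is recovered by taking the disjoint stars $St_{\lambda_1},\dots,St_{\lambda_\ell}$ and joining the centre of $St_p$ to that of $St_q$ for every adjacent pair $\{p,q\}$. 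Here distinctness of the $\lambda_i$ is essential: a pair of orders names a unique pair of leaf components, so this assembly is unambiguous.

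It remains to extract the adjacent pairs from $\csft$, which is exactly what Proposition \ref{prop:k-edge-adjacencies}(a) provides (note $m_1 = 0$ here). Let $\mathcal{M}$ be the set of $\mu\vdash n$ with $c_\mu\neq 0$, with $\mu$ having no part equal to $1$, and with $\ell(\mu) = \ell - 1$; this is read directly off $\csft$. For each $\mu\in\mathcal{M}$, Proposition \ref{prop:k-edge-adjacencies}(a) says $E_\mu = A_{\lead} - A_\mu = \lmulti p,q\rmulti$ with the leaf components of orders $p$ and $q$ adjacent in $T$, and (by distinctness) $\{p,q\}$ determines that adjacent pair. Conversely, the proof of Proposition \ref{prop:k-edge-adjacencies}(a) shows that, given adjacent leaf components of orders $p$ and $q$, deleting all internal edges except the one joining their centres and then leaf-contracting that edge yields a star forest realizing $\mu = \sort\big((p+q)\cdot(\lead\setminus\{p,q\})\big)\in\mathcal{M}$ with $E_\mu = \lmulti p,q\rmulti$. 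Hence $\mu\mapsto E_\mu$ is a bijection from $\mathcal{M}$ onto the set of adjacent pairs of leaf components of $T$; combined with the previous paragraph, this reconstructs $T$ from $\csft$.

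I expect the only real content to be the structural reduction of the second paragraph — confirming that the leaf-component adjacency data together with the (distinct) leaf-component orders genuinely pins down $T$, with no lost information. The distinctness hypothesis does precisely the work of removing the labelling ambiguity among equal-order leaf components, so that the collection $\{E_\mu : \mu\in\mathcal{M}\}$ of unordered order-pairs becomes an honest edge set on the $\ell$ internal vertices; once that is granted, the ``inflate each internal vertex by its pendant leaves'' construction returns $T$ uniquely. Everything else is a direct appeal to Theorem \ref{thm:leading-partition} and Proposition \ref{prop:k-edge-adjacencies}(a).
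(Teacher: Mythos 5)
Your argument is correct and follows the same route as the paper: read $\lead(\csft) = \lc(T)$ off $\csft$ via Theorem \ref{thm:leading-partition}, extract the leaf-component adjacencies from the $1$-edge adjacency multisets via Proposition \ref{prop:k-edge-adjacencies}(a), and use distinctness of the parts to disambiguate which pair of leaf components each multiset names. You spell out a few points the paper leaves implicit — the observation that internal edges of $T$ are precisely the leaf-component adjacencies, and the bijection between $\mathcal{M}$ and the set of adjacent pairs — but these are elaborations of the same argument, not a different approach.
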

\begin{proof}
    Since $\lead(\csft)$ contains no 1s, then Proposition \ref{prop:k-edge-adjacencies}(a) implies that $\bigsqcup_\mu E_\mu$, where the union is taken over all $\mu \vdash n$ of length $\ell(\lead(\csft))-1$ such that $c_\mu\neq 0$ in $\csft$ and $\mu$ contains no 1s, contains all the adjacencies between leaf components in $T$. Further, each of these adjacencies occurs exactly once in $T$ since all parts in $\lead(\csft)$ have multiplicity 1. Then, we can reconstruct $T$ as follows: (1) draw a leaf component $\mathcal{L}_i$ of order $\lambda_i$ for each $1 \leq i \leq \ell$, and (2) for $1 \leq i < j \leq \ell$, if $\lmulti \lambda_i, \lambda_j\rmulti$ is a 1-edge adjacency multiset, draw an edge connecting the centers of $\mathcal{L}_i$ and $\mathcal{L}_j$.
\end{proof}

\begin{example}\label{ex:proper-distinct-parts}
    We provide an example of how to reconstruct a proper tree $T$ whose leading partition has all distinct parts from $\csft$. For the sake of brevity, we only provide $\lead(\csft)$ and the coefficients of the partitions $\mu$ without 1s such that $\ell(\mu)=\ell(\lead(\csft))-1$. Consider $\csft$ with $\lead(\csft) = (9,7,6,5,4,3,2)$ and with the following coefficients indexed by partitions without 1s with $\ell(\lead(\csft))-1$:  
    \begin{center}
    \begin{tabular}{||c |c |c||} 
         \hline
         $\mu$ & $c_\mu$ & $E_\mu$ \\ [0.5ex] 
         \hline\hline
         $(16,6,5,4,3,2)$ & $1$ & $\lmulti 9,7 \rmulti$ \\ 
         \hline
         $(15,7,5,4,3,2)$ & $1$ & $\lmulti 9,6 \rmulti$ \\
         \hline
         $(11,9,7,4,3,2)$ & $1$ & $\lmulti 6,5 \rmulti$ \\
         \hline
         $(10,9,7,5,3,2)$ & $1$ & $\lmulti 6,4\rmulti$ \\
         \hline
         $(9,7,7,6,5,2)$ & $1$ &  $\lmulti 4,3 \rmulti$\\ 
         \hline
         $(9,7,6,6,5,3)$ & $1$ & $\lmulti 4,2 \rmulti$ \\
         \hline
    \end{tabular}
    \end{center}
    Therefore, following the algorithm outlined in Corollary \ref{cor:proper-tree-distinct-parts}, we reconstruct $T$ and obtain:

    \begin{figure}[ht]
        \centering
        \begin{tikzpicture}[auto=center,thick, every node/.style={circle, fill=black, scale=0.5}]
            \node (c) at (0,0) {};
            \node (c1) at (-.38, 0.65) {};
            \node (c2) at (0, .75) {};
            \node (c3) at (.38, .65) {};

            \node (t) at (-1, 0) {};
            \node (t1) at (-1.5, 0.25) {};
            \node (t2) at (-1.5, -.25) {};

            \node (d) at (0, -.75) {};
            \node (d1) at (0, -1.25) {};

            \node (s) at (1.25,0) {};
            \node (s1) at (1, .65) {};
            \node (s2) at (1.5, .65) {};
            \node (s3) at (.87, -.65) {};
            \node (s4) at (1.6, -.65) {};
            \node (s5) at (1.25, -.75) {};

            \node (n) at (2.75, .75) {};
            \node (n1) at (2.8, 1.5) {};
            \node (n2) at (2.7, 0) {};
            \node (n3) at (2.5, 1.45){};
            \node (n4) at (2.25, 1.3) {};
            \node (n5) at (2, 1) {};
            \node (n6) at (3, 0.05) {};
            \node (n7) at (3.25, 0.2) {};
            \node (n8) at (3.5, 0.5){};

            \node (l) at (4.25, 1.5) {};
            \node (l1) at (4.8, 1.1) {};
            \node (l2) at (4.3, 2.1) {};
            \node (l3) at (4.7, 2) {};
            \node (l4) at (4.95, 1.5) {};
            \node (l5) at (3.9, 2) {};
            \node (l6) at (4.4, .9) {};

            \node (f) at (2.5, -.5) {};
            \node (f1) at (3.2, -.8) {};
            \node (f2) at (2.9, -1.1) {};
            \node (f3) at (3.1, -.4) {};
            \node (f4) at (2.5, -1) {};

            \draw (l1)--(l)--(l2);
            \draw (l3)--(l)--(l4);
            \draw (l5)--(l)--(l6);
            \draw (n) -- (l);
            \draw (c1)--(c)--(c2);
            \draw (c)--(c3);
            \draw (t1)--(t)--(t2);
            \draw (d)--(d1);
            \draw (t)--(c)--(d);
            \draw (s1)--(s)--(s2);
            \draw (s3)--(s)--(s4);
            \draw (s)--(s5);
            \draw (c)--(s);
            \draw (f1)--(f)--(f2);
            \draw (f3)--(f)--(f4);
            \draw (f) -- (s);
            \draw (s) -- (n);
            \draw (n1)--(n)--(n2);
            \draw (n3)--(n)--(n4);
            \draw (n6)--(n)--(n5);
            \draw (n7)--(n)--(n8);
            \node[fill=none, scale=2] at (-2.2,0) {$T=$};
        \end{tikzpicture}
    \end{figure}
\end{example}
We now define a number that will help us determine the leaf components of $T$ which are contained in $\mathcal{I}_T$. If $\csft=\sum_\lambda c_\lambda \stfrak_\lambda$ is the chromatic symmetric function of an $n$-vertex tree, $T$, and $p$ is a part in $\lead(\csft)$, then define the quantity:
\begin{equation}\label{eq:N(p)}
    N(p) := \sum m_{E_\lambda}(p) \cdot c_\lambda ~,
\end{equation}   
where the sum runs over all $\lambda\vdash n$ of length $\ell(\lead(\csft))-1$ such that $c_\lambda \neq 0$ in $\csft$ and such that $\lambda$ does not contain 1 as a part. Recall $m_{E_\lambda}(p)$ is the multiplicity of $p$ in $E_\lambda$.

From Equation (\refeq{eq:N(p)}), we observe that $N(p)$ is a quantity that can be recovered from the chromatic symmetric function. In the following remark, we give the combinatorial interpretation for $N(p)$, which reveals information about $T$ when $\lead(\csft)$ contains no 1s.

\begin{remark}\label{remark:n(p)-interpretation}
    If $\lead(\csft)$ contains no 1s (i.e. if $T$ is a proper tree), then Proposition \ref{prop:k-edge-adjacencies}(a) implies that $\bigsqcup_\mu E_\mu$, where the union is taken over all $\mu \vdash n$ of length $\ell(\lead(\csft))-1$ such that $c_\mu\neq 0$ in $\csft$ and $\mu$ contains no 1s, contains the adjacencies between leaf components in $T$. Further, there are $c_\mu$ such adjacencies for each $E_\mu$ in the disjoint union. Hence, $N(p)$ as defined above is exactly the number of times that a leaf component of order $p$ occurs as a leaf component endpoint in $T$. For instance, in Example \ref{ex:diam-4-reconstruction}, the leaf component of order 4 occurs four times as an endpoint in the tree $T$. Notice that there is only one leaf component of order $4$ in the tree in that example but there are four internal edges incident to it and exactly one endpoint of each of these internal edges is contained in a leaf component of order $4$.
\end{remark}

Even though $N(p)$ is defined as a quantity obtained from $\csft$, the remark above shows that $N(p)$ can also be computed from the tree. The following example uses the tree to compute the values of $N(p)$.

\begin{example}
    Consider the tree $T$ below, where the leaf components contained in the internal subgraph have central vertices $v_1$, $v_2$ and $v_3$. We have $\lead(\csft) = (6,5,4^2,3^2,2^4)$. From the leaf component adjacencies in the tree, we obtain $N(6) = 1, N(5)=1, N(4)=4, N(3)=4$ and $N(2)=8$. One can check that these values agree with those obtained when computing $N(p)$ from $\csft$. We omit this computation here for brevity.
    \begin{center}
        \begin{tikzpicture}[every node/.style={circle, fill=black, scale=0.6}, thick]
            \node (b2) at (2,2) {};
            \node (b21) at (1.35, 2.375) {};
            \node (b22) at (1.625, 2.65) {};
            \node (b23) at (2.375, 2.65) {};
            \node (b24) at (2.65, 2.375) {};
            \node (b25) at (2,2.75){};
    
            \draw (b2) -- (b21);
            \draw (b2) -- (b22);
            \draw (b2) -- (b23);
            \draw (b2) -- (b24);
            \draw (b2) -- (b25);
    
            \node (b4) at (4, 2) {};
            \node (b41) at (4, 2.75) {};
    
            \draw (b4)--(b41);
    
            \node (b5) at (4.707, 1.707) {};
            \node (b51) at (4.994, 2.399) {};
            \node (b52) at (5.237, 2.237) {};
            \node (b53) at (5.399, 1.994) {};
    
            \draw (b5)--(b51);
            \draw (b5)--(b52);
            \draw (b5)--(b53);
    
            \node (c1) at (1,1) {};
            \node (c11) at (0.376, 1.417) {};
            \node (c12) at (0.264, 1.146) {};
            \node (c13) at (0.264, 0.854) {};
            \node (c14) at (0.376, 0.583) {};
    
            \draw (c1)--(c11);
            \draw (c1)--(c12);
            \draw (c1)--(c13);
            \draw (c1)--(c14);
        
            \node[red] (c2) at (2,1) {};
            \node[fill=red] (c21) at (1.6, 0.4) {};
            \node[fill=red] (c22) at (2, 0.25) {};
            \node[fill=red] (c23) at (2.4, 0.4) {};
    
            \draw[red] (c2)--(c21);
            \draw[red] (c2)--(c22);
            \draw[red] (c2)--(c23);
            
            \node[red] (c3) at (3,1) {};
            \node[red] (c31) at (2.806, 1.724) {};
            \node[red] (c32) at (3.194, 1.724) {};
    
            \draw[red] (c3)--(c31);
            \draw[red] (c3)--(c32);
            
            \node[red] (c4) at (4,1) {};
            \node[red] (c41) at (3.6, 0.25) {};
    
            \draw[red] (c4)--(c41);
    
            \node (c5) at (5,1) {};
            \node (c51) at (5.75,1) {};
    
            \draw (c5)--(c51);
    
            \node (d3) at (3, 0) {};
            \node (d31) at (2.806, -0.724) {};
            \node (d32) at (3.194, -0.724) {};
    
            \draw (d3)--(d31);
            \draw (d3)--(d32);
            
            \node (d5) at (4.75, 0.25) {};
            \node (d51) at (5.237, -0.237) {};
    
            \draw (d5)--(d51);
            
            \draw (c4)--(b5);
            \draw (c4)--(c5);
            \draw (c4)--(d5);
            \draw (c4)--(b4);
            \draw (c1)--(c2);
            \draw (c2)--(c3);
            \draw (c3)--(c4);
            \draw (c2)--(b2);
            \draw (c3)--(d3);
            \node[fill=none, scale=1.5] at (1.75, 1.2) {$v_1$};
            \node[fill=none, scale=1.5] at (2.75, 1.2) {$v_2$};
            \node[fill=none, scale=1.5] at (3.75, 1.2) {$v_3$};
        \end{tikzpicture}
    \end{center}
    Note that the orders of the leaf components in $\mathcal{I}_T$ are $4, 3,$ and $2$. These are the only parts in $\lead(\csft)$ such that $N(p) > m_p$, where $m_p$ denotes the multiplicity of $p$ in $\lead(\csft)$. The following theorem guarantees that this is always the case for proper trees.
\end{example}

\begin{theorem}\label{thm:leaf-component-order}
    Let $\csft$ be the chromatic symmetric function of an $n$-vertex tree, $T$ and let $\lead(\csft) = (n^{m_n}, \ldots, 2^{m_2})$ be the leading partition. If $p$ is any part of $\lead(\csft)$, then a leaf component of order $p$ in $T$ is  contained in $\mathcal{I}_T$ if and only if $N(p) > m_p$.
\end{theorem}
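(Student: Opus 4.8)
The plan is to convert the numerical condition $N(p) > m_p$ into a statement about internal degrees of vertices of $T$, and then to close with an elementary counting observation. Write $\mathcal{L}_1,\dots,\mathcal{L}_{m_p}$ for the leaf components of $T$ of order $p$, with centers $c_1,\dots,c_{m_p}$; by Theorem~\ref{thm:leading-partition} these leaf components account for exactly the parts of $\lead(\csft)$ equal to $p$. For $1\le i\le m_p$, let $d_i$ denote the internal degree of $c_i$, i.e.\ the number of internal vertices in $N(c_i)$. I will carry out three steps: first, show that $N(p)=\sum_{i=1}^{m_p} d_i$; second, show that $\mathcal{L}_i$ is contained in $\mathcal{I}_T$ if and only if $d_i\ge 2$; third, observe that each $d_i$ is positive (barring the trivial case that $T$ is a star), so that $N(p)=\sum_i d_i\ge m_p$ with equality precisely when $d_i=1$ for every $i$. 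Putting these together yields the theorem.

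For the first step: every internal edge of $T$ joins the centers of two distinct leaf components, and the number of internal edges incident to $c_i$ is exactly $d_i$, since each internal neighbor of $c_i$ determines an internal edge at $c_i$ and conversely. Hence, counting over all internal edges, the number of incidences between an internal edge and an order-$p$ leaf-component endpoint equals $\sum_{i=1}^{m_p} d_i$; by Remark~\ref{remark:n(p)-interpretation} this count is exactly $N(p)$. Since $\lead(\csft)=(n^{m_n},\dots,2^{m_2})$ has no part equal to $1$, the tree $T$ is proper by Corollary~\ref{cor:prop-trees-deep-vertices}, so Proposition~\ref{prop:k-edge-adjacencies}(a) applies and underwrites this identification.

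For the second step: the leaf component $\mathcal{L}_i$ is the star centered at $c_i$ whose leaves are precisely the leaf-neighbors of $c_i$ in $T$. If $d_i\ge 2$, then $c_i$ is one of the vertices of internal degree strictly greater than $1$ used to define $\mathcal{I}_T$, so $c_i$ together with all its leaf-neighbors lies in $\mathcal{I}_T$, and hence $\mathcal{L}_i\subseteq\mathcal{I}_T$. Conversely, if $d_i\le 1$, then $c_i$ is not one of those defining vertices, and being an internal vertex it is not a leaf-neighbor of one either, so $c_i\notin\mathcal{I}_T$ and $\mathcal{L}_i\not\subseteq\mathcal{I}_T$.

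For the third step: if $T$ has no internal edge then $T$ is a star, $\mathcal{I}_T$ is empty, and $N(p)=0<1=m_p$, so both sides of the asserted equivalence fail. Otherwise no $c_i$ can have $d_i=0$, since then every neighbor of $c_i$ would be a leaf and $T$ would be a star; thus $d_i\ge 1$ for all $i$, so $N(p)=\sum_i d_i\ge m_p$, with equality exactly when $d_i=1$ for all $i$. By the second step this last condition says that no order-$p$ leaf component is contained in $\mathcal{I}_T$, so $N(p)>m_p$ if and only if some order-$p$ leaf component of $T$ is contained in $\mathcal{I}_T$, which is the theorem. I expect the only genuinely nonroutine point to be the first step, namely identifying $N(p)$, which is read off from $\csft$, with the combinatorial count $\sum_i d_i$; this is exactly where Proposition~\ref{prop:k-edge-adjacencies} (and hence the properness forced by the form of $\lead(\csft)$) is indispensable, while everything else is bookkeeping with the definition of $\mathcal{I}_T$ and the elementary fact that, once $T$ has an internal edge, every leaf-component center has positive internal degree.
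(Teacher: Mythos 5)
Your proof is correct, and it is a genuinely cleaner re-organization of the argument than the one in the paper. The paper proves the two directions of the equivalence separately: the forward direction by a one-sided estimate (the order-$p$ leaf component in $\mathcal{I}_T$ contributes at least $2$ to $N(p)$ and each other order-$p$ leaf component at least $1$, whence $N(p)\ge m_p+1$), and the converse by contrapositive through a case split on $\diam(T)$ that invokes Proposition~\ref{prop:int-edge-incident-I_T} when $\diam(T)\ge 4$ to show $N(p)=m_p$. You instead prove the exact identity $N(p)=\sum_{i=1}^{m_p} d_i$, where $d_i$ is the internal degree of the center of the $i$-th order-$p$ leaf component, observe directly from the definition of $\mathcal{I}_T$ that $\mathcal{L}_i\subseteq\mathcal{I}_T$ iff $d_i\ge 2$, and dispose of the trivial star case; both directions then follow simultaneously from the fact that each $d_i\ge 1$. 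This sharper identity replaces the paper's inequalities, eliminates the diameter case split, and makes the appeal to Proposition~\ref{prop:int-edge-incident-I_T} unnecessary: the observation that $c_i\notin V(\mathcal{I}_T)$ forces $d_i\le 1$ follows immediately from the definition of $\mathcal{I}_T$ once one notes $c_i$ is internal and hence cannot be a leaf-neighbor of any $v_j$. Both proofs rest on the same input, namely the interpretation of $N(p)$ as the number of incidences between internal edges and order-$p$ leaf-component endpoints (Remark~\ref{remark:n(p)-interpretation}, which requires $m_1=0$, as you correctly note), so the essential combinatorial content is shared; the gain here is in organization and in isolating the exact formula.
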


\begin{proof}
    Suppose that a leaf component $\mathcal{L}$ contained in $\mathcal{I}_T$ has order $p$. Let $v$ be the central vertex of $\mathcal{L}$. Since $\mathcal{L} \subseteq \mathcal{I}_T$, then there are at least two internal edges $e, e^\prime$ incident to $v$ in $T$. Then, we know that $\mathcal{L}$, which has order $p$, occurs at least twice as a leaf component endpoint in $T$ from the edges $e$ and $e^\prime$. Let $\mathcal{L}_1, \ldots, \mathcal{L}_{m_p-1}$ be the remaining leaf components of order $p$ and let $v_1, \ldots, v_{m_p-1}$ be their central vertices, respectively. Since $v_1, \ldots, v_{m_p-1}$ are internal vertices, there is at least one internal edge incident to each of them. Let $e_1, \ldots, e_{m_p-1}$ be internal edges incident to $v_1, \ldots, v_{m_p-1}$, respectively. Note that it is possible that some of these edges are equal, in the case that an edge has as endpoints the centers of two leaf components both of order $p$. Then, we know that each $\mathcal{L}_i$, which has order $p$ is a distinct leaf component endpoint. Hence, $N(p) \geq 2+m_p-1=m_p+1>m_p$.

    We show the converse by contrapositive. 
    Suppose that no leaf component of $\mathcal{I}_T$ has order $p$. If $\diam(T) \leq 3$, then $\mathcal{I}_T$ is empty. In this case, each leaf component of order $p$ is a leaf component endpoint of at most one internal edge, and hence $N(p)\leq m_p$. In particular, $N(p)=0$ if $\diam(T)=1$ or $2$ and $N(p)=m_p$ if $\diam(T)=3$. Thus assume that $\diam(T) \geq 4$ and so $\mathcal{I}_T$ is not empty. Then, by Proposition \ref{prop:int-edge-incident-I_T}, it follows that every leaf component of $T$ that has order $p$ is adjacent to exactly one other leaf component with order not equal to $p$. Let $e_1, \ldots, e_{m_p}$ be the internal edges that connect, respectively, the centers of the $m_p$ leaf components of order $p$ to the centers of the leaf components of orders $q_1, \ldots, q_{m_p}$. Note that $q_1, \ldots, q_{m_p}$ are not necessarily distinct. 
    Since $q_i \neq p$ for all $1 \leq i \leq m_p$, then a leaf component of order $p$ 
    occurs as an endpoint in $T$ exactly $m_p$ times. Thus, by Remark \ref{remark:n(p)-interpretation}, $N(p)=m_p\leq m_p$, finishing the proof.
\end{proof}

\subsection{Reconstruction of trees of diameter 4.} \label{subsec:diameter 4}
In Martin, Morin and Wagner in \cite{martin2008distinguishing}, the authors show that the diameter of a tree can be recovered from the $\csft$. Given a tree $T$, we use their result to check that $T$ has diameter 4, then Theorem \ref{thm:leading-partition} which recovers the leading partition gives us the orders of all the leaf components of $T$; Corollary \ref{cor:leaf-comp-number-diam4-diam5} which says that $\mathcal{I}_T$ is a single leaf component; then Corollary \ref{cor:num-deep-vertices} tells us when  $\mathcal{I}_T$ is just a single vertex, and Theorem \ref{thm:leaf-component-order}, which helps us find the order of $\mathcal{I}_T$ in the case that the leading partition does not have a 1. Finally, Proposition \ref{prop:int-edge-incident-I_T} tells us that every other leaf component not in $\mathcal{I}_T$ is incident to the central vertex of the leaf component in $\mathcal{I}_T$.
\begin{theorem}\label{thm:diam-4-reconstruction}
    Trees of diameter four can be reconstructed from their chromatic symmetric function. In particular, these trees can be reconstructed from their leading partition, $\lead(\csft)$ and the coefficients of the partitions $\mu$ such that $\ell (\mu) = \ell(\lead(\csft))-1$ and $\mu$ has no parts of size 1 such that $c_\mu\neq 0$. 
\end{theorem}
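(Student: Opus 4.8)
The plan is to first pin down the shape of a diameter-four tree and then read that shape off from the permitted data. Let $T$ be a tree of diameter $4$. By Corollary~\ref{cor:leaf-comp-number-diam4-diam5}(a), the internal subgraph $\mathcal{I}_T$ is a single leaf component; since a star has a unique vertex of degree exceeding $1$ (and $St_1$ is a single vertex), $\mathcal{I}_T$ has a well-defined central vertex $v$, which is the unique vertex of $T$ of internal degree greater than $1$. I would next prove the diameter-four analogue of Proposition~\ref{prop:int-edge-incident-I_T} (which is stated only for $\diam(T)>4$): every internal edge of $T$ is incident to $v$. Indeed, if $e=xy$ were an internal edge with $x\neq v$ and $y\neq v$, then neither $x$ nor $y$ could have internal degree greater than $1$ (else it would equal $v$), so $x$ and $y$ would each have internal degree exactly $1$ and hence be adjacent only to the other and to leaves; this forces $T$ to be a bi-star, which has diameter $3$, a contradiction. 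Consequently the internal edges of $T$ are exactly the edges joining $v$ to the central vertices of the remaining leaf components, each such central vertex having internal degree $1$, and $T$ is completely determined by the order of $\mathcal{I}_T$ together with the multiset of orders of the other leaf components; conversely, any such choice of data yields exactly one diameter-four tree.

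It then remains to recover this data from $\csft$. By Theorem~\ref{thm:leading-partition}, $\lead(\csft)=\lc(T)$ is precisely the multiset of orders of all leaf components of $T$, and $\lead(\csft)$ itself is recoverable from $\csft$ as the lex-smallest partition with nonzero coefficient. So the only remaining task is to identify which part of $\lead(\csft)$ is the order of $\mathcal{I}_T$. If $\lead(\csft)$ has a part equal to $1$, then by Corollary~\ref{cor:num-deep-vertices} and Proposition~\ref{prop:num-deep-vert-diam-at-most-5}(b) that part has multiplicity exactly $1$ and $T$ has a unique deep vertex; by Lemma~\ref{lem:deep-vertex-in-internal-subgraph} this deep vertex lies in $\mathcal{I}_T$, and since $\mathcal{I}_T$ is the single star centered at $v$ this forces $\mathcal{I}_T=St_1$, so the order of $\mathcal{I}_T$ is $1$. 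If $\lead(\csft)$ has no part equal to $1$, then $T$ is proper by Corollary~\ref{cor:prop-trees-deep-vertices}; for each part $p$ of $\lead(\csft)$ the quantity $N(p)$ from \eqref{eq:N(p)} is computed solely from $\lead(\csft)$ and the coefficients $c_\mu$ with $\ell(\mu)=\ell(\lead(\csft))-1$, $c_\mu\neq0$, and $\mu$ having no part equal to $1$; and by Theorem~\ref{thm:leaf-component-order} a leaf component of order $p$ lies in $\mathcal{I}_T$ if and only if $N(p)>m_p$. Because $\mathcal{I}_T$ is nonempty and is a single leaf component, there is exactly one part $p^{\ast}$ with $N(p^{\ast})>m_{p^{\ast}}$, and it is the order of $\mathcal{I}_T$.

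Putting these together, I would reconstruct $T$ explicitly: form a star whose order is the order of $\mathcal{I}_T$ identified above, designate its center $v$, and for every remaining part $b$ of $\lead(\csft)$ adjoin a copy of $St_b$ by joining its center to $v$. By the structural description in the first paragraph this is the unique tree of diameter four with the given leaf-component multiset and the given central component, so it coincides with $T$. All data used were $\lead(\csft)$ and the coefficients $c_\mu$ for partitions $\mu$ with $\ell(\mu)=\ell(\lead(\csft))-1$, no part equal to $1$, and $c_\mu\neq0$, as claimed.

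The main obstacle is the structural step in the first paragraph: one must establish, for $\diam(T)=4$ specifically, both that $\mathcal{I}_T$ is a single star with a unique center $v$ and that every internal edge meets $v$ — the latter is exactly the $d=4$ case of Proposition~\ref{prop:int-edge-incident-I_T}, which was proved there only for $d>4$, so the short separate argument above is needed. A secondary point requiring care is the uniqueness of the part $p^{\ast}$ in the proper case, which relies on combining Theorem~\ref{thm:leaf-component-order} with the fact that $\mathcal{I}_T$ is a single leaf component; once these are in place the reconstruction is routine.
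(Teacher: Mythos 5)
Your proof is correct and takes essentially the same approach as the paper's: Theorem \ref{thm:leading-partition} recovers the multiset of leaf-component orders, Corollary \ref{cor:leaf-comp-number-diam4-diam5}(a) pins down $\mathcal{I}_T$ as a single star, and Corollary \ref{cor:num-deep-vertices} (when a part equal to $1$ is present) or Theorem \ref{thm:leaf-component-order} (in the proper case) identifies which part is the order of $\mathcal{I}_T$, after which attaching the remaining stars to the center of $\mathcal{I}_T$ is forced. You also correctly noticed that Proposition \ref{prop:int-edge-incident-I_T} is stated only for $d>4$ yet is invoked here for $d=4$, and you supplied the needed $d=4$ argument; in fact the paper's own proof of that proposition derives a contradiction from $d\geq 4$, so it already covers $d=4$ — the hypothesis ``greater than $4$'' in its statement appears to be a slip — though your stated justification that ``a star has a unique vertex of degree exceeding $1$'' fails for $St_2$, and the well-definedness of $v$ is better drawn directly from the definition of $\mathcal{I}_T$ as you do in the second half of that same sentence.
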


 In fact, we can give an algorithm to reconstruct trees of diameter 4. Given $\csft$ in the star-basis, the leading partition $\lead(\csft)$ tells us the orders of the leaf-components of $T$. Since $T$ has diameter 4, it can have at most 1 deep vertex. This means that $\lead(\csft)$ can have at most one 1.  We need to determine the internal subtree $\mathcal{I}_T$, which is a single leaf component (a star) when $\diam(T) =4$. 
\begin{itemize}
    \item If $\lead(\csft)=(\lambda_1, \ldots, \lambda_\ell, 1)$ has one 1, then $\mathcal{I}_T$ is a single vertex, $v$, then $T$ is the tree obtained by adding an edge from $v$ the center of a star of order $\lambda_i$ for all $1\leq i \leq \ell$.
    \item If $\lead(\csft)=(\lambda_1, \ldots, \lambda_\ell)$ does not have a 1, then:
    \begin{itemize}
        \item Use Theorem \ref{thm:leaf-component-order} to determine $\mathcal{I}_T$, which in this case is a single leaf component of order $\lambda_j$ for some $1\leq j \leq \ell$. 
        \item Then add an edge from the central vertex of a star of order $\lambda_i$, for $i\neq j$, to the central vertex of $\mathcal{I}_T$.
    \end{itemize}
\end{itemize}

\begin{example}\label{ex:diam-4-reconstruction}
We now give an example applying the algorithm outlined above. Consider the following chromatic symmetric function on a tree with 17 vertices.
\begin{eqnarray*}
    \csft &=&  \stfrak_{(5, 4, 3, 3, 2)}- \stfrak_{(5, 5, 3, 3, 1)} - 2\stfrak_{(6, 5, 3, 2, 1)} + \stfrak_{(6, 5, 3, 3)} + 2\stfrak_{(7, 5, 3, 1, 1)} + 2\stfrak_{(7, 5, 3, 2)}  - \stfrak_{(8, 3, 3, 2, 1)} +
    \\
    && \stfrak_{(8, 5, 2, 1, 1)} - 4\stfrak_{(8, 5, 3, 1)} + \stfrak_{(9, 3, 3, 1, 1)} + \stfrak_{(9, 3, 3, 2)} - \stfrak_{(9, 5, 1, 1, 1)} - 2\stfrak_{(9, 5, 2, 1)} + 2\stfrak_{(9, 5, 3)}+
    \\
    && 2\stfrak_{(10, 3, 2, 1, 1)} - 2\stfrak_{(10, 3, 3, 1)} + 3\stfrak_{(10, 5, 1, 1)} + \stfrak_{(10, 5, 2)} - 2\stfrak_{(11, 3, 1, 1, 1)} - 4\stfrak_{(11, 3, 2, 1)} + \stfrak_{(11, 3, 3)}
    \\
    && - 3\stfrak_{(11, 5, 1)} - \stfrak_{(12, 2, 1, 1, 1)} + 6\stfrak_{(12, 3, 1, 1)}+ 2\stfrak_{(12, 3, 2)} + \stfrak_{(12, 5)}+ \stfrak_{(13, 1, 1, 1, 1)}+ 3\stfrak_{(13, 2, 1, 1)}
    \\
     &&   - 6\stfrak_{(13, 3, 1)}- 4\stfrak_{(14, 1, 1, 1)}
     - 3\stfrak_{(14, 2, 1)} + 2\stfrak_{(14, 3)}+ 6\stfrak_{(15, 1, 1)}  + \stfrak_{(15, 2)} - 4\stfrak_{(16, 1)} +\stfrak_{(17)} 
\end{eqnarray*}

As shown in \cite{martin2008distinguishing}, we know that $T$ has diameter four from its chromatic symmetric function. By Theorem \ref{thm:leading-partition}, we have $\lead = \lc(T) = (5,4,3,3,2)$. The coefficients of the partitions of length $\ell(\lead(\csft))-1=4$ that don't contain 1s are: $c_{(9,3,3,2)}=1$,  $c_{(7,5,3,2)}=2$, and $c_{(6,5,3,3)}=1$, whose partitions induce the 1-edge-adjacency multisets $\lmulti 5,4\rmulti, \lmulti 4, 3\rmulti$ (twice) and $\lmulti 4, 2\rmulti$, respectively. We have $N(4)=4>1=m_4$, which by Theorem \ref{thm:leaf-component-order} implies that the leaf component in $T$ contained in $\mathcal{I}_T$ has order $4$. Then, we draw all the remaining leaf components and connect them to the leaf component of order 4 in $\mathcal{I}_T$. This is captured in the figure below.

\begin{figure}[ht]
    \centering
    \begin{tikzpicture}[auto=center,every node/.style={circle, fill=black,scale=0.6}, thick, scale=0.9]
        
        \node (A1) at (7,0) {};
        \node (A2) at (7,0.75) {};
        \node (A3) at (7,-0.75) {};
        \node (A4) at (7.75,0) {};
        \node (A5) at (6.25,0) {};
        
        \draw(A1) -- (A2);
        \draw(A1) -- (A3);
        \draw(A1) -- (A4);
        \draw(A1) -- (A5);
        
        \node[fill=red] (B1) at (8.5, 0) {};
        \node[fill=red] (B2) at (8.5, 0.75) {};
        \node[fill=red] (B3) at (8.5, -0.75) {};
        \node[fill=red] (B4) at (9.25, 0) {};
        
        \draw[red](B1) -- (B2);
        \draw[red](B1) -- (B3);
        \draw[red](B1) -- (B4);
        
        \node (C1) at (10,0) {};
        \node (C2) at (10,0.75) {};
        \node (C3) at (10,-0.75) {};
        
        \draw(C1) -- (C2);
        \draw(C1) -- (C3);
        
        \node (D1) at (10.75,0) {};
        \node (D2) at (10.75,0.75) {};
        \node (D3) at (10.75,-0.75) {};
        
        \draw(D1) -- (D2);
        \draw(D1) -- (D3);
        
        \node (E1) at (11.5,0) {};
        \node (E2) at (11.5,0.75) {};
        
        \draw(E1) -- (E2);
        
        \draw[-stealth, line width=1pt](12.25,0) -- (14.25,0);
        
        \node[fill=red] (A1) at (16,0) {};
        \node[fill=red] (A2) at (16.75,0) {};
        \node[fill=red] (A3) at (15.25,0) {};
        \node[fill=red] (A4) at (16,0.75) {};
        
        \draw[red](A1) -- (A2);
        \draw[red](A1) -- (A3);
        \draw[red](A1) -- (A4);
        
        \node (B1) at (17,1) {};
        \node (B2) at (17.25,1.75) {};
        \node (B3) at (17.75,1.25) {};
        
        \draw(B1) -- (B2);
        \draw(B1) -- (B3);

        \node (C1) at (15,1) {};
        \node (C2) at (14.75, 1.75) {};
        \node (C3) at (14.25, 1.25){};

        \draw(C1) -- (C2);
        \draw(C1) -- (C3);

        \node (D1) at (17,-1) {};
        \node (D2) at (17.53, -1.53){};
        
        \draw(D1) -- (D2);
        
        \node (E1) at (15,-1){};
        \node (E2) at (14.75,-1.6) {};
        \node (E3) at (14.4, -1.2) {};
        \node (E4) at (15.25, -1.5) {};
        \node (E5) at (14.5, -0.75) {};
        
        \draw(E1) -- (E2);
        \draw(E1) -- (E3);
        \draw(E1) -- (E4);
        \draw(E1) -- (E5);
        \draw(A1) -- (B1);
        \draw(A1) -- (C1);
        \draw(A1) -- (D1);
        \draw(A1) -- (E1);    
    \end{tikzpicture}
    \caption{On the left, leaf components of the sizes given by $\lead(\csft)=(5,4,3,3,2)$. On the right, the tree $T$. }
\end{figure}
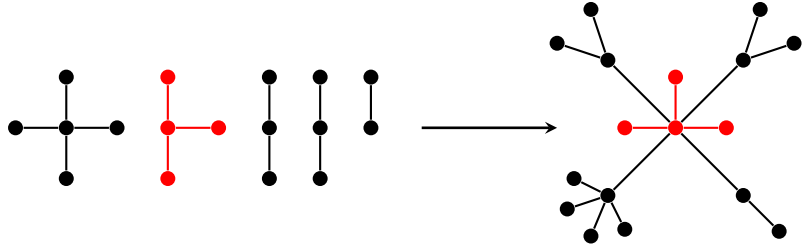
\end{example}

\subsection{Reconstruction of trees of diameter 5}  We now focus on trees of diameter five. As in the case for diameter 4, we first determine which leaf components are in the internal subgraph of the tree, $\mathcal{I}_T$. By Corollary \ref{cor:leaf-comp-number-diam4-diam5} and Corollary \ref{cor:num-deep-vertices}, it follows that $m_1 = 0, 1$ or $2$ in the leading partition of a tree $T$ of diameter five. If $m_1 = 0$, then $T$ is a proper tree so Theorem \ref{thm:leaf-component-order} applies, and if $m_1 = 2$, then $T$ has two deep vertices and by Lemma \ref{lem:deep-vertex-in-internal-subgraph}, both are in $\mathcal{I}_T$, hence $\mathcal{I}_T$ is the tree of order 2. 

If $m_1 = 1$, then we know that one leaf component has order one. Proposition \ref{prop:order leaf component diam 5 one 1} below shows that we can also reconstruct the order of the other leaf component in $\mathcal{I}_T$. Assume that two non-isomorphic trees $T_1, T_2$ of diameter five have equal leading partition, i.e., $\lead=\lead(\mathbf{X}_{T_1})=\lead(\mathbf{X}_{T_2})$, and that the coefficients indexed by partitions of length $\ell(\lead) -1$ not containing ones are also equal in $\mathbf{X}_{T_1}$ and $\mathbf{X}_{T_2}$. Note that this condition is equivalent to saying that they have the same 1-edge-adjacency multisets appearing the same number of times. This means that $T_1$ and $T_2$ have the same leaf components adjacent to the leaf component of order 1. This equivalence follows from the following reasoning. For any part $p$ in $\lead$, Proposition \ref{prop:k-edge-adjacencies}(b) implies that the coefficient of $\stfrak_{\sort(p+1, n^{m_n}, \ldots, p^{{m_p}-1}, \ldots, 2^{m_2})}$ in $\mathbf{X}_{T_1}$ (or $\mathbf{X}_{T_2}$ since they agree on this partition) is equal to the number of leaf components of order $p$ that are adjacent to the leaf component of order 1. We will use this and the following lemma to prove
 Proposition \ref{prop:order leaf component diam 5 one 1}.

\begin{lemma} \label{lem:distinguish diam 5 varying center}
    Let $T_1$ and $T_2$ be two trees of diameter five with equal leading partition $\lead= (n^{m_n}, \ldots, 2^{m_2}, 1)$ and such that $\mathbf{X}_{T_1}$ and $\mathbf{X}_{T_2}$ agree on the coefficients indexed by partitions of length $\ell(\lead)-1$ containing no ones. Let $p_1, p_2 \neq 1$ be the orders of the non-singleton leaf components of the internal subgraphs of $T_1$ and $T_2$, respectively. Then, $\mathbf{X}_{T_1} = \mathbf{X}_{T_2}$ if and only if $p_1 = p_2$.
\end{lemma}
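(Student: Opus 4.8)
I would begin by pinning down the rigid shape of a diameter‑five tree with exactly one deep vertex. By Lemma~\ref{diam-int} and Proposition~\ref{prop:numberinternaledges-in-IntSub} every path in the internal subgraph has at most one internal edge, so by Corollary~\ref{cor:leaf-comp-number-diam4-diam5}(b) the internal subgraph is a star or a bi‑star; since a deep vertex lies in the internal subgraph (Lemma~\ref{lem:deep-vertex-in-internal-subgraph}) and has no leaf in its neighbourhood, the internal subgraph of such a tree is a bi‑star whose two leaf components have orders $1$ and $p$. Thus $T$ has a deep vertex $a$ (its order‑$1$ leaf component), joined by an internal edge to a non‑deep vertex $b$ whose leaf component has order $p$, and every other leaf component is a pendant star attached either to $a$ or to $b$. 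Write $Q$ and $R$ for the multisets of orders of the pendant stars at $a$ and at $b$; all parts of $Q$ and $R$ are $\ge 2$, $Q\neq\emptyset$ because $a$ has internal degree $\ge2$, and $R\neq\emptyset$ because otherwise $\diam(T)\le 4$. The isomorphism type of $T$ is determined by the triple $(p,Q,R)$, and $\deg(b)=(p-1)+1+|R|=p+|R|$.

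\textbf{Reading the hypotheses.} Equal leading partitions give $\lc(T_1)=\lc(T_2)=\lead$, so $T_1$ and $T_2$ have the same multiset $M$ of leaf‑component orders, namely (as multisets) $M=\{1\}\cup\{p_i\}\cup Q_i\cup R_i$. By Proposition~\ref{prop:k-edge-adjacencies}(b), for each value $q$ the coefficient of the length‑$(\ell(\lead)-1)$ partition with no $1$'s whose adjacency multiset is $\lmulti 1,q\rmulti$ equals the number of order‑$q$ leaf components adjacent to the deep vertex; since $T_1$ and $T_2$ agree on all such coefficients, they share the multiset $A:=\{p_i\}\cup Q_i$ of orders of leaf components adjacent to the deep vertex, and hence also $R_1=R_2=:R$ because $R_i=M\setminus(\{1\}\cup A)$ as multisets. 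If $p_1=p_2$ then $Q_1=A\setminus\{p_1\}=Q_2$ and $R_1=R_2$, so $(p_1,Q_1,R_1)=(p_2,Q_2,R_2)$, whence $T_1\cong T_2$ and in particular $\mathbf X_{T_1}=\mathbf X_{T_2}$; this possibility is of course excluded by the standing hypothesis $T_1\not\cong T_2$, and is recorded only so that the stated equivalence is literally correct. It remains to show $p_1\neq p_2\Rightarrow\mathbf X_{T_1}\neq\mathbf X_{T_2}$.

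\textbf{Distinguishing when $p_1\neq p_2$.} By the structure above, recovering $p_i$ from $\mathbf X_{T_i}$ is the same as recovering $\deg(b_i)$, equivalently identifying which element of the known multiset $A$ is the order of the unique leaf component adjacent to the deep vertex that is itself adjacent to further leaf components (namely to all components of $R$). My plan is to read this off from the coefficients of those length‑$(\ell(\lead)-1)$ partitions $\mu$ that \emph{do} contain a $1$: such a $\mu$ arises by merging two of the non‑$1$ parts of $\lead$, and because the deep vertex can never be separated off by deletions alone (its last incident edge has become a leaf‑edge by then, so it is absorbed into a neighbouring leaf component), Corollary~\ref{cor:coeff-paths-seq} together with the no‑cancellation statement of Theorem~\ref{thm:dnc-tree-csf} forces every DNC‑path realizing $\mu$ to use exactly one dot‑contraction (which spawns the surviving $1$) and exactly one leaf‑contraction, the rest being deletions. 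Carrying out the resulting enumeration — and using Theorem~\ref{thm:lead-inequalities} to see that the single dot‑contraction must sit on an internal edge incident to the deep vertex, since otherwise the leaf‑component partition is raised prematurely — one should find that a length‑$(\ell(\lead)-1)$ partition containing a $1$ has nonzero coefficient in $\mathbf X_{T_i}$ precisely when its two merged parts are $\{p_i,r\}$ for some $r\in R$ (i.e.\ are the endpoint leaf‑component orders of one of the non‑deep internal edges $b_id_k$), with coefficient equal, up to sign and a factor $\deg(a_i)-1$ common to both trees, to the number of such edges with the prescribed endpoint orders. Since $R\neq\emptyset$ and $p_1\neq p_2$, the multiset $\lmulti\,\{p_i,r\}:r\in R\,\rmulti$ of non‑deep internal‑edge endpoint orders recovered this way differs between $T_1$ and $T_2$, so some coefficient of length $\ell(\lead)-1$ differs; if for some reason this layer of coefficients happened to coincide, one would instead push the same analysis one step further, to partitions of length $\ell(\lead)-2$.

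\textbf{The main obstacle.} The crux is exactly the coefficient enumeration in the previous step. For an \emph{arbitrary} DNC‑tree one must show that no path with zero or with at least two dot‑contractions terminates in a star forest with partition $\mu$ (a $1$ produced by a dot‑contraction is a permanent isolated $St_1$ that can never be merged away, and the deep vertex cannot be cut off without one), then control the unique dot‑contraction, and finally track the remaining deletions — which by Lemma~\ref{lem:deletions-lead} may remove up to three internal edges at once and may absorb a degree‑$2$ endpoint into a neighbouring leaf component, so that several distinct paths can land on the same partition, with multiplicities depending on the coincidences among the values in $Q_i$, $R$ and $\{p_i\}$. The degenerate case $\deg(a_i)=2$ (the deep vertex carrying a single pendant star, so $a_i$ can never be separated off at all) needs to be handled separately. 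Throughout, the no‑cancellation property of Theorem~\ref{thm:dnc-tree-csf} is what makes the enumeration meaningful, since it guarantees that every path contributing to a fixed $\mathfrak{st}_\mu$ carries the same sign, so the coefficient is, up to that sign, an honest count of contributing paths.
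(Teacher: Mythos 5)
Your structural setup is on point and matches the paragraph preceding the lemma in the paper: you correctly parametrize each tree by $(p,Q,R)$, and the hypotheses do force $T_1$ and $T_2$ to share the multiset of leaf components adjacent to the deep vertex and hence to share $R$; the converse direction ($p_1=p_2\Rightarrow$ isomorphic) is then immediate, exactly as in the paper. The gap is in the forward direction, which is the actual content of the lemma, and you explicitly defer it in your ``main obstacle'' paragraph: the assertion that every DNC-path landing on a length-$(\ell(\lead)-1)$ partition with a $1$ must use exactly one dot-contraction \emph{and exactly one leaf-contraction} is incorrect (a zero-leaf-contraction path is possible when a deletion eliminates two internal edges at once, e.g.\ a deletion that turns the deep vertex into a leaf), and the further claim that the resulting $E_\mu$ is always $\lmulti p_i,r\rmulti$ with $r\in R$ is asserted, not proved — leaf-contractions that fold pendant stars at $a$ into each other give other adjacency multisets. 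You acknowledge these issues and propose a fallback (``push the analysis to length $\ell(\lead)-2$'') but never carry out the enumeration, so the proof as written does not establish $p_1\neq p_2\Rightarrow \mathbf{X}_{T_1}\neq\mathbf{X}_{T_2}$.

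The paper sidesteps all of this by going to the \emph{coarsest} level of the star expansion: the two-part partitions $(n-p_1,p_1)$ with both parts $\geq 2$. Such a partition can only be reached by exactly one deletion and no dot-contraction, so the coefficient is simply the number of internal edges whose deletion splits $T$ into pieces of sizes $p_1$ and $n-p_1$. In $T_1$ that count is $m_{p_1}-1$ (one $p_1$-component is stuck inside $\mathcal{I}_{T_1}$), while in $T_2$ it is at least $m_{p_1}$, giving a discrepancy — with a single corner case $1+p_2+\cdots+p_k=p_1$ handled symmetrically by switching to $p_2$. That enumeration is clean precisely because it forbids dot-contractions entirely, whereas your route forces you to control when dot-contractions preserve the leaf-component partition, which is the source of the combinatorial difficulties you flagged but did not resolve.
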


\begin{proof}
    Let $\mathcal{P}_1, \ldots, \mathcal{P}_k$ be the leaf components, of orders $p_1, \ldots, p_k$ respectively, which are adjacent to the singleton leaf component in $T_1$ and in $T_2$. Note that the fact that these leaf components are the same in both trees follows from the discussion in the paragraph preceding this lemma. Consequently, by Theorem \ref{thm:leading-partition},  these trees also agree on the remaining non-singleton leaf components $\mathcal{Q}_1, \ldots, \mathcal{Q}_l$, of orders $q_1, \ldots, q_l$ respectively, which are adjacent to the other leaf component in the internal subgraph of the respective trees. We can assume, without loss of generality, that $\mathcal{P}_1$ and $\mathcal{P}_2$ are the non-singleton leaf components in the internal subgraph of $T_1$ and $T_2$, respectively. A sketch of $T_1$ and $T_2$ can be seen in the figure below, where we have represented the leaf components by their orders to make it more clear. 
    \begin{center}
    \begin{tikzpicture}[auto=center,every node/.style={circle, fill=none, draw=black, scale=0.85}, thick]
        
        \node (a1) at (0,0) {$1$};
        \node (a2) at (1,0) {$p_1$};
        \node (b1) at (-1, 1) {$p_2$};
        \node (a0) at (-1.25,0.25) {$p_3$};
        \node[draw=none] at (-1.25, -.3) {\Large{$\vdots$}};
        \node (c1) at (-1.1, -1) {$p_k$};
        \node (b2) at (2, 0.75) {$q_1$};
        \node[draw=none] at (2,0.1) {\Large{$\vdots$}};
        \node (c2) at (2, -0.75) {$q_l$};
        
        \draw (a0) -- (a1);
        \draw (a1) -- (a2);
        \draw (a1) -- (b1);
        \draw (a1) -- (c1);        
        \draw (a2) -- (b2);
        \draw (a2) -- (c2);
        
        \node[draw=none] at (0.5, -1.75) {$T_1$};
        
        \draw[dashed] (3.85, 1.75) -- (3.85, -2);
        
        
        \node (a1) at (7,0) {$1$};
        \node (a2) at (8,0) {$p_2$};
        \node (b1) at (6, 1) {$p_1$};
        \node (a0) at (5.75,0.25) {$p_3$};
        \node[draw=none] at (5.75, -.3) {\Large{$\vdots$}};
        \node (c1) at (5.9, -1) {$p_k$};
        \node (b2) at (9, 0.75) {$q_1$};
        \node[draw=none] at (9,0.1) {\Large{$\vdots$}};
        \node (c2) at (9, -0.75) {$q_l$};
        
        \draw (a0) -- (a1);
        \draw (a1) -- (a2);
        \draw (a1) -- (b1);
        \draw (a1) -- (c1);        
        \draw (a2) -- (b2);
        \draw (a2) -- (c2);
        
        \node[draw=none] at (7.5, -1.75) {$T_2$};   
    \end{tikzpicture}
\end{center}
We remark that the $p_i$s and $q_j$s could be equal. 

    We can now prove the statement. We show the forward direction by contrapositive. Assume that $p_1 \neq p_2$.  We will show that some coefficients $c_{(\mu_1,\mu_2)}$ where $\mu_2\geq 2$ will differ in $\mathbf{X}_{T_1}$ and $\mathbf{X}_{T_2}$. The coefficient of such $c_\mu$ is equal to the number of paths in a DNC-tree of $T$ from the root $T$ to a star forest which corresponds to a sequence that uses exactly one deletion and leaf-contractions elsewhere.
    
    Note that in $T_1$, there are $m_{p_1} - 1$ leaf components of order $p_1$ that are not in the internal subgraph of $T_1$. The sequences that delete an internal edge connecting a leaf component of order $p_1$ not contained in $\mathcal{I_T}$ to one of the leaf components in $\mathcal{I}_T$ and leaf-contract all other edges, contribute to the coefficient of $c_{(n-p_1, p_1)}$, and if  $1 + p_2 + \ldots + p_k \neq p_1$, these are the only ones that contribute, hence $c_{(n-p_1,p_1)} = m_{p_1}-1$ in $\mathbf{X}_{T_1}$. In $\mathbf{X}_{T_2}$ there are $m_{p_1}$ leaf components of order $p_1$ not in $\mathcal{I}_T$, hence by a similar argument $c_{(n-p_1,p_1)} \geq m_{p_1}$, so $\mathbf{X}_{T_1} \neq \mathbf{X}_{T_2}$. 
     
    If $1 + p_2 + \ldots + p_k = p_1$, then $p_1 > p_2$. Thus, $1 + p_1 + p_3 + \ldots + p_k \neq p_2$ which by a similar argument as above implies $c_{(n-p_2,p_2)} = m_{p_2}-1$ in $\mathbf{X}_{T_2}$. In $T_1$, there are $m_{p_2}$ leaf components of order $p_2$ not contained in $\mathcal{I}_{T_1}$. Hence, $c_{(n-p_2,p_2)}\geq m_{p_2}$ in $\mathbf{X}_{T_1}$ and so $\mathbf{X}_{T_1} \neq \mathbf{X}_{T_2}$. This finishes the contrapositive argument for the forward direction.
    
    For the converse, it is clear from the discussion at the beginning of the proof that if $p_1 = p_2$, then $T_1 \cong T_2$, and so $\mathbf{X}_{T_1} = \mathbf{X}_{T_2}$.
\end{proof}

With this lemma, we can reconstruct the order of the other leaf component in the internal subgraph for the case $m_1=1$.
\begin{proposition} \label{prop:order leaf component diam 5 one 1}
    Let $\csft$ be the chromatic symmetric function of a tree, $T$, of diameter five with leading partition $\lead = (n^{m_n}, \ldots, 2^{m_2},1)$. Then, the internal subgraph, $\mathcal{I}_T$, is determined by $\csft$.
\end{proposition}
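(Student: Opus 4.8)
The plan is to derive this proposition from Lemma~\ref{lem:distinguish diam 5 varying center}, after first pinning down the \emph{shape} of $\mathcal{I}_T$ in the case $m_1=1$. Since $\lead(\csft)$ has exactly one part equal to $1$, Corollary~\ref{cor:num-deep-vertices} says $T$ has exactly one deep vertex $v$, and by Lemma~\ref{lem:deep-vertex-in-internal-subgraph} both $v$ and the singleton leaf component $\{v\}$ lie in $\mathcal{I}_T$. By Corollary~\ref{cor:leaf-comp-number-diam4-diam5}(b), $\mathcal{I}_T$ consists of two leaf components of $T$ joined by an edge; one of these must be $\{v\}$ (if both had order $\ge 2$ there would be no order-$1$ leaf component in $\mathcal{I}_T$, hence no deep vertex by Lemma~\ref{lem:deep-vertex-in-internal-subgraph}), and the other cannot be a singleton either (that would give a second deep vertex, contradicting $m_1=1$). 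Hence $\mathcal{I}_T$ is the bi-star consisting of $\{v\}$ joined by an edge to a leaf component $\mathcal{P}$ of some order $p\ge 2$, so reconstructing $\mathcal{I}_T$ from $\csft$ amounts to determining the single integer $p$.

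Next I would read off from $\csft$ everything needed to apply the lemma. By Theorem~\ref{thm:leading-partition}, $\lead(\csft)=\lc(T)$ records the multiset of orders of all leaf components of $T$ (in particular $n$, all multiplicities $m_k$, and the single part $1$). By Proposition~\ref{prop:k-edge-adjacencies}(b), for each part $q\ne 1$ of $\lead(\csft)$ the coefficient of the partition $\mu$ with $\ell(\mu)=\ell(\lead(\csft))-1$ and $E_\mu=\lmulti 1,q\rmulti$ equals the number of leaf components of order $q$ adjacent to $v$. Since the leaf components adjacent to $v$ are exactly $\mathcal{P}$ together with the pendant stars hanging off $v$, these coefficients recover from $\csft$ the multiset $D$ of orders of the leaf components adjacent to $v$ (and therefore also the complementary multiset $\lead(\csft)\setminus(\{1\}\uplus D)$, whose entries are the orders of the remaining, pendant, leaf components, which must all hang off the centre $w$ of $\mathcal{P}$). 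In particular $p\in D$, and once $p$ is singled out of $D$ the whole tree is forced: it is the ``double broom'' $T_p$ with the edge $vw$, pendant stars of orders $D\setminus\{p\}$ attached at $v$, and pendant stars of the complementary orders attached at $w$.

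Finally I would invoke Lemma~\ref{lem:distinguish diam 5 varying center}. For \emph{every} choice $p_0\in D$ the tree $T_{p_0}$ built as above is a tree of diameter five with a single deep vertex, it has leading partition $\lead(\csft)$, and by Proposition~\ref{prop:k-edge-adjacencies}(b) it has the same coefficients as $\csft$ on every partition of length $\ell(\lead(\csft))-1$ containing no $1$'s, since those coefficients depend only on $\lead(\csft)$ and $D$, not on $p_0$. Thus any two of the $T_{p_0}$ satisfy the hypotheses of Lemma~\ref{lem:distinguish diam 5 varying center}, so distinct values of $p_0$ produce non-isomorphic trees with distinct chromatic symmetric functions. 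Since $T=T_p$ and $\mathbf{X}_{T_p}=\csft$, there is a $p_0\in D$ (namely $p$) with $\mathbf{X}_{T_{p_0}}=\csft$, and by the lemma every such $p_0$ yields $T_{p_0}\cong T$; equivalently, any diameter-five tree $T'$ with $\mathbf{X}_{T'}=\csft$ is one of the $T_{p_0}$, so its internal subgraph is the bi-star with leaf components $St_1$ and $St_p$ and hence is isomorphic to $\mathcal{I}_T$. This exhibits $\mathcal{I}_T$ (indeed the whole reconstruction of $T$ in this case) as computable from $\csft$.

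The step I expect to be the real obstacle is the second one: verifying that the data $(\lead(\csft),D)$ extracted from $\csft$ genuinely pins $T$ down to one of the finitely many double brooms $T_{p_0}$ --- i.e.\ that in a diameter-five tree with one deep vertex no leaf component can ``hang two steps out'' (so that $\mathcal{I}_T$, the multiset $D$, and its complement really do determine $T$ up to the choice of $p$) --- together with checking that each candidate $T_{p_0}$ is a legitimate diameter-five, one-deep-vertex tree meeting the hypotheses of Lemma~\ref{lem:distinguish diam 5 varying center}. Once this bookkeeping is in place, the conclusion is immediate from that lemma.
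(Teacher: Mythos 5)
Your argument is correct and, after unpacking, follows essentially the same route as the paper: identify the singleton leaf component in $\mathcal{I}_T$ from the single part equal to $1$, read off via Proposition~\ref{prop:k-edge-adjacencies}(b) the multiset of orders of leaf components adjacent to the deep vertex, build the finitely many candidate ``double brooms'' $T_{p_0}$, and appeal to Lemma~\ref{lem:distinguish diam 5 varying center} to pick out the unique $p_0$ with $\mathbf{X}_{T_{p_0}}=\csft$. You are somewhat more explicit than the paper about why fixing $\lead(\csft)$, the multiset $D$, and the choice of $p_0$ determines the candidate tree up to isomorphism (i.e.\ that every non-singleton leaf component other than $\mathcal{P}$ must be pendant at $v$ or at $w$); the paper delegates this to the discussion preceding Lemma~\ref{lem:distinguish diam 5 varying center} and the sketch of $T_1,T_2$ there, but the underlying ``double broom'' observation is the same.
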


\begin{proof} 
    The internal subgraph of any tree of diameter 5 consists of two adjacent leaf components of orders given by two parts of the leading partition. Since 1 is a part in the leading partition, by Lemma \ref{lem:deep-vertex-in-internal-subgraph}, $\mathcal{I}_T$ contains the leaf component of order 1.  We now recover the order, $p\neq 1$, of the second leaf component in $\mathcal{I}_T$.  
    Let $\csft = \sum c_\lambda \mathfrak{st}_\lambda$. From the partitions $\lambda$ of length $\ell(\lead)-1$ that do not have 1s and such that $c_\lambda \neq 0$, we obtain the 1-edge-adjacency multisets $\lmulti p_1, 1\rmulti, \ldots, \lmulti p_k, 1\rmulti$. By Proposition \ref{prop:k-edge-adjacencies}(b), the numbers $p_1, \ldots, p_k$ are the orders of the leaf components that are adjacent to the one vertex leaf component in $\mathcal{I}_T$. Hence, one of the $p_i$'s is the order of the other leaf component in $\mathcal{I}_T$. For each $1\leq i \leq k$, let $T_i$ be the tree of diameter five such that the orders of the leaf components in the internal subgraph of $T_i$ are 1 and $p_i$ and such that $\csft$ and $\mathbf{X}_{T_i}$ agree on the coefficient of partitions of length $\ell(\lead(\csft))-1$ which do not contain 1s. By the discussion preceding Lemma \ref{lem:distinguish diam 5 varying center} and the lemma itself, we know that $T_i$ is unique up to isomorphism and that $\mathbf{X}_{T_i} = \csft$ if and only if $p = p_i$, so we can reconstruct $p$. 
    \end{proof}

Theorem \ref{thm:leaf-component-order}, Corollary \ref{cor:num-deep-vertices} and Proposition \ref{prop:order leaf component diam 5 one 1} show that we can reconstruct the orders of the leaf components of the internal subgraph for any tree of diameter five. We now show how to finish the reconstruction of a tree of diameter five by proving that we can also determine the adjacencies of all other leaf components.

\begin{proposition}\label{reconstruction diam 5 different orders}
    Let $T$ be a tree of diameter five with internal subgraph $\mathcal{I}_T$ and let $p, q$ be the orders of the leaf components in $\mathcal{I}_T$. If $p \neq q$, then $T$ can be reconstructed from its chromatic symmetric function $\csft$. In particular these trees can be reconstructed from $\lead(\csft)$ and the coefficients of partitions $\mu$ without 1s such that $\ell(\mu)=\ell(\lead(\csft))-1$ and $c_\mu \neq 0$.
\end{proposition}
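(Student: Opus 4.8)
The plan is to extract from $\csft$ the following data, which together determine $T$ up to isomorphism: the orders of all leaf components of $T$; the orders $p$ and $q$ of the two leaf components forming $\mathcal{I}_T$; and, for each remaining leaf component, whether its centre is joined by an internal edge to the centre $u$ of the order-$p$ component of $\mathcal{I}_T$ or to the centre $v$ of the order-$q$ component. That this determines $T$ is the structural heart of the argument: since $\diam(T)=5$, the proof of Corollary \ref{cor:leaf-comp-number-diam4-diam5}(b) shows $\mathcal{I}_T$ is the edge $uv$ together with the leaves of $T$ at $u$ and at $v$, so $u$ and $v$ are the only internal vertices of $\mathcal{I}_T$; by Proposition \ref{prop:int-edge-incident-I_T} every internal edge of $T$ is incident to $\mathcal{I}_T$, and since the centre of any leaf component outside $\mathcal{I}_T$ is an internal vertex of internal degree $1$, its unique internal edge joins it to $u$ or to $v$. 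Hence the internal edges of $T$ are precisely $uv$ together with the edges from $u$ to a multiset $R_u$ of leaf-component centres and from $v$ to a multiset $R_v$ of leaf-component centres, and $T$ is reconstructed from $p$, $q$, $R_u$, $R_v$.

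First I would recover the multiset of leaf-component orders from $\lead(\csft)=\lc(T)$ (Theorem \ref{thm:leading-partition}); then $R_u\sqcup R_v$ consists of those orders with one copy of $p$ and one of $q$ deleted. Next I identify $\{p,q\}$: by Corollary \ref{cor:1s-leading-diam45} the multiplicity $m_1$ of $1$ in $\lead(\csft)$ is $0,1$ or $2$, and $m_1=2$ would force both deep vertices to be $u$ and $v$, i.e. $p=q=1$, which is excluded. If $m_1=0$ then $T$ is proper (Corollary \ref{cor:prop-trees-deep-vertices}) and Theorem \ref{thm:leaf-component-order} identifies the orders of the leaf components of $\mathcal{I}_T$ with the parts $r$ of $\lead(\csft)$ satisfying $N(r)>m_r$, which are exactly the two values $p\ne q$. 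If $m_1=1$, one of $p,q$ equals $1$ and Proposition \ref{prop:order leaf component diam 5 one 1} recovers the order of the other leaf component of $\mathcal{I}_T$.

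It remains to split $R_u$ from $R_v$, and here I treat the two surviving cases separately. If $m_1=1$, say $q=1$, then $v$ is the unique deep vertex of $T$, so by Proposition \ref{prop:k-edge-adjacencies}(b) the coefficients $c_\mu$ of the partitions $\mu$ of length $\ell(\lead(\csft))-1$ with no part equal to $1$ record, for each order $r$, the number of leaf components of order $r$ adjacent to $v$; the resulting multiset of orders is $R_v$ together with one extra copy of $p$ (coming from $u$), so deleting one $p$ yields $R_v$, and $R_u$ is its complement in $R_u\sqcup R_v$. If $m_1=0$, Proposition \ref{prop:k-edge-adjacencies}(a) says the pairs $(E_\mu,c_\mu)$ over the same family of partitions record, for each unordered pair of orders $\{a,b\}$, the number of internal edges of $T$ whose leaf-component endpoints have orders $a$ and $b$. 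Since the internal edges are $uv$ and the edges from $u$ to $R_u$ and from $v$ to $R_v$, I can read off the number of order-$r$ components in $R_u$ from the multiplicity of $\{p,r\}$ (for $r\ne p,q$), the number in $R_v$ from $\{q,r\}$, the number of order-$p$ components in $R_u$ from $\{p,p\}$, and the number of order-$q$ components in $R_v$ from $\{q,q\}$; the numbers of order-$p$ components in $R_v$ and of order-$q$ components in $R_u$ are then forced, because the total number of order-$p$ (resp. order-$q$) leaf components equals $m_p$ (resp. $m_q$), one of which is the $\mathcal{I}_T$ component.

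The main obstacle is precisely this last bookkeeping step in the case $m_1=0$: the edges $uv$, $u$-to-order-$p$ and $v$-to-order-$q$ all produce ``diagonal'' endpoint-order pairs $\{p,q\}$, $\{p,p\}$, $\{q,q\}$, so the adjacency data alone cannot say on which side a leaf component of order $p$ or $q$ lies; the resolution is that the total multiplicities $m_p,m_q$ are visible in $\lead(\csft)$, so the ambiguous counts are determined by subtraction. The one other point requiring care is the structural claim that $u$ and $v$ are the only internal vertices of $\mathcal{I}_T$, which is what forbids a leaf component being attached anywhere but $u$ or $v$; this comes from the diameter-$5$ analysis in Proposition \ref{prop:numberinternaledges-in-IntSub} and Corollary \ref{cor:leaf-comp-number-diam4-diam5}.
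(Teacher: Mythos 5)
Your proposal is correct and takes essentially the same route as the paper's proof: recover the leaf-component orders from $\lead(\csft)=\lc(T)$, identify $\{p,q\}$ via Theorem~\ref{thm:leaf-component-order} (when $m_1=0$) or Proposition~\ref{prop:order leaf component diam 5 one 1} (when $m_1=1$), then use the $1$-edge adjacency coefficients of Proposition~\ref{prop:k-edge-adjacencies} together with the known multiplicities $m_r$ to split the remaining leaf components between the two centres of $\mathcal{I}_T$. Your explicit discussion of the $\{p,q\}$ ``diagonal'' ambiguity and its resolution by subtraction from $m_p,m_q$ is a slightly more transparent phrasing of the bookkeeping that the paper carries out by reading off the $\{p,p\}$ and $\{q,q\}$ counts, but it is not a different argument.
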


\begin{proof}
    Let $\lead(\csft)=(n^{m_n}, \ldots, 1^{m_1})$. To reconstruct $T$, Corollary \ref{cor:leaf-comp-number-diam4-diam5} implies that it suffices to know how many leaf components of each order are adjacent to each of the leaf components in $\mathcal{I}_T$. Lemma \ref{lem:deep-vertex-in-internal-subgraph}, Corollary \ref{cor:1s-leading-diam45} and $p \neq q$ imply that either $m_1 = 0$ or $m_1 = 1$. Suppose that $m_1 = 0$, i.e. $T$ is proper. Let $p_1, \ldots, p_k$ be all the distinct parts in $\lead(\csft)$ that are not equal to $p$ or $q$. For each $1 \leq i \leq k$, the coefficient $c_{\lambda^{(i)}}$ of the partition $\lambda^{(i)} = \sort(p + p_i, n^{m_n}, \ldots, p^{m_p-1}, \ldots, p_i^{m_{p_i}-1}, \ldots, 2^{m_2})$ is equal to the number of leaf components of order $p_i$ that are adjacent to the leaf component of order $p$ in $\mathcal{I}_T$ by Proposition \ref{prop:k-edge-adjacencies}(a).
    Hence, the number of leaf components of order $p_i$ adjacent to the leaf component of order $q$ in $\mathcal{I}_T$ is equal to $m_{p_i}-c_{\lambda^{(i)}}$. Now consider the partition $\mu = \sort(q + q, n^{m_n}, \ldots, q^{m_q-2}, \ldots, 2^{m_2})$. Proposition \ref{prop:k-edge-adjacencies}(a) shows that its coefficient $c_\mu$ in $\csft$ equals the number of leaf components of order $q$ adjacent to the leaf component of order $q$ in $\mathcal{I}_T$. Therefore, the number of leaf components of order $q$ adjacent to the leaf component of order $p$ in $\mathcal{I}_T$ equals $m_q - c_\mu - 1$. Similarly, the number of leaf components of order $p$ adjacent to the leaf component of order $p$ in $\mathcal{I}_T$ is equal to $c_\nu$, where $\nu = (p +p, n^{m_n}, \ldots, p^{m_p-2}, \ldots, 2^{m_2})$, so the the number of leaf components of order $p$ adjacent to the leaf component of order $q$ in $\mathcal{I}_T$ equals $m_p - c_\nu - 1$. 

    Now consider the case where $m_1 = 1$ and assume, without loss of generality, that $p = 1$. Let $p_1, \ldots, p_k$ be all the distinct parts in $\lead(\csft)$ not equal to $1$ or $q$. For each $1 \leq i \leq k$, the coefficient $c_{\lambda_i}$ of the partition $\lambda^{(i)} = \sort(p_i + 1, n^{m_n}, \ldots, p_i^{m_{p_i}-1}, \ldots, 2^{m_2})$ is equal to the number of leaf components of order $p_i$ adjacent to the leaf component of order 1 in $\mathcal{I}_T$. Hence, the number of leaf components of order $p_i$ adjacent to the leaf component of order $q$ in $\mathcal{I}_T$ is equal to $m_{p_i}-c_{\lambda^{(i)}}$. Similarly, the coefficient of the partition $\sort(q+1,  n^{m_n}, \ldots, q^{m_q-1}, \ldots, 2^{m_2})$ is the number of leaf components of order $q$ adjacent to the singleton leaf component in $\mathcal{I}_T$, so we can reconstruct the number of those that are adjacent to the leaf component of order $q$ in $\mathcal{I}_T$.
\end{proof}

We now focus on the reconstruction of trees of diameter five whose internal subgraphs have two leaf components of equal order. The proof that we give for this case is very different in nature for the other cases. It relies on applying the DNC relation on the central edge and arguing on the forests that we obtain after applying the DNC operations. In what follows, we say that a tree $T$ is \defn{$p$-balanced} if all the leaf components in the internal subgraph of $T$ have order $p$. 

Observe that we can tell whether a tree of diameter 5 is $p$-balanced or not from its chromatic symmetric function. If a tree $T$ has diameter 5, then it is $1$-balanced if and only if the multiplicity of $1$ in $\lead(\csft)$ is $m_1=2$. Further, by Theorem \ref{thm:leaf-component-order}, $T$ is $p$-balanced for $p \geq 2$ if and only if $p$ is the only part in $\lead(\csft)$ such that $N(p) > m_p$. 

\begin{lemma}\label{lemma: diameter of T1 and T2}
    Let $T$ be a tree of diameter five with internal subgraph $\mathcal{I}_T$. Let $e=u_1u_2$ be the edge between the two internal vertices $u_1$  and  $u_2$ contained in $\mathcal{I}_T$. Let $T\setminus e = T_1 \sqcup T_2$ where $T_1$ and  $T_2$ are the trees containing $u_1$ and $u_2$, respectively. Then:
    \begin{enumerate}
        \item[(a)] $T_i$ has diameter 2 if and only if $u_i$ is a degree-2 deep vertex.
        \item[(b)] $T_i$ has diameter 3 if and only if $u_i$ has internal degree two but is not deep.
        \item[(c)] $T_i$ has diameter 4 if and only if the internal degree of $u_i$ is greater than or equal to three.
    \end{enumerate}
\end{lemma}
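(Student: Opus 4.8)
The plan is to first pin down the local structure of $T$ around the central edge $e=u_1u_2$, then read off $\diam(T_1)$ and $\diam(T_2)$ directly, and finally upgrade the resulting implications to the claimed biconditionals by a case-exhaustion argument. The key input is that $\diam(T)=5$: by Corollary~\ref{cor:leaf-comp-number-diam4-diam5}(b) and its proof, $u_1$ and $u_2$ are \emph{precisely} the two vertices of $T$ of internal degree at least $2$, and they are adjacent, so $e$ is well defined and each $u_i$ has internal degree $\ge 2$. Fix $i$ and consider $T_i$, the component of $T\setminus e$ containing $u_i$. In $T$, the neighbours of $u_i$ other than $u_{3-i}$ consist of some leaves of $T$ together with internal vertices $w_1,\dots,w_r$, where $r=(\text{internal degree of }u_i)-1\ge 1$. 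Each $w_j$ lies outside $\{u_1,u_2\}$, so $w_j$ has internal degree exactly $1$ in $T$; hence every neighbour of $w_j$ other than $u_i$ is a leaf of $T$, and there is at least one such leaf since $w_j$ is internal. Therefore $T_i$ is exactly the tree obtained from the vertex $u_i$ by attaching some pendant leaves and, for each $1\le j\le r$, a star of order $k_j\ge 2$ through its centre $w_j$.

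Using this description I would split on the type of $u_i$. If $u_i$ is a degree-$2$ deep vertex, then it has no pendant leaves and $r=1$, so $T_i$ is the star with centre $w_1$ on the vertex set $\{u_i,w_1\}\cup(\text{leaves of }w_1)$; since $w_1$ has at least one leaf, $T_i$ has at least three vertices and $\diam(T_i)=2$. If $u_i$ has internal degree $2$ but is not deep, then $r=1$ and $u_i$ carries at least one pendant leaf, so $T_i$ is a bi-star with centres $u_i$ and $w_1$, each carrying at least one leaf, and $\diam(T_i)=3$. Finally, if $u_i$ has internal degree at least $3$, then $r\ge 2$; a path running from a leaf of $w_1$ through $w_1,u_i,w_2$ to a leaf of $w_2$ has length $4$, and no path in $T_i$ is longer because every vertex of $T_i$ is within distance $2$ of $u_i$ and a path meets $u_i$ at most once, so $\diam(T_i)=4$. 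The same argument applies verbatim with $u_2$ in place of $u_1$.

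To conclude, observe that the three conditions on $u_i$ appearing in (a), (b), (c) — being a degree-$2$ deep vertex, having internal degree $2$ without being deep, and having internal degree at least $3$ — are mutually exclusive and together exhaust all vertices of internal degree $\ge 2$, while the three corresponding values $2,3,4$ of $\diam(T_i)$ are distinct. Hence each implication proved above reverses automatically: for instance, if $\diam(T_i)=2$ then $u_i$ can be of neither of the other two types (those force $\diam(T_i)\in\{3,4\}$), so $u_i$ must be a degree-$2$ deep vertex, and similarly for the other two cases. This establishes (a), (b) and (c).

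I expect the diameter computations to be routine; the step needing genuine care is the structural one, where $\diam(T)=5$ must be invoked (through Proposition~\ref{prop:numberinternaledges-in-IntSub}) to guarantee that $u_1$ and $u_2$ are the \emph{only} vertices of internal degree $\ge 2$, and hence that every $w_j$ has internal degree $1$; without this, $T_i$ need not have the simple ``central vertex with attached stars'' shape that drives the whole argument. A secondary point, needed to pin the diameters down exactly rather than as mere upper bounds, is that each attached star has order $\ge 2$, which holds because an internal vertex that is not deep has a leaf in its neighbourhood.
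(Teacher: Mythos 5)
Your proof is correct and follows essentially the same route as the paper: prove the three forward implications by a local case analysis at $u_i$ (star, bi-star, diameter-4 tree centered at $u_i$), then observe that the three hypotheses are mutually exclusive and exhaustive for a vertex of internal degree at least $2$, so the converses follow for free. Your version is somewhat more explicit than the paper's in justifying, via Proposition \ref{prop:numberinternaledges-in-IntSub}, that $u_1,u_2$ are the only vertices of internal degree at least $2$ (the paper uses this implicitly when asserting that all internal edges of $T_i$ are incident to $u_i$), but the argument is the same.
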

\begin{proof}
    Note that it suffices to prove the forward direction of each (a), (b) and (c) and that it also suffices to show the statements for $i=1$. (a) Suppose that $u_1$ is a degree-2 deep vertex with $N(u)=\{t, u_1\}$ where $t$ is an internal vertex. As shown in the proof of Lemma \ref{lem:deletions-lead}, after deleting $e$, $u$ becomes a leaf in the leaf component containing $t$. Hence, in this case $T_1$ is a star, which has diameter two. (b) Since $u_1$ is not deep and has internal degree two, then $\{t, \ell, u_2\} \subseteq N(u_1)$ where $t$ is internal and $\ell$ is a leaf. Then, $u_1$ remains internal in $T \setminus e$ and so $T_1$ has one internal edge, so it is a bi-star. Hence, $\diam(T_1)=3$. (c) If the internal degree of $u_1$ is at least three, then $u_1$ has degree at least 2 in $T \setminus e$, so it remains an internal vertex. Further, there are at least two internal edges in $T_1 \setminus e$ and all of them are incident to $u_1$, so $T_1$ is a tree of diameter 4 with $u_1$ as the center.
\end{proof}

\begin{proposition}\label{prop:xt1 and xt2}
    Let $\csft$ be the chromatic symmetric function of a tree $T$ with diameter five and $\lead = \lead(\csft)=(n^{m_n}, \ldots, 1^{m_1})$. Let $e$ be the internal edge in $\mathcal{I}_T$, the internal subgraph of $T$ and let $T\setminus e = T_1 \sqcup T_2$. Then, $\mathbf{X}_{T_1}\mathbf{X}_{T_2}$ can be recovered from $\csft$. Further, $\#V(T_1)$ and $\#V(T_2)$ can be recovered from $\csft$.
\end{proposition}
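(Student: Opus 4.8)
The plan is to apply the DNC relation (Proposition~\ref{prop:dnc-formula}) to the central edge $e = u_1u_2$ of $\mathcal{I}_T$ and to identify each of the three resulting terms from data that $\csft$ already determines. Writing $T\setminus e = T_1\sqcup T_2$, the relation reads
\[
\csft = \mathbf{X}_{T_1}\mathbf{X}_{T_2} - \mathbf{X}_{(T\odot e)\setminus \ell_e} + \mathbf{X}_{T\odot e},
\]
so it suffices to recover $\mathbf{X}_{(T\odot e)\setminus \ell_e}$ and $\mathbf{X}_{T\odot e}$ from $\csft$; then $\mathbf{X}_{T_1}\mathbf{X}_{T_2}$ follows by rearranging.

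First I would describe $T\odot e$ and $(T\odot e)\setminus\ell_e$ explicitly. Let $w$ be the vertex obtained by contracting $e$. Its neighbours are exactly the neighbours of $u_1$ and of $u_2$ other than one another, namely the $|\mathcal{M}_1| + |\mathcal{M}_2| - 2$ pendant leaves that $u_1$ and $u_2$ carry together, together with the central vertices of all leaf components of $T$ not contained in $\mathcal{I}_T$; each such leaf component is a star hanging off $w$ at its centre. Consequently the graph $T/e$ — and hence $(T\odot e)\setminus\ell_e = (T/e)\sqcup K_1$ and $T\odot e$, which is $T/e$ with one more pendant leaf at $w$ — is determined up to isomorphism by the number $s := |\mathcal{M}_1| + |\mathcal{M}_2|$ and the multiset $\mathcal{O}$ of orders of the leaf components of $T$ lying outside $\mathcal{I}_T$. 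Both data are recoverable from $\csft$: by Theorem~\ref{thm:leading-partition}, $\lead(\csft) = \lc(T)$ is the multiset of orders of all leaf components of $T$, while the orders of the two leaf components inside $\mathcal{I}_T$ are recoverable by Corollary~\ref{cor:num-deep-vertices}, Theorem~\ref{thm:leaf-component-order} and Proposition~\ref{prop:order leaf component diam 5 one 1} (exactly as in the discussion preceding Proposition~\ref{reconstruction diam 5 different orders}); so $s$ is their sum and $\mathcal{O}$ is the corresponding multiset difference. This identifies the isomorphism types of $T\odot e$ and $(T\odot e)\setminus\ell_e$, hence their chromatic symmetric functions, and therefore $\mathbf{X}_{T_1}\mathbf{X}_{T_2} = \csft + \mathbf{X}_{(T\odot e)\setminus\ell_e} - \mathbf{X}_{T\odot e}$ is determined by $\csft$.

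For the final claim I would read $\{\#V(T_1),\#V(T_2)\}$ off the product $\mathbf{X}_{T_1}\mathbf{X}_{T_2} = \mathbf{X}_{T\setminus e}$ using Stanley's power-sum expansion (Theorem~\ref{thm:stanley-power-sum-expansion}). For any forest $F$ with components of orders $\mu_1\ge\cdots\ge\mu_k$ and any $A\subseteq E(F)$, the partition $\lambda(A)$ refines $(\mu_1,\ldots,\mu_k)$, with equality precisely when $A = E(F)$; splitting a part of a partition strictly decreases it in lexicographic order, so $\lambda(A) <_{\mathrm{lex}} (\mu_1,\dots,\mu_k)$ whenever $A\subsetneq E(F)$, while the coefficient of $p_{(\mu_1,\dots,\mu_k)}$ in $\mathbf{X}_F$ equals $(-1)^{|E(F)|}\ne 0$. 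Hence $(\mu_1,\dots,\mu_k)$ is the lexicographically largest partition indexing a nonzero power-sum coefficient of $\mathbf{X}_F$; applying this to $F = T\setminus e$, whose two components are the trees $T_1$ and $T_2$, recovers $\{\#V(T_1),\#V(T_2)\}$ from $\mathbf{X}_{T_1}\mathbf{X}_{T_2}$, hence from $\csft$.

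The step I expect to require the most care is the explicit identification of $T/e$ (and thus of $T\odot e$) up to isomorphism from $\csft$-recoverable data: one must check that knowing only the unordered pair $\{|\mathcal{M}_1|,|\mathcal{M}_2|\}$ — not which leaf component sits on which side of $e$, nor how the remaining leaf components distribute between $u_1$ and $u_2$ — already pins the graph down, which works because contracting $e$ collapses all of these distinctions. The power-sum observation used for the vertex counts is short but should be written out carefully, since it is the one genuinely new ingredient beyond the DNC relation and the results already established about leaf components of the internal subgraph.
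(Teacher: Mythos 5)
Your proof is correct and follows the paper's overall strategy: apply the DNC relation at the central edge $e$ of $\mathcal{I}_T$, observe that $T\odot e$ and $(T\odot e)\setminus\ell_e$ are (up to isomorphism, and up to a disjoint $K_1$) determined by data recoverable from $\csft$ — namely the two leaf-component orders $p,q$ of $\mathcal{I}_T$ together with the multiset of the remaining leaf-component orders from $\lead(\csft)$ — and then solve for $\mathbf{X}_{T_1}\mathbf{X}_{T_2}$. Your formulation via $T/e$ and the "$s$ and $\mathcal{O}$" description is a perfectly valid repackaging of the paper's description of the two diameter-$4$ trees.

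The one place where you genuinely diverge is the recovery of $\#V(T_1)$ and $\#V(T_2)$. You change basis to the power sums and invoke Theorem~\ref{thm:stanley-power-sum-expansion}, showing that the lex-largest power-sum support of $\mathbf{X}_{T_1\sqcup T_2}$ is the partition of component orders, with coefficient $(-1)^{|E(T_1\sqcup T_2)|}\ne 0$. This is correct (refinement implies strict lex decrease, and for a forest no proper edge subset gives the same component partition). The paper instead stays in the star basis and uses its multiplicativity: the only length-one support of $\mathbf{X}_{T_i}$ is $(\#V(T_i))$ with coefficient $1$, so the unique length-two star-basis support of the product is $\sort(\#V(T_1),\#V(T_2))$. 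The paper's route is a little lighter — it avoids a change of basis and the dominance/refinement argument — but both are clean; your version has the mild advantage of being a well-known fact about $\mathbf{X}_F$ that does not depend on any star-basis machinery.
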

\begin{proof}
    Since $T$ has diameter five, then by Corollary \ref{cor:leaf-comp-number-diam4-diam5}(c), $\mathcal{I}_T$ consists of two leaf components of order $p,q$ whose centers are connected by the edge $e$. By the DNC relation and since $T\setminus e = T_1 \sqcup T_2$:
    \[
    \csft = \mathbf{X}_{T_1 \sqcup T_2} - \mathbf{X}_{(T \odot e)\setminus \ell_e} + \mathbf{X}_{T \odot e}.
    \] 
    Note that $(T \odot e)\setminus \ell_e = H\sqcup v$, where $v$ is an isolated vertex and $H$ is a tree of diameter four. If $m_1=0$, then $\lead(\mathbf{X}_H)=\sort(p+q-1,n^{m_n},\ldots, p^{m_p-1},q^{m_q-1}, \ldots, 2^{m_2})$. If $m_1=1$, then we can assume that $p=1$ and so $\lead(\mathbf{X}_H) = \sort(n^{m_n},\ldots,q^{m_q}, \ldots, 2^{m_2})$. Lastly, if $m_1=2$, then $p=q=1$ and so $\lead(\mathbf{X}_H) = \sort(n^{m_n},\ldots, 2^{m_2},1)$. Further, we know that the leaf component in the internal subgraph of $H$ has order $p+q-1$. Therefore, by the results obtained in Section \ref{subsec:diameter 4}, we can reconstruct $H$ from its leading partition and the degree of the central vertex, and therefore compute $\mathbf{X}_{(T \odot e)\setminus \ell_e}=\stfrak_{(1)}\mathbf{X}_H$. Similarly, $T \odot e$ is the unique tree of diameter four with leading partition $\sort(p+q,n^{m_n},\ldots, p^{m_p-1},q^{m_q-1} \ldots, 2^{m_2})$ and whose internal subgraph is a star of order $p+q$. Note that there are no 1s in $\lead(\mathbf{X}_{T \odot e})$. Hence, we can reconstruct $T \odot e$ and therefore compute $\mathbf{X}_{T \odot e}$. Therefore, we can solve for $\mathbf{X}_{T_1\sqcup T_2}$ from $\csft$:
    \[
    \mathbf{X}_{T_1\sqcup T_2} = \csft + \mathbf{X}_{(T \odot e)\setminus \ell_e} - \mathbf{X}_{T \odot e}
    \]
    This implies that we can recover $\mathbf{X}_{T_1 \sqcup T_2} = \mathbf{X}_{T_1}\mathbf{X}_{T_2}$ from $\csft$. In the star-basis, the only term whose partition has length one in $\mathbf{X}_{T_1}$ is $\stfrak_{\#V(T_1)}$, which occurs with coefficient 1, and the same applies to $\mathbf{X}_{T_2}$. Since the star-basis is multiplicative, this implies that the only partition $\mu$ of length two such that $c_\mu \neq 0$ in $\mathbf{X}_{T_1}\mathbf{X}_{T_2}$ is $\mu = \sort(\#V(T_1), \#V(T_2))$. Hence, we can recover $\#V(T_1)$ and $\#V(T_2)$ from $\csft$, finishing the proof.
\end{proof}

\begin{theorem}\label{thm:p-balanced diameter 5 reconstruction}
    Let $\csft$ be the chromatic symmetric function of a $p$-balanced tree, $T$, with diameter five and $\lead = \lead(\csft)=(n^{m_n}, \ldots, 1^{m_1})$. Then, $T$ can be reconstructed from $\csft$.
\end{theorem}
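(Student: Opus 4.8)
The plan is to reconstruct $T$ by cutting it at the internal edge $e=u_1u_2$ of its internal subgraph $\mathcal{I}_T$, reconstructing the two trees $T_1,T_2$ in $T\setminus e=T_1\sqcup T_2$ (with $u_i\in T_i$) separately, and then regluing them. First I would recover the value of $p$ from $\csft$: by the discussion preceding the theorem, $p=1$ exactly when $m_1=2$, and otherwise $p$ is the unique part of $\lead(\csft)$ with $N(p)>m_p$ (Theorem~\ref{thm:leaf-component-order}). Since $\mathcal{I}_T$ is a star or bi-star both of whose leaf components have order $p$, the orders of the two leaf components of $\mathcal{I}_T$ are now known, so Proposition~\ref{prop:xt1 and xt2} applies and produces, from $\csft$ alone, the product $\mathbf{X}_{T_1}\mathbf{X}_{T_2}$ together with the vertex counts $n_1=\#V(T_1)$ and $n_2=\#V(T_2)$.

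The heart of the argument is to split this product into its two factors. Since $T_1,T_2$ are trees, each $\mathbf{X}_{T_i}$ equals $\stfrak_{n_i}$ plus a combination of $\stfrak_\lambda$ with $\lambda\vdash n_i$ and $\lambda\neq(n_i)$, and the coefficient of $\stfrak_{n_i}$ is $1$. If $n_1\neq n_2$, say $n_1>n_2$, then the only terms of $\mathbf{X}_{T_1}\mathbf{X}_{T_2}$ whose index has a part equal to $n_1$ are the $\stfrak_{\sort((n_1)\cdot\mu)}$ with $\mu\vdash n_2$, and the coefficient of $\stfrak_{\sort((n_1)\cdot\mu)}$ equals the coefficient of $\stfrak_\mu$ in $\mathbf{X}_{T_2}$; this recovers $\mathbf{X}_{T_2}$, and then $\mathbf{X}_{T_1}$ is the quotient $\mathbf{X}_{T_1}\mathbf{X}_{T_2}/\mathbf{X}_{T_2}$ in the polynomial ring $\Lambda\cong\Q[\stfrak_1,\stfrak_2,\dots]$. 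If $n_1=n_2=n/2$, the same bookkeeping gives the coefficient of $\stfrak_{\sort((n/2)\cdot\mu)}$ in the product as the \emph{sum} of the $\stfrak_\mu$-coefficients of $\mathbf{X}_{T_1}$ and $\mathbf{X}_{T_2}$ (for $\mu\neq(n/2)$), so---using that both $\stfrak_{n/2}$-coefficients are $1$---we recover $\mathbf{X}_{T_1}+\mathbf{X}_{T_2}$ as well as $\mathbf{X}_{T_1}\mathbf{X}_{T_2}$. Knowing these two elementary symmetric functions, the unordered pair $\{\mathbf{X}_{T_1},\mathbf{X}_{T_2}\}$ is determined as the set of roots of $z^2-(\mathbf{X}_{T_1}+\mathbf{X}_{T_2})z+\mathbf{X}_{T_1}\mathbf{X}_{T_2}$ over the domain $\Lambda$: the discriminant is the square of the (unknown but existing) element $\mathbf{X}_{T_1}-\mathbf{X}_{T_2}$, and square roots in $\Lambda$ are unique up to sign and computable.

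Once $\{\mathbf{X}_{T_1},\mathbf{X}_{T_2}\}$ is in hand, I would apply Lemma~\ref{lemma: diameter of T1 and T2}: since $u_i\in\mathcal{I}_T$ has internal degree at least $2$, each $T_i$ has diameter $2$, $3$ or $4$. Diameter-$\leq 2$ trees are stars, diameter-$3$ trees are bi-stars reconstructed via Corollary~\ref{cor:bi-stars-distinguish}, and diameter-$4$ trees are reconstructed via Theorem~\ref{thm:diam-4-reconstruction}; hence $\{T_1,T_2\}$ is recovered up to isomorphism. To finish I must locate, inside each $T_i$, the vertex $u_i$ to which $e$ attached. If $\diam(T_i)=4$ then $T_i$ is a tree centred at $u_i$, so $u_i$ is its unique vertex of eccentricity $2$. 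If $\diam(T_i)=3$ then $u_i$ stays internal after the cut and its leaf component in $T_i$ still has order $p$ (deleting $e$ removes no leaf-vertex and turns no neighbour of $u_i$ into a leaf), so $u_i$ is the centre of a leaf component of order $p$ in the bi-star $T_i$; if both leaf components of $T_i$ have order $p$, then $T_i$ has an automorphism swapping its two centres and the choice is immaterial. If $\diam(T_i)=2$ then by Lemma~\ref{lemma: diameter of T1 and T2}(a) $p=1$ and $u_i$ is a leaf of the star $T_i$, all of whose leaves are equivalent. In each case $u_i$ is canonical up to an automorphism of $T_i$, so the tree obtained from $T_1\sqcup T_2$ by adding the edge $u_1u_2$ is a well-defined isomorphism class, which is $T$.

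The step I expect to be the main obstacle is the splitting of $\mathbf{X}_{T_1}\mathbf{X}_{T_2}$: knowing only $\lead(\csft)$ (hence the multiset of leaf-component orders) together with $(n_1,n_2)$ is not enough, because a multiset with two prescribed subset-sums need not split uniquely into two sub-multisets, so one genuinely needs the whole product, and the balanced case $n_1=n_2$ forces the square-root extraction in $\Lambda$. A secondary point requiring care is the last step: one must check, case by case on $\diam(T_i)$, that the attaching vertex of each piece is canonical up to automorphism, so that regluing $T_1$ and $T_2$ produces an unambiguous tree.
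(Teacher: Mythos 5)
Your proposal is correct and shares the paper's overall skeleton --- recover $p$, cut $T$ at the central edge $e$ of $\mathcal{I}_T$, apply Proposition \ref{prop:xt1 and xt2} to obtain $\mathbf{X}_{T_1}\mathbf{X}_{T_2}$ together with $N_1=\#V(T_1)$ and $N_2=\#V(T_2)$, and reconstruct the two halves --- but your method for splitting the product is genuinely different, especially in the balanced case $N_1=N_2$. There the paper extracts only $\lead(\mathbf{X}_{T_1})$, as the lexicographically smallest $\alpha$ with $c_{(N_1,\alpha)}\neq 0$, and reconstructs $T_1$ from that partition together with the known order $p$ of its central leaf component via the diameter-$\leq 4$ results; it never recovers the individual functions $\mathbf{X}_{T_1}$, $\mathbf{X}_{T_2}$. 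You instead note that the coefficients of partitions containing a part $N_1=n/2$ give $\mathbf{X}_{T_1}+\mathbf{X}_{T_2}$ (your check that no cross-terms interfere is right, since a partition of $n/2$ containing the part $n/2$ must equal $(n/2)$), and then determine the unordered pair $\{\mathbf{X}_{T_1},\mathbf{X}_{T_2}\}$ as the roots of $z^2-(\mathbf{X}_{T_1}+\mathbf{X}_{T_2})z+\mathbf{X}_{T_1}\mathbf{X}_{T_2}$ in the polynomial ring $\Lambda=\Q[\stfrak_1,\stfrak_2,\ldots]$, where the discriminant $(\mathbf{X}_{T_1}-\mathbf{X}_{T_2})^2$ has a square root unique up to sign; this is a clean algebraic alternative that yields the full pair of chromatic symmetric functions at the cost of the square-root extraction. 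Likewise, for $N_1>N_2$ you obtain $\mathbf{X}_{T_1}$ by division in the integral domain $\Lambda$, whereas the paper counts leaf components ($m_q-A_q$) to place the remaining stars around $\mathcal{L}_1$. Your final regluing step --- locating $u_i$ in $T_i$ up to automorphism as the unique center when $\diam(T_i)=4$, as the center of a leaf component of order $p$ when $\diam(T_i)=3$, and as an arbitrary leaf (forcing $p=1$) when $\diam(T_i)=2$ --- is justified by Lemma \ref{lemma: diameter of T1 and T2} and is in fact more explicit than the paper's treatment of this point.
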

\begin{proof}
    Since $T$ is $p$-balanced, then $\mathcal{I}_T$ consists of two leaf components $\mathcal{L}_1$ and $\mathcal{L}_2$ of order $p$ whose respective centers $u_1$ and  $u_2$ are connected by an edge $e=u_1u_2$. Let $T\setminus e = T_1 \sqcup T_2$, where $T_1$ contains $u_1$ and $T_2$ contains $u_2$. By Proposition \ref{prop:xt1 and xt2}, we can recover $\#V(T_1)$ and  $\#V(T_2)$ from $\csft$. Let $N_1 = \#V(T_1)$ and $N_2 = \#V(T_2)$ and assume without loss of generality that $N_1\geq N_2$. By the same proposition, we can recover $\mathbf{X}_{T_1}\mathbf{X}_{T_2}$ from $\csft$. Let $\mathbf{X}_{T_1}\mathbf{X}_{T_2} = \sum_{\mu \vdash N_1 + N_2} c_\mu \stfrak_\mu$.
    
    Consider the case where $N_1 > N_2$. This implies that no indexing partition in $\mathbf{X}_{T_2}$ contains $N_1$ as a part. Therefore, $\sum_{\nu \vdash N_2}c_{(N_1,\nu)}\stfrak_{(N_1,\nu)}=\stfrak_{(N_1)}\mathbf{X}_{T_2}$ where $(N_1, \nu)$ is the partition obtained by adding a part $N_1$ to $\nu$. Hence, we can recover $\mathbf{X}_{T_2}$ from $\mathbf{X}_{T_1}\mathbf{X}_{T_2}$ by taking all partitions that have largest part $N_1$ in $\mathbf{X}_{T_1}\mathbf{X}_{T_2}$ and removing this largest part.
    In addition, $T_2$ has diameter at most 4 by Lemma \ref{lemma: diameter of T1 and T2}, so we can reconstruct $T_2$ by the results obtained previously in this section since all trees of diameter of diameter 4 or less can be reconstructed from their chromatic symmetric function. For each part $q$ of $\lead$, since we can reconstruct $T_2$, we know the number $A_q$ of leaf components of order $q$ that are adjacent to $\mathcal{L}_2$ for each part $q$ of $\lead$. Then, we know that there are $m_q - A_q$ leaf components of order $q$ adjacent to $\mathcal{L}_1$, so we can reconstruct $T_1$ and, therefore, we can reconstruct $T$.

    Suppose then that $N_1 = N_2$. Without loss of generality, we may assume $\lead(\mathbf{X}_{T_1}) \leq \lead(\mathbf{X}_{T_2})$ where $\leq$ is lexicographic order. Then, the smallest partition $\alpha$ in lexicographic order such that $c_{(N_1,\alpha)} \neq 0$ must be exactly $\alpha = \lead(\mathbf{X}_{T_1})$. Hence, we can recover $\lead(\mathbf{X}_{T_1})$ from $\mathbf{X}_{T_1}\mathbf{X}_{T_2}$. If $\ell(\lead(\mathbf{X}_{T_1}))=1$, then $T_1$ is a star, so it has diameter 2. By Lemma \ref{lemma: diameter of T1 and T2}(a), this implies that $p=1$ and that the only leaf component adjacent to $\mathcal{L}_1$ other than $\mathcal{L}_2$ has order $|\lead(\mathbf{X}_{T_1})|-1$.  Then, $T_1$ can be reconstructed. If $\ell(\lead(\mathbf{X}_{T_1})) = 2$, then $T_1$ is a bi-star so it has diameter 3. We showed in Corollaries \ref{cor:bi-stars-leading} and \ref{cor:bi-stars-distinguish} that bi-stars can be reconstructed from their leading partitions. Hence, $T_1$ can be reconstructed. Lastly, consider the case where $\ell(\lead(\mathbf{X}_{T_1})) \geq 3$. Then, $T_1$ has at least two internal edges and since $\diam(T_1) \leq 4$ by Lemma \ref{lemma: diameter of T1 and T2}, it follows that $\diam(T_1)=4$. Since we know that the order of the leaf component in the internal subgraph of $T_1$ is $p$ and we know $\lead(\mathbf{X}_{T_1})$, then by the results in Section \ref{subsec:diameter 4}, we can reconstruct $T_1$. Hence, we can reconstruct $T_1$ in this case also. By the same argument given above, this implies that we can reconstruct $T_2$ as well and since we know the orders of the leaf components in $\mathcal{I}_T$, we can also reconstruct $T$. 
\end{proof}

The results that we have shown in this section allow us to reconstruct trees of diameter at most five from their chromatic symmetric function. In fact, we can give a reconstruction algorithm. Given $\csft$ in the star-basis of a tree $T$ of diameter 5, the following algorithm reconstructs $T$:

\begin{itemize}
    \item If $\lead(\csft)$ has no ones, then apply Theorem \ref{thm:leaf-component-order} to determine the orders of the leaf components of $\mathcal{I}_T$, which is a bi-star. Then:
    \begin{itemize}
        \item If $T$ is not $p$-balanced, then the algorithm outlined in the proof of Proposition \ref{reconstruction diam 5 different orders} reconstructs $T$ using the adjacencies between leaf components.
        \item If $T$ is $p$-balanced, then by considering the edge $e$ contained in $\mathcal{I}_T$ and $T \setminus e = T_1 \sqcup T_2$, recover $T_1$ and $T_2$ as shown in the proof of Theorem \ref{thm:p-balanced diameter 5 reconstruction}, and then reconstruct $T$ using $T_1$ and $T_2$ and $\lead(\csft)$.
    \end{itemize}
    \item If $\lead(\csft)$ has one 1, then apply Lemma \ref{lem:distinguish diam 5 varying center} to obtain the order of the other leaf component in $\mathcal{I}_T$. Then, apply Proposition \ref{reconstruction diam 5 different orders} to obtain the adjacencies, reconstructing $T$.
    \item If $\lead(\csft)$ has two 1s, then $\mathcal{I}_T$ is an edge. Then, use Theorem \ref{thm:p-balanced diameter 5 reconstruction} to reconstruct $T$.
\end{itemize}

\begin{example}\label{ex:diam 5 reconstruction}
    Consider the following chromatic symmetric function of a tree, $T$ of diameter five.
    \begin{eqnarray*}
\csft &=& 
            3\stfrak_{(4,3,2^2,1^2)} + \stfrak_{(4, 3, 2^3)}- \stfrak_{(4, 3^2, 1^3)}- 4\stfrak_{(4, 3^2, 2, 1)}- 2\stfrak_{(4^2, 2, 1^3)}- 2\stfrak_{(4^2, 2^2, 1)}+3\stfrak_{(4^2, 3, 1^2)}+ \\
            && 2\stfrak_{(4^2, 3, 2)}- 2\stfrak_{(5, 3, 2, 1^3)}- 4\stfrak_{(5, 3, 2^2, 1)}+ \stfrak_{(5, 3^2, 1^2)}+ 2\stfrak_{(5, 3^2, 2)}+ \stfrak_{(5, 4, 1^4)}+ 8\stfrak_{(5, 4, 2, 1^2)}+\\
            &&2\stfrak_{(5, 4, 2^2)}- 5\stfrak_{(5, 4, 3, 1)}- \stfrak_{(5^2, 1^3)}- 4\stfrak_{(5^2, 2, 1)} + \stfrak_{(5^2, 3)}- \stfrak_{(6, 2^2, 1^3)}+ \stfrak_{(6, 3, 1^4)}+\\
            &&6\stfrak_{(6, 3, 2, 1^2)}+ \stfrak_{(6, 3, 2^2)}- 4\stfrak_{(6, 4, 1^3)} - 6\stfrak_{(6, 4, 2, 1)}+ \stfrak_{(6, 4, 3)}+ 3\stfrak_{(6, 5, 1^2)}+2\stfrak_{(6, 5, 2)}+\\
            &&   2\stfrak_{(7, 2, 1^4)}+ 3\stfrak_{(7, 2^2, 1^2)} - 4\stfrak_{(7, 3, 1^3)}- 6\stfrak_{(7, 3, 2, 1)}+ 6\stfrak_{(7, 4, 1^2)}+ 2\stfrak_{(7, 4, 2)}- 3\stfrak_{(7, 5, 1)}\\
            &&- \stfrak_{(8, 1^5)} - 8\stfrak_{(8, 2, 1^3)}- 3\stfrak_{(8, 2^2, 1)}+ 6\stfrak_{(8, 3, 1^2)}+ 2\stfrak_{(8, 3, 2)}- 4\stfrak_{(8, 4, 1)}+ \stfrak_{(8, 5)} + 5\stfrak_{(9, 1^4)}+\\
            &&  12\stfrak_{(9, 2, 1^2)}+ \stfrak_{(9, 2, 2)}- 4\stfrak_{(9, 3, 1)}+ \stfrak_{(9, 4)}- 10\stfrak_{(10, 1^3)}- 8\stfrak_{(10, 2, 1)}+ \stfrak_{(10, 3)}+ 10\stfrak_{(11, 1^2)}+ \\
            &&  2\stfrak_{(11, 2)}- 5\stfrak_{(12, 1)}+\stfrak_{(13)}
    \end{eqnarray*}
    Note that $\lead(\csft)=(4,3,2^2,1^2)$, so $T$ is 1-balanced. Let $e=u_1u_2$ be the edge in $\mathcal{I}_T$ and let $T \setminus e = T_1 \sqcup T_2$, where without loss of generality $T_1$ contains $u_1$ and $T_2$ contains $u_2$. From the proof of Proposition \ref{prop:xt1 and xt2}, we have:
    \[
    (T \odot e)\setminus \ell_e =
        \tikzmath{
        \filldraw (0,0) node {} circle (.06cm);
        
        \filldraw (0,-.5) node {} circle (.06cm);
        \filldraw (0,-1) node {} circle (.06cm);
        
        \filldraw (-1,0) node {} circle (.06cm);
        \filldraw (-.5,0) node {} circle (.06cm);
        
        \filldraw (.5,0) node {} circle (.06cm);
        \filldraw (1,0.2) node {} circle (.06cm);
        \filldraw (1,-.2) node {} circle (.06cm);
        
        \filldraw (0,0.5) node {} circle (.06cm);
        
        \filldraw (0,1) node {} circle (.06cm);
        \filldraw (0.25,0.93) node {} circle (.06cm);
        \filldraw (-0.25,0.93) node {} circle (.06cm);
        
        \filldraw (-.75,0.75) node {} circle (.06cm);

        \draw[thick] (-1,0)--(.5,0);
        \draw[thick] (0,1)--(0,-1);
        \draw[thick] (0.25,0.93)--(0,0.5)--(-.25,.93);
        \draw[thick] (1,0.2)--(.5,0)--(1,-0.2);
        }\;, \quad
        T \odot e =
            \tikzmath{
            \filldraw (0,0) node {} circle (.06cm);
        
        \filldraw (0,-.5) node {} circle (.06cm);
        \filldraw (0,-1) node {} circle (.06cm);
        
        \filldraw (-1,0) node {} circle (.06cm);
        \filldraw (-.5,0) node {} circle (.06cm);
        
        \filldraw (.5,0) node {} circle (.06cm);
        \filldraw (1,0.2) node {} circle (.06cm);
        \filldraw (1,-.2) node {} circle (.06cm);
        
        \filldraw (0,0.5) node {} circle (.06cm);
        
        \filldraw (0,1) node {} circle (.06cm);
        \filldraw (0.25,0.93) node {} circle (.06cm);
        \filldraw (-0.25,0.93) node {} circle (.06cm);
        
        \filldraw (-.4,0.4) node {} circle (.06cm);

        \draw[thick] (-1,0)--(.5,0);
        \draw[thick] (0,1)--(0,-1);
        \draw[thick] (0.25,0.93)--(0,0.5)--(-.25,.93);
        \draw[thick] (1,0.2)--(.5,0)--(1,-0.2);
        \draw[thick] (-.4,0.4)--(0,0);
        }
    \]
    Then, computing $\mathbf{X}_{(T \odot e)\setminus \ell_e}, \mathbf{X}_{T \odot e}$ and solving for $\mathbf{X}_{T_1}\mathbf{X}_{T_2}$ in the DNC relation yields:
    \begin{eqnarray*}
        \mathbf{X}_{T_1}\mathbf{X}_{T_2} &=& 
        - 2\stfrak_{(5,3, 2^2, 1)} + \stfrak_{(5,3^2, 1^2)} + 2\stfrak_{(5,3^2, 2)} + 2\stfrak_{(5,4, 2, 1^2)} + \stfrak_{(5,4, 2^2)} - 2\stfrak_{(5,4, 3, 1)} \\
        && - \stfrak_{(5^2, 1^3)} - 4\stfrak_{(5^2, 2, 1)}+ \stfrak_{(5^2, 3)}  + 3\stfrak_{(6,5, 1^2)} + 2\stfrak_{(6,5, 2)} - 3\stfrak_{(7,5, 1)}+ \stfrak_{(8,5)} 
\end{eqnarray*}
The only two-part partition whose term has nonzero coefficient in $\mathbf{X}_{T_1}\mathbf{X}_{T_2}$ is $(8,5)$. Hence, without loss of generality, $\#V(T_1)=8$ and $\#V(T_2)=5$. Since $\#V(T_1)>\#V(T_2)$, we can recover $\mathbf{X}_{T_2}$ from $\mathbf{X}_{T_1}\mathbf{X}_{T_2}$. In particular, we obtain $\mathbf{X}_{T_2} = \stfrak_{(5)}$ as the only term in $\mathbf{X}_{T_1}\mathbf{X}_{T_2}$ whose indexing partition contains $\#V(T_1)=8$ is $\stfrak_{(8,5)}$. This implies that $T_2 = St_5$. 
By Lemma \ref{lemma: diameter of T1 and T2}(a), $u_2$ is a deep vertex of degree two. Hence, the only leaf component not contained in $\mathcal{I}_T$ that is adjacent to $u_2$ has order 4. Then, we know that the remaining leaf components, which have orders $3,2,2$ are adjacent to $u_1$. Therefore:
\[
T = \tikzmath{
        \filldraw (0,0) node {} circle (.06cm);
        \filldraw (0,-.5) node {} circle (.06cm);
        \filldraw (0,-1) node {} 
        circle (.06cm);
        
        \filldraw (-1.5,0) node {} circle (.06cm);
        \filldraw (-1.43, 0.25) node {} circle (.06cm);
        \filldraw (-1.43, -.25) node {} circle (.06cm);
        
        \filldraw (-1,0) node {} circle (.06cm);
        \filldraw (-.5,0) node {} circle (.06cm);
        
        \filldraw (.5,0) node {} circle (.06cm);
        \filldraw (1,0) node {} circle (.06cm);
        
        \filldraw (0,0.5) node {} circle (.06cm);
        
        \filldraw (0.25,0.93) node {} circle (.06cm);
        \filldraw (-0.25,0.93) node {} circle (.06cm);
        
        \draw[thick] (-1.43,-.25)--(-1,0)--(-1.43,.25);
        \draw[thick] (-1.5,0)--(.5,0);
        \draw[thick] (0,.5)--(0,-1);
        \draw[thick] (0.25,0.93)--(0,0.5)--(-.25,.93);
        \draw[thick] (1,0)--(.5,0);
}
\]
\end{example}
\section{The symmetric tree chromatic subspace}
\label{sec:subspace}
In this section, we provide another application of the results obtained in Section \ref{sec:leading}. In particular, for any positive integer $n$, we consider the $\C$-vector space spanned by the set 
\[ \{ \csft : \text{$T$ is a tree with $n$ vertices}\},\] 
and we prove that it has dimension $p(n)-n+1$, where $p(n)$ is the number of partitions of $n$.  In addition, we give a construction of a basis for this vector space which we call the caterpillar basis.

The lexicographic order for partitions allows us to consider $\{\stfrak_\lambda : \lambda \vdash n \}$ as an ordered basis for $\Lambda^n$ for any integer $n$. Let $\lambda^{(1)}<\lambda^{(2)}<\cdots< \lambda^{(p(n))}$ be the partitions of $n$ in increasing lexicographic order. We use this ordering to obtain the ordered basis $\{\stfrak_{\lambda^{(1)}}, \ldots, \stfrak_{\lambda^{(p(n))}}\}$. 

\begin{definition}
    Let $T$ be a tree on $n$ vertices with $\csft=\sum_{j=1}^{p(n)} c_{\lambda^{(j)}}\mathfrak{st}_{\lambda^{(j)}}$. We define the \defn{star-vector} of $T$ with respect to the ordered basis $\{\stfrak_{\lambda^{(1)}}, \ldots, \stfrak_{\lambda^{(p(n))}}\}$ to be the coordinate vector $[c_{\lambda^{(1)}} \; c_{\lambda^{(2)}} \; \cdots \; c_{\lambda^{(p(n))}}]$. Note that this allows us to write 
    \[
        \csft =
        \begin{bmatrix}
        c_{\lambda^{(1)}} & c_{\lambda^{(2)}} & \cdots & c_{\lambda^{(p(n))}}
        \end{bmatrix}
        \begin{bmatrix}
        \mathfrak{st}_{\lambda^{(1)}} \\
        \mathfrak{st}_{\lambda^{(2)}} \\
        \vdots \\
        \mathfrak{st}_{\lambda^{(p(n))}}
        \end{bmatrix}.
    \]
\end{definition}

\begin{example}
    Let $P_5$ be the path on 5 vertices. Then, we have $\mathbf{X}_{P_5} = -\stfrak_{(2,2,1)}+\stfrak_{(3,1,1)}+ 2\stfrak_{(3,2)}- 2\stfrak_{(4,1)} +\stfrak_{(5)}$, so the star-vector of $P_5$ is $\begin{bmatrix}
        0 & 0 & -1 & 1 &2 & -2 & 1
    \end{bmatrix}$.
\end{example}

\begin{definition}
    Let $T_1,T_2,\ldots,T_q$ be all the trees with $n$ vertices. For each $1 \leq i \leq q$, we write $\mathbf{X}_{T_i}=\sum_{j=1}^{p(n)} c^i_{\lambda^{(j)}} \mathfrak{st}_{\lambda^{(j)}}$ for some integers $c^i_{\lambda^{(1)}},\ldots,c^i_{\lambda^{(p(n))}}$. Up to reordering of the trees, we define an \defn{$n$-CSF matrix} to be the $q \times p(n)$ matrix $(c^i_{\lambda^{(j)}})_{i,j}$. In matrix notation:
    \[
        \begin{bmatrix}
            \mathbf{X}_{T_1} \\
            \mathbf{X}_{T_2} \\
            \vdots \\
            \mathbf{X}_{T_q}
        \end{bmatrix} 
        =
        \begin{bmatrix}
            c^1_{\lambda^{1}} & c^1_{\lambda^{2}} & \cdots & c^1_{\lambda^{p(n)}}\\
            c^2_{\lambda^{1}} & c^2_{\lambda^{2}} & \cdots & c^2_{\lambda^{p(n)}}\\
            \vdots & \vdots & \ddots & \vdots \\
            c^q_{\lambda^{1}} & c^q_{\lambda^{2}} & \cdots & c^q_{\lambda^{p(n)}}\\
        \end{bmatrix}
        \begin{bmatrix}
            \mathfrak{st}_{\lambda^1} \\
            \mathfrak{st}_{\lambda^2} \\
            \vdots \\
            \mathfrak{st}_{\lambda^{p(n)}}
        \end{bmatrix}.
    \]
\end{definition}

\begin{example}
    We give an example of the $6$-CSF matrix. We enumerate the trees on 6 vertices as follows:
    \[
    \quad T_1 = \tikzmath{
        \filldraw (0,0) node {} circle (.06cm);
        \filldraw (0.6,0) node {} circle (.06cm);
        \filldraw (1.2,0) node {} circle (.06cm);
        \filldraw (1.8,0) node {} circle (.06cm);
        \filldraw (2.4,0) node {} circle (.06cm);
        \filldraw (3,0) node {} circle (.06cm);

        \draw[thick] (0,0)--(3,0);
    }, \quad
    T_2 = \tikzmath{
        \filldraw (0,0) node {} circle (.06cm);
        \filldraw (0.4,.4) node {} circle (.06cm);
        \filldraw (.4,-.4) node {} circle (.06cm);
        \filldraw (0.8,.8) node {} circle (.06cm);
        \filldraw (0.8,-.8) node {} circle (.06cm);
        \filldraw (0.6, 0) node {} circle (.06cm);

        \draw[thick] (0,0)--(.8,.8);
        \draw[thick] (0,0)--(.8,-.8);
        \draw[thick] (0,0)--(.6,0);
    }, \quad
    \quad T_3=
    \tikzmath{
        \filldraw (0,0) node {} circle (.06cm);
        \filldraw (0.6,0) node {} circle (.06cm);
        \filldraw (1.2,0) node {} circle (.06cm);
        \filldraw (1.8,0) node {} circle (.06cm);
        \filldraw (2.2,-.4) node {} circle (.06cm);
        \filldraw (2.2,.4) node {} circle (.06cm);

        \draw[thick] (0,0)--(.6,0);
        \draw[thick] (1.2,0)--(.6,0);
        \draw[thick] (1.2,0)--(1.8,0);
        \draw[thick] (2.2,0.4)--(1.8,0);
        \draw[thick] (2.2,-.4)--(1.8,0);
    }   
    \]
    \[\quad T_4 = 
    \tikzmath{
        \filldraw (0,0) node {} circle (.06cm);
        \filldraw (0.6,0) node {} circle (.06cm);
        \filldraw (1,.4) node {} circle (.06cm);
        \filldraw (1,-.4) node {} circle (.06cm);
        \filldraw (-.4,.4) node {} circle (.06cm);
        \filldraw (-.4,-.4) node {} circle (.06cm);

        \draw[thick] (0,0)--(.6,0);
        \draw[thick] (0,0)--(-.4,.4);
        \draw[thick] (0,0)--(-.4,-.4);
        \draw[thick] (.6,0)--(1,-.4);
        \draw[thick] (.6,0)--(1,.4);
    }, \quad
    T_5 = 
    \tikzmath{
        \filldraw (0,0) node {} circle (.06cm);
        \filldraw (0.6,0) node {} circle (.06cm);
        \filldraw (1.2,0) node {} circle (.06cm);
        \filldraw (1.2,.6) node {} circle (.06cm);
        \filldraw (1.2,-.6) node {} circle (.06cm);
        \filldraw (1.8,0) node {} circle (.06cm);

        \draw[thick] (0,0)--(.6,0);
        \draw[thick] (1.2,0)--(.6,0);
        \draw[thick] (1.2,0)--(1.2,.6);
        \draw[thick] (1.2,0)--(1.2,-.6);
        \draw[thick] (1.2,0)--(1.8,0);
    }, \quad
    T_6 = 
    \tikzmath{
        \filldraw (0,0) node {} circle (.06cm);
        \filldraw (0,.6) node {} circle (.06cm);
        \filldraw (.5,0.3) node {} circle (.06cm);
        \filldraw (-.5,0.3) node {} circle (.06cm);
        \filldraw (-.4,-0.4) node {} circle (.06cm);
        \filldraw (.4,-0.4) node {} circle (.06cm);
    
        \draw[thick] (0,0)--(0,.6);
        \draw[thick] (0,0)--(.5,.3);
        \draw[thick] (0,0)--(-.5,.3);
        \draw[thick] (0,0)--(-.4,-.4);
        \draw[thick] (0,0)--(.4,-.4);
    }
    \]
    Then, we have:
    \[
        \begin{bmatrix}
            \mathbf{X}_{T_1} \\
            \mathbf{X}_{T_2} \\
            \mathbf{X}_{T_3} \\
            \mathbf{X}_{T_4} \\ 
            \mathbf{X}_{T_5} \\
            \mathbf{X}_{T_6}
        \end{bmatrix} 
        =\begin{bmatrix}
            0 & 0 & 1 & 1 & -1 & -4 & 1 & 3 & 2 & -3 & 1\\
            0 & 0 & 0 & 1 & 0 & -2 & 0 & 1 & 2 & -2 & 1\\
            0 & 0 & 0 & 0 & 0 & -1 & 1 & 1 & 1 & -2 & 1\\
            0 & 0 & 0 & 0 & 0 & 0 & 1 & 0 & 0 & -1 & 1\\
            0 & 0 & 0 & 0 & 0 & 0 & 0 & 0 & 1 & -1 & 1\\
            0 & 0 & 0 & 0 & 0 & 0 & 0 & 0 & 0 & 0 & 1
        \end{bmatrix}\cdot
        \begin{bmatrix}
            \mathfrak{st}_{(1^6)} \\
            \mathfrak{st}_{(2,1^4)} \\
            \mathfrak{st}_{(2^2,1^2)}  \\
            \mathfrak{st}_{(2^3)} \\
            \mathfrak{st}_{(3,1^3)}\\
            \mathfrak{st}_{(3,2,1)}\\
            \mathfrak{st}_{(3,3)}\\
            \mathfrak{st}_{(4,1^2)}\\
            \mathfrak{st}_{(4,2)}\\
            \mathfrak{st}_{(5,1)}\\
            \mathfrak{st}_{(6)}
        \end{bmatrix}
    \]
\end{example}

    If $\lambda = (\lambda_1, \ldots, \lambda_\ell) \vdash n$, we say that $\lambda$ is a \defn{hook partition}, or simply a hook, if $\ell > 1$ and $\lambda_2 = \cdots = \lambda_\ell = 1$. We say that $\lambda$ is a \defn{non-hook partition} if $\ell = 1$ or $\lambda_2 > 1$.

\begin{definition}
    A tree $T$ is a \defn{caterpillar} if all its internal edges form a path, which is called the \defn{spine} of the caterpillar. If $v_1,v_2, \ldots, v_{k-1},v_k$ is the spine of the caterpillar, we define the \defn{leaf-component sequence} of $T$, up to reversal, as $(L_1, \ldots, L_k)$ where $L_i$ is the order of the unique leaf component containing $v_i$ as its central vertex.
\end{definition}

\begin{example}
    The following tree is a caterpillar with leaf-component sequence $(5,3,1,2,4)$.
    \begin{center}
        \begin{tikzpicture}[auto=center,every node/.style={circle, fill=black, scale=0.5}, thick]
            \node (c0) at (0,0) {};
            \node (c1) at (1,0) {};
            \node (c2) at (2,0) {};
            \node (c3) at (3,0) {};
            \node (c4) at (4,0) {};

            \node[fill=none,scale=2.25] at (0, -.3) {$v_1$};
            \node[fill=none,scale=2.25] at (1, -.3) {$v_2$};
            \node[fill=none,scale=2.25] at (2, -.3) {$v_3$};
            \node[fill=none,scale=2.25] at (3, -.3) {$v_4$};
            \node[fill=none,scale=2.25] at (4, -.3) {$v_5$};

            \node (c01) at (-.35, .35) {};
            \node (c02) at (.13, .49) {};
            \node (c03) at (-.13, .49) {};
            \node (c04) at (.35, .35) {};

            \node (c11) at (.75, .5) {};
            \node (c12) at (1.25, .5) {};

            \node (c31) at (3, .5) {};

            \node (c41) at (3.75, .43) {};
            \node (c42) at (4.25, .43) {};
            \node (c43) at (4, .5) {};

            \draw (c0) -- (c1) -- (c2) -- (c3) -- (c4);

            \draw (c0) -- (c01);
            \draw (c0) -- (c02);
            \draw (c0) -- (c03);
            \draw (c0) -- (c04);

            \draw (c1) -- (c11);
            \draw (c1) -- (c12);

            \draw (c3) -- (c31);
            
            \draw (c4) -- (c41);
            \draw (c4) -- (c42);
            \draw (c4) -- (c43);
        \end{tikzpicture}
    \end{center}
\end{example}

\begin{remark}
    Given a sequence of positive integers $\alpha = (\alpha_1, \ldots, \alpha_k)$ where $\alpha_1,\alpha_k>1$, there is a unique caterpillar up to isomorphism with $\alpha$ as its leaf-component sequence. This caterpillar can be constructed by creating a path with vertices $v_1, \ldots, v_k$ and adding $\alpha_i-1$ leaves to $v_i$ for each $1 \leq i \leq k$. We denote this caterpillar by $C[\alpha_1, \ldots, \alpha_k]$ or simply $C[\alpha]$. For example, given the sequence $(4,2,1,1,2,1,1,6)$, the caterpillar $C[4,2,1,1,2,1,1,6]$ up to isomorphism is:
    \begin{center}
        \begin{tikzpicture}[auto=center,every node/.style={circle, fill=black, scale=0.5}, thick]
            \node (c0) at (0,0) {};
            \node (c1) at (1,0) {};
            \node (c2) at (2,0) {};
            \node (c3) at (3,0) {};
            \node (c4) at (4,0) {};
            \node (c5) at (5,0) {};
            \node (c6) at (6,0) {};
            \node (c7) at (7,0) {};

            \node (c01) at (-.25, .43) {};
            \node (c02) at (0, .5) {};
            \node (c03) at (0.25, .43) {};

            \node (c11) at (1, .5) {};

            \node (c41) at (4, .5) {};

            \node (c71) at (6.75, .43) {};
            \node (c72) at (7.25, .43) {};
            \node (c73) at (7, .5) {};
            \node (c74) at (7.43, .25) {};
            \node (c75) at (6.57, .25) {};
            
            \draw (c0) -- (c1) -- (c2) -- (c3) -- (c4) -- (c5) -- (c6) -- (c7);

            \draw (c0) -- (c01);
            \draw (c0) -- (c02);
            \draw (c0) -- (c03);

            \draw (c1) -- (c11);

            \draw (c4) -- (c41);
            
            \draw (c7) -- (c71);
            \draw (c7) -- (c72);
            \draw (c7) -- (c73);
            \draw (c7) -- (c74);
            \draw (c7) -- (c75);
        \end{tikzpicture}
    \end{center}
\end{remark}

We can use these concepts to obtain the following result.

\begin{lemma}\label{lemma:non-hook-building}
    Let $\lambda \vdash n$ be a non-hook partition. Then, there exists a tree $T$ with $n$ vertices such that $\lead(\csft) = \lambda$.
\end{lemma}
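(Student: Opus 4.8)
The plan is to reduce the statement to a leaf component computation via Theorem~\ref{thm:leading-partition}: since $\lead(\csft)=\lc(T)=\lambda(T\setminus I(T))$, it suffices to construct, for each non-hook $\lambda\vdash n$, a tree $T$ on $n$ vertices whose leaf components are exactly the stars $St_{\lambda_1},\dots,St_{\lambda_\ell}$, so that $\lc(T)=\lambda$. I would split into the cases $\ell(\lambda)=1$ and $\ell(\lambda)\ge 2$.

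For $\ell(\lambda)=1$, i.e.\ $\lambda=(n)$, I would simply take $T=St_n$: a star has no internal edges (Remark~\ref{DNC-properties}), so $T\setminus I(T)=St_n$ and $\lc(T)=(n)=\lambda$. For $\ell(\lambda)\ge 2$, I would use the caterpillar construction $C[\alpha]$ described in the remark above. Since $\lambda$ is a non-hook with at least two parts, $\lambda_2\ge 2$ and hence $\lambda_1\ge\lambda_2\ge 2$, so I can reorder the parts of $\lambda$ into a sequence $\alpha=(\lambda_1,\lambda_3,\lambda_4,\dots,\lambda_\ell,\lambda_2)$ whose first and last entries are both $\ge 2$. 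I would set $T=C[\alpha]$, the unique caterpillar with leaf-component sequence $\alpha$, and then check directly that $\lc(C[\alpha])=\sort(\alpha)=\lambda$: writing $v_1,\dots,v_\ell$ for the spine, each endpoint has degree $\alpha_1,\alpha_\ell\ge 2$ and each interior spine vertex $v_i$ has degree $\alpha_i+1\ge 3$, so every spine vertex is internal and every other vertex is a leaf; therefore $I(C[\alpha])$ is precisely the set of spine edges, and deleting them leaves the $\ell$ star components $St_{\alpha_1},\dots,St_{\alpha_\ell}$. Hence $\lc(C[\alpha])=\sort(\alpha)=\lambda$, the vertex count is $|\alpha|=|\lambda|=n$, and Theorem~\ref{thm:leading-partition} gives $\lead(\csft)=\lc(T)=\lambda$.

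The only point requiring care — and thus the main (mild) obstacle — is the presence of parts equal to $1$ in $\lambda$, which correspond to deep vertices: a deep vertex has internal degree at least $2$, so a part of size $1$ can never sit at an end of a caterpillar spine. This is exactly why I place $\lambda_1$ and $\lambda_2$ at the two ends of $\alpha$, and it is precisely the non-hook hypothesis (guaranteeing two parts $\ge 2$ once $\ell\ge 2$) that makes such a placement possible; this is where the assumption on $\lambda$ is genuinely used.
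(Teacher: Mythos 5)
Your proof is correct and takes essentially the same approach as the paper: both handle $\ell(\lambda)=1$ with $T=St_n$ and, for $\ell(\lambda)\geq 2$, build the caterpillar with the two parts $\lambda_1,\lambda_2\geq 2$ placed at the two ends of the spine (the paper uses the ordering $C[\lambda_2,\lambda_3,\dots,\lambda_\ell,\lambda_1]$, which differs only cosmetically from your $C[\lambda_1,\lambda_3,\dots,\lambda_\ell,\lambda_2]$). Your explicit verification that the spine vertices are exactly the internal vertices, and your remark on why parts equal to $1$ cannot sit at an end, merely flesh out the same argument.
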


\begin{proof}
    If $\ell(\lambda) = 1$, then $T = St_n$ has $\lambda=(n)$ as its leading partition. Assume then that $\ell(\lambda) = \ell > 1$ and write $\lambda = (\lambda_1, \ldots, \lambda_\ell)$. Since $\lambda$ is not a hook partition and it has length greater than 1, it follows that $\lambda_1, \lambda_2 > 1$. Then the caterpillar $C = C[\lambda_2, \ldots, \lambda_\ell,\lambda_1]$ has $\lead(\mathbf{X}_{C}) = \lambda$.
\end{proof}

We now use Lemma \ref{lemma:non-hook-building} to prove the following lower bound for the rank of an $n$-CSF matrix.

\begin{proposition}\label{prop:lower-bound-rank}
Let $\mathcal{M}$ be an $n$-CSF matrix for some $n\geq 1$. Then, $\rank(\mathcal{M})\geq p(n)-n+1$.
\end{proposition}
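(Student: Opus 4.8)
The plan is to exhibit, for each non-hook partition $\lambda \vdash n$, a tree whose star-vector is ``triangular'' with respect to $\lambda$, and then count how many non-hook partitions there are. By Lemma \ref{lemma:non-hook-building}, for every non-hook partition $\lambda \vdash n$ there is a tree $T_\lambda$ with $n$ vertices such that $\lead(\mathbf{X}_{T_\lambda}) = \lambda$; that is, the star-vector of $T_\lambda$ has its first nonzero entry precisely in the coordinate indexed by $\lambda$ (recall the coordinates are ordered by increasing lexicographic order on partitions, and that $c_{\lead(\mathbf{X}_{T_\lambda})} \neq 0$ by Theorem \ref{thm:leading-partition}).

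First I would collect the rows of $\mathcal{M}$ corresponding to the trees $\{T_\lambda : \lambda \vdash n \text{ non-hook}\}$ (if several non-hook partitions happen to be realized by the same tree this cannot occur, since distinct leading partitions force distinct trees, but in any case each chosen tree contributes one row). Order these rows so that the leading partitions $\lambda$ appear in increasing lexicographic order. Since the leading term of $\mathbf{X}_{T_\lambda}$ sits exactly in the column indexed by $\lambda$ and all columns indexed by strictly smaller partitions are zero in that row, the resulting submatrix of $\mathcal{M}$ is in echelon form: each successive row has its pivot strictly to the right of the previous one. Hence these rows are linearly independent, and $\rank(\mathcal{M})$ is at least the number of non-hook partitions of $n$.

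Next I would count the non-hook partitions of $n$. A partition of $n$ is a hook if it has length greater than $1$ and all parts after the first equal $1$; these are exactly $(n-k, 1^k)$ for $1 \le k \le n-1$, giving $n-1$ hook partitions. Therefore the number of non-hook partitions of $n$ is $p(n) - (n-1) = p(n) - n + 1$, and combining with the previous paragraph gives $\rank(\mathcal{M}) \ge p(n) - n + 1$, as claimed.

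The one point that needs a little care — and the place I expect the only real friction — is confirming that the chosen rows genuinely form an echelon submatrix after the reordering: one must be sure that the pivot of the row for $\lambda$ (namely the column indexed by $\lambda$) is not cancelled or shifted by entries coming from other rows, which is immediate because we are selecting rows, not taking combinations, and each row's leading entry is in a distinct column. No numerics are required beyond the standard count of hook partitions, so the argument is essentially a triangularity observation plus bookkeeping.
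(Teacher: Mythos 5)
Your argument is correct and matches the paper's proof essentially verbatim: both select, via Lemma \ref{lemma:non-hook-building}, one tree per non-hook partition, observe that the resulting rows of $\mathcal{M}$ have distinct leading (pivot) columns and are therefore linearly independent, and count the non-hook partitions as $p(n)-n+1$. No gaps.
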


\begin{proof}
Fix an $n$-CSF matrix $\mathcal{M}$. There are exactly $n-1$ hook partitions of $n$. For each of the $p(n)-n+1$ non-hook partitions $\lambda \vdash n$, it follows from Lemma \ref{lemma:non-hook-building} that there is a tree $T$ with $\lead(\mathbf{X}_{T})=\lambda$. Then, let us fix a set of $p(n) - n+1$ trees on $n$ vertices with mutually distinct leading partitions. The rows corresponding to the CSF vectors of these trees are linearly independent because the first nonzero entry of a row corresponding to a tree with leading partition $\lambda$ is strictly to the left of the first nonzero entry of a row corresponding to a tree with leading partition $\mu$ if $\lambda < \mu$ in lexicographic order and the trees are ordered by increasing leading partition. 
\end{proof}

We have just shown a lower bound on the rank of any $n$-CSF matrix. We will shortly prove that $p(n)-n+1$ is also an upper bound for $\rank(\mathcal{M})$. However, we first need the following proposition.

\begin{lemma}\label{lem:sum-coeff-0}
    Let $F$ be a forest on $n$ vertices and let $cc(F)$ denote the number of connected components of $F$. If $ \csff = \sum_{\lambda \vdash n}c_{\lambda}\mathfrak{st}_\lambda$, then for all $m>cc(F)$, the following holds
    $$
    \sum_{\substack{\lambda\vdash n \\ \ell(\lambda)=m}} c_\lambda = 0.
    $$
\end{lemma}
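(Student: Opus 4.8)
The statement says that if $F$ is a forest on $n$ vertices with $cc(F)$ connected components, then for every $m > cc(F)$ the sum of the star-coefficients over all partitions of length exactly $m$ vanishes. The natural approach is to find a specialization (a ring homomorphism, or just an evaluation) of symmetric functions that detects exactly the quantity $\sum_{\ell(\lambda)=m} c_\lambda$ and on which $\mathbf{X}_F$ is easy to evaluate. The key observation is that $\stfrak_k = \mathbf{X}_{St_k}$ and $\stfrak_\lambda = \prod_i \stfrak_{\lambda_i}$ is multiplicative, so if we apply a homomorphism $\varphi\colon \Lambda \to \mathbb{Q}[t]$ for which $\varphi(\stfrak_k)$ depends on $k$ in a controlled way, then $\varphi(\stfrak_\lambda)$ will encode $\ell(\lambda)$. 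The cleanest choice is the one that sends each generator $\stfrak_k$ to $t$ for $k \geq 1$ (if this is well-defined), so that $\varphi(\stfrak_\lambda) = t^{\ell(\lambda)}$ and $\varphi(\mathbf{X}_F) = \sum_\lambda c_\lambda t^{\ell(\lambda)} = \sum_m \big(\sum_{\ell(\lambda)=m} c_\lambda\big) t^m$. Then the claim is equivalent to saying this polynomial has degree at most $cc(F)$ in $t$.

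First I would pin down such a homomorphism concretely. Since $\{\stfrak_\lambda\}$ is a basis and the $\stfrak_k$ are algebraically independent generators of $\Lambda$ (they form a multiplicative basis, analogous to the $p_k$ or $e_k$), there is a well-defined algebra homomorphism $\varphi$ with $\varphi(\stfrak_k) = t$ for all $k \geq 1$; equivalently one can realize it as evaluation of symmetric functions at a suitable (possibly formal) point, or simply define it on generators and extend multiplicatively. Then I would compute $\varphi(\mathbf{X}_F)$ directly from the definition of the chromatic symmetric function rather than from the star-expansion. Because $\mathbf{X}_F = \prod_{\text{components } T} \mathbf{X}_T$ and $\varphi$ is multiplicative, it suffices to bound $\deg_t \varphi(\mathbf{X}_T)$ by $1$ for a single tree $T$; then $\deg_t \varphi(\mathbf{X}_F) \leq cc(F)$ and we are done, since the coefficient of $t^m$ for $m > cc(F)$ is exactly $\sum_{\ell(\lambda)=m} c_\lambda$.

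The heart of the argument is therefore: for a single tree $T$, show $\varphi(\mathbf{X}_T)$ is linear in $t$, i.e. $\sum_{\ell(\lambda)=m} c_\lambda = 0$ for all $m \geq 2$. I expect to get this from the DNC-tree / star-expansion machinery (Theorem~\ref{thm:dnc-tree-csf} and Corollary~\ref{cor:coeff-paths-seq}) combined with the behavior of connected-component counts under the three DNC operations. Alternatively — and this may be cleaner — I would use Stanley's power-sum expansion (Theorem~\ref{thm:stanley-power-sum-expansion}) together with the change of basis \eqref{eq:power-sum-to-star}: applying $\varphi$ to $p_{n+1} = \sum_r (-1)^r\binom{n}{r}\stfrak_{(r+1,1^{n-r})}$ gives $\varphi(p_{n+1}) = \sum_r (-1)^r \binom{n}{r} t^{n-r+1} = t(t-1)^n$, and more generally $\varphi(p_k) = t(t-1)^{k-1}$, so $\varphi(p_\mu) = t^{\ell(\mu)}(t-1)^{|\mu|-\ell(\mu)} = t^{\ell(\mu)}(t-1)^{n - \ell(\mu)}$ for $\mu \vdash n$. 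Then $\varphi(\mathbf{X}_F) = \sum_{A \subseteq E(F)} (-1)^{|A|} t^{\ell(\lambda(A))}(t-1)^{n-\ell(\lambda(A))}$, where $\ell(\lambda(A))$ is the number of connected components of the spanning subgraph $F|_A$, which equals $n - |A|$ since $F|_A$ is a forest. Thus $\varphi(\mathbf{X}_F) = \sum_{A}(-1)^{|A|} t^{n-|A|}(t-1)^{|A|} = \big(t - t(t-1)\big)^{\#\text{?}}$... more precisely, since $F$ is a forest with $E(F)$ partitioned among its components and $\ell(\lambda(A)) = n - |A|$, we get $\varphi(\mathbf{X}_F) = \sum_{A \subseteq E(F)} \big(-(t-1)\big)^{|A|} t^{n-|A|} = t^{n - |E(F)|}\big(t - (t-1)\big)^{|E(F)|} = t^{n - |E(F)|} = t^{cc(F)}$, using $|E(F)| = n - cc(F)$. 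So $\varphi(\mathbf{X}_F) = t^{cc(F)}$ exactly, which immediately gives $\sum_{\ell(\lambda) = m} c_\lambda = 0$ for all $m \neq cc(F)$ — in particular for $m > cc(F)$.

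The main obstacle is establishing rigorously that $\varphi$ is a well-defined algebra homomorphism on $\Lambda$ with $\varphi(\stfrak_k) = t$ — i.e. that the $\stfrak_k$ really are free polynomial generators — and then confirming the formula $\varphi(p_k) = t(t-1)^{k-1}$ via \eqref{eq:power-sum-to-star}; once that is in hand the computation above is short. I would state this as: there is a unique $\mathbb{Q}$-algebra homomorphism $\varphi\colon\Lambda\to\mathbb{Q}[t]$ sending $\stfrak_{\lambda}\mapsto t^{\ell(\lambda)}$, because $\Lambda = \mathbb{Q}[\stfrak_1,\stfrak_2,\dots]$ is a polynomial ring on the $\stfrak_k$ (Theorem~\ref{thm:cho-vanW} shows the $\stfrak_\lambda$ form a basis, and multiplicativity $\stfrak_\lambda\stfrak_\mu = \stfrak_{\sort(\lambda\cdot\mu)}$ identifies $\Lambda$ with a polynomial ring on the $\stfrak_k$). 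Then the bulk of the proof is the one-line power-sum computation, and the conclusion follows. If I wanted to avoid the reciprocity-based formula, I could instead prove $\varphi(\mathbf{X}_F) = t^{cc(F)}$ by induction via the DNC relation (Proposition~\ref{prop:dnc-formula}): deletion, dot-contraction, and leaf-contraction each interact predictably with $cc$ and with $\varphi$, and the three-term alternating sum telescopes — but the power-sum route is cleaner and I would present that.
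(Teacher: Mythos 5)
Your proof is correct, and it takes a genuinely different route from the paper's. The paper proves the lemma by induction on the number of internal edges using the DNC relation: the base case is a star forest, and in the inductive step the three graphs $F\setminus e$, $(F\odot e)\setminus\ell_e$, $F\odot e$ are compared, with the borderline length $m=cc(F)+1$ handled by observing that the all-leaf-contraction path contributes $+1$ to each of the first two terms and these cancel because of their opposite signs. You instead introduce the specialization $\varphi\colon\Lambda\to\Q[t]$ with $\varphi(\stfrak_\lambda)=t^{\ell(\lambda)}$ (well-defined as a linear map on the star-basis, and multiplicative since $\stfrak_\lambda\stfrak_\mu=\stfrak_{\sort(\lambda\cdot\mu)}$), compute $\varphi(p_k)=t(t-1)^{k-1}$ from \eqref{eq:power-sum-to-star}, and then evaluate $\varphi(\csff)$ via Theorem \ref{thm:stanley-power-sum-expansion}, using that $\ell(\lambda(A))=n-|A|$ for a spanning forest to collapse the binomial sum to $t^{n-|E(F)|}=t^{cc(F)}$. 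Your calculation checks out (e.g.\ for $\mathbf{X}_{P_5}$ the length-$m$ coefficient sums are $1,0,0$ for $m=1,2,3$), and it in fact yields the strictly stronger statement $\sum_{\ell(\lambda)=m}c_\lambda=0$ for \emph{all} $m\neq cc(F)$ together with $\sum_{\ell(\lambda)=cc(F)}c_\lambda=1$, which the paper's induction does not directly give. What the paper's proof buys is self-containment within the DNC machinery the paper is built on; what yours buys is brevity after setup, an exact closed form $\varphi(\csff)=t^{cc(F)}$ rather than just a vanishing statement, and a reusable homomorphism. The only point to make fully explicit in a final write-up is the multiplicativity of $\varphi$ (needed to pass from $\varphi(p_k)$ to $\varphi(p_\mu)$), which you correctly reduce to the multiplicativity of the star-basis.
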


\begin{proof}
    We will proceed by induction on the number of internal edges $k$ of $F$. If $k=0$, then $F$ is a star forest, and $\csff = \stfrak_{(\lambda_1, \ldots, \lambda_\ell)}$ where $\lambda_1, \ldots, \lambda_\ell$ are the orders of the star graphs. Thus, the claim holds vacuously since $c_\lambda = 0$ for all $\lambda$ such that $\ell(\lambda) > cc(F)=\ell$. 
    
    Now suppose that the claim holds for all forests with $n$ vertices with at most $k$ internal edges. Assume that $F$ has $k+1$ internal edges and choose an internal edge $e$. By the deletion-near-contraction relation, $\csff=\mathbf{X}_{F \setminus e}-\mathbf{X}_{(F\odot e)\setminus \ell_e}+\mathbf{X}_{F\odot e}$. Note that $F\setminus e$, $(F\odot e)\setminus \ell_e$, and $F\odot e$ each have $n$ vertices and at most $k$ internal edges. In addition, $F\setminus e$, $(F\odot e)\setminus \ell_e$ have one more connected component than $F$, while $F\odot e$ has the same number of connected components than $F$.
    
    The coefficients indexed by partitions with more than $cc(F)+1$ parts in $\mathbf{X}_{F\setminus e}$, $\mathbf{X}_{(F\odot e)\setminus \ell_e}$, and $\mathbf{X}_{F\odot e}$ are all $0$ by induction hypothesis. Hence, if $m > cc(F)+1$, we have $\sum c_\lambda = 0$, where the sum runs over partitions of length $m$.

    It remains to show that the claim holds for partitions of length 
    $m=cc(F)+1$. In this case, the induction hypothesis still applies for $F\odot e$ since $cc(F\odot e) = cc(F)$. Suppose $\mathbf{X}_{F\setminus e}=\sum_{\lambda \vdash n} a_\lambda\mathfrak{st}_\lambda$ and $ \mathbf{X}_{(F\odot e)\setminus \ell_e}=\sum_{\lambda \vdash n}b_\lambda\mathfrak{st}_\lambda$.
        
    Note that $cc(F\setminus e)=cc((F\odot e)\setminus \ell_e)=m$. Since deletions and dot-contractions increase the number of connected components, the only path in the DNC-tree from the root (either $F\setminus e$ or $(F\odot e)\setminus \ell_e)$) to a star forest with $m$ connected components is the sequence of repeated leaf-contractions in both. Therefore, we obtain:
    \[
        \sum_{\substack{\lambda\vdash n, \\ \ell(\lambda)=m}}a_\lambda=\sum_{\substack{\lambda\vdash n, \\ \ell(\lambda)=m}}b_\lambda=1.
    \]
    Since $\mathbf{X}_{F\setminus e}$ and $\mathbf{X}_{(F\odot e)\setminus \ell_e}$ have opposite parities in the DNC relation, the claim follows.
\end{proof}

Lemma \ref{lem:sum-coeff-0} above is generalized to forests, not just trees. However, we shall only use the result for the chromatic symmetric function of trees. In particular, we use it to prove an upper bound for the rank of an $n$-CSF matrix.

\begin{proposition}\label{prop:upper-bound-rank}
    Let $\mathcal{M}$ be an $n$-CSF matrix for some $n\geq 1$. Then, $\rank(\mathcal{M}) \leq p(n)-n+1$
\end{proposition}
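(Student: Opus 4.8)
The plan is to exhibit $n-1$ explicit linear relations satisfied by every row of $\mathcal{M}$ and to verify that these relations are linearly independent; this forces the row space of $\mathcal{M}$, and hence $\mathcal{M}$ itself, to have rank at most $p(n)-(n-1)$.

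First I would apply Lemma \ref{lem:sum-coeff-0} to trees. Every tree $T$ on $n$ vertices is connected, so $cc(T)=1$, and the lemma gives, for each integer $m$ with $1<m\le n$,
\[
\sum_{\substack{\lambda\vdash n\\ \ell(\lambda)=m}} c^T_\lambda = 0,
\]
where $\mathbf{X}_T=\sum_{\lambda\vdash n}c^T_\lambda\,\mathfrak{st}_\lambda$. In terms of the matrix $\mathcal{M}=(c^i_{\lambda^{(j)}})$, this says that for each such $m$ the vector $v_m\in\C^{p(n)}$ whose $\lambda$-coordinate equals $1$ when $\ell(\lambda)=m$ and $0$ otherwise is orthogonal to every row of $\mathcal{M}$. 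Thus the row space of $\mathcal{M}$ is contained in $W:=\{v_m : 1<m\le n\}^{\perp}$.

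Next I would count and check independence. There are exactly $n-1$ integers $m$ with $1<m\le n$, and every partition of $n$ has length in $\{1,2,\ldots,n\}$, so the sets $\Lambda_m:=\{\lambda\vdash n : \ell(\lambda)=m\}$ partition the column index set. For each $m$ with $2\le m\le n$ the set $\Lambda_m$ is nonempty, since the hook $(n-m+1,1^{m-1})$ is a partition of $n$ of length $m$. Hence $v_2,\ldots,v_n$ have pairwise disjoint nonempty supports, so they are linearly independent, and $\dim W=p(n)-(n-1)=p(n)-n+1$. Therefore $\rank(\mathcal{M})\le\dim W=p(n)-n+1$.

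The argument is essentially immediate once Lemma \ref{lem:sum-coeff-0} is available; the only point needing (minimal) care is confirming that each length $m\in\{2,\ldots,n\}$ is realized by some partition of $n$, so that all $n-1$ relations are genuinely nontrivial rather than some being vacuous — this is exactly what the hook partitions $(n-m+1,1^{m-1})$ provide. Combined with Proposition \ref{prop:lower-bound-rank}, this pins down $\rank(\mathcal{M})=p(n)-n+1$, which will then yield the stated dimension of $\mathcal{V}_n$.
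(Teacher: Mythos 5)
Your proof is correct and follows essentially the same route as the paper: both invoke Lemma \ref{lem:sum-coeff-0} (with $cc(T)=1$) to get $n-1$ linear relations among the columns indexed by partitions of a fixed length $m\in\{2,\ldots,n\}$, and then conclude the rank bound $p(n)-n+1$. The only cosmetic difference is that the paper phrases this as eliminating the $n-1$ hook columns in favor of the remaining columns of the same length, whereas you phrase it as the row space lying in the orthogonal complement of $n-1$ independent relation vectors with disjoint supports; these are two readings of the same linear-algebra fact.
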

\begin{proof}
    Recall that $\mathcal{M}$ has $p(n)$, number of partitions of $n$, columns. Let $k\in [n-1]$ be arbitrary, and let $h_k$ denote the unique hook partition with biggest part $k$, i.e. $h_k=(k, 1^{n-k})$. By Lemma \ref{lem:sum-coeff-0}, we have that the sum of the column vectors in $\mathcal{M}$ corresponding to partitions of length $n-k+1$ is the zero vector. That is, the column indexed by $h_k$ can be expressed as a linear combination of the column vectors corresponding to non-hook partitions of length $n-k+1$. Thus, we can express $n-1$ columns of $\mathcal{M}$ as linear combinations of other columns in $\mathcal{M}$, so the claim follows immediately.
\end{proof}

From the previous results, we obtain the following theorem.

\begin{theorem}\label{thm:dimension}
     For each positive integer $n$, the $\C$-vector space $\mathcal{V}_n = \spn_\C\{\csft : |V(T)|=n\}$ has dimension $p(n)-n+1$. Furthermore, there exists a basis of caterpillars for this space which we call the \defn{\textit{caterpillar basis}}.
\end{theorem}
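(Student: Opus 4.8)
The plan is to read off the dimension directly from Propositions~\ref{prop:lower-bound-rank} and~\ref{prop:upper-bound-rank}, and then to extract the caterpillar basis from the construction already used in the proof of Proposition~\ref{prop:lower-bound-rank}. First I would note that, under the identification of $\Lambda^n$ with $\C^{p(n)}$ afforded by the ordered star-basis $\{\stfrak_{\lambda^{(1)}}, \ldots, \stfrak_{\lambda^{(p(n))}}\}$, the row space of any $n$-CSF matrix $\mathcal{M}$ is exactly $\mathcal{V}_n$; hence $\dim_\C \mathcal{V}_n = \rank(\mathcal{M})$. The two inequalities $\rank(\mathcal{M}) \ge p(n)-n+1$ and $\rank(\mathcal{M}) \le p(n)-n+1$ then give $\dim_\C \mathcal{V}_n = p(n)-n+1$.

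For the basis, recall that there are exactly $p(n)-n+1$ non-hook partitions of $n$, and that Lemma~\ref{lemma:non-hook-building} attaches to each non-hook $\lambda \vdash n$ a caterpillar $C_\lambda$ with $\lead(\mathbf{X}_{C_\lambda}) = \lambda$ --- namely $C_\lambda = St_n$ when $\ell(\lambda)=1$ and $C_\lambda = C[\lambda_2, \ldots, \lambda_{\ell(\lambda)}, \lambda_1]$ otherwise, both of which are caterpillars. I would then show that $\mathcal{B} = \{\mathbf{X}_{C_\lambda} : \lambda \vdash n \text{ non-hook}\}$ is linearly independent by the standard echelon argument: order the star-vectors of the members of $\mathcal{B}$ by increasing leading partition, so that the first nonzero entry of the star-vector of $C_\lambda$ occurs in the column indexed by $\lambda$ and equals the leading coefficient $c_{\lead(\mathbf{X}_{C_\lambda})} \ne 0$; since these pivot positions move strictly to the right as we descend the list, no nontrivial linear combination can vanish.

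Finally, $\mathcal{B}$ is a set of $p(n)-n+1$ linearly independent vectors inside the $(p(n)-n+1)$-dimensional space $\mathcal{V}_n$, so it is a basis; this is the caterpillar basis, completing the proof.

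There is essentially no obstacle beyond bookkeeping: the substantive content already sits in Propositions~\ref{prop:lower-bound-rank} and~\ref{prop:upper-bound-rank} and in Lemma~\ref{lemma:non-hook-building}. The only points worth double-checking are that the trees supplied by Lemma~\ref{lemma:non-hook-building} are genuinely caterpillars (including the degenerate star $St_n$, whose set of internal edges is empty and hence vacuously a path), and that the echelon argument is legitimate over $\C$ --- which it is, since leading coefficients are nonzero by definition of the leading partition.
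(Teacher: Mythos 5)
Your proposal is correct and follows the paper's proof in essentially all respects: both read off the dimension from the two rank bounds in Propositions~\ref{prop:lower-bound-rank} and~\ref{prop:upper-bound-rank}, and both build the basis from the caterpillars $C[\lambda_2,\ldots,\lambda_\ell,\lambda_1]$ attached to non-hook partitions by Lemma~\ref{lemma:non-hook-building}, with linear independence via the leading-partition echelon argument. The small additions you make --- making explicit that $\mathcal{V}_n$ is the row space of an $n$-CSF matrix so that $\dim \mathcal{V}_n = \rank(\mathcal{M})$, and observing that $St_n$ is a caterpillar with empty spine --- are correct and merely spell out points the paper leaves implicit.
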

\begin{proof}
    The first statement of the theorem is obvious from Propositions \ref{prop:lower-bound-rank} and \ref{prop:upper-bound-rank}. For the second part, note that we just need to find $p(n)-n+1$ linearly independent vectors in $\mathcal{V}_n$. Suppose $\mu=(\mu_1, \ldots, \mu_\ell)$ is a non-hook partition. Construct a caterpillar $C[\mu]$ with leaf component sequence $(\mu_2, \ldots, \mu_\ell, \mu_1)$. Now, let $\mu^{(1)}, \ldots, \mu^{(p(n)-n+1)}$ be the non-hook partitions of $n$, then the set $\{\mathbf{X}_{C[\mu^{(1)}]}, \ldots, \mathbf{X}_{C[\mu^{(p(n)-n+1)}]}\}$ is certainly linearly independent since each term has a different leading partition. Thus, this set is indeed a basis for $\mathcal{V}_n$ and we refer to it as the \defn{caterpillar basis} for $\mathcal{V}_n$.
\end{proof}

Note that one of the main consequences of Theorem \ref{thm:dimension} is that there are linear relations among the chromatic symmetric functions of trees with $n$ vertices. We provide an example below.

\begin{example}
    Letting $\sim$ represent equality when passing to the chromatic symmetric function, we have the following linear combination:
    \begin{figure}[ht]
        \centering
        \begin{tikzpicture}[auto=center,every node/.style={circle, fill=black, scale=0.5}, thick]
            \node (a) at (0,0) {};
            \node (b) at (1,0) {};
            \node (c) at (2,0) {};

            \node (a1) at (0,.5) {};

            \node (b1) at (1.25, .43) {};
            \node (b2) at (1,.5) {};
            \node (b3) at (.75, .43) {};

            \node (c1) at (2, .5) {};

            \draw (a)--(b)--(c);
            
            \draw (a)--(a1);
            \draw (b)--(b1);
            \draw (b)--(b2);
            \draw (b)--(b3);
            \draw (c)--(c1);

            \node[fill=none, scale=2] at (2.75,0.25) {$\sim$};

            \node (a) at (3.5, 0) {};
            \node (b) at (4.5, 0) {};
            \node (c) at (5.5, 0) {};

            \node (a1) at (3.5, 0.5) {};
            \node (b1) at (4.5, .5) {};
            \node (c1) at (5.25, .43) {};
            \node (c2) at (5.5, .5) {};
            \node (c3) at (5.75, .43) {};

            \draw(a)--(b)--(c);
            \draw(a)--(a1);
            \draw(b)--(b1);
            \draw(c)--(c1);
            \draw(c)--(c2);
            \draw(c)--(c3);

            \node[fill=none,scale=2] at (6.25, .25) {$-$};

            \node (a) at (7, 0) {};
            \node (b) at (8, 0) {};
            \node (c) at (9, 0) {};

            \node (a1) at (6.8, 0.5) {};
            \node (a2) at (7.2, .5) {};
            
            \node (c1) at (9.25, .43) {};
            \node (c2) at (9, .5) {};
            \node (c3) at (8.75, .43) {};

            \draw(a)--(b)--(c);
            \draw(a)--(a1);
            \draw(a)--(a2);
            \draw(c)--(c1);
            \draw(c)--(c2);
            \draw(c)--(c3);
            
            \node[fill=none, scale=2] at (9.75,.25) {$+$};

            \node (a) at (10.5, 0) {};
            \node (b) at (11.5, 0) {};
            \node (c) at (12.5, 0) {};

            \node (a1) at (10.5, 0.5) {};

            \node (c1) at (12.15, .35) {};
            \node (c2) at (12.37, .49) {};
            \node (c3) at (12.63, .49) {};
            \node(c4) at (12.85, .35) {};

            \draw(a)--(b)--(c);
            \draw(a)--(a1);
            \draw(c)--(c4);
            \draw(c)--(c1);
            \draw(c)--(c2);
            \draw(c)--(c3);
        \end{tikzpicture}
        \caption{The linear relation $\mathbf{X}_{C[2,4,2]} = \mathbf{X}_{C[2,2,4]} - \mathbf{X}_{C[3,1,4]} + \mathbf{X}_{C[2,1,5]}$.}
        \label{fig:linear-combin-2-int-edges}
    \end{figure}
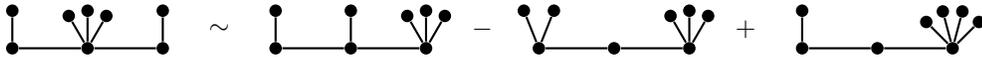
\end{example}

We can give a complete description of the linear combinations that occur in trees with 2 internal edges. Note that a tree with 2 internal edges is a caterpillar with three leaf components. Hence given a partition with three parts $(a,b,c)$, there are at most three non-isomorphic trees with $(a,b,c)$ as their leading partition. In particular, these are $C[a,b,c], C[b,c,a], C[b,a,c]$. Note that these trees may be isomorphic if $(a,b,c)$ has repeated parts. Further, $\mathbf{X}_{C[b,c,a]}$ is in the caterpillar basis by definition, so we only need to find the linear combinations that $C[a,b,c]$ and $C[b,a,c]$ satisfy.

\begin{proposition} Let $T$ be a tree with two internal edges, with leading partition $\lead = (a, b, c)$, then $T = C[a,b,c], C[b,c,a]$ or $C[b,a,c]$, we have the following linear combinations:
    \begin{align}
        \mathbf{X}_{C[a,b,c]} &= \mathbf{X}_{C[b,c,a]} - \mathbf{X}_{C[b,1,a+c-1]} + \mathbf{X}_{C[b+1,a+c-1]} + \mathbf{X}_{C[c,1,a+b-1]} - \mathbf{X}_{C[c+1,a+b-1]}
        \\
        \mathbf{X}_{C[b,a,c]} &= \mathbf{X}_{C[b,c,a]} - \mathbf{X}_{C[a,1,b+c-1]} + \mathbf{X}_{C[a+1,b+c-1]} + \mathbf{X}_{C[c,1,a+b-1]} - \mathbf{X}_{C[c+1,a+b-1]}
    \end{align}
\end{proposition}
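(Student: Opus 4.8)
The plan is to prove the first identity (the one expressing $\mathbf{X}_{C[a,b,c]}$) by a single application of the DNC relation to each of $C[a,b,c]$ and $C[b,c,a]$, and then to deduce the second identity (for $\mathbf{X}_{C[b,a,c]}$) from the first by the substitution $a\leftrightarrow b$ together with the isomorphism $C[a,c,b]\cong C[b,c,a]$ obtained by reversing the spine. We may assume $a\ge b\ge c\ge 2$: if $c=1$ then $\lead(\csft)$ contains a part equal to $1$, and by Corollary \ref{cor:num-deep-vertices} and the analysis of trees with two internal edges there is a unique tree with that leading partition, namely the extended bi-star $C[b,1,a]$, which is already a basis caterpillar, so nothing needs proving.

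First I would record two auxiliary expansions coming straight from Proposition \ref{prop:dnc-formula}. Applying the DNC relation to the single internal edge of a bi-star gives
\[
\mathbf{X}_{C[x,y]} = \stfrak_x\stfrak_y - \stfrak_1\stfrak_{x+y-1} + \stfrak_{x+y},
\]
and applying it to the internal edge of $C[x,1,y]$ incident to the order-$x$ endpoint of the spine, then identifying the three resulting graphs as $St_x\sqcup St_{y+1}$, $\;C[x,y]\sqcup St_1$, and $C[x+1,y]$, gives
\[
\mathbf{X}_{C[x,1,y]} = \stfrak_x\stfrak_{y+1} - \stfrak_1\,\mathbf{X}_{C[x,y]} + \mathbf{X}_{C[x+1,y]}.
\]
Both identifications are routine once one tracks how leaf components merge under dot- and leaf-contraction, using Lemma \ref{lemma:dot-contraction-lead} and Lemma \ref{lem:leaf-contraction-lead}; the only point of care is checking that the spine endpoints still carry a leaf afterwards, so that the outputs are genuine (extended) bi-stars rather than larger stars.

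Next I would apply the DNC relation to the internal edge of $C[a,b,c]$ incident to the order-$a$ endpoint of the spine: deletion splits off $St_a$ and leaves the bi-star $C[b,c]$; dot-contraction merges the order-$a$ and order-$b$ leaf components into one of order $a+b-1$ and adds an isolated vertex, yielding $C[a+b-1,c]\sqcup St_1$; leaf-contraction yields the bi-star $C[a+b,c]$. Hence
\[
\mathbf{X}_{C[a,b,c]} = \stfrak_a\,\mathbf{X}_{C[b,c]} - \stfrak_1\,\mathbf{X}_{C[a+b-1,c]} + \mathbf{X}_{C[a+b,c]}.
\]
Doing the same for $C[b,c,a]$ on the internal edge incident to the order-$a$ endpoint gives
\[
\mathbf{X}_{C[b,c,a]} = \stfrak_a\,\mathbf{X}_{C[b,c]} - \stfrak_1\,\mathbf{X}_{C[b,a+c-1]} + \mathbf{X}_{C[b,a+c]}.
\]
Subtracting, the $\stfrak_a\mathbf{X}_{C[b,c]}$ terms cancel, so the first identity becomes the assertion that $-\stfrak_1\mathbf{X}_{C[a+b-1,c]} + \mathbf{X}_{C[a+b,c]} + \stfrak_1\mathbf{X}_{C[b,a+c-1]} - \mathbf{X}_{C[b,a+c]}$ equals $-\mathbf{X}_{C[b,1,a+c-1]} + \mathbf{X}_{C[b+1,a+c-1]} + \mathbf{X}_{C[c,1,a+b-1]} - \mathbf{X}_{C[c+1,a+b-1]}$. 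Using the $C[x,1,y]$ expansion twice, the right-hand side collapses to $-\stfrak_b\stfrak_{a+c} + \stfrak_1\mathbf{X}_{C[b,a+c-1]} + \stfrak_c\stfrak_{a+b} - \stfrak_1\mathbf{X}_{C[c,a+b-1]}$; then substituting the bi-star expansion into every remaining $\mathbf{X}_{C[x,y]}$ on both sides reduces the equality to an identity among products of $\stfrak$'s, which is verified by inspection. Finally, interchanging $a$ and $b$ in the first identity turns its leading term into $\mathbf{X}_{C[a,c,b]}=\mathbf{X}_{C[b,c,a]}$ and leaves the last two correction terms fixed (they are symmetric in $a,b$), producing exactly the second identity.

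I do not expect a genuine obstacle here: the argument is essentially bookkeeping with the DNC relation, and the computation above confirms that it closes. The only delicate points are the correct identification of the three graphs produced at each DNC step — especially confirming that contraction does not promote a spine endpoint's leaf component to a full star — and the handling of the degenerate indices, where $C[1,y]$ must be read as $St_{y+1}$ and a part $1$ in $\lead$ forces the tree to be a single extended bi-star.
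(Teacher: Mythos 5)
Your proof is correct and follows the same route as the paper's (one-line) proof: a direct application of the DNC relation to the caterpillars involved, followed by algebraic simplification using the bi-star and extended bi-star expansions. Your computation closes as claimed (the key cancellation being $\mathbf{X}_{C[a+b,c]}-\mathbf{X}_{C[b,a+c]}=\mathfrak{st}_{a+b}\mathfrak{st}_c-\mathfrak{st}_b\mathfrak{st}_{a+c}$), and your explicit treatment of the degenerate case $c=1$ is a sensible addition the paper omits.
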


\begin{proof}
    Both relations are obtained by applying the DNC relation on each of the caterpillars on the right-hand side and simplifying the result to obtain the chromatic symmetric function in the left-hand side.   
\end{proof}

\printbibliography

\end{document}